\DeclareRobustCommand\shape{
 \lower5pt\hbox{
 \hskip-7pt
  \tikzset{circ/.style={circle, draw, fill=black, scale=.15}}
  \begin{tikzpicture}[semithick,scale=.3]
  \node (l1) at (0,.5) [circ]{};
  \node (l3) at (0.5,0.3) [circ]{};
  \draw[-] (l1) to node [auto] {} (l3);
    \end{tikzpicture}
  \hskip-8pt}
}
\newcommand{\bernoulli}{\mathsf{Bern}}
\newcommand{\hzero}{\mathsf{H}_0}
\newcommand{\hone}{\mathsf{H}_1}
\newtheorem{conjecture}{Conjecture}[]
\newtheorem{theorem}{Theorem}[section]
\newtheorem{challenge}{Challenge}[]
\newtheorem{lemma}{Lemma}[section]
\newtheorem{corollary}[lemma]{Corollary}
\newtheorem{claim}[lemma]{Claim}
\newtheorem{proposition}[theorem]{Proposition}
\theoremstyle{definition}
\newtheorem{remark}{Remark}[]
\newtheorem{definition}{Definition}
\newtheorem{problem}{Problem}[]
\newcommand{\bfx}{\mathbf{x}}
\newcommand{\Star}{\mathsf{Star}}
\newcommand{\cycle}{\mathsf{Cyc}}
\newcommand{\complete}{\mathsf{K}}
\newcommand{\connectedgraphs}{\mathsf{CGraphs}}
\newcommand{\graphs}{\mathsf{Graphs}}
\newcommand{\trace}{\mathsf{tr}}
\newcommand{\Fourier}{\Phi}
\DeclareMathOperator*{\var}{{\rm Var}}
\newcommand{\Var}{\var}
\newcommand{\cov}{{\rm Cov}}
\DeclareMathOperator*{\E}{{\rm I}\kern-0.18em{\rm E}}
\newcommand{\expect}{\E}
\renewcommand{\Pr}{\,{\rm I}\kern-0.18em{\rm P}}
\newcommand{\bfG}{\mathbf{G}}
\newcommand{\signedcount}{\mathsf{SC}}
\newcommand{\edges}{\mathsf{E}}
\newcommand{\vertices}{\mathsf{V}}
\newcommand{\unsignedcount}{\mathsf{C}}
\newcommand{\tspecial}{\mathbf{T}}
\newcommand{\leaves}{\mathsf{L}}
\newcommand{\parent}{\mathsf{par}}
\newcommand{\ER}{{Erd\H{o}s-R\'enyi}}
\newcommand{\bfK}{\mathbf{K}}
\newcommand{\iidsim}{\stackrel{\mathrm{i.i.d.}}\sim}
\newcommand{\ergraph}{{\mathbb{G}(n,q)}}
\newcommand{\ergraphhalf}{{\mathbb{G}(n,1/2)}}
\newcommand{\SBM}{\mathbb{SBM}}
\newcommand{\indicator}{\mathds{1}}
\title{Graph Quasirandomness for Hypothesis Testing of \\ Stochastic Block Models}
\author{Kiril Bangachev\thanks{Dept. of EECS, MIT. \texttt{kirilb@mit.edu}} \quad Guy Bresler\thanks{Dept. of EECS, MIT. \texttt{guy@mit.edu}. Supported by NSF Grant CCF-2428619.}}
\begin{document}

\maketitle

\pagenumbering{gobble}

\begin{abstract}
The celebrated theorem of Chung, Graham, and Wilson on quasirandom graphs implies that if the 4-cycle and edge counts in a graph $G$ are both close to their typical number in $\mathbb{G}(n,1/2),$ then this also holds for the counts of subgraphs isomorphic to $H$ for any $H$ of constant size. We aim to prove a similar statement where the notion of close is whether the given (signed) subgraph count can be used as a test between $\mathbb{G}(n,1/2)$ and a stochastic block model $\mathbb{SBM}.$ 

Quantitatively, this is related to approximately maximizing $H \longrightarrow |\Phi(H)|^{\frac{1}{|\mathsf{V}(H)|}},$ where $\Phi(H)$ is the Fourier coefficient of $\mathbb{SBM}$, indexed by subgraph $H.$ This formulation turns out to be equivalent to approximately maximizing the partition function of a spin model over alphabet equal to the community labels in $\mathbb{SBM}.$

We resolve the approximate maximization when $\mathbb{SBM}$ satisfies one of four conditions:
1) the probability of an edge between any two vertices in different communities is exactly $1/2$; 2) the probability of an edge between two vertices from any two communities is at least $1/2$ (this case is also covered in a recent work of Yu, Zadik, and Zhang); 3) the probability of belonging to any given community is at least $c$ for some universal constant $c>0$; 4) $\mathbb{SBM}$ has two communities.
In each of these cases, we show that
there is an approximate maximizer of $|\Phi(H)|^{\frac{1}{|\mathsf{V}(H)|}}$ in $\mathcal{A} = \{\text{stars, 4-cycle}\}.$ This implies that if there exists a constant-degree polynomial test distinguishing $\mathbb{G}(n,1/2)$ and $\mathbb{SBM},$ then the two distributions can also be distinguished via the signed count of some graph in $\mathcal{A}.$ We conjecture that the same holds true for distinguishing $\mathbb{G}(n,1/2)$ and any graphon if we also add triangles to $\mathcal{A}.$
\end{abstract}

\begin{keywords}{}
Stochastic Block Models, Quasirandom Graphs, Low Degree Polynomial Tests.
\end{keywords}

\newpage

\setcounter{tocdepth}{2}
\tableofcontents

\clearpage
\pagenumbering{arabic}

\section{Introduction}
\subsection{Background}
\paragraph{Quasirandomness.}
In the seminal paper \cite{chung87},  Chung, Graham, and Wilson initiated the study of \emph{quasirandom graphs}. The central question addressed by quasirandomness is:
\begin{quote}
\begin{center}
\emph{Does the graph $G$ resemble a random graph?}
\end{center}
\end{quote}
\cite{chung87} establishes the equivalence of several seemingly disparate properties that make a graph similar to a sample from $\ergraphhalf$ or, in their terminology, \emph{quasirandom}. To describe the result, we denote by 
$\unsignedcount_H(G)$ the number of labeled copies of $H$ in a graph $G$ (so, for example,
$\unsignedcount_H(\complete_n)=n(n-1)\cdots (n -|\vertices(H)|+1)$ for any $H$) and by 
$\unsignedcount^*_H(G)$ the number of induced labeled copies.

 \begin{theorem}[\cite{chung87}]
\label{thm:classicquasirandomnessnew}
    The following conditions are equivalent for an $n$-vertex graph $G:$
    \begin{enumerate}
        \item[P1)] $|\unsignedcount^*_H(G)| =(1+o(1))n^{|\vertices(H)|}2^{-\binom{|\vertices(H)|}{2}},$ which is equivalent to 
        $|\unsignedcount_H(G)| =(1+o(1))n^{|\vertices(H)|}2^{-|\edges(H)|},$
        for all graphs $H$ of constant size.
        \item[P2)] $|\edges(G)|\ge (1 +o(1))\frac{n^2}{4}$ and $|\unsignedcount_{\cycle_t}(G)|\le \frac{n^t}{2^t}(1+o(1))$ for some cycle $\cycle_t$ of even length $t\ge 4.$
        \item[P3)] $|\edges(G)|\ge (1+o(1))\frac{n^2}{4}$ and $\lambda_1(G)=\frac{n}{2}(1+o(1)),\lambda_2(G)=o(n)$ where these are the largest two eigenvalues of $G$ by absolute value.
        \item[P4)] For each subset $S$ of $\vertices(H),$ the number of edges restricted to this subset is $\frac{|S|^2}{4} + o(|\vertices(H)|^2).$
        \item[P5)] For each subset $S$ of $\vertices(H)$ with $S=\lfloor n/2\rfloor,$ the number of edges restricted to this subset is $(1 +o(1))\frac{n^4}{16}.$
        \item[P6)]
        $\displaystyle 
        \sum_{v\neq v'\in \vertices(H)}
        \bigg|\sum_{u\in \vertices(H)} \indicator[G_{vu}= G_{v'u}] - n/2\bigg| = o(n^3). 
        $
        \item[P7)] 
        $\displaystyle 
        \sum_{v\neq v'\in \vertices(H)}
        \bigg|\sum_{u\in \vertices(H)} \indicator[G_{vu}= G_{v'u}= 1] - n/4\bigg| = o(n^3).
        $
    \end{enumerate}
\end{theorem}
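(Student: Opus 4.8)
The plan is to establish the equivalences by proving the cycle of implications $P1\Rightarrow P2\Rightarrow P3\Rightarrow P4\Rightarrow P5\Rightarrow P6\Rightarrow P7\Rightarrow P1$, with the content concentrated in a handful of links. Two are immediate: $P1\Rightarrow P2$ is $P1$ specialized to $H$ equal to an edge and to $H=\cycle_t$, and $P4\Rightarrow P5$ is the restriction to $|S|=\lfloor n/2\rfloor$. The first substantive link, $P2\Rightarrow P3$, I would handle by the trace method: writing $\bfA=\bfA(G)$, the trace $\trace(\bfA^t)$ counts closed walks of length $t$, and a closed walk that is not an embedded cycle factors through a graph on at most $t-1$ vertices, so $\trace(\bfA^t)=\unsignedcount_{\cycle_t}(G)+O(n^{t-1})$. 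Since $t$ is even, $\sum_i\lambda_i^t=\trace(\bfA^t)\le(1+o(1))n^t2^{-t}$ is a sum of nonnegative terms, so $\lambda_1\le(1+o(1))n/2$; combined with $\lambda_1\ge 2|\edges(G)|/n\ge(1+o(1))n/2$ this gives $\lambda_1=(1+o(1))n/2$, whence $\max_{i\ge 2}|\lambda_i|^t\le\sum_{i\ge 2}\lambda_i^t=o(n^t)$ and so $\lambda_2=o(n)$.

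For $P3\Rightarrow P4$, I would upgrade the spectral data to operator-norm control of $B:=\bfA-\tfrac12\one\one^\top$. Decomposing $\tfrac1{\sqrt n}\one=c_1v_1+w$ with $w\perp v_1$, the bounds $\tfrac1n\one^\top\bfA\one=\tfrac{2|\edges(G)|}{n}\ge(1+o(1))\tfrac n2$ and $\tfrac1n\one^\top\bfA\one\le c_1^2\lambda_1+|\lambda_2|\,\|w\|^2$ force $c_1^2=1-o(1)$, so the top eigenvector is nearly uniform and $\|B\|_{\mathrm{op}}=o(n)$. Hence $\edges(S)=\tfrac12\one_S^\top\bfA\one_S=\tfrac{|S|^2}{4}+\tfrac12\one_S^\top B\one_S=\tfrac{|S|^2}{4}+o(n^2)$ for every $S$, which is $P4$; the bipartite version $\edges(S,T)=\tfrac{|S||T|}{4}+o(n^2)$, equivalently smallness of the cut norm $\|B\|_\square$, follows by inclusion--exclusion.

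The return arc $P5\Rightarrow P6\Rightarrow P7\Rightarrow P1$ carries the remaining weight, and I would route all of $P5,P6,P7$ through $\|B\|_{\mathrm{op}}=o(n)$. For $P5\Rightarrow P6$: a routine averaging argument recovers $\edges(S)$ for arbitrary $S$ from the counts on random half-sized supersets (hence $P4$, i.e.\ $\|B\|_\square=o(n^2)$), and passing to the $\pm1$ matrix $\widetilde{\bfA}=2\bfA-\one\one^\top$ (up to an $O(1)$-per-entry diagonal) the entry $(\widetilde{\bfA}^2)_{vv'}$ is --- up to $O(1)$ --- twice the $P6$-quantity $\sum_u\indicator[G_{vu}=G_{v'u}]-n/2$, so since $\|\widetilde{\bfA}x\|_1\le 4\|\widetilde{\bfA}\|_\square$ for sign vectors $x$, summing over the rows of $\widetilde{\bfA}$ gives $\sum_{v\neq v'}|(\widetilde{\bfA}^2)_{vv'}|=o(n^3)$, which is $P6$. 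Conversely $P6$ says $\widetilde{\bfA}^2=nI+N$ with $\|N\|_1=o(n^3)$, so $\|N\|_F^2\le n\|N\|_1=o(n^4)$ and thus $\|B\|_{\mathrm{op}}=o(n)$; and $P7$ says $\bfA^2=\tfrac n4\one\one^\top+E$ with $\|E\|_1=o(n^3)$, so $\bfA^2$ is within $o(n^2)$ in operator norm of a rank-one matrix with top eigenvalue $\tfrac{n^2}{4}$, and Weyl's inequality, Perron--Frobenius, and eigenvector perturbation again yield $\|B\|_{\mathrm{op}}=o(n)$ (with $\lambda_1=(1+o(1))\tfrac n2$, $\lambda_2=o(n)$, near-uniform top eigenvector). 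From $\|B\|_{\mathrm{op}}=o(n)$ one reads off the edge count $|\edges(G)|=(1+o(1))n^2/4$, the degree concentration $\sum_v|\Deg v-\tfrac n2|=o(n^2)$, and then $P7$ via the identity $(\bfA^2)_{vv'}-\tfrac n4=\tfrac12(\Deg v+\Deg v'-n)+(B^2)_{vv'}$ together with $\sum_{v,v'}|(B^2)_{vv'}|\le n\|B\|_{\mathrm{op}}\|B\|_F=o(n^3)$; and, closing the cycle, since $\|B\|_\square\le n\|B\|_{\mathrm{op}}=o(n^2)$ the graph counting lemma gives $P1$ --- expanding $\unsignedcount_H(G)=\sum_{\phi\text{ inj.}}\prod_{ij\in\edges(H)}(\tfrac12+B_{\phi(i)\phi(j)})$ over subsets of $\edges(H)$, the all-$\tfrac12$ term is $(1+o(1))n^{|\vertices(H)|}2^{-|\edges(H)|}$ and every remaining term carries a factor of $B$ and is $o(n^{|\vertices(H)|})$, while $\unsignedcount_H^*(G)$ (main term $(1+o(1))n^{|\vertices(H)|}2^{-\binom{|\vertices(H)|}{2}}$) follows from the same expansion with non-edge factors $\tfrac12-B_{\phi(i)\phi(j)}$.

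The main obstacle I expect is this last counting-lemma computation: bounding the $2^{|\edges(H)|}-1$ off-main-term contributions uniformly over all $H$ of constant size, since an operator- or cut-norm bound does not by itself control a general homomorphism-type sum on the nose --- one must either iterate Cauchy--Schwarz over the connected components of each subset of $\edges(H)$, or swap the entries $\bfA_{ij}$ for $\tfrac12$ one edge at a time and control each swap by a single cut-norm estimate. The secondary nuisance is purely bookkeeping: ensuring that every lower-order correction --- non-injective closed walks, the diagonal of $\bfA$, mild degree irregularity, the $O(1)$-per-entry discrepancy between $\widetilde{\bfA}$ and $2\bfA-\one\one^\top$ --- stays $o(n^{|\vertices(H)|})$ throughout.
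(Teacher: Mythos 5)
The paper does not prove this theorem: it is quoted from \cite{chung87} as background, so there is no in-paper argument to compare yours against. (The transcription in the paper also contains typos --- $\vertices(H)$ in P4--P7 should read $\vertices(G)$, and the main term in P5 should be $n^2/16$, not $n^4/16$ --- which your argument silently corrects.) Your submission is therefore best read as an independent reconstruction of the Chung--Graham--Wilson equivalences.

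On its own terms the reconstruction is sound and follows what is by now the standard route. The links $P1\Rightarrow P2$ (specialization), $P2\Rightarrow P3$ (trace method, $\trace(\bfA^t)=\unsignedcount_{\cycle_t}(G)+O(n^{t-1})$ plus eigenvalue pinching), $P3\Rightarrow P4$ (alignment of $v_1$ with $\one$ gives $\|\bfA-\tfrac12\one\one^\top\|_{\mathrm{op}}=o(n)$, hence control of all quadratic forms), $P4\Rightarrow P5$ (restriction), and $P5\Rightarrow P6$ (averaging back to $P4$, then the cut-norm bound on $\|\widetilde{\bfA}x\|_1$ applied with $x$ ranging over the rows of $\widetilde{\bfA}$) are all correct, and the closure $P6,P7\Rightarrow\|B\|_{\mathrm{op}}=o(n)\Rightarrow P7,P1$ is correctly organized. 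You rightly identify the counting-lemma computation in $\Rightarrow P1$ as the residual work, and either closure you propose (iterated Cauchy--Schwarz, or one-edge-at-a-time substitution each controlled by a single cut-norm estimate) does finish it. The one link deserving an extra sentence is $P7\Rightarrow\|B\|_{\mathrm{op}}=o(n)$: from $\bfA^2=\tfrac n4\one\one^\top+E$ with $\|E\|_{\mathrm{op}}=o(n^2)$ you must extract information about $\bfA$ itself, and that requires saying explicitly why the large eigenvalue of $\bfA^2$ comes from $\lambda_1(\bfA)=+\tfrac n2(1+o(1))$ rather than a large negative eigenvalue (Perron--Frobenius) and why the corresponding eigenvector is within $o(1)$ of $\tfrac1{\sqrt n}\one$ (Davis--Kahan for $\bfA^2$); you gesture at both, but the reader needs to see the sign and alignment pinned down before $\|B\|_{\mathrm{op}}=o(n)$ follows.
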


In particular, if $G$ has a number of 4-cycles and edges close to that of a typical sample from $\ergraphhalf,$ then any other subgraph count as well as the spectrum of $G$ resemble that of \ER.

\paragraph{Hypothesis Testing for Graph Structure.} Subgraph counts and spectral statistics are commonly used to compare random graphs to \ER{} in a different framework -- \emph{when testing for hidden structure in random graph distributions}. Concretely, the following hypothesis testing question has received considerable attention from the probability, computer science, and theoretical statistics communities. For a family of distributions $(\mathbb{P}_n)_{n\in \mathbb{N}}$ over $n$-vertex graphs, one aims to solve the following hypothesis testing problem on input an $n$-vertex graph $G$:
\begin{equation}
\tag{HT}
\label{eq:hypothesistest}
    \hzero: \ergraphhalf\quad \text{ versus }\quad \hone: \mathbb{P}_n.
\end{equation}
Both the \emph{information-theoretic} question (is there a consistent test?) and the \emph{computational} question (is there a computationally-efficient test?) are active research areas for many choices of $\mathbb{P}_n$ in \eqref{eq:hypothesistest}.  

Popular choices of $\mathbb{P}_n$ in \eqref{eq:hypothesistest} include graph distributions with the following types of hidden structure: Community structure such as stochastic block models \cite{HOLLAND1983sbm,Bui1984GraphBA,DYER1989sbm,boppana87sbm,bollobas07sbm,reichardt08detectsbm,COJAOGHLAN2010sbm,decele11detect,massoulie14detection,mossel18proofsbm} (and many more in \cite{abbecommunitydetection}), in particular the planted clique \cite{kuvcera1995expected,jerrum1992large,alon1998finding,barak19soslowerbound,brennan2018reducibility,Brennan19Reductions,brennan2020reducibility,hirahara24pc}; Geometric structure such as random geometric graphs \cite{Devroye11,Bubeck14RGG,brennan2019phase,Brennan22AnisotropicRGG,Liu2021APV,Liu2022STOC,Liu21PhaseTransition,Friedrich24cliques,bangachev2024detection,bangachev24fourier,bangachev2023random}; Planted dense subgraph such as a matching, clique, or cycle \cite{hajek2015computational,Ding2021ThePM,mossel23shapthresholds,dhawan2023detection,yu2024counting} (in addition to planted clique references).

One natural and computationally efficient approach to \eqref{eq:hypothesistest} is to compare simple statistics of $\mathbb{P}_n$ and $\ergraphhalf.$ For example, if $\mathbb{P}_n$ has a lot more triangles than $\ergraphhalf$ (as in the case of random geometric graphs over the sphere, see \cite{Bubeck14RGG}), then one can use this fact to distinguish the two distributions. The semblance with the classic quasirandomness theory, initiated in \cref{thm:classicquasirandomnessnew}, is striking -- subgraph counts are used to compare with \ER.

\paragraph{Towards a Quasirandomness Theory for Hypothesis Testing for Graph Structure.} In the context of \eqref{eq:hypothesistest}, the question of graph quasirandomness takes the following form:
\begin{quote}
\begin{center}
\emph{
Is there a small set of simple graph statistics such that
if $\mathbb{P}_n$ and $\ergraphhalf$ are indistinguishable under these statistics, then they are also computationally / information-theoretically indistinguishable?}
\end{center}
\end{quote}

Of course, one may speculate that \cref{thm:classicquasirandomnessnew} already answers this question. If the 4-cycle count and edge count of $\mathbb{P}_n$ and $\ergraphhalf$ are sufficiently close, then so are all other subgraph counts. Hence, polynomials in the edges of the two graph distributions are also close. Within the increasingly popular framework of low-degree polynomial tests (see \cref{sec:ldpprelim} for definitions), this would give evidence for computational indistinguishability. 

The reason this argument fails is that \cref{thm:classicquasirandomnessnew} is quantitatively too weak in the $o(1)$ dependence on the number of vertices. For example, consider $\mathbb{P}_n = \mathbb{G}(n, 1/2 + 1/\sqrt{n}).$ According to \cref{thm:classicquasirandomnessnew}, (a sample from) this graph distribution is quasirandom. However, one can easily distinguish $\mathbb{P}_n$ and $\ergraphhalf$: A simple concentration argument shows that with high probability, the first distribution has more than $\frac{1}{2}\binom{n}{2} + \frac{1}{8}n^{1.5}$ edges, while the second less than $\frac{1}{2}\binom{n}{2} + \frac{1}{8}n^{1.5}.$ 
In \cref{sec:priortechniquesQUASI}, we 
explain why even careful book-keeping of the $o(1)$-dependencies in  \cref{thm:classicquasirandomnessnew} is not sufficient. There is a fundamental barrier to the techniques used for   \cref{thm:classicquasirandomnessnew} and a different approach is needed to develop a quasirandomness theory in the setting of hypothesis testing.

In the recent work \cite{yu2024counting}, the authors make a first step towards this end.\footnote{To the best of our knowledge, the connection of their work to graph quasirandomness was not known to the authors of \cite{yu2024counting}.} 

\begin{theorem}[\cite{yu2024counting}]
\label{thm:countingstars}
Let $(H_n)_{n \in \mathbb{N}}$ be a sequence of fixed
$n$-vertex graphs. The sequence of planted distributions
$\mathbb{P}_{n \in \mathbb{N}}$ over $n$-vertex graphs is defined as follows. To sample from $\mathbb{P}_n,$ one first samples $\bfG$ from $\ergraphhalf,$\footnote{The results of \cite{yu2024counting} and the current work extend appropriately for any $\ergraph$ when $q\in (0,1)$ is an absolute constant (independent on $n$). We choose to work with $q = 1/2$ as this makes nearly no difference in the arguments. We do note that if we allow $q$ to depend on $n,$ we expect the behavior to change dramatically, see \cref{sec:discussion}.} then adds the edges of $H_n,$ producing $\bfG\cup H_n,$ and finally applies a uniformly random permutation to the vertices of $\bfG\cup H_n.$

There exists a constant degree polynomial test that distinguishes $\hzero: \ergraphhalf$ and $\hone: \mathbb{P}_n$ with high probability if and only if there exists some signed star count that distinguishes $\hzero: \ergraphhalf$ and $\hone: \mathbb{P}_n$ with high probability.
\end{theorem}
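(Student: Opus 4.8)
The plan is to cast everything in the standard low‑degree/Fourier framework, reducing the theorem to a single combinatorial inequality about homomorphism counts of trees. Work with the characters $\chi_H$ on $\{0,1\}^{\binom{[n]}{2}}$ that are orthonormal under $\ergraphhalf$, indexed by graphs $H$ without isolated vertices, and let $\unsignedcount_H(\cdot)$ count injective homomorphisms (labeled copies). A direct computation for the planted‑union model $\mathbb{P}_n$ shows that its Fourier coefficient is $\Fourier(H)=\unsignedcount_H(H_n)\big/\bigl(n(n-1)\cdots(n-|\vertices(H)|+1)\bigr)$, i.e. the chance that a uniformly random injective embedding of $H$ into $[n]$ carries every edge of $H$ into $H_n$. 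By Parseval, writing $L=\mathbb{P}_n/\ergraphhalf$ for the likelihood ratio and $L^{\le D}$ for its orthogonal projection onto degree‑$\le D$ functions,
$$\|L^{\le D}-1\|_{L^2(\ergraphhalf)}^2=\sum_{\substack{H:\;1\le|\edges(H)|\le D}}\frac{\unsignedcount_H(H_n)^2}{|\Aut(H)|\cdot n(n-1)\cdots(n-|\vertices(H)|+1)},$$
a finite sum over isomorphism types without isolated vertices whose $H$‑th term equals $\expect_{\mathbb{P}_n}[\signedcount_H]^2/\Var_{\ergraphhalf}[\signedcount_H]$ --- the squared signal‑to‑noise ratio of the test ``reject if $\signedcount_H$ is large''. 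By the usual dictionary, a constant‑degree polynomial test exists iff this quantity is unbounded in $n$, and --- the sum having $O_D(1)$ terms --- iff some single $\signedcount_H$ with $|\edges(H)|\le D$ distinguishes. The converse half of the theorem is then immediate: a signed count of a bounded‑size star is already a bounded‑degree polynomial.

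For the forward half it remains to show that a distinguishing $H$ can be replaced by a star. For connected $H$ let $T$ be a spanning tree, so $|\vertices(T)|=|\vertices(H)|=:v$; then $\unsignedcount_H(F)\le\mathrm{hom}(H,F)\le\mathrm{hom}(T,F)$ for every host graph $F$ (deleting pattern edges only increases the homomorphism count), and disconnected $H$ reduce to the connected case via $\unsignedcount_{H_1\sqcup H_2}\le\unsignedcount_{H_1}\unsignedcount_{H_2}$. The crux is the \emph{tree bound}: for every tree $T$ on $v$ vertices and every graph $F$ with degree sequence $(d_x)_{x\in\vertices(F)}$,
$$\mathrm{hom}(T,F)\;\le\;\sum_{x\in\vertices(F)}d_x^{\,v-1}\;\le\;C(v)\cdot\bigl(\unsignedcount_{K_{1,v-1}}(F)+n\bigr)$$
for a constant $C(v)$ --- equivalently, among all $v$‑vertex trees the star maximizes $\mathrm{hom}(\cdot,F)$. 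Substituting back: if no star count $\signedcount_{K_{1,k}}$ with $k\le D$ distinguishes, i.e. $\unsignedcount_{K_{1,k}}(H_n)=O(n^{(k+1)/2})$ for all such $k$, then each term in the displayed sum is $O_D(1)$, so $\|L^{\le D}-1\|$ is bounded and no constant‑degree test exists; contrapositively, a constant‑degree test forces some $\signedcount_{K_{1,k}}$ with $k\le D$ to distinguish. (This is exactly the statement that $H\mapsto|\Fourier(H)|^{1/|\vertices(H)|}$ is approximately maximized, among $v$‑vertex graphs, by a $v$‑vertex star.)

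The tree bound is the step I expect to be the main obstacle. I would prove it by induction on $v$, strengthening it to $\sum_{\phi\colon T\to F}\prod_{u\in\vertices(T)}d_{\phi(u)}^{a(u)}\le\sum_{x\in\vertices(F)}d_x^{(v-1)+\sum_u a(u)}$ for arbitrary nonnegative integer weights $a(\cdot)$ on $\vertices(T)$. The inductive step peels a leaf $\ell$, or better a whole bouquet of leaves sharing a neighbor $p$: summing those leaves out replaces the weighted homomorphism sum by one over a smaller tree, at the cost of extra powers of $d_{\phi(p)}$, and one absorbs these using weighted AM--GM, $d_x^{\alpha}d_y^{\beta}\le\frac{\alpha}{\alpha+\beta}d_x^{\alpha+\beta}+\frac{\beta}{\alpha+\beta}d_y^{\alpha+\beta}$, across tree edges together with Hölder/power‑mean within a bouquet, in such a way that the target exponent $(v-1)+\sum_u a(u)$ is preserved at every step and one eventually collapses to $\sum_x d_x^{v-1}$. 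Making this bookkeeping go through uniformly over all trees --- checking paths, brooms, and spiders first is a good sanity check --- is the bulk of the work, along with noting that only stars of size $\le D$ are ever needed (since a connected $H$ with $\le D$ edges has $\le D+1$ vertices). The one remaining subtlety is the routine point that passing between ``bounded low‑degree advantage'' and ``no constant‑degree polynomial distinguishes with high probability'' also requires a second‑moment bound on the candidate statistics under $\mathbb{P}_n$; this is harmless because the $\signedcount_{K_{1,k}}$ are low‑degree and $\mathbb{P}_n$ is a mild perturbation of $\ergraphhalf$.
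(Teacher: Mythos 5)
Your outline is correct, and the Parseval/``sum of squared signal-to-noise ratios'' bookkeeping matches what the paper does implicitly (Theorem~\ref{thm:ldpimpliesbsubgraphcount}), but the key combinatorial step is a genuinely different route from the one the paper attributes to \cite{yu2024counting} and spells out for non-negative SBMs in Theorem~\ref{thm:maximizinginnonnegative}. There the argument after reducing to a spanning tree $T$ is purely local: find a path of length~3, delete the middle edge to get $T_1\sqcup T_2$, observe $\Fourier(T)\le\Fourier(T_1)\Fourier(T_2)$ by monotonicity under edge removal, and iterate until every component is a star; the multiplicativity over disjoint unions (Proposition~\ref{prop:vertexdisjointgrahs}/Corollary~\ref{cor:vertexdisjointcomparison}) then gives $\Psi(H)\le\max_t\Psi(\Star_t)$ with no reference to degree sequences at all. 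You instead invoke a one-shot \emph{tree bound}, $\mathrm{hom}(T,F)\le\mathrm{hom}(K_{1,v-1},F)$ for every $v$-vertex tree $T$, which directly targets the single star $\Star_{v-1}$. That inequality is a true and classical fact (it is Sidorenko-type domination: among $v$-vertex trees the star is maximal for homomorphism counts into any host), so your overall proof goes through, but it is a strictly heavier tool than the paper's elementary edge-deletion-and-split argument. What it buys you is a cleaner ``the star on $|\vertices(H)|$ vertices already dominates'' statement; what it costs is that you must import or prove the tree bound, and the induction you sketch does not close as written --- peeling a leaf $\ell$ with weight $\alpha>0$ attached to $p$ produces the factor $\sum_{z\sim\phi(p)}d_z^\alpha$, which is not a power of $d_{\phi(p)}$ and is not absorbed by a single application of weighted AM--GM across the edge $(\ell,p)$; you would need to track weights more carefully (e.g.\ always peel a minimum-weight leaf and re-derive the needed per-edge inequality, or use the entropy/tensor-power argument) to make the strengthening $\sum_\phi\prod_u d_{\phi(u)}^{a(u)}\le\sum_x d_x^{v-1+\sum a(u)}$ actually inductive.

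One more caveat: your last sentence dismisses the ``beating variance under $\mathbb{P}_n$'' step as routine because ``$\mathbb{P}_n$ is a mild perturbation of $\ergraphhalf$,'' but this is exactly the part that is not routine in general (planted clique is a planted-subgraph model and is not a mild perturbation), and the paper handles the SBM analogue with real work in Theorem~\ref{thm:beatingvarnullimpliesbeatingvarplanted} --- the argument there crucially uses that the winning subgraph is an approximate maximizer of $\Psi$ so that $\Fourier(H_1\otimes H_2)$ can be controlled against $\Fourier(H)^{2}$ across all overlap patterns. You would need the planted-subgraph version of that lemma (which is presumably in \cite{yu2024counting}, since the paper under review only cites Theorem~\ref{thm:countingstars}); it is not a consequence of the $L^2$ identity alone.
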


We will explain the precise meaning of \emph{constant degree polynomial test} and \emph{signed} subgraph count in a moment. More important for now is the interpretation of the theorem from a quasirandomness perspective: if signed star counts between $\mathbb{P}_n$ and $\ergraphhalf$ are sufficiently similar, then so are also all other constant-degree polynomial statistics.

The goal of this work is to extend the above theorem to a more diverse family of distributions. Specifically, the choice of model in \cref{thm:countingstars} makes the following strong structural assumption: \textbf{Even conditioned on the latent planted graph $H,$ the edge $\bfG_{ij}$ has marginal probability of appearance at least $1/2$}. Both the methods and conclusion of \cref{thm:countingstars} break down in some of the simplest models that do not exhibit this structure. For example, the following distribution is easily distinguishable from $\ergraphhalf$ via counting signed 4-cycles but signed stars fail to distinguish it. Each of $n$ vertices receives an independent uniformly random label $1$ or 2. If $u$ and $v$ receive the same label, they are adjacent with probability $1/2+ n^{-1/10}.$ If $u$ and $v$ receive different labels, they are adjacent with probability $1/2- n^{-1/10}.$ One can show that the expected number of signed stars is the same as in $\ergraphhalf$: zero. The reason is that even conditioned on the latent label of $u,$ the edge $\bfG_{u,v}$ is distributed as $\bernoulli(1/2),$ due to the randomness of the label of $v.$ 

\paragraph{Our Conjecture and Results.}
We make the following quasirandomness conjecture for hypothesis testing of a stochastic block model (\cref{def:sbm} below) against \ER.

\begin{conjecture}
\label{conj:optimality}
Suppose that $\mathbb{P}_n$ is a stochastic block model (whose parameters may depend arbitrarily on $n$) which one aims to distinguish from $\ergraphhalf.$ There exists a constant degree polynomial test that distinguishes $\hzero: \ergraphhalf$ and $\hone: \mathbb{P}_n$ with high probability if and only if one of the following signed subgraph counts
$$
\{\textsf{edge, stars, triangle, 4-cycle}\}
$$
distinguishes $\hzero: \ergraphhalf$ and $\hone: \mathbb{P}_n$ with high probability.    
\end{conjecture}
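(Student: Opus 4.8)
The plan is to translate the testing dichotomy into a single analytic inequality about homomorphism densities, and then attack that inequality by decomposing $H$. Via the standard low-degree-polynomial dictionary, a constant-degree polynomial distinguishes $\ergraphhalf$ from $\mathbb{P}_n$ with high probability exactly when the degree-$\le D$ likelihood-ratio norm diverges for some constant $D$; for an exchangeable planted model this norm squared equals, up to $\Theta_D(1)$ combinatorial factors, $\sum_{H} n^{|\vertices(H)|}\Phi(H)^2$, the sum over graphs $H$ with no isolated vertices and at most $D$ edges. Writing the community edge probabilities as $\tfrac12(1+\rho_{ab})$, one has $\Phi(H)=\E_{\sigma}\big[\prod_{\{i,j\}\in\edges(H)}\rho_{\sigma_i\sigma_j}\big]$ with $\sigma$ a $\pi$-random assignment $\vertices(H)\to[q]$, which is precisely the homomorphism density $t(H,W)$ of $H$ into the signed block graphon $W$ of $(\pi,\rho)$ (entries in $[-1,1]$). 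Since $n^{|\vertices(H)|}\Phi(H)^2=\big(\sqrt n\,|t(H,W)|^{1/|\vertices(H)|}\big)^{2|\vertices(H)|}$, only $O_D(1)$ classes $H$ occur, and the community parameters may grow with $n$, the conjecture is equivalent to the following analytic statement with a \emph{universal} constant $C$: for every signed block graphon $W$,
$$
\sup_{H}\,|t(H,W)|^{\frac{1}{|\vertices(H)|}}\;\le\;C\cdot\max_{H'\in\{\text{edge, stars, triangle, }\cycle_4\}}|t(H',W)|^{\frac{1}{|\vertices(H')|}}.
$$
The universality of $C$ is essential: a factor that grows with $q$ or with $1/\min_a\pi_a$ would be useless here.

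First I would reduce to connected $H$: $t(\cdot,W)$ is multiplicative over components, so $|t(H,W)|^{1/|\vertices(H)|}$ is a weighted geometric mean of the component quantities and hence bounded by their maximum, and deleting isolated vertices only raises $|t|^{1/|\vertices|}$. Next, for connected $H$, split it into its $2$-core $K$ (minimum degree $\ge 2$, possibly empty) and the pendant forest. Integrating out each pendant tree $T$ rooted at its attachment vertex $v\in\vertices(K)$ yields a weight function of $x_v$ whose sup-norm is at most $\|d_W\|_\infty^{\ell(T)}$, where $d_W(x)=\int W(x,y)\,dy$ and $\ell(T)$ is the number of non-root leaves of $T$ (peel leaves one at a time: each peeled leaf contributes a literal factor $d_W$, and $\|T_W g\|_\infty\le\|g\|_\infty$ because $\int|W(x,y)|\,dy\le 1$). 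Since the star family satisfies $\sup_k |t(\Star_k,W)|^{1/|\vertices(\Star_k)|}=\|d_W\|_\infty$ (through even $k$), the pendant forest is charged to the stars, and, using $|\vertices(H)|\ge|\vertices(K)|+\sum_T\ell(T)$, one is left to control $|t(K,W)|^{1/|\vertices(K)|}$ for a graph $K$ of minimum degree $\ge 2$.

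The core term is where the even-cycle statistics enter. Because $K$ has minimum degree $\ge 2$ it has at least $|\vertices(K)|$ edges and decomposes into short cycles; the aim is a bound of the shape $|t(K,W)|\lesssim \|T_W\|_{\mathrm{op}}^{|\vertices(K)|}$, plus a separate bound for odd cycles via the triangle count $t(\complete_3,W)=\trace(T_W^3)$. The $4$-cycle controls $\|T_W\|_{\mathrm{op}}$ from below, $t(\cycle_4,W)^{1/4}=\|T_W\|_{S_4}\ge\|T_W\|_{\mathrm{op}}$, so combining the pendant bound (charged to stars) with the core bound (charged to $\cycle_4$ and $\complete_3$) via Hölder gives the displayed inequality — \emph{provided} the intermediate constants stay independent of $q$ and of $\min_a\pi_a$. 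This is exactly where the four resolved regimes differ: when $\rho$ vanishes off the diagonal, $W$ is a disjoint union of weighted cliques and $\Phi(H)=\sum_a\pi_a^{|\vertices(H)|}\rho_{aa}^{|\edges(H)|}$ depends on $H$ only through $(|\vertices(H)|,|\edges(H)|)$, reducing the maximization to an explicit two-parameter problem with no operator-norm machinery at all; when all $\rho_{ab}\ge 0$ one has $W\ge 0$ and $t(H,W)\ge 0$, so Sidorenko-type monotonicity lets a spanning tree and then a star dominate a general connected $H$ with no $q$-dependence — the regime of \cref{thm:countingstars} and of the work of Yu, Zadik, and Zhang, where the $4$-cycle is not even needed; when every $\pi_a\ge c$ there are at most $1/c$ communities, so the rank-$q$ estimates for the core carry only constant factors; and with two communities everything can be expanded by hand.

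The hard part will be the general case, which is why the statement remains a conjecture: one must control \emph{simultaneously} sign indefiniteness of $\rho$ (which rules out the positivity arguments of the $\rho\ge 0$ case and forces delicate bookkeeping of cancellations across communities — the edge count can vanish identically while a higher even star does not, so ``stars'' must be read as the whole family) \emph{and} highly unbalanced, possibly numerous communities (so that no factor of $q$ or of $1/\min_a\pi_a$ may be incurred, invalidating rank-based estimates). In the partition-function language of the abstract, the task is to show that $\frac{1}{|\vertices(H)|}\log|\Phi(H)|$, the normalized log-partition function of an arbitrary signed spin system on alphabet $[q]$, is maximized up to a universal additive constant by evaluating it on some member of $\mathcal{A}$; the obstacle is ruling out an exotic, expander-like $H$ whose many edges conspire with a near-cancellation over unbalanced communities to beat every graph in $\mathcal{A}$ at once. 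I would attempt this by a careful induction on the $2$-core that tracks the signs and the small and large communities separately, but producing a genuinely parameter-free bound is the crux I do not see how to push through in full generality.
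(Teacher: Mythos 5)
The statement you are addressing is an open conjecture, not a theorem: the paper proves it only in the four restricted regimes listed in Theorem 1.5 and explicitly leaves the general case open. You correctly recognize this, and your high-level framing — reducing the testing dichotomy to the analytic inequality $\sup_H |t(H,W)|^{1/|\vertices(H)|} \le C\,\max_{H'\in\mathcal{A}}|t(H',W)|^{1/|\vertices(H')|}$, reducing to connected $H$ by multiplicativity, and peeling the pendant forest off the $2$-core — is essentially the same philosophy as the paper's Problem~\ref{problem:maximizingfourriercoefficients} and its leaf-isolation technique.

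That said, two of your intermediate claims are not correct as stated, and the gap between them and a rigorous argument is exactly where the open problem lives. First, $\sup_k|t(\Star_k,W)|^{1/|\vertices(\Star_k)|}=\|d_W\|_\infty$ only in the limit $k\to\infty$ over even $k$; with $k\le D$ constant, the quantity $\big(\sum_a \pi_a |d_W(a)|^k\big)^{1/(k+1)}$ can fall short of $\|d_W\|_\infty$ by a factor $\pi_{a^*}^{1/(k+1)}$ that depends on the least likely community and so is not controlled by $D$. This is precisely why the paper cannot ``charge the pendant forest to stars'' in general and why the non-vanishing-probability hypothesis (case 3) or the comparison-to-$|Q|$ trick (case 4) is invoked. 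Second, the core bound $|t(K,W)|\lesssim\|T_W\|_{\mathrm{op}}^{|\vertices(K)|}$ for minimum-degree-$\ge 2$ graphs $K$ is what one would like, but it does not follow from $\|T_W\|_{S_4}\ge\|T_W\|_{\mathrm{op}}$ without a further lower bound on $t(\cycle_4,W)$ in terms of $\max|\rho_{ab}|$, and such a lower bound degrades when communities are tiny (compare Lemma~\ref{lem:4cycleinnonvanishing} with Claim~\ref{claim:4cycle2sbm}: the former gives $\Omega_c(\max|Q|^4)$ only under $\pi_a\ge c$, while in the two-community case the $4$-cycle is $\Theta(\max(p_1^4Q_{1,1}^4,p_1^2Q_{1,2}^4,Q_{2,2}^4))$, picking up $p_1$-factors that must be tracked). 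Relatedly, your remark that the two-community case ``can be expanded by hand'' badly undersells Section~\ref{sec:2sbms}, which is the most technical part of the paper precisely because one must classify the within-community versus between-community cancellations and combine the leaf-isolation step with the non-negative comparison, case by case, to control these $p_1$-factors. So: your diagnosis of the obstruction (sign indefiniteness plus unbalanced communities, with no $q$- or $1/\min_a\pi_a$-dependent constants allowed) is accurate, but the two intermediate bounds you propose are not strong enough as stated, and repairing them is the open problem rather than a routine cleanup.
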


We note that \cite{yu2024counting} implicitly proves the conjecture whenever, even conditioned on the labels, any two vertices have a probability of connection at least $1/2.$

Our main contribution is to prove this conjecture in the following additional cases:
\begin{enumerate}
    \item $\mathbb{P}_n$ is an arbitrary stochastic block model on 2 communities.
    \item $\mathbb{P}_n$ is an arbitrary stochastic block model on a constant number of communities and each community label appears with constant positive probability.
    \item $\mathbb{P}_n$ is a stochastic block model in which every two vertices in distinct communities are adjacent with probability $1/2.$
\end{enumerate}
The stochastic block model is of special interest for several reasons. First, it is perhaps the most widely studied family of random graph distributions after \ER. Second, as we will see in the forthcoming section, the above results for stochastic block models with $k$-communities can be equivalently phrased as an approximate maximization of the partition function of a certain spin-glass model with alphabet of size $k.$ The interaction matrix of the spin model is determined by the stochastic block model interaction matrix and external fields correspond to the 
community probabilities. This gives an alternative statistical-mechanics interpretation of our results in addition to the quasirandomness perspective. Finally, stochastic block models can approximate arbitrarily well smooth graphons. In fact, we believe that \cref{conj:optimality} should hold more generally for graphons.

\subsection{Implications and Interpretations of \texorpdfstring{\cref{conj:optimality}}{conjecture}}

\paragraph{Fine-Grained Running Times.}
A degree-$D$ polynomial in the edges of an $n$-vertex graph may take up to time $n^{2D}$ to compute and, hence, be completely impractical. Yet, 
\cref{conj:optimality} tells us that we only need to compute 
signed star counts or signed 3- and 4-cycle counts. Signed counts of stars can be computed in near-linear time as the signed star count with a fixed central vertex $v$ is a simple one-dimensional functional of the degree of the vertex (which can trivially be computed in time $O(D)$ where $D$ is the star size). One simply needs to evaluate and add this function over all possible central vertices. On the other hand, signed triangles and 4-cycle counts can be evaluated in $n^\omega$ time, where $\omega \in [2,2.371\ldots]$ is the matrix-multiplication time \cite{williams24matrixmult}. Indeed, the signed triangle count of 
a graph with adjacency matrix $A$ is
is $\mathsf{trace}((2A- 11^T)^3)$ and the signed 4-cycle count is 
$\mathsf{trace}((2A- 11^T)^4) - n(n-1)(2n-3).$ Hence, if there exists a constant-degree polynomial distinguisher, there also exists a practical one.

More succinctly, our result suggests a strong dichotomy for the complexity of testing stochastic block-models with low-degree polynomial tests. Either there exists a low-degree distinguisher implementable in nearly-quadratic time or there does not exist any constant-degree polynomial distinguisher.

\paragraph{On Finding A Distinguisher.}
The practical distinguisher is easy to find as noted in \cite{yu2024counting} -- it is a signed star, triangle, or 4-cycle count. Hence, a statistician in practice may also simply try each of these tests. They do not even need to know the specific parameters of the stochastic block model, but only the fact that it is part of a family satisfying \cref{conj:optimality}.

Conversely, the failure of a very small set of tests (the signed counts of triangles, 4-cycles, starts) implies the failure of ALL low-degree polynomial tests.

\paragraph{Our Conjecture And Results in The Context of Low-Degree Hardness.} The current work and \cite{yu2024counting} are preceded by a long line of low-degree polynomial hardness results on testing between \ER{} and random graph models
\cite{barak19soslowerbound,hopkins2017bayesian,hopkins18,bangachev2023random,bangachev2024detection,bangachev24fourier,mao23detectionrecoverygap,mao2024informationtheoretic,kothari2023planted,rush2022,mardia24ldpquery} (nearly all of which are graphons).. We remark that there is an important conceptual difference between these prior results and ours. Our work does not prove hardness for any specific model (and in fact does not prove explicitly any hardness result). Instead, our work and \cite{yu2024counting} aim to derive certain universality principles for the low-degree polynomial framework itself over a rich family of testing problems. In this light, our work more closely resembles the universal (near)-equivalence between low-degree polynomial tests and SQ algorithms in \cite{brennan2021statisticalquery}.

\subsection{Signed Subgraph Counts, Stochastic Block Models, and Partition Functions}
\label{sec:introformalizingsubgraphsbmpartition}

We now introduce the main ingredients necessary to describe our results. 
\paragraph{Signed Subgraph Counts.} To derive a quantitatively stronger form of \cref{thm:classicquasirandomnessnew}, we need to quantify what it means for a subgraph count to be different enough from that of \ER{} so that one can use it towards \eqref{eq:hypothesistest}. We borrow the notion from the framework of low-degree polynomial tests \cite{hopkins2017bayesian,hopkins18} as in \cite{yu2024counting}. Let $H$ be a graph. On input an $n$-vertex graph $G,$ compute the signed count of $H$,
\begin{equation}
    \label{eq:signedsubgraphcount}
    \signedcount_H(G)\coloneqq 
    \sum_{H_1\sim H} \prod_{(ij)\in \edges(H_1)}(2G_{ij}-1)\,,
\end{equation}
where the sum is over all isomorphic copies of $H_1$ in the complete graph $\complete_n$ and $G_{ij}$ is the indicator of the respective edge. Compare with the unsigned count $\unsignedcount_H(G)$ from \cref{thm:classicquasirandomnessnew}, which corresponds to $\unsignedcount_H(G)\coloneqq 
    \sum_{H_1\sim H} \prod_{(ij)\in \edges(H_1)}G_{ij}.$ The advantage of working with \emph{signed counts} in the hypothesis testing setting is that they have a smaller variance (as noted by \cite{Bubeck14RGG}) with respect to $\ergraphhalf$ due to the fact that the signs of different subgraphs are uncorrelated (see \cref{thm:orthofourier}). 

One condition that would guarantee that $\signedcount_H$ distinguishes $\hzero:\ergraphhalf$ and $\hone:\mathbb{P}_n$ is  
\begin{align}
\label{eq:meanseparationforsignedcounts}
\Big|
\expect_{\bfG\sim \hone}\signedcount_H(\bfG) - 
\expect_{\bfG\sim \hzero}\signedcount_H(\bfG)
\Big| = \omega\Big( \max\Big(
\var_{\bfG\sim \hzero}[\signedcount_H(\bfG)]^{1/2},
\var_{\bfG\sim \hone}[\signedcount_H(\bfG)]^{1/2}\Big)\Big).
\end{align}
If \eqref{eq:meanseparationforsignedcounts} holds, one can test between $\hone$ and $\hzero$ by evaluating $\signedcount_H$ on the input $G.$ If $G\sim\mathsf{H}_b$ for $b\in \{0,1\},$ then $\displaystyle\signedcount_H(G)\in [\expect_{\bfG\sim \mathsf{H}_b}\signedcount_H(\bfG) \pm C\times \var_{\bfG\sim\mathsf{H}_b}[\signedcount_H(\bfG)]^{1/2}]\eqqcolon I_b$ with high probability for large enough $C = \omega(1)$ by Chebyshev's inequality. Condition \eqref{eq:meanseparationforsignedcounts} ensures that $I_0, I_1$ are disjoint for some appropriate $C$ and, hence, one can test by reporting the membership of $\signedcount_H$ to one of $I_0,I_1.$

Under the $\ergraphhalf$ distribution, when $H$ has a constant number of edges,
it holds that 
\begin{align}
    \label{ergraph}
    \expect_{\bfG\sim\ergraphhalf}[\signedcount_H(\bfG)] = 0\quad \text{ and }
    \quad
    \Var_{\bfG\sim\ergraphhalf}[\signedcount_H(\bfG)] = 
     \sum_{H_1\sim H}  1 = \Theta(n^{|\vertices(H)|}).
\end{align}
Thus, one can rephrase \eqref{eq:meanseparationforsignedcounts} as 
$|\displaystyle
\expect_{\bfG\sim \mathbb{P}_n}\signedcount_H(\bfG) 
| = \omega\Big( \max\big(
n^{|\vertices(H)|/2},
\var_{\bfG\sim \mathbb{P}_n}[\signedcount_H(\bfG)]^{1/2}\big)\Big).$

If the distribution $\mathbb{P}_n$ is permutation invariant, that is any vertex permutation $\pi$ is measure-preserving with respect to $\mathbb{P}_n$, one can further observe that 
\begin{align*}
\expect_{\bfG\sim \mathbb{P}_n}\signedcount_H(\bfG) = 
\expect_{\bfG\sim \mathbb{P}_n}\prod_{(ij)\in \edges(H)}(2G_{ij}-1)\times \Bigg( \sum_{H_1\sim H} 1\Bigg) = \Theta\Bigg(n^{|\vertices(H)|}\times \expect_{\bfG\sim \mathbb{P}_n}\Bigg[\prod_{(ij)\in \edges(H)}(2G_{ij}-1)\Bigg]\Bigg).
\end{align*}
The quantity 
\begin{align}
    \label{eq:fourier}
    \Fourier_{\mathbb{P}_n}(H)\coloneqq \expect_{\bfG\sim \mathbb{P}_n}\Bigg[\prod_{(ij)\in \edges(H)}(2G_{ij}-1)\Bigg]
\end{align}
denotes the Fourier coefficient of the distribution (i.e., of the probability mass function of)
$\mathbb{P}_n$
corresponding to shape $H.$ Going back to \eqref{eq:meanseparationforsignedcounts}, the inequality becomes 
\begin{equation}
\label{eq:statisticalscaling}
    n^{|\vertices(H)|} |\Fourier_{\mathbb{P}_n}(H)| = \omega(n^{|\vertices(H)|/2})
    \Longleftrightarrow
    |\Fourier_{\mathbb{P}_n}(H)|^{\frac{1}{|\vertices(H)|}}n^{1/2} = \omega(1).   
\end{equation}
Leaving aside the fact that \eqref{eq:statisticalscaling} does not capture the variance under $\mathbb{P}_n$\footnote{In our setting, it turns out that \eqref{eq:statisticalscaling} 
can ``nearly'' capture the variance under $\mathbb{P}_n.$ We return to this in \cref{thm:beatingvarnullimpliesbeatingvarplanted}.
}, \eqref{eq:statisticalscaling} suggests that the ``most powerful'' signed subgraph counts for distinguishing $\ergraphhalf$ and $\mathbb{P}_n$ are
the (approximate) maximizers of $$H\longrightarrow \Psi_{\mathbb{P}_n}(H), \text{ where } 
\Psi_{\mathbb{P}_n}(H)\coloneqq |\Fourier_{\mathbb{P}_n}(H)|^{\frac{1}{|\vertices(H)|}}.$$This motivates the following problem:

\begin{problem}[Approximate Maximization of Scaled Fourier Coefficients / Approximate Maximization of SBM Partition Function]
\label{problem:maximizingfourriercoefficients}
For every $n\in \mathbb{N},$ let $\mathcal{F}_n$ be a family of random graph distributions over $n$-vertex graphs invariant under vertex permutations. Let 
$\mathcal{F}=\bigcup_{n\in \mathbb{N}}\mathcal{F}_n.$
Let $D\in \mathbb{N}$ be some fixed constant. Find some minimal set $\mathcal{A}_D$ of graphs on at most $D$ edges with the following property. 
There exists some constant $C_{D, \mathcal{F}}>0$ (depending only on $\mathcal{F}, D$) such that for any graph $H$ \emph{without isolated vertices} on at most $D$ edges and any $\mathbb{F}\in \mathcal{F},$
\begin{align}
\label{eq:comparisontomaximalfourierset}
\Psi_{\mathbb{F}}(H)
\le C_{D, \mathcal{F}}\times 
\max_{K\in \mathcal{A}_D}
\Psi_{\mathbb{F}}(K).
\end{align}
\end{problem}

To gain intuition about \cref{problem:maximizingfourriercoefficients}, consider the setting of dense planted subgraphs of \cite{yu2024counting}. The authors implicitly show that the set of stars on at most $D$ edges is such a set $\mathcal{A}_D$.

\paragraph{Stochastic Block Models.} One of the motivations of our work is to extend the results of \cite{yu2024counting} to a setting when, conditioned on the latent structure of the model, some of the edges might have a probability of appearance less than half. Stochastic block models are a canonical example.

\begin{definition}
\label{def:sbm}
A stochastic block model on $k$ communities is parametrized by a probability vector $p=(p_1, p_2,\ldots,p_k)\in (0,1]^k$ and a symmetric matrix $Q\in [-1,1]^k.$ To generate an $n$-vertex sample from $\SBM(n;p,Q),$ one performs the following two-step process:
\begin{enumerate}
    \item First, draw $n$ independent labels $\bfx_1,\bfx_2,\ldots,\bfx_n$ from the distribution over $[k]$ specified by $p.$
    \item Produce an $n$-vertex graph $\bfG$ by drawing each edge $(i,j)$ as an independent $\bernoulli(\frac{1+Q_{\bfx_i, \bfx_j}}{2})$ random variable.
\end{enumerate}
\end{definition}
More commonly, the SBM is defined with the probability matrix $M \in [0,1]^{k\times k}$ where $M_{i,j} = (1 + Q_{i,j})/2,$ but the current 
parametrization is more convenient for discussing Fourier coefficients.
We proceed with some examples:
\begin{enumerate}
    \item Suppose that $Q = \mathbf{0}_{k\times k}$ is the zero matrix. Then, regardless of labels, each edge $(i,j)$ appears with probability $1/2.$ Hence, this is $\ergraphhalf.$
    \item Suppose that $k =2,$ $p= (1/n^\alpha,1-1/n^\alpha)$ and $Q= \begin{pmatrix}1 & 0\\ 0 &0\end{pmatrix}.$ Then, $\SBM(n; p,Q)$ is the planted clique distribution with expected clique of size $n^{1-\alpha}.$ Namely, the vertices with label 1 form a clique and every other edge appears with probability $1/2.$ 
\end{enumerate}

\paragraph{Partition Functions.} We now record an explicit formula for $\Fourier_{\SBM(p,Q)}(H).$ 
We will often write $\SBM(p,Q)$ instead of $\SBM(n; p, Q)$ whenever the dependence on $n$ is clear or unimportant. 
We write $\bfx\sim p$ for a variable distributed over $[k]$ with p.m.f. $p.$

\begin{proposition}
\label{prop:explicitFourier}
Consider some $\SBM(p,Q)$ distribution and let $H$ be a graph on $h$ vertices. Then,
\begin{align}
\label{eq:explicitfourier}
\Fourier_{\SBM(p,Q)}(H)
=\expect_{(\bfx_i)_{i = 1}^h\iidsim p}\Bigg[\prod_{(i,j)\in \edges(H)}Q_{\bfx_i\bfx_j}\Bigg] = 
\sum_{x_1, x_2, \ldots, x_h\in [k]}\Big(
\prod_{i = 1}^h p_{x_i}
\times
\prod_{(i,j)\in \edges(H)}
Q_{x_i,x_j}\Big).
\end{align}
\end{proposition}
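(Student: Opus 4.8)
The plan is to unfold the definition of the Fourier coefficient and exploit the conditional independence of edges given the community labels. Recall from \eqref{eq:fourier} that $\Fourier_{\SBM(p,Q)}(H) = \expect_{\bfG\sim\SBM(p,Q)}[\prod_{(i,j)\in\edges(H)}(2\bfG_{ij}-1)]$, where we identify $\vertices(H)$ with $\{1,2,\dots,h\}$. The first step is to condition on the label vector $\bfx = (\bfx_1,\dots,\bfx_n)$ produced in Step 1 of \cref{def:sbm} and apply the tower rule, writing $\Fourier_{\SBM(p,Q)}(H) = \expect_{\bfx}[\,\expect_{\bfG}[\prod_{(i,j)\in\edges(H)}(2\bfG_{ij}-1)\mid \bfx]\,]$.

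The main step is the inner conditional expectation. By Step 2 of \cref{def:sbm}, conditioned on $\bfx$ the edge indicators $(\bfG_{ij})$ are mutually independent with $\bfG_{ij}\sim\bernoulli(\tfrac{1+Q_{\bfx_i\bfx_j}}{2})$. Since $H$ is a simple graph, no edge is repeated in the product, so the factors $(2\bfG_{ij}-1)$ over $(i,j)\in\edges(H)$ are conditionally independent and the conditional expectation factorizes as $\prod_{(i,j)\in\edges(H)}\expect[2\bfG_{ij}-1\mid\bfx]$. A one-line computation gives $\expect[2\bfG_{ij}-1\mid\bfx] = 2\cdot\frac{1+Q_{\bfx_i\bfx_j}}{2}-1 = Q_{\bfx_i\bfx_j}$, so the inner expectation equals $\prod_{(i,j)\in\edges(H)}Q_{\bfx_i\bfx_j}$, which after taking the outer expectation over $\bfx$ yields the middle expression in \eqref{eq:explicitfourier}.

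Finally, the random variable $\prod_{(i,j)\in\edges(H)}Q_{\bfx_i\bfx_j}$ depends only on $(\bfx_1,\dots,\bfx_h)$, the labels of the $h$ vertices of $H$, and these are i.i.d.\ with p.m.f.\ $p$. Expanding the expectation over $[k]^h$ and using independence of the coordinates to split off $\prod_{i=1}^h p_{x_i}$ from the joint p.m.f.\ gives the rightmost expression. There is essentially no obstacle here: the only points requiring care are that $H$ is simple (so the conditional independence of the displayed factors is genuine, rather than an independence of powers of the same variable) and that isolated vertices of $H$, if present, contribute a trivial factor of $1$ to the edge product while still being summed over — which is precisely what the right-hand side of \eqref{eq:explicitfourier} encodes through the full i.i.d.\ product $\prod_{i=1}^h p_{x_i}$.
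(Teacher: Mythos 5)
Your proof is correct and follows the same route as the paper's: condition on the label vector, use the tower rule together with conditional independence of the edges to pull the expectation through the product, compute $\expect[2\bfG_{ij}-1\mid\bfx]=Q_{\bfx_i,\bfx_j}$, and then expand the outer expectation over $[k]^h$. The extra remarks about simplicity of $H$ and isolated vertices are sound but not part of the paper's (otherwise identical) argument.
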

\begin{proof}
Without loss of generality, assume that the vertices of $H$ are $\{1,2,\ldots,h\}.$
By definition,
\begin{align*}
    \Fourier_{\SBM(p,Q)}(H) & = \expect_{\bfG\sim \SBM(p,Q)}\Bigg[\prod_{(i,j)\in \edges(H)}(2\bfG_{i,j}- 1)\Bigg]\\
    &= \expect
    \Bigg[
    \expect
    \Bigg[
    \prod_{(i,j)\in \edges(H)}(2\bfG_{i,j}- 1)\Bigg|(\bfx_{i})_{1\le i \le h}
    \Bigg]
    \Bigg]\\
    & =
    \expect
    \Bigg[
    \prod_{(i,j)\in \edges(H)}
    \expect
    \Bigg[
    (2\bfG_{i,j}- 1)\Bigg
    |(\bfx_{i})_{1\le i \le h}
    \Bigg]
    \Bigg]\\
    & = 
    \expect
    \Bigg[
    \prod_{(i,j)\in \edges(H)}
    \expect
    \Bigg[
    (2\frac{1+ Q_{\bfx_i,\bfx_j}}{2}- 1)\Bigg
    |(\bfx_{i})_{1\le i \le h}
    \Bigg]
    \Bigg]\\
    & =\expect\Bigg[\prod_{(i,j)\in \edges(H)}Q_{\bfx_i\bfx_j}\Bigg]=
    \sum_{x_1, x_2, \ldots, x_h\in [k]}\Bigg(
    \prod_{i = 1}^h p_{x_i}\times \prod_{(i,j)\in \edges(H)}
Q_{x_i,x_j}\Bigg).\qedhere
\end{align*}
\end{proof}
The right hand-side would be the partition function of a $k$-spin system if the matrix $Q$ had non-negative entries. Denoting $f(s)\coloneqq \log p_s, g(s,t)\coloneqq \log Q_{s,t},$ we obtain the more familiar from spin systems expression 
\begin{equation}
\begin{split}
    \Fourier_{\SBM(n;p,Q)}(H) & = 
\sum_{x_1, x_2, \ldots, x_h\in [k]}\Bigg(
\prod_{i = 1}^h p_{x_i}
\times
\prod_{(i,j)\in \edges(H)}
Q_{x_i,x_j}\Bigg)\\
& =
\sum_{x_1, x_2, \ldots, x_h\in [k]} 
\exp\Bigg(\sum_{(i,j)\in \edges(H)} g(x_i,x_j)+ \sum_{i \in \vertices(H)} f(x_i)\Bigg)\eqqcolon Z_{H}(g,f).
\end{split}
\end{equation}

Hence, (approximately) maximizing $\Psi_{\SBM(p,Q)}(H) = |\Fourier_{\SBM(p,Q)}(H)|^{\frac{1}{|\vertices(H)|}}$ over graphs $H$ is the same as approximately maximizing $|Z_{H}(g,f)|^{\frac{1}{|\vertices(H)|}}$ or, equivalently, $\frac{1}{|\vertices(H)|}\log |Z_{H}(g,f)|.$
This problem has received a lot of attention for spin systems, most notably in the case of the hard-core model \cite{Alon1991IndependentSI,kahn2001hardcore,ZHAO_2009,galvin2011hardcore,davies2017independentsets,Sah2018ARS,Sah_2019}. Yet, our task differs from prior work in at least two notable ways. What makes our problem easier is that we are content with approximate maximization of $|Z_{H}(g,f)|^{\frac{1}{|\vertices(H)|}}$ (up to constant multiplicative factors) as this does not change the asymptotics of testing. What makes our problem harder is that the interaction terms $Q_{i,j}$ might be negative, so prior techniques, for example based on the bipartite swapping trick \cite{ZHAO_2009}, fail.

\subsection{Our Results}
\subsubsection{Results for Restricted Stochastic Block Models}
\begin{theorem}[Main Results on Testing and Partition Function Maximization]
\label{thm:mainintro}
    Consider the $k$-community stochastic block model 
    $\SBM(p,Q).$ 
    Let $D$ be an even integer constant greater than $4.$
    Then, in \cref{problem:maximizingfourriercoefficients},one can take the set $\mathcal{A}_D$ of approximate maximizers, to be:
    \begin{enumerate}
        \item \emph{Diagonal SBMs} (\cref{thm:maximizepartitiondiagonal}): If $Q$ satisfies that $Q_{i,j}= 0$ whenever $i\neq j,$ then one can take $$\mathcal{A}^1_D = \{\text{edge, star on 2 edges}\}.$$
        \item \emph{Non-Negative SBMs} (\cref{thm:maximizinginnonnegative} and \cite{yu2024counting}): If $Q$ satisfies that $Q_{i,j}\ge 0$ for each $i,j,$ then one can take $$\mathcal{A}^2_D = \{\text{stars on at most $D$ edges}\}.$$
        \item \emph{SBMs with Non-Vanishing Community Probabilities} (\cref{thm:maximizingpartitioninnonvanishing}): If $p$ satisfies that $p_i \ge c \; \forall i\in [k]$ for some universal constant $c,$
        then one can take 
        $$\mathcal{A}^3_D = \{\text{edge, star on 2 edges, 4-cycle}\}.$$
        \item \emph{SBMs with Two Communities} (\cref{thm:maximizingpartition2sbm}): If $k = 2$ and $D$ is even, then 
        one can take 
        $$\mathcal{A}^4_D =\{\text{stars on at most $D$ edges}\}\cup \{\text{4-cycles\}}.$$
    \end{enumerate}
    Furthermore, in each of the cases above, if there is a constant degree polynomial test distinguishing the respective SBM model from $\ergraphhalf$ with high probability, then one can also distinguish with high probability using the signed count of some graph in the respective $\mathcal{A}_D.$
\end{theorem}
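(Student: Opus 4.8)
The plan is to establish the four partition-function statements \cref{thm:maximizepartitiondiagonal,thm:maximizinginnonnegative,thm:maximizingpartitioninnonvanishing,thm:maximizingpartition2sbm} -- in each case, that every $H$ with at most $D$ edges and no isolated vertices satisfies $\Psi_{\SBM(p,Q)}(H)\le C_{D,\mathcal F}\cdot\max_{K\in\mathcal A_D}\Psi_{\SBM(p,Q)}(K)$ -- and then to read off the testing conclusion. For the latter, write $\mathbb P_n=\SBM(p,Q)$; by \cref{prop:explicitFourier}, permutation invariance, and \eqref{ergraph}, the squared degree-$\le D$ likelihood-ratio norm between $\ergraphhalf$ and $\mathbb P_n$ equals $\sum_{H:\,1\le |\edges(H)|\le D}\Theta(n^{|\vertices(H)|})\,\Fourier_{\mathbb P_n}(H)^2$, a sum of $O_D(1)$ shape terms of size $\Theta\big((n\,\Psi_{\mathbb P_n}(H)^2)^{|\vertices(H)|}\big)$. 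Hence if a constant-degree polynomial distinguishes $\mathbb P_n$ from $\ergraphhalf$ whp, the easy direction of the low-degree framework forces that norm to be $\omega(1)$, so $\Psi_{\mathbb P_n}(H)\sqrt n=\omega(1)$ for some $H$ with $\le D$ edges -- the ``beats the null variance'' form of \eqref{eq:statisticalscaling}. The maximization statement transfers this to some $K\in\mathcal A_D$, and \cref{thm:beatingvarnullimpliesbeatingvarplanted} then controls $\Var_{\mathbb P_n}[\signedcount_K]$ as well, so $\signedcount_K$ satisfies \eqref{eq:meanseparationforsignedcounts} and is a valid test. (The reverse implication is immediate: $\signedcount_K$ is a degree-$|\edges(K)|\le D$ polynomial.) In all four maximization claims I may assume $H$ is connected, since $\Fourier$ factorizes over connected components (disjoint components carry independent labels), so $\Psi(H)=\prod_i\Psi(H_i)^{|\vertices(H_i)|/|\vertices(H)|}\le\max_i\Psi(H_i)$; and a connected $H$ on $h$ vertices has $|\edges(H)|\ge h-1$.

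\paragraph{The monotone cases (bullets 1 and 2).}
If $Q$ is diagonal, any coloring of the connected $H$ that is non-monochromatic kills the product, so $\Fourier(H)=\sum_{s\in[k]}p_s^{\,h}Q_{ss}^{\,e}$ with $e=|\edges(H)|\ge h-1$. Then $|\Fourier(H)|\le \|u\|_h^h$ for $u_s:=p_s|Q_{ss}|^{e/h}$; since $|Q_{ss}|\le 1$ and $e/h\ge (h-1)/h\ge 2/3$ when $h\ge3$, we get $u_s\le w_s:=p_s|Q_{ss}|^{2/3}$, so $\ell_p$-norm monotonicity gives $\Psi(H)=\|u\|_h\le\|w\|_h\le\|w\|_3=\big(\sum_s p_s^3 Q_{ss}^2\big)^{1/3}=\Psi(\text{star on }2\text{ edges})$; the case $h=2$ is the edge itself. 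If instead $Q\ge0$ entrywise, every coloring term is nonnegative, so $\Fourier(H)\ge0$; pick a spanning star forest $F\subseteq H$ with no singleton component (one exists for every graph without isolated vertices), delete the $\le D$ remaining edges (each a factor in $[0,1]$), and use multiplicativity: $\Fourier(H)\le\Fourier(F)=\prod_S\Fourier(S)$, whence $\Psi(H)\le\prod_S\Psi(S)^{|\vertices(S)|/h}\le\max_S\Psi(S)$, where each star $S$ has $\le D$ edges. This recovers the set of \cite{yu2024counting}.

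\paragraph{The cases with sign cancellation (bullets 3 and 4).}
These are the heart of the matter: once $Q$ has negative off-diagonal entries, the monotone steps above all fail -- $\Fourier(H)$ can be far smaller than the sum of the absolute values of its coloring terms, and for the transfer we must lower-bound $\max_{K\in\mathcal A_D}|\Fourier(K)|$ by $|\Fourier(H)|$ even though $\Fourier(H)$ itself may be the result of massive cancellation. The device is to route everything through the cancellation-free statistics in $\mathcal A_D$: writing $A_{st}=\sqrt{p_sp_t}\,Q_{st}$, one has $\Fourier(\text{star on }2\text{ edges})=\|A\sqrt p\|_2^2\ge0$ and $\Fourier(\text{4-cycle})=\trace(A^4)\ge0$, which lower-bound the degree vector $r=Qp$ and the full spectrum of $A$, respectively. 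In bullet~3, $p_s\ge c$ forces $k\le 1/c$ and makes all weighted norms comparable (up to $c$-dependent constants) to unweighted ones; I would (i) dominate stars on $t\ge3$ edges by the $2$-edge star via $|\Fourier(\text{star}_t)|\le \sum_s p_s|r_s|^t\le k\|r\|_\infty^t$, $\Fourier(\text{star on }2\text{ edges})\ge c\|r\|_\infty^2$, and $\|r\|_\infty\le1$; and (ii) bound $|\Fourier(H)|$ for connected $H$ by a product of $\|r\|$-type factors and $\trace(A^4)^{1/2}$-type factors from a spectral/H\"older decomposition of the tensor network computing $\Fourier(H)$: peeling degree-one vertices produces factors of $r$, and the remaining $2$-regular core is governed by $\trace(A^{2m})\le(\trace(A^4))^{m/2}$ for $m\ge2$, which after the $1/h$ normalization collapses to $\Psi(\text{4-cycle})$. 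In bullet~4, $A$ is $2\times2$ with eigenvalues $\lambda_1,\lambda_2$, so $\Fourier(H)$ is an explicit degree-$\le D$ polynomial in $\lambda_1,\lambda_2$ and the two eigenvector coordinates, analyzable by a finite case split once $D$ is fixed; the hypothesis that $D$ is even is what forces the sign parities of the extremal configurations to match an even statistic already in $\mathcal A_D^4$ (an even-size star or an even cycle), so no odd phase can survive.

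\paragraph{The main obstacle.}
The genuinely hard part is step~(ii) of bullet~3 and its analogue in bullet~4: certifying that the tensor contraction $\Fourier(H)$ of an \emph{arbitrary} connected $H$ is, up to a $D$-dependent constant, captured by just $\{\text{edge},\ \text{star on }2\text{ edges},\ \text{4-cycle}\}$, in the presence of cancellation. None of the monotone tools (edge deletion, term-by-term bounds, spanning star forests) apply here; one instead needs a spectral decomposition argument that simultaneously exploits boundedness of $k$, the bound $p_s\ge c$, and the fact that at the scale relevant to \eqref{eq:statisticalscaling} the only surviving ``signed'' degrees of freedom are precisely those recorded by $\trace(A^4)$.
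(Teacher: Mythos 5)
Your overall architecture (prove the four maximization statements, then route the testing conclusion through the orthonormality of Fourier characters, the existence of an approximate-maximizer subgraph with $\Psi=\omega(n^{-1/2})$, and \cref{thm:beatingvarnullimpliesbeatingvarplanted}) matches the paper's logic, and bullets~1 and~2 are handled correctly — your $\ell_p$-monotonicity proof for the diagonal case is a clean elementary substitute for the paper's calculus lemma in \cref{sec:smpleinequality}, and the spanning-star-forest argument for non-negative $Q$ is the same monotone edge-deletion idea as \cref{thm:maximizinginnonnegative}, just stated in one shot. One bookkeeping point you should make explicit in the transfer step: \cref{thm:beatingvarnullimpliesbeatingvarplanted} requires the chosen $K$ to be an approximate maximizer among graphs with up to $2D$ edges (the $H_1\otimes H_2$ shapes in the variance expansion have up to $2D$ edges), so the maximization statements must be invoked at parameter $2D$, not $D$; this is why the theorems are stated for all $D$.

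For bullet~3 your sketch has a concrete error and an unnecessary complication. The ``remaining 2-regular core'' is wrong: iteratively peeling degree-one vertices produces the $2$-core, which has \emph{minimum} degree $\ge 2$ but is rarely $2$-regular (e.g.\ $\complete_4$), so the bound $\trace(A^{2m})\le\trace(A^4)^{m/2}$ — which governs unions of even cycles — does not apply to what remains. More importantly, no spectral argument is needed there at all: for a connected non-tree $H$ one has $|\edges(H)|\ge|\vertices(H)|$, so the crude bound $|\Fourier_{\SBM(p,Q)}(H)|\le (\max_{u,v}|Q_{u,v}|)^{|\edges(H)|}\le (\max_{u,v}|Q_{u,v}|)^{|\vertices(H)|}$ combined with \cref{lem:4cycleinnonvanishing} already gives $\Psi(H)\lesssim_c\Psi(\cycle_4)$ — the hard scaling only arises for trees, where $|\edges|=|\vertices|-1$. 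For trees, ``peeling degree-one vertices produces factors of $r$'' only works if you peel \emph{all} current leaves simultaneously and then take absolute values once; chained one-at-a-time peeling does not multiply $\|r\|_\infty$ factors (after the first absolute value the inner sums are no longer $r$'s). The all-leaves-at-once version, combined with $\max|Q|^{|\edges(T')|}$ for the stripped interior $T'$ and $\|r\|_\infty\le 1$, does close, but the paper's leaf-isolation inequality \eqref{intro:leafisolation} — isolate exactly two leaves, use $|a||b|\le a^2+b^2$ and $p_i\ge c$ to upgrade $r_1r_2$ to $\Fourier_{\SBM(p,Q)}(\Star_2)$, thereby \emph{gaining a vertex} in the accounting, and crude-bound the remaining $|\vertices(T)|-3$ edges by $\Fourier(\cycle_4)^{(|\vertices(T)|-3)/4}$ — is the cleaner route and the one that generalizes to bullet~4.

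For bullet~4 the sketch is too vague to be a proof plan. ``$A$ is $2\times2$, so $\Fourier(H)$ is a polynomial in the eigendata, analyze by finite case split'' does not surface the structure the paper actually needs: (i) compare to the \emph{entrywise absolute-value} model $\SBM(p,|Q|)$ and apply \cref{thm:maximizinginnonnegative} to reduce to a single star $\Star_\tspecial$ of the non-negative model, as in \eqref{eq:compareHtoabsvaluemodel}; (ii) identify the \emph{dichotomy of cancellations} — within-community \eqref{eq:inbracketcancelation} versus between-community \eqref{eq:outofbracketcancelation} — that must occur if $\Fourier_{\SBM(p,|Q|)}(\Star_\tspecial)\gg|\Fourier_{\SBM(p,Q)}(\Star_\tspecial)|$, and in each regime extract an inequality relating $p_1$ and the entries of $Q$ (e.g.\ \cref{lem:11dominates12}); and (iii) re-run a community-labeled leaf-isolation decomposition of $H$ that exploits those inequalities to restore comparability with $\Fourier(\cycle_4)$ and $\Fourier(\Star_t)$, as in \cref{lem:when11dominates12} and \cref{sec:outofbracketcancelation}. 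The parity role of $D$ even you identify correctly (it forces $\tspecial\le D-1$ when $\tspecial$ is odd, enabling comparison with $\Star_{\tspecial+1}$ in \cref{lem:whenbeta12gep}), but without steps (i)--(iii) the ``finite case split on eigenvalues'' lands nowhere near the needed many-to-one comparisons; indeed \cref{examplethm:onetoneinsufficient} shows one-to-one comparisons necessarily fail here.
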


We remark that allowing for small increase in the vertex size, the same theorem holds for degrees as large as $D = o(\log n/\log\log n).$ We formalize this in \cref{prop:lognoverloglognversion}. We choose to write all the proofs for constant degree polynomials, $D = O(1)$ for simplicity of exposition, but the extensions to $D = o(\log n /\log\log n)$ are only a matter of careful bookkeeping. We do not pursue this direction since we are not aware of any concrete advantages of degree $O(\log n/\log \log n)$ polynomials over constant-degree polynomials. Different would have been the case if our techniques also captured degree $O(\log n)$ polynomials which encompass many spectral methods. This, however, seems challenging at present. 

\subsubsection{One-to-one comparisons of Fourier coefficients: Results and Barriers}
\label{sec:onetoonefourier}
\paragraph{Barrier to one-to-one comparisons.}
One natural approach to identifying such optimal sets $\mathcal{A}_D$ is via \emph{one-to-one comparisons} of Fourier coefficients. For example in the case of diagonal SBMs, prove that for any $H,$ there exists some $\mathsf{K}\in \{\text{edge, wedge}\}$ such that 
$
\Psi_{\SBM(p,Q)}(H)\le 
\Psi_{\SBM(p,Q)}(K)
$ for any diagonal SBM. This indeed works for diagonal SBMs. 

This simple approach, however, fails in the case of SBMs with non-vanishing community fractions and 2-SBMs (as demonstrated in \cref{examplethm:onetoneinsufficient}). Instead, we rely on \emph{many-to-one comparisons}. For example in the case of 2-SBMs,
we prove that for any $H$ and any $\SBM(p, Q),$ there exists some $\mathsf{K}_{\SBM(p,Q)} \in \{\text{stars on at most $D$ edges}\}\cup \{\text{4-cycles}\}$ such that 
$$
\Psi_{\SBM(p,Q)}(H)\lesssim
\Psi_{\SBM(p,Q)}(\mathsf{K}_{\SBM(p,Q)}).
$$
The only difference from one-to-one comparison is in the order of quantifiers -- here $\mathsf{K}$ can depend on the specific stochastic block model. 
This flexibility turns out to be necessary for results 3 and 4 in our \cref{thm:mainintro} as demonstrated by \cref{examplethm:onetoneinsufficient}.

\paragraph{Some one-to-one comparisons.}
It is still useful to consider what one-to-one comparisons we can obtain. 
We summarize below.

\begin{theorem}[One-to-one comparison of Fourier Coefficients]
\label{thm:onetonne}
    Consider any  
    $\SBM(p,Q).$ Then:
    \begin{enumerate}
        \item \cref{thm:largercycles}: For any $t \ge 5,$ $\Psi_{\SBM(p,Q)}(\cycle_t)\le \Psi_{\SBM(p,Q)}(\cycle_4).$ 
        \item \cref{thm:chisquaredapproach}: If $H$ has a degree $d$ vertex, then
        $|\Fourier_{\SBM(p,Q)}(H)|\le |\Fourier_{\SBM(p,Q)}(\complete_{2,d})|^{1/2}.$
        \item \cref{thm:4cyclewithextra}: Let $\complete_4^-$ be the graph on 4 vertices with 5 edges. Then, $\Psi_{\SBM(p,Q)}(\complete_4^-)\le \Psi_{\SBM(p,Q)}(\cycle_4).$ 
    \end{enumerate}
\end{theorem}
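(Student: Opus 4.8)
The plan is to prove all three statements by rewriting the Fourier coefficient from \cref{prop:explicitFourier} so that the relevant structure becomes transparent. First I would introduce two auxiliary objects: the symmetric matrix $R\in\mathbb{R}^{k\times k}$ with $R_{s,t}\coloneqq\sqrt{p_s\,p_t}\,Q_{s,t}$, and, for each $d\ge 1$, the ``partial partition function'' $\phi_d\colon[k]^d\to\mathbb{R}$, $\phi_d(a_1,\dots,a_d)\coloneqq\sum_{x\in[k]}p_x\prod_{\ell=1}^d Q_{x,a_\ell}$. Because every vertex of a cycle is incident to exactly two of its edges, a direct computation gives $\Fourier_{\SBM(p,Q)}(\cycle_t)=\trace(R^t)=\sum_j\lambda_j^t$, where $\lambda_1,\dots,\lambda_k\in\mathbb{R}$ are the eigenvalues of $R$ (real, since $R$ is symmetric). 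Similarly, integrating out the labels of the two hub vertices of $\complete_{2,d}$ gives $\Fourier_{\SBM(p,Q)}(\complete_{2,d})=\E_{(\bfx_i)_{i=1}^d\iidsim p}[\phi_d(\bfx_1,\dots,\bfx_d)^2]\ge 0$; the case $d=2$ specializes to $\Fourier_{\SBM(p,Q)}(\cycle_4)=\E[\phi_2(\bfx_1,\bfx_2)^2]\ge 0$.

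Given these identities, part~1 is immediate: since $|Q_{s,t}|\le 1$ we have $|\Fourier(\cycle_t)|=\big|\sum_j\lambda_j^t\big|\le\sum_j|\lambda_j|^t$, so $\Psi(\cycle_t)\le\big(\sum_j|\lambda_j|^t\big)^{1/t}$, and by monotonicity of $\ell_p$-norms in $p$ this is at most $\big(\sum_j\lambda_j^4\big)^{1/4}=\Fourier(\cycle_4)^{1/4}=\Psi(\cycle_4)$ for every $t\ge 4$ (equality at $t=4$).

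For part~2, let $v$ be a degree-$d$ vertex of $H$ with neighbours $u_1,\dots,u_d$ (necessarily distinct), and let $W=H\setminus v$. Integrating out $\bfx_v$ first — it is independent of all other labels — factors the product over $\edges(H)$ and yields $\Fourier(H)=\E_{(\bfx_i)_{i\ne v}}\!\big[g\cdot\phi_d(\bfx_{u_1},\dots,\bfx_{u_d})\big]$ where $g\coloneqq\prod_{(i,j)\in\edges(W)}Q_{\bfx_i,\bfx_j}$ satisfies $|g|\le 1$ pointwise. Cauchy--Schwarz over the remaining labels gives $|\Fourier(H)|\le\E[g^2]^{1/2}\E[\phi_d^2]^{1/2}\le\E[\phi_d^2]^{1/2}$, and since $u_1,\dots,u_d$ are distinct their labels are i.i.d.\ from $p$, so $\E_{(\bfx_i)_{i\ne v}}[\phi_d(\bfx_{u_1},\dots,\bfx_{u_d})^2]=\Fourier(\complete_{2,d})$, which is exactly $|\Fourier(\complete_{2,d})|$. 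For part~3, view $\complete_4^-$ as two triangles sharing the edge $\{1,2\}$; integrating out the two degree-two vertices gives $\Fourier(\complete_4^-)=\E_{\bfx_1,\bfx_2}\!\big[Q_{\bfx_1,\bfx_2}\,\phi_2(\bfx_1,\bfx_2)^2\big]$, while reading $\cycle_4$ around the order $1,3,2,4$ gives $\Fourier(\cycle_4)=\E_{\bfx_1,\bfx_2}[\phi_2(\bfx_1,\bfx_2)^2]$; since $|Q_{\bfx_1,\bfx_2}|\le1$ and $\phi_2^2\ge 0$, we get $|\Fourier(\complete_4^-)|\le\Fourier(\cycle_4)$, and both graphs have $4$ vertices, so $\Psi(\complete_4^-)\le\Psi(\cycle_4)$.

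I do not expect a genuine obstacle here: each part reduces to the single principle ``group the low-degree leaves/spokes, bound the remaining $Q$-entries by $1$ in absolute value, and recognise the square of a partial partition function as the Fourier coefficient of a cycle or of $\complete_{2,d}$.'' The only points needing a little care are checking that the neighbours $u_1,\dots,u_d$ (resp.\ the spokes of $\complete_{2,d}$, resp.\ the two degree-two vertices of $\complete_4^-$) are genuinely distinct vertices, so that their labels are i.i.d.\ and the expectations factor as claimed, and observing that the exponents $1/|\vertices(\cdot)|$ match in parts~1 and~3 so that the $\Psi$-inequalities follow from the corresponding $|\Fourier|$-inequalities.
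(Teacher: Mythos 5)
Your proposal is correct and follows essentially the same route as the paper on all three parts: the matrix $R=\sqrt{P}Q\sqrt{P}$ and the spectral $\ell_t$-vs-$\ell_4$ monotonicity for cycles (\cref{thm:largercycles}), conditioning on all labels except the degree-$d$ vertex and applying Cauchy--Schwarz/Jensen against $\Fourier_{\SBM(p,Q)}(\complete_{2,d})=\E[\phi_d^2]$ (\cref{thm:chisquaredapproach}), and recognizing $\Fourier_{\SBM(p,Q)}(\complete_4^-)=\E[Q_{\bfx_1,\bfx_2}\phi_2^2]$ versus $\Fourier_{\SBM(p,Q)}(\cycle_4)=\E[\phi_2^2]$ together with $|Q|\le 1$ (\cref{thm:4cyclewithextra}). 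The only cosmetic differences are that you name the partial partition function $\phi_d$ explicitly, and in part~3 you invoke the triangle inequality directly where the paper phrases the same observation as the two nonnegativity statements $\Fourier(\cycle_4)\pm\Fourier(\complete_4^-)\ge 0$.
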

The first statement explains why the only signed cycle counts for 
distinguishing stochastic block models and $\ergraphhalf$ that appear in the literature are triangles (for example, planted coloring in \cite{kothari2023planted}) and 4-cycles (for example, the quiet planting in \cite{kothari2023planted}), but no larger cycle counts. The proof is a simple spectral argument, at a high-level, similar to the way spectrum (P3) and cycle counts (P2) in \cref{thm:classicquasirandomnessnew} are related in classical quasirandomness \cite[F4 and F5]{chung87}.

The second inequality also follows the proof of equivalence used in \cite[F12]{chung87}, namely used to show that (P6) implies  (P1) in \cref{thm:classicquasirandomnessnew} (and, implicitly, appears in \cite{Liu2021APV} for our setting of signed subgraph counts). And while the quantitative dependence in the second inequality is strong enough to imply \cref{thm:classicquasirandomnessnew}, it is too weak for the purposes of hypothesis testing. We would like to show the much stronger inequality 
$|\Fourier_{\SBM(p,Q)}(H)|\le |\Fourier_{\SBM(p,Q)}(\complete_{2,d})|^{|\vertices(H)|/|\vertices(\complete_{2,d})|}$ (note that 
$|\Fourier_{\SBM(p,Q)}(\complete_{2,d})|\le 1$ and $|\vertices(H)|\ge d + 1\ge |\vertices(\complete_{2,d})|/2 = (d+2)/2$).

Nevertheless, it turns out that in certain cases, one can improve the argument in Item 2 above. For example, Item 2 implies that  $|\Fourier_{\SBM(p,Q)}(\complete_4^-)|\le |\Fourier_{\SBM(p,Q)}(\complete_{2,2})|^{1/2}.$ However, we show that in this specific case, the argument can be appropriately modified to imply the stronger, and sufficient for the hypothesis testing setting, inequality from Item 3:
$|\Fourier_{\SBM(p,Q)}(\complete_4^-)|\le |\Fourier_{\SBM(p,Q)}(\complete_{2,2})|.$ 
This inequality also appears 
implicitly in \cite{Liu2021APV}.
In particular, this result demonstrates why no examples in the literature appear in which one tests via signed $\complete_4^-$-counts. 
\subsubsection{A Library of Examples}
In \cref{appendix:library} we provide example SBM distributions which demonstrate the necessity 
of different subgraph (signed) counts appearing in the respective sets $\mathcal{A}_D$ in \cref{thm:mainintro}. In 
\cref{examplethm:notodddegree}, we show that for certain SBMs even on just 2 balanced communities, 
one needs to go beyond the signed star counts proposed by \cite{yu2024counting} to distinguish the model from \ER{} with a constant-degree polynomial. In \cref{examplethm:onetoneinsufficient}, we illustrate the aforementioned necessity of many-to-one comparisons. Finally, with \cref{examplethm:fourcyclenostarnotriangle,examplethm:trianglenostarnotfourcycle,examplethm:1starsnotrianglenofourcycle,examplethm:2starsnotrianglenofourcycle,examplethm:largestarsnotrianglenofourcycle} we show that each of the triangle, 4-cycle, edge, wedge, and a large star necessarily belongs to the set in \cref{conj:optimality}.

\subsection{Prior Techniques and Barriers}
Our two main results are parts 3 and 4 of \cref{thm:mainintro}. We begin with an overview of previous techniques appearing in 
the recent work on planted graph models \cite{yu2024counting} and
the classical quasirandomness theory \cite{chung87} and outline two barriers which prevent those techniques from working in our setting. Then, we explain our two new key ideas towards overcoming these barriers.

\subsubsection{Signed Stars in Planted-Subgraph Models}
\label{sec:priortechniquesSTARS}
We begin with the recent work \cite{yu2024counting}. Adapted to our setting of stochastic block models, one part of their argument shows that if $Q_{i,j}\ge 0$ for all $i,j$, then for any graph $H$ on at most $D$ edges without isolated vertices, there exists some star on $t\le D$ edges such that 
$
\Psi_{\SBM(p,Q)}(H)\le
\Psi_{\SBM(p,Q)}(\Star_t)
.
$ For completeness, we give a full proof of this fact in \cref{thm:maximizinginnonnegative}, but here is a sketch of the proof. Recall \eqref{eq:explicitfourier} -- for any graph $H$ without isolated vertices,
$$
\Fourier_{\SBM(p,Q)}(H)
=
\sum_{x_1, x_2, \ldots, x_h\in [k]}\Big(
\prod_{i = 1}^h p_{x_i}
\times
\prod_{(i,j)\in \edges(H)}
Q_{x_i,x_j}\Big).
$$
Since all values of $Q_{x_i, x_j}$ \emph{are non-negative} and $[0,1]$-valued, removing an edge from $H$ cannot decrease the quantity on the right hand-side. Hence, one can \emph{iteratively remove edges} from $H.$ The only condition one needs to ensure is that there are no isolated vertices. Therefore, the process will terminate with a graph $K$ in which every connected component is a star. Stars are the only connected graphs from which one cannot remove an edge without creating isolated vertices. Altogether, this means that 
$\Fourier_{\SBM(p,Q)}(H)\le 
\Fourier_{\SBM(p,Q)}(K)
$ where $K$ has $|\vertices(H)|$ vertices and every connected component is a star. An elementary factorization of Fourier coefficients of SBMs over connected components (\cref{prop:vertexdisjointgrahs}) implies that 
$\Psi_{\SBM(p,Q)}(H)\le 
\Psi_{\SBM(p,Q)}(\Star_t)
$ for some star graph on at most $D$ edges (one of the components of $K$).

The challenge of extending this argument to general $\SBM(p,Q)$ distributions when $Q$ has negative entries is that removing an edge can decrease the Fourier coefficient of a graph. This is illustrated in our \cref{examplethm:notodddegree}.
Namely, take $k = 2, p = (1/2, 1/2)$
and $Q = \begin{pmatrix}1 & -1\\-1 & 1\end{pmatrix}.$ Let $H = \complete_{4,4},$ the complete bipartite graph with two parts of size 4. One can show that $\Fourier_{\SBM(p,Q)}(H) = 1$ (as $H$ has only even-degree vertices). However, for any edge $e,$ if $H\backslash e$ is the graph $H$ with edge $e$ deleted, 
$\Fourier_{\SBM(p,Q)}(H\backslash e) = 0$ (as $H$ has an odd-degree vertex). Hence, removing an edge can dramatically decrease a Fourier coefficient -- from being 1 and sufficient to test against $\ergraphhalf$ to being 0 and, thus, being completely useless towards distinguishing from $\ergraphhalf.$

The fact that edge-removals fail when $Q$ has negative entries is a serious obstacle towards proving our results 3 and 4 in \cref{thm:mainintro}. The reason is that we always want to compare the Fourier coefficient of a graph $H$ to the Fourier coefficient of very sparse graphs -- stars or 4-cycle. 

\begin{challenge}
\label{challenge:iterativetechniquefails}
The natural approach of comparing the Fourier coefficient of a graph $H$ to a sparser graph by iteratively removing edges sometimes fails when $Q$ has negative entries, even on two communities each appearing with probability $1/2.$
\end{challenge}

\noindent
\textbf{Insight from \cref{challenge:iterativetechniquefails}:}
We need \emph{one-shot} comparisons instead of \emph{iterative comparisons}.

\subsubsection{Classical Quasirandomness}
\label{sec:priortechniquesQUASI}
The celebrated \cref{thm:classicquasirandomnessnew} of \cite{chung87} gives several equivalent conditions for when the subgraph counts of graphs resemble that of \ER. One way to rewrite their results to make them more similar to our setting is as follows. Let $G$ be any graph on $n$ vertices and let $\bfG$ be $G$ after a uniformly random vertex permutation. Then, the condition $|\unsignedcount_H(G)| =(1+o(1))n^{|\vertices(H)|}2^{-|\edges(H)|}$ can be equivalently rewritten as 
\begin{align}
\label{eq:reformulatepseudorandom}
\Big|\expect\Big[\prod_{(i,j)\in \edges(H)}\bfG_{i,j}\Big] - 2^{-|\edges(H)|}\Big| = o(1).
\end{align}
There are several ways in which \eqref{eq:reformulatepseudorandom} differs with our result:
\begin{enumerate}
    \item \emph{Distributional:} The distribution of $\bfG$ is not a stochastic block model. However, the stochastic block models is a vertex-symmetric distribution. In particular, \cref{thm:classicquasirandomnessnew} applies to (a high-probability sample of) the SBM distributions. 
    \item \emph{Signed versus Unsigned Counts:} \cref{thm:classicquasirandomnessnew} is for unsigned counts $|\expect \prod_{(i,j)\in \edges(H)}\bfG_{i,j} - 2^{-|\edges(H)|}| $ instead of signed counts $|\expect\prod_{(i,j)\in \edges(H)}(2\bfG_{i,j} -1)|.$ This difference is again minor. Expanding the product in $\prod_{(i,j)\in \edges(H)}(2\bfG_{i,j} -1)$ and applying triangle inequality shows that for constant size graphs $H,$ \eqref{eq:reformulatepseudorandom} is equivalent to
\begin{align}
\label{eq:reformulatepseudorandomfourier}
\Big|\expect \prod_{(i,j)\in \edges(H)}(2\bfG_{i,j} - 1)\Big| = o(1).
\end{align}
\item \emph{Scaling:} The real problem with the approach of \cite{chung87} is the scaling. In place of \eqref{eq:reformulatepseudorandomfourier}, we need the much more fine-grained inequality \eqref{eq:statisticalscaling}:
$|\expect[ \prod_{(i,j)\in \edges(H)}(2\bfG_{i,j} - 1)]|^{\frac{1}{|\vertices(H)|}} = o(n^{-1/2}).$ It turns out that improving \eqref{eq:reformulatepseudorandomfourier} is not simply a matter of carefully keeping track of the $o(1)$-dependence in \cite{chung87} as their methods are fundamentally too weak for our purposes. For example, an intermediate step in their argument (Fact 12), rewritten in the setting of Fourier coefficients of stochastic block models (see \cref{thm:onetonne}) gives that for any $\SBM(p,Q)$ distribution, and graph $H$ with a degree $d$ vertex,
\begin{align}
\label{eq:comparetoK2dintro}
\Big|\expect \prod_{(i,j)\in \edges(H)}(2\bfG_{i,j} - 1)\Big|\le
\Big|\expect \prod_{(i,j)\in E(\complete_{2,d})}(2\bfG_{i,j} - 1))\Big|^{1/2}.
\end{align}
So, take for example $H = \complete_{4,4}.$ This inequality implies that if $\Big|\expect \prod_{(i,j)\in E(\complete_{2,4})}(2\bfG_{i,j} - 1))\Big| = o(1),$ then $\Big|\expect \prod_{(i,j)\in \edges(\complete_{4,4})}(2\bfG_{i,j} - 1)\Big| = o(1).$ However, 
if $\Big|\expect \prod_{(i,j)\in E(\complete_{2,4})}(2\bfG_{i,j} - 1))\Big|^{1/6} = o(n^{-1/2}),$ as needed in \eqref{eq:statisticalscaling}, it implies that 
$\Big|\expect \prod_{(i,j)\in \edges(\complete_{4,4})}(2\bfG_{i,j} - 1)\Big|^{1/8} = o(n^{-3/16})$ which is too weak for our purposes.

\end{enumerate}
We outline the scaling of Fourier coefficients to the power of $1/|\vertices(H)|$ as a second main challenge.

\begin{challenge}
\label{challenge:statisticalscaling}
Developing techniques that not only compare Fourier coefficients, but do so with the appropriate $1/|\vertices(H)|$
scaling appearing in \eqref{eq:statisticalscaling}. 
\end{challenge}

We note that this challenge is directly related to the one-to-one versus many-to-one comparisons discussed in \cref{sec:onetoonefourier} as apparent from the discussion above in Item 3.

\medskip

\noindent
\textbf{Insight from \cref{challenge:statisticalscaling}:} We need \emph{many-to-one} comparison inequalities.

\subsection{Our Key Ideas}
We now describe our two main ideas used to overcome \cref{challenge:iterativetechniquefails} and \cref{challenge:statisticalscaling}.
\subsubsection{Main Idea 1: Leaf-Isolation Technique} This idea is used to overcome both challenges and appears in the proofs of both part 3 and part 4 of \cref{thm:mainintro}. For simplicity, we illustrate  with part 3. That is, $\SBM(p,Q)$ satisfies that each community label $i\in [k]$ appears with probability $p_i$ at least $c$ for some absolute constant $c>0$ (which implies that $k \le 1/c = O(1).$) An elementary calculation (\cref{lem:4cycleinnonvanishing}) shows that  
\begin{equation}
\label{eq:intro4cycleinnonvanishing}
\Fourier_{\SBM(p,Q)}(\cycle_4) = \Omega(\max_{u,v}|Q_{u,v}|^4).
\end{equation}    
Hence, for any graph $H$ that satisfies $h = |\vertices(H)|\le |\edges(H)|,$ by \eqref{eq:explicitfourier}
\begin{equation}
\label{eq:intro4cycle}
\begin{split}
     |\Fourier_{\SBM(p,Q)}(H)|
&=
\Big|\sum_{x_1, x_2, \ldots, x_h\in [k]}\Big(
\prod_{i = 1}^h p_{x_i}
\times
\prod_{(i,j)\in \edges(H)}
Q_{x_i,x_j}\Big)\Big|\\
& \le 
\sum_{x_1, x_2, \ldots, x_h\in [k]}\Big(
\prod_{i = 1}^h p_{x_i}
\times
\prod_{(i,j)\in \edges(H)}
|Q_{x_i,x_j}|\Big)\\
&\lesssim 
\max_{u,v}|Q_{u,v}|^{|\edges(H)|}\le 
\max_{u,v}|Q_{u,v}|^{|\vertices(H)|}\lesssim 
|\Fourier_{\SBM(p,Q)}(\cycle_4)|^{\frac{|\vertices(H)|}{|\vertices(\cycle_4)|}},
\end{split}
\end{equation}
which is enough. Note that so far we have overcome \cref{challenge:iterativetechniquefails} by directly comparing to a 4-cycle. What does remain a difficulty is \cref{challenge:statisticalscaling}. If we use
\eqref{eq:intro4cycle} for a tree $T,$ the \emph{scaling is too weak} as we obtain
$|\Fourier_{\SBM(p,Q)}(T)|^{\frac{1}{|\edges(T)|}}\le |\Fourier_{\SBM(p,Q)}(\cycle_4)|^{\frac{1}{|\edges(\cycle_4)|}}$ and $|\edges(T)| = |\vertices(T)| - 1.$

\begin{figure}[!htb]
\label{fig:leafisolationtriangleineq}
\begin{center}
\begin{tikzpicture}


\filldraw[thick, red] (-6,0) circle (.05cm);
\node[align=center] at (-6,-.3) {\textcolor{red}{$6$}}; 
\filldraw[thick, black] (-6,1) circle (.05cm);
\node[align=center] at (-6,1.3) {\textcolor{black}{$4$}};
\filldraw[thick, black] (-5,0) circle (.05cm);
\node[align=center] at (-5,-.3) {\textcolor{black}{$3$}};
\filldraw[thick, black] (-4,0) circle (.05cm);
\node[align=center] at (-4,-.3) {\textcolor{black}{$2$}};
\filldraw[thick, red] (-4,1) circle (.05cm);
\node[align=center] at (-4,1.3) {\textcolor{red}{$5$}};
\filldraw[thick, black] (-3,0) circle (.05cm);
\node[align=center] at (-3,-.3) {\textcolor{black}{$1$}};

\draw[thick,red] (-6,0) -- (-5,0);
\draw[thick,black] (-6,1) -- (-5,0);
\draw[thick,black] (-5,0) -- (-4,0);
\draw[thick,red] (-4,0) -- (-4,1);
\draw[thick,black] (-4,0) -- (-3,0);

\node[align=center] at (-4.5,-1) {\textcolor{black}{Graph $H$}};

\node[align=center] at (-.5,.5) {
\text{Leaf-Isolation}\\
\text{Triangle Inequality}\\
\text{$\Longrightarrow$}
};

\filldraw[thick, black] (2,1) circle (.05cm);
\node[align=center] at (2,1.3) {\textcolor{black}{$4$}};
\filldraw[thick, black] (3,0) circle (.05cm);
\node[align=center] at (3,-.3) {\textcolor{black}{$3$}};
\filldraw[thick, black] (4,0) circle (.05cm);
\node[align=center] at (4,-.3) {\textcolor{black}{$2$}};
\filldraw[thick, black] (5,0) circle (.05cm);
\node[align=center] at (5,-.3) {\textcolor{black}{$1$}};

\draw[thick,black] (2,1) -- (3,0);
\draw[thick,black] (3,0) -- (4,0);
\draw[thick,black] (4,0) -- (5,0);

\node[align=center] at (6,.5) {\textcolor{black}{$\bigsqcup$}};

\filldraw[thick, red] (7,0) circle (.05cm);
\node[align=center] at (7,-0.3) {\textcolor{red}{$6$}};
\filldraw[thick, blue] (8,0) circle (.05cm);
\node[align=center] at (8,-0.3) {\textcolor{blue}{$7$}};
\filldraw[thick, red] (8,1) circle (.05cm);
\node[align=center] at (8,1.3) {\textcolor{red}{$5$}};

\draw[thick,red] (7,0) -- (8,0);
\draw[thick,red] (8,0) -- (8,1);

\node[align=center] at (4,-1) {\textcolor{black}{Graph $H'$}};

\node[align=center] at (7,-1) {\textcolor{black}{$\Star_2$}};


\end{tikzpicture}
\caption{Illustration of the leaf-isolation inequality over the graph $H$ with leaves $5,6.$ It effectively compares $|\Fourier_{\SBM(p,Q)}(H)|$ to 
$|\Fourier_{\SBM(p,Q)}(H'\sqcup \Star_2)| = 
|\Fourier_{\SBM(p,Q)}(H')|\times |\Fourier_{\SBM(p,Q)}(\Star_2)|. 
$ In this comparison, a new vertex $7$ is created, which resolves the issue that $|\edges(H)|<|\vertices(H)|.$}
\end{center}
\end{figure}
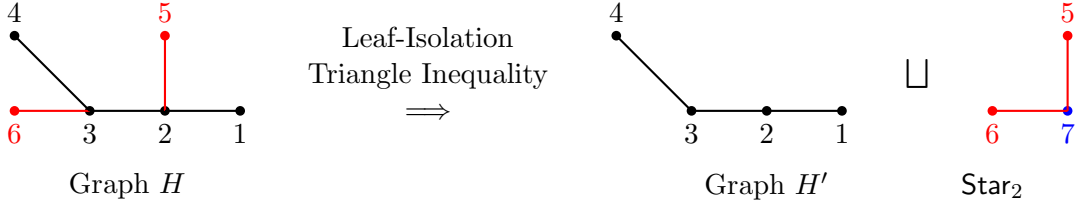

\paragraph{Leaf-Isolation Technique.} When $T$ is a tree on at least 3 vertices, $T$ has at least two leaves. Let these be $h-1,h$ with parents $\parent(h-1), \parent(h).$ Instead of the crude triangle inequality applied as a first step in \eqref{eq:intro4cycle}, we use a more fine-grained ``leaf-isolation'' triangle inequality which allows us to artificially create a star count in \eqref{eq:intro4cycle} as in the figure. On a more technical level, we perform the summation over all vertices but the two leaves first and then over the two leaves:

\begin{equation}
    \label{intro:leafisolation}
\begin{split}
 \Big|\Fourier_{\SBM(p,Q)}(T)\Big| &= 
\Big|
\sum_{x_1, x_2, \ldots, x_h\in [k]}\Big(
\prod_{i = 1}^h p_{x_i}
\times
\prod_{(i,j)\in \edges(T)}
Q_{x_i,x_j}\Big)\Big|\\
& = 
\Bigg|
\sum_{x_{1},x_2, \ldots, x_{h-2}}\Bigg(
p_{x_{1}}p_{x_2}\cdots p_{x_{h-2}}
\prod_{(ij)\in \edges(T)\backslash \{(\parent(h-1),h-1), (\parent(h),h)\}}Q_{x_i,x_j}\times\\
& \quad\quad\quad\quad
\big(
\sum_{x_{h-1}}
p_{x_{h-1}}
Q_{x_{\parent(h-1)}, x_{h-1}}
\big)\times
\big(
\sum_{x_{h}}
p_{x_{h}}
Q_{x_{\parent(h)} x_{h}}
\big)\Bigg)
\Bigg|\\
& \le
\sum_{x_{1},x_2, \ldots, x_{h-2}}\Bigg(
p_{x_{1}}p_{x_2}\cdots p_{x_{h-2}}
\prod_{(ij)\in \edges(T)\backslash \{(\parent(h-1),h-1), (\parent(h),h)\}}|Q_{x_i,x_j}|\times\\
& \quad\quad\quad\quad\times
\big|
\sum_{x_{h-1}}
p_{x_{h-1}}
Q_{x_{\parent(h-1)}, x_{h-1}}
\big|\times
\big|
\sum_{x_{h}}
p_{x_{h}}
Q_{x_{\parent(h)}, x_{h}}
\big|\Bigg)\\
& \lesssim
\sum_{x_{1},x_2, \ldots, x_{h-2}}\Bigg(
\prod_{(ij)\in \edges(T)\backslash \{(\parent(h-1),h-1), (\parent(h),h)\}}|Q_{x_i,x_j}|\times\\
&\quad\quad\quad\quad\times
\bigg(\big(
\sum_{x_{h-1}}
p_{x_{h-1}}
Q_{x_{\parent(h-1)}, x_{h-1}}
\big)^2 +
\big(
\sum_{x_{h}}
p_{x_{h}}
Q_{x_{\parent(h)}, x_{h}}
\big)^2\bigg)\Bigg),
\end{split}
\end{equation}
where we used $|a|\times |b|\le a^2+ b^2$ in the last line.
Now, the product of $|\edges(T)|-2 = |\vertices(T)|-3$ values $Q_{x_i,x_j}$ can be bounded as $|\Fourier_{\SBM(p,Q)}(\cycle_4)|^{\frac{|\vertices(T)|-3}{|\vertices(\cycle_4)|}},$ which is comparable to the signed count of a graph on $|\vertices(T)|-3$ vertices. On the other hand, $\big(
\sum_{x_{h-1}}
p_{x_{h-1}}
Q_{x_{\parent(h-1)}, x_{h-1}}
\big)^2 +
\big(
\sum_{x_{h}}
p_{x_{h}}
Q_{x_{\parent(h)} x_{h}}
\big)^2$ can be easily shown to be bounded by\linebreak  $\Fourier_{\SBM(p,Q)}(\Star_2),$ a graph on $3$ vertices. Altogether, we have managed to show that 
$$
\Big|\Fourier_{\SBM(p,Q)}(T)\Big|\lesssim 
|\Fourier_{\SBM(p,Q)}(\cycle_4)|^{\frac{|\vertices(T)|-3}{|\vertices(\cycle_4)|}}\times 
|\Fourier_{\SBM(p,Q)}(\Star_2)|.
$$
This immediately implies 
$$
\Big|\Fourier_{\SBM(p,Q)}(T)\Big|^{\frac{1}{|\vertices(T)|}}\lesssim \max\Big(
|\Fourier_{\SBM(p,Q)}(\cycle_4)|^{{\frac{1}{|\vertices(\cycle_4)|}}}, 
|\Fourier_{\SBM(p,Q)}(\Star_2)|^{\frac{1}{|\vertices(\Star_2)|}}\Big).
$$

\subsubsection{Main Idea 2: Comparison With a Non-Negative Model}
\label{sec:introcmparewithnonnegative}
The reason why the leaf-isolation technique alone is not sufficient for general SBMs on two communities is that \eqref{eq:intro4cycleinnonvanishing} fails. Namely, one can show (\cref{claim:4cycle2sbm}) that 
\begin{equation}
\label{eq:intro4cycleinnonvanishing2SBM}
\Fourier_{\SBM(p,Q)}(\cycle_4) = \Theta\bigg(\max\Big(p_1^4|Q_{1,1}|^4, p^2_1p_2^2|Q_{1,2}|, p_2^4|Q_{2,2}|^4\Big)\bigg).
\end{equation}  
So, if $p_1$ is very small, say $p_1 = n^{-1/2},$ then the inequality used in  
\eqref{eq:intro4cycle} for non-tree graphs may fail. Indeed, in \cref{examplethm:onetoneinsufficient} we show that there do exist two-community SBMs and connected $H$ which are not trees such that $|\Fourier_{\SBM(p,Q)}(H)|^{\frac{1}{|\vertices(H)|}} = \omega(|\Fourier_{\SBM(p,Q)}(\cycle_4)|^{\frac{1}{|\vertices(\cycle_4)|}}).$

Thus, if we want to utilize 
\eqref{eq:intro4cycleinnonvanishing2SBM} towards the leaf-isolation technique,
we need to understand how the probability vector $(p_1, p_2)$ relates to the entries of $Q.$

\paragraph{Comparison with Non-Negative SBM.} To do so, we compare with a non-negative SBM and utilize what we know from \cite{yu2024counting} (spelled out in \cref{sec:priortechniquesSTARS}). Namely, let $|Q|$ be the matrix $Q$ in which we take entry-wise absolute values. Then, triangle inequality applied to \eqref{eq:explicitfourier} implies that
$$
|\Fourier_{\SBM(p,Q)}(H)|\le 
|\Fourier_{\SBM(p,|Q|)}(H)|.
$$
On the other hand, as explained in \eqref{sec:priortechniquesSTARS}, 
$$ 
|\Fourier_{\SBM(p,|Q|)}(H)|^{\frac{1}{|\vertices(H)|}}\le 
|\Fourier_{\SBM(p,|Q|)}(\Star_\tspecial)|^{\frac{1}{|\vertices(\Star_\tspecial)|}}
$$
for some $\tspecial\in [1, |\edges(H)|].$ Combining the last two inequalities, we conclude that 
$$|\Fourier_{\SBM(p,Q)}(H)|^{\frac{1}{|\vertices(H)|}}\le 
|\Fourier_{\SBM(p,|Q|)}(\Star_\tspecial)|^{\frac{1}{|\vertices(\Star_\tspecial)|}}.
$$
Hence, the desired conclusion for 2-SBMs in \cref{thm:mainintro} would be implied by 
\begin{align*}
    & |\Fourier_{\SBM(p,|Q|)}(\Star_\tspecial)|
     \lesssim
    |\Fourier_{\SBM(p,Q)}(\Star_\tspecial)| \Longleftrightarrow\\
    & p_1(p_1|Q_{1,1}| + p_2|Q_{1,2}|)^\tspecial + 
    p_2(p_1|Q_{1,2}| + p_2|Q_{2,2}|)^\tspecial\lesssim 
    \Big|
    p_1(p_1Q_{1,1} + p_2Q_{1,2})^\tspecial + 
    p_2(p_1Q_{1,2} + p_2Q_{2,2})^\tspecial
    \Big|.
\end{align*}
Unfortunately, such an inequality is wrong as demonstrated, for example, by \cref{examplethm:notodddegree}. Yet, whenever this inequality is violated, there must be certain cancellations in $\Fourier_{\SBM(p,|Q|)}(\Star_\tspecial) = 
 p_1(p_1Q_{1,1} + p_2Q_{1,2})^\tspecial + 
    p_2(p_1Q_{1,2} + p_2Q_{2,2})^\tspecial.
$
The key question, used to address \cref{challenge:iterativetechniquefails}, is: \emph{What causes the cancellations?}
It turns out that the following
\emph{dichotomy} occurs. Exactly one of the following two types of cancellations occurs:
\begin{enumerate}
    \item \emph{Within-community cancellations:} when there is a cancellation within community 1, corresponding to $|p_1Q_{1,1} + p_2Q_{1,2}| = o(p_1|Q_{1,1}| + p_2|Q_{1,2}|)$ or, similarly in community 2, $|p_1Q_{1,2} + p_2Q_{2,2}| = o(p_1|Q_{1,2}| + p_2|Q_{2,2}|)$ .
    \item \emph{Between-community cancellations:} when there are no in-community cancellations, but 
    there is a cancellation between $ p_1(p_1Q_{1,1} + p_2Q_{1,2})^\tspecial$ and
    $p_2(p_1Q_{1,2} + p_2Q_{2,2})^\tspecial.$
\end{enumerate}
In either case, the cancellations imply strong relationships between $Q$ and $p,$ which help us boost \eqref{eq:intro4cycleinnonvanishing2SBM} and then apply an appropriate version of the leaf-isolation technique.
For example, if $p_1\le p_2$ (so, also $p_2 \ge 1/2$) in the case of within-community cancellations we can show that $p_2|Q_{1,2}| = O(p_1|Q_{1,1}|),$ which turns out to be sufficient for making the leaf-isolation technique to work. The reason such an inequality is useful is the following. As $p_2\ge 1/2,$ it implies that $|Q_{1,2}| =O(p_1|Q_{1,1}|).$ This means that when applying the triangle-inequality approach \eqref{eq:intro4cycle} (or the more sophisticated leaf-isolation variation of it, \eqref{intro:leafisolation}), we can replace some of the $Q_{1,2}$ occurrences with $p_{1}|Q_{1,1}|,$ thus, adding more $p_1$ appearances in the expression. As \eqref{eq:intro4cycleinnonvanishing2SBM} reads 
$\Fourier_{\SBM(p,Q)}(\cycle_4) = \Theta\bigg(\max\Big(p_1^4|Q_{1,1}|^4, p^2_1|Q_{1,2}|, |Q_{2,2}|^4\Big)\bigg)$ when $p_2\ge 1/2,$ the extra occurrences of $p_1$ are key to a comparison with $\Fourier_{\SBM(p,Q)}(\cycle_4).$

\section{Preliminaries}
\subsection{Notation}
\paragraph{Graph Notation.} All graphs in the current paper are undirected and have no loops or multiple edges. For a graph $H,$ its vertex set and edge sets are denoted by $\vertices(H)$ and $\edges(H).$ Whenever $A,B\subseteq\vertices(H)$ for some graph $H,$ we denote by $\edges_{H}(A,B)$ the subset of $\edges(H)$ with one endpoint in $A$ and one endpoint in $B.$ For $S\subseteq \vertices(H),$ we denote by $H|_S$ the graph on vertex set $S$ and edge-set $\edges_H(S,S).$

We denote by $\cycle_t$ the cycle on $t$ vertices, by $\complete_t$ the complete graph on $t$ vertices, by $\complete_{t,s}$ the complete bipartite graph on parts with $t$ and $s$ vertices, by $\Star_t$ the star graph on $t+1$ vertices (so $\complete_{1,t}= \Star_t$). In $\Star_t,$ there is one central vertex of degree $t,$ which is adjacent to $t$ leaves. Note that $\Star_1$ is simply an edge and $\Star_2$ is the path of length 2 (also called wedge).

For two graphs $H,K,$ we denote by $H\sqcup K$ their disjoint union. If $H$ and $K$ are labeled, denote by $H\otimes K$ the graph in which $(i,j)$ is an edge if and only if it is an edge in exactly one of $H$ and $K,$and furthemrore all isolated vertices are removed. 

For two graphs $H$ and $K,$ we denote by $H\sim K$ the fact that they are isomorphic.

Denote by
$\graphs_{\le D}$ the set of graphs without isolated vertices and at most $D$ edges and by
$\connectedgraphs_{\le D}$ those of them that are furthermore connected.

\paragraph{Asymptotic Notation.} For two quantities $x(n),y(n)\in \mathbb{R}$ depending on $n\in \mathbb{N},$ we denote $x\gtrsim y$ if $x\ge Cy$ for some absolute constant $C>0.$ If $x\gtrsim y$ and $y\gtrsim x,$ we denote $x\asymp y.$
We similarly denote $x\gtrsim_D y$ if $x\ge C(D)y$ where the constant $C(D)$ can be an arbitrary function of $D$ (but nothing else) and likewise $x\asymp_D y$. We also denote by $y = O(x)$ and 
$x = \Omega(y)$
the fact $x \gtrsim y.$ Finally, $y = o_n(x)$ and $x = \omega_n(y)$ denote 
the fact that $\lim_{n\longrightarrow + \infty} y/x = 0.$ In most places, we omit the dependence on $n.$

\subsection{Testing via Low Degree Polynomials}
\label{sec:ldpprelim}
Condition \eqref{eq:meanseparationforsignedcounts} is a special case of testing between two distributions via low-degree polynomial tests. Since the indicator of edges of a random graph are random variables, one naturally defines low-degree polynomial tests as follows, again exploiting Chebyshov's ineqality.
\begin{definition}[\cite{hopkins2017bayesian,hopkins18} specialized to graphs] 
\label{def:lowdegreesuccess}
Consider the families of graph distributions $(\mathbb{P}_n)_{n\in \mathbb{N}}$ and $\ergraphhalf_{n \in \mathbb{N}}.$ We say that a polynomial $f:\{0,1\}^{\binom{n}{2}}\longrightarrow \mathbb{R}$ distinguishes the two families of distributions if 
$$
\Big|
\expect_{\bfG\sim \mathbb{P}_n}f(\bfG) - 
\expect_{\bfG\sim \ergraphhalf}f(\bfG)
\Big| = \omega\Bigg( \max\Big(
\var_{\bfG\sim \ergraphhalf}[f(\bfG)]^{1/2},
\var_{\bfG\sim \mathbb{P}_n}[f(\bfG)]^{1/2}\Big)\Bigg).
$$
Conversely, if a polynomial $f$ of degree at most $D$ satisfies
$$
\Big|
\expect_{\bfG\sim \mathbb{P}_n}f(\bfG) - 
\expect_{\bfG\sim \ergraphhalf}f(\bfG)
\Big| = o\Bigg( \max\Big(
\var_{\bfG\sim \ergraphhalf}[f(\bfG)]^{1/2},
\var_{\bfG\sim \mathbb{P}_n}[f(\bfG)]^{1/2}\Big)\Bigg),
$$
we say that $f$ fails to distinguish the two distributions. If all polynomials of degree at most $D$ fail,  we say that
$\ergraphhalf$ and $\mathbb{P}_n$ are degree-$D$ indistinguishable.
\end{definition}

Polynomials tests of degree $D = O(\log n)$ capture a wide range of computationally efficient tests such as constant-sized (signed) subgraph counts, certain
spectral methods \cite{kunisky2019notes}, 
statistical-query algorithms \cite{brennan2021statisticalquery}, and approximate message passing \cite{montanari2022equivalence} algorithms among others. Due to the variety of methods captured by low-degree polynomials, hardness against degree $D$ polynomials for some $D = \omega(\log n)$ is viewed as a strong (but, of course, still limited) heuristic for the computational hardness of a problem \cite{hopkins2017efficient,hopkins18}.

When testing against $\ergraphhalf,$ it is convenient to express polynomials in terms of the Fourier characters $\prod_{(ji)\in E(H)}(2G_{ji}-1).$ The key simple fact from Boolean Fourier analysis is the following.

\begin{theorem}[Folklore]
\label{thm:orthofourier}
The set of polynomials $\Bigg\{\prod_{(ji)\in E(H)}(2G_{ji}-1)\Bigg\}_{H}$ where $H$ ranges over all graphs without isolated vertices in $\complete_n$ forms an orthonormal basis of all edge-functions over $n$-vertex graphs with respect to the $\ergraphhalf$ distribution. 
\end{theorem}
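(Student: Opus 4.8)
The plan is to verify the two defining properties of an orthonormal basis — orthonormality and maximality — by the standard Boolean Fourier argument, using that under $\ergraphhalf$ the edge indicators $\bfG_{ij}$ are i.i.d.\ $\bernoulli(1/2)$. First I would set up the ambient space: an edge-function is a function $f:\{0,1\}^{\binom{n}{2}}\to\mathbb{R}$, and the relevant $\binom{n}{2}$-variable real vector space is $2^{\binom{n}{2}}$-dimensional; equip it with $\langle f,g\rangle := \expect_{\bfG\sim\ergraphhalf}[f(\bfG)g(\bfG)]$, which is positive definite since $\ergraphhalf$ assigns positive probability to every edge configuration. For a graph $H$ without isolated vertices, write $\chi_H(\bfG):=\prod_{(ij)\in\edges(H)}(2\bfG_{ij}-1)$ and set $\sigma_{ij}:=2\bfG_{ij}-1$, a $\pm 1$-valued Rademacher variable; the family $(\sigma_{ij})_{(ij)\in\edges(\complete_n)}$ is independent with $\expect[\sigma_{ij}]=0$ and $\sigma_{ij}^2=1$.

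The orthonormality computation is then immediate: for two such graphs $H,K$, using $\sigma_{ij}^2=1$ one gets $\chi_H\chi_K=\prod_{e\in\edges(H)\triangle\edges(K)}\sigma_e$, so by independence and mean-zero, $\langle\chi_H,\chi_K\rangle=\prod_{e\in\edges(H)\triangle\edges(K)}\expect[\sigma_e]=\mathds{1}[\edges(H)=\edges(K)]$. Because $H$ and $K$ have no isolated vertices, each is determined by its edge set, so the right side equals $\mathds{1}[H=K]$; in particular $\|\chi_H\|^2=1$. Hence $\{\chi_H\}_H$ is an orthonormal system.

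Finally, I would close the argument by a dimension count: graphs on vertex set $[n]$ with no isolated vertices are in bijection with subsets $S\subseteq\edges(\complete_n)$ (take $S=\edges(H)$; the non-isolated vertices are exactly the endpoints occurring in $S$, and $S=\varnothing$ corresponds to the empty graph), so there are exactly $2^{\binom{n}{2}}$ of them. An orthonormal system whose cardinality matches the dimension of the space is automatically a basis, which finishes the proof.

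I do not expect any genuine obstacle here — this is folklore, a direct specialization of the fact that the parity (Walsh) functions form the Fourier basis of $L^2(\{\pm1\}^m)$ with the uniform measure. The only points that merit a sentence of care are that the inner product is positive definite (which uses the full support of $\ergraphhalf$) and that the ``no isolated vertices'' normalization yields a clean bijection between the index set and subsets of $\edges(\complete_n)$, so that the count is exactly $2^{\binom{n}{2}}$ and no vectors are missing.
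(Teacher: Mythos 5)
Your proof is correct and is exactly the standard Walsh/parity argument; the paper states this result as folklore and does not supply its own proof, so there is nothing to compare against. You correctly handle both the orthonormality computation (via $\sigma_{ij}^2=1$, independence, and zero mean) and the spanning claim (via the bijection between subsets of $\edges(\complete_n)$ and graphs without isolated vertices, giving $2^{\binom{n}{2}}$ orthonormal vectors in a $2^{\binom{n}{2}}$-dimensional space), and you rightly flag the two places where a beginner might slip: positive-definiteness of the inner product and the normalization by ``no isolated vertices.''
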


\subsection{Simple Facts about Fourier Coefficients of Stochastic Block Models}
\begin{proposition}
\label{prop:vertexdisjointgrahs}
Consider any stochastic block model $\SBM(p, Q)$ and let $H, K$ be any two graphs. Then, 
$$
\Fourier_{\SBM(p,Q)}(H\sqcup K) = 
\Fourier_{\SBM(p,Q)}(H)\times 
\Fourier_{\SBM(p,Q)}(K).
$$
\end{proposition}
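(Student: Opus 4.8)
The plan is to apply the explicit formula for Fourier coefficients of stochastic block models from \cref{prop:explicitFourier} and observe that the resulting sum factors along the two vertex sets of the disjoint union. First I would fix representatives for the vertex sets: say $\vertices(H) = \{1, \ldots, h\}$ and $\vertices(K) = \{h+1, \ldots, h+\ell\}$ with $\ell = |\vertices(K)|$, so that $\vertices(H \sqcup K) = \{1, \ldots, h+\ell\}$ and, crucially, every edge of $H \sqcup K$ lies entirely within $\{1, \ldots, h\}$ or entirely within $\{h+1, \ldots, h+\ell\}$, with $\edges(H \sqcup K)$ the disjoint union of $\edges(H)$ and $\edges(K)$.

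Then I would expand $\Fourier_{\SBM(p,Q)}(H \sqcup K)$ using \eqref{eq:explicitfourier}, writing the prefactor $\prod_{i=1}^{h+\ell} p_{x_i}$ as $\bigl(\prod_{i=1}^h p_{x_i}\bigr)\bigl(\prod_{i=h+1}^{h+\ell} p_{x_i}\bigr)$ and likewise splitting $\prod_{(i,j) \in \edges(H \sqcup K)} Q_{x_i,x_j}$ into its $H$-part and its $K$-part. Because these two factors depend on disjoint blocks of summation variables, $(x_1, \ldots, x_h)$ and $(x_{h+1}, \ldots, x_{h+\ell})$, the sum over $[k]^{h+\ell}$ splits as a product of a sum over $[k]^h$ and a sum over $[k]^\ell$, which are exactly $\Fourier_{\SBM(p,Q)}(H)$ and $\Fourier_{\SBM(p,Q)}(K)$ by \eqref{eq:explicitfourier} again. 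Equivalently, one can argue probabilistically: conditioned on all vertex labels, the edges spanned by $\vertices(H)$ and those spanned by $\vertices(K)$ are independent Bernoullis, so $\expect[\,\cdot \mid \text{labels}]$ factors; the outer expectation then factors because the labels of $\vertices(H)$ are independent of those of $\vertices(K)$.

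There is essentially no obstacle here — the statement is a bookkeeping consequence of the multiplicative structure of \eqref{eq:explicitfourier}. The only point worth a remark is that isolated vertices are harmless: a vertex with no incident edges contributes a factor $\sum_{x \in [k]} p_x = 1$, so the identity holds verbatim regardless of whether $H$ or $K$ has isolated vertices, consistent with the convention of deleting isolated vertices used elsewhere. A trivial induction then extends the identity to any finite disjoint union, $\Fourier_{\SBM(p,Q)}\bigl(\bigsqcup_{t} H_t\bigr) = \prod_{t} \Fourier_{\SBM(p,Q)}(H_t)$, which is the form in which the proposition is applied (for instance when reducing to connected components in \cref{sec:priortechniquesSTARS}).
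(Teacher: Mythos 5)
Your proof is correct and follows essentially the same approach as the paper: both start from \cref{prop:explicitFourier} and factor using the fact that the edges of $H\sqcup K$ partition into $\edges(H)$ and $\edges(K)$ while the relevant label variables are independent. The paper phrases this as factoring the expectation $\expect[\prod Q_{\bfx_i\bfx_j}]$ directly, whereas you also write out the equivalent manipulation of the explicit sum over $[k]^{h+\ell}$; these are the same argument.
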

\begin{proof}
Since there are no shared vertices between the copies of $H,K$ in $H\sqcup K$ and labels are independent,
\begin{align*}
    & \Fourier_{\SBM(p,Q)}(H\sqcup K) = 
    \expect\Bigg[\prod_{(i,j)\in \edges(H\cup K)}Q_{\bfx_i\bfx_j}\Bigg]\\
    & =
    \expect\Bigg[
    \prod_{(i,j)\in \edges(H)}Q_{\bfx_i\bfx_j}\times 
    \prod_{(u,v)\in \edges(H)}Q_{\bfx_u\bfx_v}
    \Bigg]\\
    & = 
    \expect\Bigg[
    \prod_{(i,j)\in \edges(H)}Q_{\bfx_i\bfx_j}\Bigg]\times 
    \expect\Bigg[
    \prod_{(u,v)\in \edges(H)}Q_{\bfx_u\bfx_v}
    \Bigg] = \Fourier_{\SBM(p,Q)}(H)\times 
\Fourier_{\SBM(p,Q)}(K).\qedhere
\end{align*}
\end{proof}

\begin{corollary}
\label{cor:vertexdisjointcomparison}
Consider any $\SBM(p, Q)$ distribution and let $H, K$ be any two graphs. Then, 
$$
\Psi_{\SBM(p,Q)}(H\sqcup K)
\le 
\max\Big(
\Psi_{\SBM(p,Q)}(H),
\Psi_{\SBM(p,Q)}(K)
\Big).
$$ 
\end{corollary}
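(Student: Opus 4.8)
The plan is to combine the multiplicativity of Fourier coefficients over disjoint unions (\cref{prop:vertexdisjointgrahs}) with the additivity of vertex counts, and then reduce to an elementary inequality comparing a weighted geometric mean to a maximum. Concretely, write $a \coloneqq |\Fourier_{\SBM(p,Q)}(H)|$, $b\coloneqq |\Fourier_{\SBM(p,Q)}(K)|$, $m \coloneqq |\vertices(H)|$, and $\ell \coloneqq |\vertices(K)|$. Since $H\sqcup K$ has $m + \ell$ vertices and, by \cref{prop:vertexdisjointgrahs}, $|\Fourier_{\SBM(p,Q)}(H\sqcup K)| = ab$, the claim becomes
\[
(ab)^{\frac{1}{m+\ell}} \le \max\big(a^{1/m},\, b^{1/\ell}\big).
\]

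Next I would introduce $\alpha \coloneqq a^{1/m}$ and $\beta \coloneqq b^{1/\ell}$, so that the left-hand side equals $\alpha^{\frac{m}{m+\ell}}\,\beta^{\frac{\ell}{m+\ell}}$, a weighted geometric mean of $\alpha$ and $\beta$ with weights $\frac{m}{m+\ell}$ and $\frac{\ell}{m+\ell}$ summing to $1$. A weighted geometric mean of two nonnegative reals never exceeds their maximum (for instance, if $\alpha \le \beta$ then $\alpha^{\frac{m}{m+\ell}}\beta^{\frac{\ell}{m+\ell}} \le \beta^{\frac{m}{m+\ell}}\beta^{\frac{\ell}{m+\ell}} = \beta$), which is exactly the desired bound. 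The degenerate cases $m = 0$ or $\ell = 0$ do not arise since the relevant graphs have no isolated vertices (or can be excluded by convention), and the case $a = 0$ or $b = 0$ is immediate because then the left-hand side is $0$.

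There is essentially no obstacle here; the only point requiring a word of care is that one should not divide by $m$ or $\ell$ without knowing they are positive, and one should separately dispatch the possibility that one of the two Fourier coefficients vanishes so as to avoid writing $0^0$. Both are handled in a single line. This corollary will then be used downstream to pass from a bound on $\Psi_{\SBM(p,Q)}$ for a graph with several connected components to a bound for one of its components, which is the typical way the leaf-isolation reductions terminate.
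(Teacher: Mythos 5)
Your proof is correct and takes essentially the same route as the paper: both invoke \cref{prop:vertexdisjointgrahs} to factor the Fourier coefficient of $H\sqcup K$, rewrite $\Psi_{\SBM(p,Q)}(H\sqcup K)$ as a weighted geometric mean of $\Psi_{\SBM(p,Q)}(H)$ and $\Psi_{\SBM(p,Q)}(K)$ with weights $\frac{|\vertices(H)|}{|\vertices(H)|+|\vertices(K)|}$ and $\frac{|\vertices(K)|}{|\vertices(H)|+|\vertices(K)|}$, and bound the geometric mean by the maximum. Your extra remarks about degenerate cases are fine but not needed given the paper's standing convention of graphs without isolated vertices.
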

\begin{proof} Recalling the definition of $\Psi,$
    \begin{align*}
       & |\Fourier_{\SBM(p,Q)}(H\sqcup K)|^{\frac{1}{|\vertices(H\sqcup K)|}}\\
       & = 
       \Big(|\Fourier_{\SBM(p,Q)}(H)|\times |\Fourier_{\SBM(p,Q)}( K)|\Big)^{\frac{1}{|\vertices(K)| + |\vertices(H)|}}\\
       & = 
       \Big(|\Fourier_{\SBM(p,Q)}(H)|^{\frac{1}{|\vertices(H)|}}\Big)^{\frac{|\vertices(H)|}{|\vertices(K)| + |\vertices(H)|}}\times 
       \Big(|\Fourier_{\SBM(p,Q)}(K)|^{\frac{1}{|\vertices(K)|}}\Big)^{\frac{|\vertices(K)|}{|\vertices(K)| + |\vertices(H)|}}\\
       & \le 
       \max\Big(
|\Fourier_{\SBM(p,Q)}(H)|^{\frac{1}{|\vertices(H)|}},
|\Fourier_{\SBM(p,Q)}(K)|^{\frac{1}{|\vertices(K)|}}
\Big)^{\frac{|\vertices(H)|}{|\vertices(K)| + |\vertices(H)|}}\times\\
& \quad\quad\quad\quad\times
\max\Big(
|\Fourier_{\SBM(p,Q)}(H)|^{\frac{1}{|\vertices(H)|}},
|\Fourier_{\SBM(p,Q)}(K)|^{\frac{1}{|\vertices(K)|}}
\Big)^{\frac{|\vertices(K)|}{|\vertices(K)| + |\vertices(H)|}}\\
& = \max\Big(
|\Fourier_{\SBM(p,Q)}(H)|^{\frac{1}{|\vertices(H)|}},
|\Fourier_{\SBM(p,Q)}(K)|^{\frac{1}{|\vertices(K)|}}
\Big).\qedhere
    \end{align*}
\end{proof}

\begin{corollary}[Star Counts]
\label{cor:starcounts}
For any $\SBM(p,Q)$ model on $k$ communities
$$
\Fourier_{\SBM(p,Q)}
(\Star_t) = 
\sum_{x\in [k]}p_x\times \big(\sum_{y \in [k]}Q_{x,y}p_y\big)^t.
$$
\end{corollary}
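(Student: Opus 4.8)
The plan is to invoke the explicit Fourier formula for stochastic block models (\cref{prop:explicitFourier}) and then exploit the tree structure of the star to factor the resulting finite sum. First I would fix a convenient labeling of $\Star_t$: let the center be vertex $0$ and the $t$ leaves be $1,2,\ldots,t$, so that $\vertices(\Star_t)=\{0,1,\ldots,t\}$ and $\edges(\Star_t)=\{(0,j):1\le j\le t\}$. Substituting into \eqref{eq:explicitfourier} gives
$$
\Fourier_{\SBM(p,Q)}(\Star_t)=\sum_{x_0,x_1,\ldots,x_t\in[k]}p_{x_0}\Big(\prod_{j=1}^t p_{x_j}\Big)\prod_{j=1}^t Q_{x_0,x_j}.
$$

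The key observation is that once the label $x_0$ of the center is fixed, the summand factors completely across the leaves: leaf $j$ enters only through $p_{x_j}Q_{x_0,x_j}$, and distinct leaves involve disjoint summation variables. So I would pull the sum over $x_0$ outside and distribute the sum over the remaining variables across the product, obtaining
$$
\Fourier_{\SBM(p,Q)}(\Star_t)=\sum_{x_0\in[k]}p_{x_0}\prod_{j=1}^t\Big(\sum_{x_j\in[k]}p_{x_j}Q_{x_0,x_j}\Big)=\sum_{x_0\in[k]}p_{x_0}\Big(\sum_{y\in[k]}p_yQ_{x_0,y}\Big)^t,
$$
where the last step uses that the $t$ inner sums are identical copies of $\sum_{y}p_yQ_{x_0,y}$. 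Renaming $x_0$ to $x$ yields the claimed identity.

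Since this is a direct substitution followed by a factorization of a finite sum, I do not expect a genuine obstacle; the only points requiring care are the bookkeeping of which vertex plays the role of the center (handled by a ``without loss of generality'' as in the proof of \cref{prop:explicitFourier}) and the fact that $\Star_t$ has $t$ edges but $t+1$ vertices, so the exponent is $t$. Note that \cref{prop:vertexdisjointgrahs} does not apply directly here because $\Star_t$ is connected, so the self-contained computation above is the cleanest route.
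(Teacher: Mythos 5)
Your proof is correct and follows essentially the same route as the paper, which simply states that the identity ``follows from \cref{prop:explicitFourier} by first summing over the central vertex.'' You have merely spelled out the substitution and the factorization over the leaves that the paper's one-line proof leaves implicit; the bookkeeping on vertex count versus edge count is handled correctly.
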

\begin{proof} Follows from \cref{prop:explicitFourier} by first summing over the central vertex.
\end{proof}

\subsection{Meta Theorems on Signed Subgraph Counts}

\begin{theorem}[Beating Null Variance by A Low Degree Polynomial]
\label{thm:ldpimpliesbsubgraphcount}
Suppose that $(\mathbb{P}_n)_{n\in \mathbb{N}}$ is a family of vertex-symmetric distributions over $n$-vertex graphs and 
there exists some polynomial $f$ of constant degree $D$ such that 
$$
|\expect_{\mathbb{P}_n}[f] - \expect_{\ergraphhalf}[f]| = \omega(\var_{\ergraphhalf}[f]^{1/2}).
$$
Then, there exists some subgraph $H$ on at most $D$ vertices such that 
$\Psi_{\SBM(p,Q)}(H) = \omega(n^{-1/2}).$
\end{theorem}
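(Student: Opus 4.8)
The plan is a direct Fourier-analytic expansion in the basis of \cref{thm:orthofourier}. Since $f$ is a polynomial of degree at most $D$ in the $\{0,1\}$-valued edge indicators, we may assume it is multilinear (using $\bfG_{ij}^2 = \bfG_{ij}$) and expand $f = \sum_{H}\hat f(H)\,\chi_H$, where $\chi_H(G) = \prod_{(ij)\in\edges(H)}(2G_{ij}-1)$, the sum ranges over labeled subgraphs $H$ of $\complete_n$ with no isolated vertices and $|\edges(H)|\le D$, and $\hat f(H) = \expect_{\bfG\sim\ergraphhalf}[f(\bfG)\chi_H(\bfG)]$. Orthonormality gives at once $\expect_{\ergraphhalf}[f] = \hat f(\emptyset)$ and $\var_{\ergraphhalf}[f] = \sum_{H\neq\emptyset}\hat f(H)^2$.

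Next I would compute the planted mean. Because $\mathbb{P}_n$ is vertex-symmetric, $\expect_{\bfG\sim\mathbb{P}_n}[\chi_H(\bfG)]$ depends only on the isomorphism type of $H$ and equals $\Fourier_{\mathbb{P}_n}(H)$ as in \eqref{eq:fourier}. Subtracting the null mean and grouping labeled copies by isomorphism type,
\[
\expect_{\mathbb{P}_n}[f] - \expect_{\ergraphhalf}[f] = \sum_{H\neq\emptyset}\hat f(H)\,\Fourier_{\mathbb{P}_n}(H) = \sum_{K}\Fourier_{\mathbb{P}_n}(K)\sum_{H\cong K}\hat f(H),
\]
where the outer sum runs over the finitely many isomorphism types $K$ with no isolated vertices and at most $D$ edges (such $K$ have at most $2D$ vertices, so there are only $C_D = O_D(1)$ of them), and the inner sum runs over labeled copies of $K$ inside $\complete_n$.

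Now I would apply Cauchy--Schwarz to each inner sum. There are at most $n^{|\vertices(K)|}$ labeled copies of $K$, and distinct copies are distinct non-empty subgraphs, so $\big|\sum_{H\cong K}\hat f(H)\big| \le n^{|\vertices(K)|/2}\big(\sum_{H\cong K}\hat f(H)^2\big)^{1/2} \le n^{|\vertices(K)|/2}\,\var_{\ergraphhalf}[f]^{1/2}$. Summing over the $C_D$ types,
\[
\big|\expect_{\mathbb{P}_n}[f] - \expect_{\ergraphhalf}[f]\big| \le C_D\,\var_{\ergraphhalf}[f]^{1/2}\max_{K} n^{|\vertices(K)|/2}\,|\Fourier_{\mathbb{P}_n}(K)|.
\]
Dividing by $\var_{\ergraphhalf}[f]^{1/2}$ and using the hypothesis that the left-hand side is $\omega\big(\var_{\ergraphhalf}[f]^{1/2}\big)$ forces $\max_K n^{|\vertices(K)|/2}|\Fourier_{\mathbb{P}_n}(K)| = \omega(1)$. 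Since there are only $C_D$ types, some type $H$ satisfies $|\Fourier_{\mathbb{P}_n}(H)| = \omega\big(n^{-|\vertices(H)|/2}\big)$, i.e.\ $\Psi_{\mathbb{P}_n}(H) = |\Fourier_{\mathbb{P}_n}(H)|^{1/|\vertices(H)|} = \omega(n^{-1/2})$, as claimed.

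I do not anticipate a genuine obstacle: the routine points are the reduction to a multilinear $f$, the elementary $O_D(1)$ bounds on the number of labeled copies and of isomorphism types, and the final pigeonhole. The one subtlety worth flagging is that the maximizing type could a priori depend on $n$, so the conclusion should be read either as a statement about the ($n$-dependent) maximizer over the $O_D(1)$ relevant shapes, or --- if one insists on a single fixed $H$ --- along a subsequence of $n$; either reading suffices downstream to exhibit a distinguishing signed subgraph count.
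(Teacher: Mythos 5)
Your proof is correct and follows essentially the same route as the paper's: Fourier-expand $f$ in the orthonormal basis of \cref{thm:orthofourier}, express $\var_{\ergraphhalf}[f]$ and the planted--null mean gap via Fourier coefficients, group coefficients by isomorphism type, apply Cauchy--Schwarz to the inner sums, and pigeonhole over the $O_D(1)$ isomorphism types. Two small remarks: (i) your Cauchy--Schwarz step is stated in the correct direction, $\bigl|\sum_{H\cong K}\hat f(H)\bigr|\le\bigl(\sum_{H\cong K}1\bigr)^{1/2}\bigl(\sum_{H\cong K}\hat f(H)^2\bigr)^{1/2}$, whereas the paper's displayed line has the $(\sum_{H_1}1)^{1/2}$ factor in the denominator instead of the numerator --- an apparent typo that is inconsistent with the $\Theta(\cdot)$ rewriting two lines later and would, if read literally, yield the impossible conclusion $|\Fourier_{\mathbb{P}_n}(H)|=\omega(n^{|\vertices(H)|/2})$; and (ii) your caveat that the maximizing shape $H$ may depend on $n$ is well taken and matches how the lemma is actually used in \cref{thm:beatingvarnullimpliesbeatingvarplanted}.
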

\begin{proof} 
Recall that $\graphs_{\le D}$ is the set of graphs without isolated vertices on at most $D$ edges.
Consider the polynomial 
$$
f(G) =\sum_{H \in \graphs_{\le D}}\sum_{H_1\subseteq K_n \; : \; H\sim H_1} c_{H_1}\prod_{(ji)\in \edges(H_1)}(2G_{ji} - 1).
$$
Then, by orthonormality under the $\ergraphhalf$ distribution (\cref{thm:orthofourier}),
\begin{align}
    & \var_{\ergraphhalf}[f(\bfG)] = \sum_{H \in \graphs_{\le D}}\sum_{H_1\subseteq K_n \; : \; H\sim H_1} c^2_{H_1}.
\end{align}
Hence, as $D$ is constant and there are constantly many graphs without isolated vertices and at most $D$ edges, it follows that 
\begin{align}
    \var_{\ergraphhalf}[f(\bfG)]^{1/2} &=\Bigg(\sum_{H \in \graphs_{\le D}}\sum_{H_1\subseteq K_n \; : \; H\sim H_1} c^2_{H_1}\Bigg)^{1/2}\\
    & = 
    \Theta\Bigg(
    \sum_{H \in \graphs_{\le D}}\Big(\sum_{H_1\subseteq K_n \; : \; H\sim H_1} c^2_{H_1}\Big)^{1/2}
    \Bigg).
\end{align}
At the same time, 
\begin{align*}
    & \Big|\expect_{\mathbb{P}_n}[f] - \expect_{\ergraphhalf}[f]\Big|\\
    & = \Big|\sum_{H \in \graphs_{\le D}}\sum_{H_1\subseteq K_n \; : \; H\sim H_1} c_{H_1}\prod_{(ji)\in \edges(H_1)}\expect_{\mathbb{P}_n}(2\bfG_{ji} - 1)\Big|\\
    & = 
    \Big|\sum_{H \in \graphs_{\le D}}\sum_{H_1\subseteq K_n \; : \; H\sim H_1} c_{H_1}\Fourier_{\mathbb{P}_n}(H)\Big|\\
    & \le 
    \sum_{H \in \graphs_{\le D}}
    \Big|\Fourier_{\mathbb{P}_n}(H)\Big|\times
    \sum_{H_1\subseteq K_n \; : \; H\sim H_1} |c_{H_1}|\\
        & \le 
    \sum_{H \in \graphs_{\le D}}
    \Big|\Fourier_{\mathbb{P}_n}(H)\Big|\times
    \frac{\Big({\sum_{H_1\subseteq K_n \; : \; H\sim H_1} c_{H_1}^2}\Big)^{1/2}}{\Big({\sum_{H_1\subseteq K_n \; : \; H\sim H_1} 1}\Big)^{1/2}}\quad\quad\quad\quad\quad(\text{Cauchy-Schwartz})\\
    & = \Theta\Bigg(
     \sum_{H \in \graphs_{\le D}}
    \Big|\Fourier_{\mathbb{P}_n}(H)\Big|\times
    \frac{\Big({\sum_{H_1\subseteq K_n \; : \; H\sim H_1} c_{H_1}^2}\Big)^{1/2}}{n^{|\vertices(H)|/2}}
    \Bigg).
\end{align*}
Hence, $
|\expect_{\mathbb{P}_n}[f] - \expect_{\ergraphhalf}[f]| = \omega(\var_{\ergraphhalf}[f]^{1/2})
$ implies that 
\begin{align*}
    & \sum_{H \in \graphs_{\le D}}
    |\Fourier_{\mathbb{P}_n}(H)|\times
    \frac{\Big({\sum_{H_1\subseteq K_n \; : \; H\sim H_1} c_{H_1}^2}\Big)^{1/2}}{n^{|\vertices(H)|/2}} = 
    \omega\Bigg(
    \sum_{H \in \graphs_{\le D}}\Big(\sum_{H_1\subseteq K_n \; : \; H\sim H_1} c^2_{H_1}\Big)^{1/2}
    \Bigg).
\end{align*}
As the sum is over constantly many terms, at most $2^{O(D\log D)}$ (which is an upper bound on the number of graphs without isolated vertices and at most $D$ edges), there exists some $H$ such that $\sum_{H_1\subseteq K_n \; : \; H\sim H_1} c_{H_1}^2\neq 0$ and
$$
|\Fourier_{\mathbb{P}_n}(H)|\times
    \frac{\Big({\sum_{H_1\subseteq K_n \; : \; H\sim H_1} c_{H_1}^2}\Big)^{1/2}}{n^{|\vertices(H)|/2}} = 
    \omega\Bigg(
    \Big(\sum_{H_1\subseteq K_n \; : \; H\sim H_1} c^2_{H_1}\Big)^{1/2}\Bigg).
$$
the claim follows.\end{proof}
Recall that the reason \eqref{eq:statisticalscaling} is not sufficient to conclude that a given signed subgraph count is a good tester is that \eqref{eq:statisticalscaling} does not take into account the variance under the planted distribution. Somewhat surprisingly, it turns out that the variance of the planted model is actually captured by this equation by the maximizer of $K\longrightarrow \Psi_{\SBM}(K).$

\begin{theorem}[Beating Variance of Null Implies Beating Variance of Planted]
\label{thm:beatingvarnullimpliesbeatingvarplanted}
Suppose that $\SBM(n; p, Q)$ is any SBM model and let $D$ be any constant (independent of $n$). Suppose that the connected graph $H$ on at most $D$ vertices satisfies the following two properties:
\begin{enumerate}
    \item $\Psi_{\SBM(p,Q)}(H) = \omega(n^{-1/2}).$
    \item $H$ is an approximate maximizer of $K \longrightarrow \Psi_{\SBM(p,Q)}(K)$ in the following sense. 
$$
\Psi_{\SBM(p,Q)}(H)\gtrsim_D
\Psi_{\SBM(p,Q)}(K)\text{ for any }K \text{on at most $2D$ edges.}
$$
\end{enumerate}
Then, one can test between $\SBM(n;p, Q)$ and $\ergraphhalf$ using the signed $H$ count, namely
$$
\Big|
\expect_{\SBM(n; p, Q)}
[\signedcount_H] - 
\expect_{\ergraphhalf}
[\signedcount_H]
\Big| = 
\omega\Big(\max(
\var_{\SBM(n; p, Q)}
[\signedcount_H]^{1/2},
\var_{\ergraphhalf}
[\signedcount_H]^{1/2})
\Big).
$$
\end{theorem}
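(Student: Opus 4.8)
The plan is to run the standard second-moment (overlap) expansion of $\var_{\SBM(p,Q)}[\signedcount_H]$ and to feed Hypothesis~2 into it, forcing the planted variance to be dominated by the mean gap; the null side will be immediate. Write $N_H$ for the number of (ordered) copies of $H$ in $\complete_n$, so $N_H\asymp_D n^{|\vertices(H)|}$ since $H$ has no isolated vertices. By \eqref{ergraph} (equivalently \cref{thm:orthofourier}) one has $\expect_{\ergraphhalf}[\signedcount_H]=0$ and $\var_{\ergraphhalf}[\signedcount_H]=N_H$; by vertex-symmetry and \eqref{eq:fourier}, $\expect_{\SBM(p,Q)}[\signedcount_H]=N_H\Fourier_{\SBM(p,Q)}(H)$, so the mean gap equals $N_H|\Fourier_{\SBM(p,Q)}(H)|\asymp_D n^{|\vertices(H)|}\Psi_{\SBM(p,Q)}(H)^{|\vertices(H)|}$. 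Hypothesis~1 says exactly $n^{1/2}\Psi_{\SBM(p,Q)}(H)=\omega(1)$, so the mean gap is $\omega(n^{|\vertices(H)|/2})=\omega(\var_{\ergraphhalf}[\signedcount_H]^{1/2})$, and it remains only to show $\var_{\SBM(p,Q)}[\signedcount_H]=o\!\big(N_H^2\Fourier_{\SBM(p,Q)}(H)^2\big)$.

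For the second moment, writing $\chi_K=\prod_{(ij)\in\edges(K)}(2G_{ij}-1)$ and using $(2G_{ij}-1)^2=1$, one has $\chi_{H_1}\chi_{H_2}=\chi_{H_1\otimes H_2}$ and $\expect_{\SBM(p,Q)}[\chi_K]=\Fourier_{\SBM(p,Q)}(K)$, so $\expect_{\SBM(p,Q)}[\signedcount_H^2]=\sum_{H_1,H_2}\Fourier_{\SBM(p,Q)}(H_1\otimes H_2)$ over ordered pairs of copies of $H$. I would split the sum by $j\coloneqq|\vertices(H_1)\cap\vertices(H_2)|$. The $j=0$ pairs are vertex-disjoint, so $H_1\otimes H_2=H_1\sqcup H_2$ and $\Fourier_{\SBM(p,Q)}(H_1\otimes H_2)=\Fourier_{\SBM(p,Q)}(H)^2$ by \cref{prop:vertexdisjointgrahs}; since all but $\lesssim_D n^{2|\vertices(H)|-1}$ of the $N_H^2$ ordered pairs are of this type, the $j=0$ part equals $N_H^2\Fourier_{\SBM(p,Q)}(H)^2$ up to an additive $\lesssim_D n^{2|\vertices(H)|-1}\Fourier_{\SBM(p,Q)}(H)^2=o\!\big(N_H^2\Fourier_{\SBM(p,Q)}(H)^2\big)$, and the main term cancels $(\expect_{\SBM(p,Q)}[\signedcount_H])^2=N_H^2\Fourier_{\SBM(p,Q)}(H)^2$. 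Hence everything reduces to bounding $\sum_{j\ge1}\sum_{|\vertices(H_1)\cap\vertices(H_2)|=j}|\Fourier_{\SBM(p,Q)}(H_1\otimes H_2)|$.

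Fix $j\ge1$. There are $\lesssim_D n^{2|\vertices(H)|-j}$ ordered pairs sharing exactly $j$ vertices. Because $H$ has no isolated vertices, any vertex of $H_1$ outside $\vertices(H_2)$ (and symmetrically) is incident to an edge of $\edges(H_1)\triangle\edges(H_2)$ and so survives in $H_1\otimes H_2$; thus only the $j$ common vertices can be deleted when passing to $H_1\otimes H_2$, giving $|\vertices(H_1\otimes H_2)|\ge|\vertices(H_1)\cup\vertices(H_2)|-j=2|\vertices(H)|-2j$. Moreover each connected component of $H_1\otimes H_2$ is a graph to which Hypothesis~2 applies (it has $\lesssim_D 1$ edges), so together with \cref{cor:vertexdisjointcomparison} this gives $\Psi_{\SBM(p,Q)}(H_1\otimes H_2)\lesssim_D\Psi_{\SBM(p,Q)}(H)$ --- when $H_1\otimes H_2$ has no edges (i.e.\ $H_1=H_2$, $j=|\vertices(H)|$) read $\Fourier_{\SBM(p,Q)}(H_1\otimes H_2)=1$. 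Since $\Psi_{\SBM(p,Q)}(H)\le1$, combining with the vertex bound yields $|\Fourier_{\SBM(p,Q)}(H_1\otimes H_2)|=\Psi_{\SBM(p,Q)}(H_1\otimes H_2)^{|\vertices(H_1\otimes H_2)|}\lesssim_D\Psi_{\SBM(p,Q)}(H)^{2|\vertices(H)|-2j}$, so the $j$-slice is $\lesssim_D n^{2|\vertices(H)|-j}\Psi_{\SBM(p,Q)}(H)^{2|\vertices(H)|-2j}\asymp_D N_H^2\Fourier_{\SBM(p,Q)}(H)^2\cdot(n\Psi_{\SBM(p,Q)}(H)^2)^{-j}$. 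Hypothesis~1 gives $n\Psi_{\SBM(p,Q)}(H)^2=\omega(1)$, so each $j\ge1$ slice is $o\!\big(N_H^2\Fourier_{\SBM(p,Q)}(H)^2\big)$; as there are only $\lesssim_D 1$ slices, this yields $\var_{\SBM(p,Q)}[\signedcount_H]=o\!\big(N_H^2\Fourier_{\SBM(p,Q)}(H)^2\big)$ and hence $\var_{\SBM(p,Q)}[\signedcount_H]^{1/2}=o\!\big(N_H|\Fourier_{\SBM(p,Q)}(H)|\big)$, which together with the null side finishes the argument.

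The step I expect to be the crux is the bound $|\Fourier_{\SBM(p,Q)}(H_1\otimes H_2)|\lesssim_D\Psi_{\SBM(p,Q)}(H)^{|\vertices(H_1\otimes H_2)|}$: this is precisely where Hypothesis~2 is indispensable, since without an assumed maximizer an overlap graph could carry an anomalously large Fourier coefficient whose slice would then swamp $(\expect[\signedcount_H])^2$ and break the test. The secondary delicate point is the vertex accounting --- establishing $|\vertices(H_1\otimes H_2)|\ge 2|\vertices(H)|-2j$ (``only shared vertices can vanish'') is what makes each slice carry the factor $(n\Psi_{\SBM(p,Q)}(H)^2)^{-j}$, so that Hypothesis~1 produces geometric decay in $j$ and the constantly many slices sum to something negligible.
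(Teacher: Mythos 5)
Your proof is correct and follows essentially the same strategy as the paper's: expand the planted variance over overlapping pairs $(H_1,H_2)$ of copies of $H$, bound $|\Fourier_{\SBM(p,Q)}(H_1\otimes H_2)|$ using Hypothesis~2, and use Hypothesis~1 to get geometric decay in the overlap size. The only difference is bookkeeping --- you collapse the paper's four-tuple $(s_\emptyset,s_1,s_2,s_{1,2})$ into the single overlap parameter $j=|\vertices(H_1)\cap\vertices(H_2)|$ by making explicit the observation that only shared vertices can become isolated in $H_1\otimes H_2$ (hence $|\vertices(H_1\otimes H_2)|\ge 2|\vertices(H)|-2j$), a fact the paper uses implicitly in its cardinality identity $s_\emptyset+s_1+s_{1,2}=|\vertices(H)|$.
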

\begin{proof}
Due to the $\Theta(n^{|\vertices(H)|})$ isomorphic copies of $H,$
\begin{align}
\label{eq:signedcountsasfourier}
\Big|
\expect_{\SBM(n; p, Q)}
[\signedcount_H] - 
\expect_{\ergraphhalf}
[\signedcount_H]
\Big| = 
\Theta(n^{|\vertices(H)|}\times |\Fourier_{\SBM(p, Q)}(H)|).
\end{align}
Hence, $|\Fourier_{\SBM(p, Q)}(H)|^{\frac{1}{|\vertices(H)|}} = \omega(n^{-1/2})$ immediately implies that 
$$
\Big|
\expect_{\SBM(n; p, Q)}
[\signedcount_H] - 
\expect_{\ergraphhalf}
[\signedcount_H]
\Big| = \omega(n^{|\vertices(H)|/2}) = 
\omega(\var_{\ergraphhalf}
[\signedcount_H]^{1/2}).
$$
Thus, all we need to show is that 
\begin{align*}
    & \Big|
\expect_{\SBM(n; p, Q)}
[\signedcount_H] - 
\expect_{\ergraphhalf}
[\signedcount_H]
\Big| = 
\omega(\var_{\SBM(p,Q)}
[\signedcount_H]^{1/2}).
\end{align*}
For the left hand-side, we use \eqref{eq:signedcountsasfourier}. The right hand-side we expand as follows:
\begin{align*}
    & \var_{\SBM(p,Q)}
[\signedcount_H]\\
& = \var_{\SBM(p,Q)}
\Bigg[\sum_{H_1\subseteq K_n\; : \; H_1\sim H} 
\prod_{(ji)\in \edges(H_1)}(2\bfG_{ji} - 1)\Bigg]\\
& = \sum_{H_1,H_2\subseteq K_n\; : \; H_1\sim H, H_2\sim H}
\cov_{\SBM(p,Q)}\Bigg[\prod_{(ji)\in \edges(H_1)}(2\bfG_{ji} - 1), \prod_{(ji)\in \edges(H_2)}(2\bfG_{ji} - 1)\Bigg].
\end{align*}
Note that if $H_1, H_2$ don't have a common vertex, the covariance is clearly zero.
Hence, the above expression equals 
\begin{align*}
    & \sum_{H_1,H_2\subseteq K_n\; : \; H_1\sim H, H_2\sim H, \vertices(H_1)\cap \vertices(H_2)\neq \emptyset}
\cov_{\SBM(p,Q)}\Bigg[\prod_{(ji)\in \edges(H_1)}(2\bfG_{ji} - 1), \prod_{(ji)\in \edges(H_2)}(2\bfG_{ji} - 1)\Bigg]\\
& = \sum 
\sum_{H_1,H_2\subseteq K_n\; : \; H_1\sim H, H_2\sim H, \vertices(H_1)\cap \vertices(H_2)\neq \emptyset} 
\Bigg|
\expect\Bigg[\prod_{(ji)\in \edges(H_1)}(2\bfG_{ji} - 1)\prod_{(ji)\in \edges(H_2)}(2\bfG_{ji} - 1)\Bigg]\\
&\quad\quad\quad\quad- 
\expect\Bigg[\prod_{(ji)\in \edges(H_1)}(2\bfG_{ji} - 1)\Bigg]\expect\Bigg[\prod_{(ji)\in \edges(H_2)}(2\bfG_{ji} - 1)\Bigg] 
\Bigg|\\
&\le 
\sum_{H_1,H_2\subseteq K_n\; : \; H_1\sim H, H_2\sim H, \vertices(H_1)\cap \vertices(H_2)\neq \emptyset} \Bigg(
\Bigg|
\expect\Bigg[\prod_{(ji)\in \edges(H_1)}(2\bfG_{ji} - 1)\prod_{(ji)\in \edges(H_2)}(2\bfG_{ji} - 1)\Bigg]\Bigg|\\
&\quad\quad\quad\quad +
\expect\Bigg[\prod_{(ji)\in \edges(H)}(2\bfG_{ji} - 1)\Bigg]^2\Bigg)
\\
& = 
\sum_{H_1,H_2\subseteq K_n\; : \; H_1\sim H, H_2\sim H, \vertices(H_1)\cap \vertices(H_2)\neq \emptyset}\Bigg(
\Bigg|
\expect\Bigg[\prod_{(ji)\in \edges(H_1)\triangle \edges(H_2)}(2\bfG_{ji} - 1)\Bigg]\Bigg|\\
&\quad\quad\quad\quad\quad\quad+ 
\expect\Bigg[\prod_{(ji)\in \edges(H)}(2\bfG_{ji} - 1)\Bigg]^2\Bigg).\\
\end{align*}
First, we will bound the term 
$$
\sum_{H_1,H_2\subseteq K_n\; : \; H_1\sim H, H_2\sim H, \vertices(H_1)\cap \vertices(H_2)\neq \emptyset}
\expect\Bigg[\prod_{(ji)\in \edges(H)}(2\bfG_{ji} - 1)\Bigg]^2.
$$
Note that there are $O(n^{2|\vertices(H)| -1})$ terms in the sum. This is the case since $|\vertices(H_1)\cup\vertices(H_2)|\le2|\vertices(H)|-1$ whenever $ \vertices(H_1)\cap \vertices(H_2)\neq \emptyset.$ Furthermore, each of these terms is $|\Fourier_{\SBM(p, Q)}(H)|^2.$ Altogether,
\begin{align*}
& \sum_{H_1,H_2\subseteq K_n\; : \; H_1\sim H, H_2\sim H, \vertices(H_1)\cap \vertices(H_2)\neq \emptyset}
\expect\Bigg[\prod_{(ji)\in \edges(H)}(2\bfG_{ji} - 1)\Bigg]^2\\
& =
O\Big(n^{2|\vertices(H)| -1}|\Fourier_{\SBM(p, Q)}(H)|^2\Big) = 
o\Big(\Big|
\expect_{\SBM(n; p, Q)}
[\signedcount_H] - 
\expect_{\ergraphhalf}
[\signedcount_H]
\Big|^2\Big)
\end{align*}
by \eqref{eq:signedcountsasfourier}.
Next, consider 
$$
\sum_{H_1,H_2\subseteq K_n\; : \; H_1\sim H, H_2\sim H, \vertices(H_1)\cap \vertices(H_2)\neq \emptyset}
\Bigg|
\expect\Bigg[\prod_{(ji)\in \edges(H_1)\triangle \edges(H_2)}(2\bfG_{ji} - 1)\Bigg]\Bigg|.
$$
Note that the graph defined by edges $\edges(H_1)\triangle \edges(H_2)$ is exactly $H_1\otimes H_2.$ Hence, the above sum is 
\begin{align}
\label{eq:overlappingequation}
\sum_{H_1,H_2\subseteq K_n\; : \; H_1\sim H, H_2\sim H, \vertices(H_1)\cap \vertices(H_2)\neq \emptyset}
\big|
\Fourier_{\SBM(p,Q)}(H_1\otimes H_2)
\big|.
\end{align}
Denote the following cardinalities:
\begin{equation}
\begin{split}
\label{eq:overlappingvertexsets}
& s_\emptyset = |(\vertices(H_1)\cup\vertices(H_2))\backslash \vertices(H_1\otimes H_2)|,\\
& s_1= |(\vertices(H_1)\cap \vertices(H_1\otimes H_2))\backslash \vertices(H_2)|,\\
& s_2= |(\vertices(H_2)\cap \vertices(H_1\otimes H_2))\backslash \vertices(H_1)|,\\
&s_{1,2} = |\vertices(H_1)\cap\vertices(H_2)\cap \vertices(H_1\otimes H_2)|
\end{split}
\end{equation}
A simple count shows that
\begin{equation}
\label{eq:setcardnalities}
\begin{split}
   & s_1+ s_2+ s_{1,2} = |\vertices(H_1\otimes H_2)|,\\
   & s_\emptyset + s_1 + s_{1,2} = |\vertices(H_1)| = |\vertices(H_2)|= 
s_\emptyset + s_2 + s_{1,2},\\
& s_\emptyset + s_1 + s_2 + s_{1,2} = |\vertices(H_1)\cup\vertices(H_2)|.
\end{split}
\end{equation}
The number of ways to choose $H_1,H_2\sim H$ so that $|\vertices(H_1)\cup\vertices(H_2)| = s_\emptyset + s_1 + s_2+s_{1,2}$ is $O(n^{s_\emptyset + s_1 + s_2 + s_{1,2}}).$  Hence, the number of terms in 
\eqref{eq:overlappingequation} with the specific overlap pattern of $H_1,H_2$ satisfying \eqref{eq:overlappingvertexsets} is $O(n^{s_\emptyset + s_1 + s_2 + s_{1,2}}).$ As there are constantly many choices of $s_\emptyset,s_1,s_2,s_{1,2}$ (as $H_1,H_2$ are two graphs on constantly many vertices), recalling \eqref{eq:signedcountsasfourier}, it is enough to show that for any choice for $s_\emptyset,s_1, s_2, s_{1,2},$
\begin{align*}
n^{2|\vertices(H)|}\times |\Fourier_{\SBM(p, Q)}(H)|^2= 
\omega\Bigg(
n^{s_\emptyset + s_1 + s_2 + s_{1,2}}
|\Fourier_{\SBM(p,Q)}(H_1\otimes H_2)|
\Bigg).
\end{align*}
Using the fact that $|\edges(H_1\otimes H_2)| = |\edges(H_1)\triangle\edges(H_2)|\le |\edges(H_1)|+|\edges(H_2)|\le 2D$ by condition 2., the approximate optimality of $H,$ it follows that 
$$
|\Fourier_{\SBM(p,Q)}(H_1\otimes H_2)|^{\frac{1}{|\vertices(H_1\otimes H_2)|}}=O(|\Fourier_{\SBM(p,Q)}(H)|^{\frac{1}{|\vertices(H)|}}).
$$
Hence, it is enough to show that 
\begin{align}
\label{eq:onepossibleoverlap}
n^{2|\vertices(H)|}\times |\Fourier_{\SBM(p, Q)}(H)|^2= 
\omega\Bigg(
n^{s_\emptyset + s_1 + s_2 + s_{1,2}}
|\Fourier_{\SBM(p, Q)}(H)|^{\frac{|\vertices(H_1\otimes H_2)|}{|\vertices(H)|}}
\Bigg).
\end{align}
Recalling \eqref{eq:setcardnalities} and the fact that 
$|\vertices(H)| = \frac{1}{2}(|\vertices(H_1)|+ |\vertices(H_2)|),$ we need to show that 
\begin{align*}
& n^{s_1 + s_2 + 2s_\emptyset+ 2s_{1,2}}\times 
 |\Fourier_{\SBM(p, Q)}(H)|^2= 
\omega\Bigg(
n^{s_\emptyset + s_1 + s_2 + s_{1,2}}
|\Fourier_{\SBM(p, Q)}(H)|^{\frac{s_1 + s_2 + s_{1,2}}{(s_1 + s_2 + 2s_\emptyset+ 2s_{1,2})/2}}\Bigg)\quad \Longleftrightarrow\\
& 
|\Fourier_{\SBM(p, Q)}(H)|^{\frac{2(s_1 + s_2 + 2s_\emptyset + 2s_{1,2})- 
2(s_1 + s_2 + s_{1,2})
}{s_1 + s_2 + 2s_\emptyset + 2s_{1,2}}}
= 
\omega(
n^{-s_\emptyset -s_{1,2}}
)\quad \Longleftrightarrow\\
& 
|\Fourier_{\SBM(p, Q)}(H)|^{\frac{4s_\emptyset + 2s_{1,2}}{2|\vertices(H)|}}=
\omega(
n^{-s_\emptyset -s_{1,2}}
).
\end{align*}
This last inequality follows from condition 1. that
$|\Fourier_{\SBM(p, Q)}(H)|{\frac{1}{|\vertices(H)|}} = \omega(n^{-\frac{1}{2}}).$ Concretely,  
$$
|\Fourier_{\SBM(p, Q)}(H)|^{\frac{4s_\emptyset + 2s_{1,2}}{2|\vertices(H)|}}\ge 
|\Fourier_{\SBM(p, Q)}(H)|^{\frac{4s_\emptyset + 4s_{1,2}}{2|\vertices(H)|}} =
\Bigg(|\Fourier_{\SBM(p, Q)}(H)|^{\frac{1}{|\vertices(H)|}}\Bigg)^{2(s_\emptyset + s_{1,2})} = 
\omega(
n^{-s_\emptyset -s_{1,2}}
).
$$
Thus, we have shown \eqref{eq:onepossibleoverlap} for one possible choice of $s_\emptyset, s_1, s_2,s_{1,2}$ in \eqref{eq:overlappingvertexsets}. Enumerating over the constantly many possible choices for $s_\emptyset, s_1, s_2,s_{1,2}$ gives the result. 
\end{proof}

\section{Proofs of Main Results}
\subsection{Diagonal SBMs}
In this section, we study the family of $\SBM(p,Q)$ models where all off-diagonal entries of $Q$ are non-zero. This is equivalent to saying that whenever two vertices have distinct labels, the probability they are adjacent is exactly $1/2.$

\begin{theorem}[Maximizing Partition Functions in Diagonal SBMs]
\label{thm:maximizepartitiondiagonal}
Suppose that $\SBM(p,Q)$ is such that $Q$ is diagonal. Then,
for any connected graph $H,$ 
$$
\Psi_{\SBM(p,Q)}(H)
\le 
\max(
\Psi_{\SBM(p,Q)}(\Star_1),
\Psi_{\SBM(p,Q)}(\Star_2)
).
$$
\end{theorem}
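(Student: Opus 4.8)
The plan is to exploit the diagonal structure of $Q$ to collapse the Fourier formula of \cref{prop:explicitFourier} to a one-parameter sum, and then run a short convexity argument. Write $h=|\vertices(H)|$ and $m=|\edges(H)|$. In the expansion
\[
\Fourier_{\SBM(p,Q)}(H) = \sum_{x_1,\dots,x_h\in[k]}\Big(\prod_{i=1}^h p_{x_i}\prod_{(i,j)\in\edges(H)}Q_{x_i,x_j}\Big),
\]
every term with $x_i\neq x_j$ for some edge $(i,j)$ vanishes because $Q$ is diagonal; since $H$ is connected the only surviving terms are those with all $x_i$ equal, so $\Fourier_{\SBM(p,Q)}(H)=\sum_{s\in[k]}p_s^{\,h}\,Q_{s,s}^{\,m}$. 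By \cref{cor:starcounts} the same formula gives $\Fourier_{\SBM(p,Q)}(\Star_1)=\sum_s p_s^2 Q_{s,s}$ and $\Fourier_{\SBM(p,Q)}(\Star_2)=\sum_s p_s^3 Q_{s,s}^2$.

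First I would dispose of $h=2$: the only connected graph on two vertices (without isolated vertices) is a single edge, i.e.\ $\Star_1$, so $\Psi_{\SBM(p,Q)}(H)=\Psi_{\SBM(p,Q)}(\Star_1)$ and there is nothing to prove. Then I assume $h\ge 3$. Using $|Q_{s,s}|\le 1$, the bound $m\ge h-1\ge 2h/3$ forced by connectedness (this last inequality is precisely where $h\ge 3$ enters), and monotonicity of $t\mapsto |Q_{s,s}|^t$,
\[
\big|\Fourier_{\SBM(p,Q)}(H)\big|\le \sum_{s\in[k]}p_s^{\,h}|Q_{s,s}|^{m}\le \sum_{s\in[k]}p_s^{\,h}|Q_{s,s}|^{2h/3}=\sum_{s\in[k]}\big(p_s^3 Q_{s,s}^2\big)^{h/3}.
\]
Setting $c_s:=p_s^3 Q_{s,s}^2\ge 0$ and $C:=\sum_s c_s=\Fourier_{\SBM(p,Q)}(\Star_2)\in[0,1]$, the bound $c_s\le C$ together with $h/3\ge 1$ gives $c_s^{h/3}=c_s\,c_s^{h/3-1}\le c_s\,C^{h/3-1}$, hence $\sum_s c_s^{h/3}\le C^{h/3}$. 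Thus $|\Fourier_{\SBM(p,Q)}(H)|\le \Fourier_{\SBM(p,Q)}(\Star_2)^{h/3}$, and taking $h$-th roots yields $\Psi_{\SBM(p,Q)}(H)\le \Psi_{\SBM(p,Q)}(\Star_2)\le\max\big(\Psi_{\SBM(p,Q)}(\Star_1),\Psi_{\SBM(p,Q)}(\Star_2)\big)$.

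I do not expect a serious obstacle here — this is the warm-up case, and once the diagonal reduction is made the statement reduces to monotonicity of $t\mapsto|Q_{s,s}|^t$ on $[0,1]$ and the elementary inequality $\sum_s c_s^\alpha\le(\sum_s c_s)^\alpha$ for $\alpha\ge 1$. The two points that do require care are: (i) the case $h=2$, where $\Star_2$ alone is insufficient (for $k=1$, $p=(1)$, $Q=(q)$ one has $|\Fourier(\Star_1)|=|q|$ but $\Fourier(\Star_2)^{2/3}=|q|^{4/3}<|q|$), which is exactly why the extremal set must contain the edge; and (ii) keeping the direction of the exponent comparison straight, since $|Q_{s,s}|\le 1$ means a larger exponent only shrinks the value, which is what converts the connectedness bound $m\ge h-1$ into a comparison against $\Star_2$.
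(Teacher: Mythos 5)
Your proof is correct, and it takes a genuinely different (and somewhat more elementary) route than the paper's at the crucial step. Both arguments begin with the identical diagonal reduction $\Fourier_{\SBM(p,Q)}(H)=\sum_s p_s^{\,h}Q_{s,s}^{\,m}$ via \cref{prop:explicitFourier}, both use $m\ge h-1$, and both dispose of $h=2$ trivially. Where they part ways: the paper proves and invokes the calculus lemma (\cref{sec:smpleinequality}) that $v\mapsto\big(\sum_s p_s^{\,v}|Q_{s,s}|^{v-1}\big)^{1/v}$ is non-increasing on $[1,\infty)$, established by differentiating the logarithm. You instead replace the connectedness bound $m\ge h-1$ by the looser but algebraically convenient $m\ge 2h/3$ (valid exactly when $h\ge 3$), which re-normalizes the exponents of $p_s$ and $|Q_{s,s}|$ into the ratio $3{:}2$ matching $\Star_2$, and then closes with the elementary superadditivity $\sum_s c_s^\alpha\le\big(\sum_s c_s\big)^\alpha$ for $\alpha\ge 1$ and $c_s\ge 0$. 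Both routes land on the same inequality $\Psi_{\SBM(p,Q)}(H)\le\Psi_{\SBM(p,Q)}(\Star_2)$ for $h\ge 3$; yours buys a purely algebraic proof that avoids the calculus lemma entirely, at the (harmless) cost of discarding the slack between $h-1$ and $2h/3$. Your parenthetical remark that $\Star_2$ alone is insufficient when $h=2$ is also a useful sanity check explaining why $\Star_1$ must appear in the extremal set.
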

\begin{proof} Take any connected $H.$ Recall \cref{prop:explicitFourier} stating that
$$
\Fourier_{\SBM(p,Q)}(H)= 
\sum_{x_1, x_2, \ldots, x_h\in [k]}\Bigg(
\prod_{i = 1}^h p_{x_i}
\times
\prod_{(i,j)\in \edges(H)}
Q_{x_i,x_j}\Bigg).
$$
Observe that since $Q_{x_i,x_j} = 0$ whenever $x_i\neq x_j,$ the expression becomes 
$$
\Fourier_{\SBM(p,Q)}(H)= 
\sum_{x\in [k]}
(p_x)^{|\vertices(H)|}
\times (Q_{x,x})^{|\edges(H)|}.
$$
In particular, for any graph $H,$ it is clearly the case that 
\begin{equation}
\begin{split}
\label{eq:triangleineqfourier}
     |\Fourier_{\SBM(p,Q)}(H)|
    & =
    \Big|\sum_{x\in [k]}
(p_x)^{|\vertices(H)|}
\times (Q_{x,x})^{|\edges(H)|}\Big|\\
& \le 
\sum_{x\in [k]}
(p_x)^{|\vertices(H)|}
\times 
|Q_{x,x}|^{|\edges(H)|}\\
& \le 
\sum_{x\in [k]}
(p_x)^{|\vertices(H)|}
\times 
|Q_{x,x}|^{|\vertices(H)|-1},
\end{split}
\end{equation}
where in the last line we used the fact that any connected graph on $v$ vertices has at least $v - 1$ edges. 
Now, to finish the proof, we will use the fact that
\begin{align}
\label{eq:decreasinginv}
v\longrightarrow (\sum_{x\in [k]}
(p_x)^{v}
\times 
|Q_{x,x}|^{v-1})^{1/v}
\end{align}
is decreasing on $[1,+\infty)$. This is a simple analytic inequality which we prove in \cref{sec:smpleinequality}.

\eqref{eq:decreasinginv} implies that for any graph on at least three vertices (and there exists a unique connected graph on two vertices, $\Star_1$), it is the case that 
\begin{equation*}
 |\Fourier_{\SBM(p,Q)}(H)|^{\frac{1}{|\vertices(H)|}}\le 
 (\sum_{x\in [k]}
(p_x)^{3}
\times 
|Q_{x,x}|^{2})^{\frac{1}{3}} = 
 |\Fourier_{\SBM(p,Q)}(\Star_2)|^{\frac{1}{|\vertices(\Star_2)|}}.\qedhere
\end{equation*}
\end{proof}
Combining with \cref{thm:beatingvarnullimpliesbeatingvarplanted}, we obtain a result for the optimal constant degree distinguisher.

\begin{theorem}[Testing in Diagonal SBMs]
Suppose that $\SBM(p,Q)$ is a stochastic block model with a diagonal matrix $Q.$ Then, if there exists a constant degree test distinguishing $\ergraphhalf$ and 
$\SBM(n;p,Q)$ with high probability, one can also distinguish the two distributions with high probability using the signed $\Star_1$ (edge) or $\Star_2$ (wedge) count.
\end{theorem}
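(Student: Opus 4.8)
The plan is to chain together three facts already established in the excerpt: \cref{thm:ldpimpliesbsubgraphcount}, which turns a constant-degree distinguisher into a signed subgraph count beating the $\ergraphhalf$ variance; \cref{thm:maximizepartitiondiagonal}, which shows that for diagonal $Q$ the edge $\Star_1$ and the wedge $\Star_2$ dominate every connected graph under the functional $\Psi_{\SBM(p,Q)}$; and \cref{thm:beatingvarnullimpliesbeatingvarplanted}, which upgrades an approximate $\Psi$-maximizer beating the null variance into a genuine high-probability test. We may assume throughout that $D$ is a sufficiently large absolute constant, since enlarging $D$ only weakens the hypothesis.

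Suppose $f$ is a degree-$D$ polynomial distinguishing $\ergraphhalf$ and $\SBM(n;p,Q)$ with high probability in the sense of \cref{def:lowdegreesuccess}; in particular $|\expect_{\SBM(n;p,Q)}[f]-\expect_{\ergraphhalf}[f]| = \omega(\var_{\ergraphhalf}[f]^{1/2})$. Since $\SBM(n;p,Q)$ is vertex-symmetric, \cref{thm:ldpimpliesbsubgraphcount} produces a graph $H\in\graphs_{\le D}$ with $\Psi_{\SBM(p,Q)}(H) = \omega(n^{-1/2})$. Because $H$ need not be connected, I would iterate \cref{cor:vertexdisjointcomparison} over the connected components of $H$ to obtain $\Psi_{\SBM(p,Q)}(H)\le\max_{C}\Psi_{\SBM(p,Q)}(C)$, so some connected component $C\in\graphs_{\le D}$ still satisfies $\Psi_{\SBM(p,Q)}(C) = \omega(n^{-1/2})$.

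Let $K^\ast\in\{\Star_1,\Star_2\}$ be whichever of the edge and the wedge has the larger value of $\Psi_{\SBM(p,Q)}$. Applying \cref{thm:maximizepartitiondiagonal} to the connected graph $C$ gives $\Psi_{\SBM(p,Q)}(C)\le\max(\Psi_{\SBM(p,Q)}(\Star_1),\Psi_{\SBM(p,Q)}(\Star_2))=\Psi_{\SBM(p,Q)}(K^\ast)$, hence $\Psi_{\SBM(p,Q)}(K^\ast) = \omega(n^{-1/2})$, which is condition~1 of \cref{thm:beatingvarnullimpliesbeatingvarplanted}. For condition~2, I would verify that $K^\ast$ maximizes $\Psi_{\SBM(p,Q)}$ over all competitors on at most $2D$ edges, with constant factor $1$: every such graph (without isolated vertices) decomposes into connected components, each in $\graphs_{\le 2D}$ and thus bounded by $\max(\Psi_{\SBM(p,Q)}(\Star_1),\Psi_{\SBM(p,Q)}(\Star_2))=\Psi_{\SBM(p,Q)}(K^\ast)$ via \cref{thm:maximizepartitiondiagonal}, and \cref{cor:vertexdisjointcomparison} then recombines the components. (The edge budget causes no trouble: $\Star_1$ and $\Star_2$ have $1$ and $2$ edges and $C$ has at most $D$ edges, all at most $2D$.)

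With both hypotheses in place, \cref{thm:beatingvarnullimpliesbeatingvarplanted} yields $|\expect_{\SBM(n;p,Q)}[\signedcount_{K^\ast}]-\expect_{\ergraphhalf}[\signedcount_{K^\ast}]|=\omega\big(\max(\var_{\SBM(n;p,Q)}[\signedcount_{K^\ast}]^{1/2},\var_{\ergraphhalf}[\signedcount_{K^\ast}]^{1/2})\big)$, and the Chebyshev argument following \eqref{eq:meanseparationforsignedcounts} converts this mean/standard-deviation separation into a high-probability test based on the signed $\Star_1$ or $\Star_2$ count --- exactly the claimed conclusion. I do not anticipate a genuine obstacle: all the real work is in \cref{thm:maximizepartitiondiagonal} and the two meta-theorems. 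The only points needing attention are routine bookkeeping --- matching the exact separation condition of \cref{def:lowdegreesuccess} both when invoking \cref{thm:ldpimpliesbsubgraphcount} and when reading off the conclusion from \cref{thm:beatingvarnullimpliesbeatingvarplanted}, reducing disconnected competitor graphs to their connected components via \cref{cor:vertexdisjointcomparison}, and tracking the $D$-versus-$2D$ edge budgets.
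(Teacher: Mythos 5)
Your proof is correct and takes the same route the paper intends: the paper itself only remarks ``Combining with \cref{thm:beatingvarnullimpliesbeatingvarplanted}, we obtain a result for the optimal constant degree distinguisher,'' and your argument is exactly the spelled-out version of that combination, going through \cref{thm:ldpimpliesbsubgraphcount} to extract a graph $H$ beating the null variance, reducing to a connected component via \cref{cor:vertexdisjointcomparison}, applying \cref{thm:maximizepartitiondiagonal} to land on $\Star_1$ or $\Star_2$, and verifying both hypotheses of \cref{thm:beatingvarnullimpliesbeatingvarplanted}. Your bookkeeping of the $D$-versus-$2D$ edge budgets and the reduction of disconnected competitors to connected components is exactly the routine care the paper leaves implicit.
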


\subsection{Non-Negative SBMs}
The case of non-negative SBMs is covered by \cite{yu2024counting}. Let us explain how to relate a non-negative SBM and the more general \cref{thm:countingstars} of \cite{yu2024counting}. This relation also appears in the ``planted dense subgraph application'' of the main result of \cite{yu2024counting}. Namely, let $\SBM(p,Q)$ be a non-negative SBM. That, is $Q_{i,j}\ge 0 \quad \forall i,j.$ Then, the probability of an edge between vertices with labels $i$ and $j$ is $(Q_{i,j}+1)/2 = \frac{1}{2} + \frac{Q_{i,j}}{2}.$ The following procedure generates a sample from $\SBM(p,Q):$
\begin{enumerate}
    \item First, draw $n$ iid labels $\bfx_1, \ldots, \bfx_n\iidsim p.$
    \item Then, draw a graph $H'$ on vertex set $[n]$ by drawing an edge between $i,j$ with probability $Q_{\bfx_i, \bfx_j}.$
    \item Let $H'$ be a uniformly random vertex permutation of $H.$
    \item Now, draw $\bfK\sim\ergraphhalf$ and output $\bfG = \bfK\cup H.$
\end{enumerate}
Step 4 shows that the model can be captured via the planted subgraph result of \cref{thm:countingstars}.  We obtain the following corollary, implicit in \cite{yu2024counting}. 
\begin{corollary}
[Implicit in \cite{yu2024counting}]
Let $Q$ be a non-negative matrix.
If there exists a degree-$D$ test distinguishing $\ergraphhalf$ and 
$\SBM(n;p,Q)$ with high probability for some absolute constant $D,$ then one can also distinguish the two distribution with high probability using the signed count of some star on at most $D$ edges.
\end{corollary}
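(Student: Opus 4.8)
The plan is to realize the non-negative $\SBM(n;p,Q)$ as a planted-subgraph model of the type in \cref{thm:countingstars} and then simply quote that theorem. The key elementary observation is that when $Q_{i,j}\ge 0$, two vertices with labels $i,j$ are adjacent with probability $\frac{1}{2}+\frac{1}{2}Q_{i,j}=1-\frac{1}{2}(1-Q_{i,j})$, which is exactly the probability that the corresponding edge appears in \emph{at least one} of two independent sources: an $\ergraphhalf$ edge and a $\bern(Q_{i,j})$ edge. This is precisely what the four-step procedure above formalizes, so $\SBM(n;p,Q)$ has the law of the union of a fresh $\ergraphhalf$ sample with a (permuted) random graph $H'$ whose edges, given the labels, are independent $\bern(Q_{\bfx_i,\bfx_j})$. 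With this identification, \cref{thm:countingstars} turns any constant-degree polynomial distinguisher into a signed star distinguisher, which is the claim.

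The one point I would need to verify is that \cref{thm:countingstars} is phrased for a \emph{fixed} planted graph $H_n$, whereas here the planted graph $H'$ is random, so $\SBM(n;p,Q)$ is a mixture of planted-subgraph models. I would handle this by conditioning on $(\bfx_i)_{i=1}^n$ and $H'$: for each realization the conditional law is an honest planted-subgraph model, and all objects entering the proof of \cref{thm:countingstars} — Fourier coefficients and low-degree projections — are linear in the planted graph, hence average correctly over $H'$. Concretely, averaging the planted-subgraph Fourier coefficients over $H'$ reproduces exactly $\Fourier_{\SBM(p,Q)}(H)=\expect\big[\prod_{(i,j)\in\edges(H)}Q_{\bfx_i,\bfx_j}\big]$ from \cref{prop:explicitFourier}, so the argument of \cite{yu2024counting} applies verbatim to the mixture.

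As a self-contained alternative that uses only the machinery already set up, I would chain the meta-theorems. Given a degree-$D$ polynomial distinguishing $\ergraphhalf$ from $\SBM(n;p,Q)$, \cref{thm:ldpimpliesbsubgraphcount} (applicable by vertex symmetry of the SBM) yields a graph $H$ without isolated vertices on at most $D$ edges with $\Psi_{\SBM(p,Q)}(H)=\omega(n^{-1/2})$, and by \cref{cor:vertexdisjointcomparison} I may assume $H$ connected. Since every $Q_{\bfx_i,\bfx_j}\in[0,1]$, deleting an edge cannot decrease $\Fourier_{\SBM(p,Q)}$; iterating such deletions while never creating an isolated vertex terminates at a disjoint union of stars (the argument of \cref{sec:priortechniquesSTARS}, proved as \cref{thm:maximizinginnonnegative}), so $\Psi_{\SBM(p,Q)}(H)\le \Psi_{\SBM(p,Q)}(\Star_t)$ for some $t\le |\edges(H)|\le D$. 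Replacing this star by the one maximizing $\Psi_{\SBM(p,Q)}(\Star_s)$ over stars with at most a fixed multiple of $D$ edges — which still has $\Psi=\omega(n^{-1/2})$ and which, again by edge-deletion, dominates $\Psi_{\SBM(p,Q)}$ of every graph on $O(D)$ edges — I then invoke \cref{thm:beatingvarnullimpliesbeatingvarplanted} to conclude that this star's signed count distinguishes the two distributions with high probability.

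The hard part is not in this corollary itself but in its inputs. The crux is the monotonicity of $\Fourier_{\SBM(p,Q)}$ under edge deletion, and this is exactly where non-negativity of $Q$ is indispensable: \cref{challenge:iterativetechniquefails} records that iterative edge removal breaks down the moment $Q$ has a negative entry, which is what forces the considerably heavier arguments behind parts 3 and 4 of \cref{thm:mainintro}. The only real bookkeeping in the present statement is reconciling the ``at most $D$ edges'' and ``at most $2D$ edges'' thresholds required by \cref{thm:beatingvarnullimpliesbeatingvarplanted}, which I dispatch by first passing to the $\Psi$-optimal bounded-size star; this is legitimate precisely because edge-deletion collapses any bounded-size graph to a bounded-size star.
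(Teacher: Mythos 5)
Your first approach is exactly the paper's: realize the non-negative $\SBM(n;p,Q)$ as a union model via the four-step construction and cite \cref{thm:countingstars}. The paper's own exposition there is terse, and your observation that \cref{thm:countingstars} is phrased for a \emph{fixed} planted graph $H_n$ while here the planted graph $H'$ is random is a genuine subtlety the paper glosses over. Your proposed resolution (condition on $H'$ and observe that the relevant Fourier coefficients average correctly) is reasonable; it is also worth noting that the paper explicitly flags this as ``the planted dense subgraph application'' of \cite{yu2024counting}, which suggests \cite{yu2024counting} already proves a version with random planted graph, so the paper is likely leaning on that.

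Your second, self-contained route has a gap that you flag but do not close. The chain $\text{\cref{thm:ldpimpliesbsubgraphcount}} \Rightarrow \text{\cref{cor:vertexdisjointcomparison}} \Rightarrow \text{\cref{thm:maximizinginnonnegative}}$ correctly produces some $t\le D$ with $\Psi_{\SBM(p,Q)}(\Star_t)=\omega(n^{-1/2})$. But to invoke \cref{thm:beatingvarnullimpliesbeatingvarplanted} for $\Star_t$ you must verify that $\Star_t$ is a $\gtrsim_D$-approximate maximizer of $\Psi_{\SBM(p,Q)}$ among all $K$ on at most $2D'$ edges (where $D'$ is the parameter fed to that theorem, roughly $t+1$). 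Your proposed fix --- replace $\Star_t$ by the $\Psi$-optimal star over stars on at most $O(D)$ edges --- does not terminate: if $\Star_{t^*}$ is optimal among stars with $\le MD$ edges, condition~2 for $\Star_{t^*}$ requires comparing against graphs on up to $2t^*+2$ edges, which can exceed $MD$; enlarging $M$ just pushes the problem further out. Moreover, the resulting $t^*$ could itself exceed $D$, contradicting the stated ``at most $D$ edges.'' In the non-negative setting $\Psi_{\SBM(p,Q)}(\Star_t)$ is not monotone in $t$ (cf.\ \cref{examplethm:largestarsnotrianglenofourcycle}), so there is no free lunch here. Making the second route airtight would require a dedicated variance argument for stars akin to ``Step 2'' in the proof of the 2-SBM testing theorem in \cref{sec:2sbms}, or simply the variance control already done inside the proof of \cref{thm:countingstars} in \cite{yu2024counting} --- which is what the paper (and your first approach) in fact relies on.
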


We still describe the corresponding partition function maximization step as it will be useful in the proof of \cref{thm:maximizingpartition2sbm} (our result for 2-SBMs).
\begin{theorem}[Maximizing Partition Functions in Non-Negative SBMs]
\label{thm:maximizinginnonnegative}
Consider any $\SBM(p,Q)$ model in which all entries of the matrix $Q$ are non-negative. Then, for any connected graph $H$ on at most $D$ edges, 
$$
\Psi_{\SBM(p,Q)}(H)\le
\max_{1\le t \le D}
\Psi_{\SBM(p,Q)}(\Star_t).
$$
\end{theorem}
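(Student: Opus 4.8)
The plan is to carry out the edge-removal argument sketched in \cref{sec:priortechniquesSTARS}. Since all entries of $Q$ and $p$ are nonnegative, \cref{prop:explicitFourier} writes $\Fourier_{\SBM(p,Q)}(H)$ as a sum of nonnegative products, so $\Fourier_{\SBM(p,Q)}(H)\ge 0$ and $\Psi_{\SBM(p,Q)}(H)=\Fourier_{\SBM(p,Q)}(H)^{1/|\vertices(H)|}$. The key monotonicity step is: for $e\in\edges(H)$, let $H\setminus e$ be $H$ with $e$ deleted but the vertex set left unchanged; because every $Q_{x_i,x_j}\in[0,1]$, dropping the factor indexed by $e$ from each (nonnegative) term of \eqref{eq:explicitfourier} can only increase that term, hence $\Fourier_{\SBM(p,Q)}(H\setminus e)\ge\Fourier_{\SBM(p,Q)}(H)$.

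Starting from the connected graph $H$, I would repeatedly delete an edge whose deletion creates no isolated vertex, always keeping the same vertex set, until no such edge remains; call the terminal graph $K$. The process terminates since the edge count strictly decreases. By construction $\vertices(K)=\vertices(H)$, $K$ has no isolated vertices, and $|\edges(K)|\le|\edges(H)|\le D$. Moreover, in $K$ every edge deletion isolates a vertex, which forces every connected component of $K$ to be a star $\Star_t$ with $t\ge 1$: a connected graph containing a cycle has a cycle edge whose removal isolates nothing, and a tree in which every edge is incident to a degree-$1$ vertex has at most one vertex of degree $\ge 2$, i.e.\ is a star. Chaining the monotonicity inequality over all the deletions gives $\Fourier_{\SBM(p,Q)}(H)\le\Fourier_{\SBM(p,Q)}(K)$, and since $|\vertices(K)|=|\vertices(H)|$ this yields $\Psi_{\SBM(p,Q)}(H)\le\Psi_{\SBM(p,Q)}(K)$. (If $H=\Star_1$ no edge is removable, but then $K=H$ and the conclusion is immediate.)

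To finish, write $K=\Star_{t_1}\sqcup\Star_{t_2}\sqcup\cdots\sqcup\Star_{t_m}$ with each $t_j\ge 1$ and $\sum_j t_j=|\edges(K)|\le D$, so in particular $t_j\le D$ for all $j$. Applying \cref{cor:vertexdisjointcomparison} inductively across the components (its statement, together with the factorization in \cref{prop:vertexdisjointgrahs}, is exactly what lets one pull out the best component),
$$
\Psi_{\SBM(p,Q)}(K)\ \le\ \max_{1\le j\le m}\Psi_{\SBM(p,Q)}(\Star_{t_j})\ \le\ \max_{1\le t\le D}\Psi_{\SBM(p,Q)}(\Star_t),
$$
which combined with the previous paragraph proves the theorem.

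I do not expect a genuine obstacle here; the only point needing a moment of care is the structural claim that the terminal graph's components are exactly stars (equivalently, that "no edge is removable without isolating a vertex" characterizes disjoint unions of stars), which is the elementary forest argument indicated above. Everything else is the nonnegativity-driven monotonicity of $\Fourier_{\SBM(p,Q)}$ under edge deletion and the disjoint-union behavior of $\Psi_{\SBM(p,Q)}$ already recorded in \cref{prop:vertexdisjointgrahs} and \cref{cor:vertexdisjointcomparison}.
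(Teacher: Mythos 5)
Your proof is correct and takes essentially the same approach as the paper's: both exploit the nonnegativity of $Q$ to show that deleting an edge can only increase the Fourier coefficient, iterate until forced to stop, observe that the terminal graph is a disjoint union of stars, and then invoke \cref{cor:vertexdisjointcomparison} to extract a single dominating star. The paper organizes the edge deletions in two phases (first reduce to a spanning tree, then split the tree at middle edges of length-3 paths), whereas you run a single deletion loop ("remove any edge whose removal isolates no vertex") and characterize the terminal graphs directly; the two are interchangeable and the mathematical content is identical.
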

\begin{proof}
Let $H$ be any connected graph on at most $D$ edges. 
Again, we start by recalling \cref{prop:explicitFourier}:
$$
\Fourier_{\SBM(p,Q)}(H)= 
\sum_{x_1, x_2, \ldots, x_h\in [k]}\Bigg(
\prod_{i = 1}^h p_{x_i}
\times
\prod_{(i,j)\in \edges(H)}
Q_{x_i,x_j}\Bigg).
$$ 
Now, we make the following observation. Suppose that $H$ is not a tree. Then, one can  remove some edge $(i,j)\in \edges(H)$ to obtain a graph $H'$ that is still connected. Clearly, 
\begin{equation}
\label{eq:effectofedgeremoval}
\begin{split}
& \Fourier_{\SBM(p,Q)}(H')= 
\sum_{x_1, x_2, \ldots, x_h\in [k]}\Bigg(
\prod_{i = 1}^h p_{x_i}
\times
\prod_{(i,j)\in \edges(H')}
Q_{x_i,x_j}\Bigg)\\
&\ge 
\sum_{x_1, x_2, \ldots, x_h\in [k]}\Bigg(
\prod_{i = 1}^h p_{x_i}
\times
\prod_{(i,j)\in \edges(H)}
Q_{x_i,x_j}\Bigg) = 
\Fourier_{\SBM(p,Q)}(H),
\end{split}
\end{equation}
where we used the simple fact that $Q\in [0,1]$ and $\edges(H')\subseteq \edges(H).$

Repeating the same edge-removal procedure, we are left with some spanning tree $T$ of $H$ such that 
$$
\Fourier_{\SBM(p,Q)}(T)\ge 
\Fourier_{\SBM(p,Q)}(H).
$$
As $T$ is spanning, $|\vertices(T)|= |\vertices(H)|,$ so 
$$
|\Fourier_{\SBM(p,Q)}(T)|^{\frac{1}{|\vertices(T)|}}\ge 
|\Fourier_{\SBM(p,Q)}(H)|^{\frac{1}{|\vertices(H)|}}.
$$

Now, we will further remove edges from the tree. Suppose that $T$ is not a star. Then, it has a subgraph which is a path of length 3. Removing the middle edge partitions $T$ into two trees (both with at least one edge) such that $\vertices(T_1)\cup \vertices(T_2) = \vertices(T)$ and 
$\vertices(T_1)\cap \vertices(T_2) = \emptyset.$ Using the same-argument as in \eqref{eq:effectofedgeremoval}, we conclude that 
$$
\Fourier_{\SBM(p,Q)}(T_1\sqcup T_2)\ge 
\Fourier_{\SBM(p,Q)}(T).
$$
Using \cref{cor:vertexdisjointcomparison}, we conclude that 
$$
|\Fourier_{\SBM(p,Q)}(H)|^{\frac{1}{|\vertices(H)|}} \le
|\Fourier_{\SBM(p,Q)}(T)|^{\frac{1}{|\vertices(T)|}}\le 
\max(
|\Fourier_{\SBM(p,Q)}(T_1)|^{\frac{1}{|\vertices(T_1)|}}, 
|\Fourier_{\SBM(p,Q)}(T_2)|^{\frac{1}{|\vertices(T_2)|}}
).
$$
Repeating this operation while no paths of length at least 3 are left, we conclude that 
$$
|\Fourier_{\SBM(p,Q)}(H)|^{\frac{1}{|\vertices(H)|}} \le 
|\Fourier_{\SBM(p,Q)}(\Star_t)|^{\frac{1}{|\vertices(\Star_t)|}} 
$$
for some star graph on $t\le D$ edges. 
\end{proof}

\subsection{SBMs with Non-Vanishing Community Probabilities}


\label{sec:sbmnonvanishing}
\begin{theorem}[Maximizing Partition Functions with Non-Vanishing Community Probabilities]
\label{thm:maximizingpartitioninnonvanishing}
Suppose that $\SBM(p,Q)$ is a stochastic block model on $k$ communities such that $p_i \ge c\; \forall  i\in [k]$ for some constant $c>0.$ Then, for any connected graph $H$ on at most $D$ edges,  
\begin{align*}
\Psi_{\SBM(p,Q)}(H)
\lesssim_{c,D}
\max\bigg(
\Psi_{\SBM(p,Q)}(\cycle_4),
\Psi_{\SBM(p,Q)}(\Star_1),
\Psi_{\SBM(p,Q)}(\Star_2)
\bigg).
\end{align*}
\end{theorem}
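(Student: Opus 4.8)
The plan is a case analysis on whether $H$ is a tree, after first isolating the one place where the hypothesis $p_i\ge c$ is used. Put $q\coloneqq\max_{u,v}|Q_{u,v}|$; we may assume $q>0$ (otherwise $\Fourier_{\SBM(p,Q)}$ vanishes on every graph with an edge and both sides are $0$) and that $H$ has at least one edge. The single estimate using $p_i\ge c$ is the $4$-cycle lower bound \eqref{eq:intro4cycleinnonvanishing} (i.e.\ \cref{lem:4cycleinnonvanishing}): letting $R$ be the symmetric matrix with $R_{ab}=\sqrt{p_ap_b}\,Q_{ab}$,
\[
\Fourier_{\SBM(p,Q)}(\cycle_4)=\trace(R^4)=\sum_i\lambda_i(R)^4\ge\|R\|_{\mathrm{op}}^4\ge(c\,q)^4,
\]
using $\|R\|_{\mathrm{op}}\ge\max_{a,b}|R_{ab}|\ge c\,q$. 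Hence $\Psi_{\SBM(p,Q)}(\cycle_4)\gtrsim_c q$, and it suffices to prove $\Psi_{\SBM(p,Q)}(H)\lesssim_{c,D}\max\big(q,\Psi_{\SBM(p,Q)}(\Star_2)\big)$.

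First I would handle the case that $H$ is connected but not a tree, so that $|\edges(H)|\ge|\vertices(H)|$. Since $\sum_{x_1,\dots,x_h}\prod_i p_{x_i}=1$, the triangle inequality in \eqref{eq:explicitfourier} gives $|\Fourier_{\SBM(p,Q)}(H)|\le q^{|\edges(H)|}$, so $\Psi_{\SBM(p,Q)}(H)\le q^{|\edges(H)|/|\vertices(H)|}\le q$ because $q\le1$; this settles every connected non-tree (triangles, $\cycle_4$ itself, $\complete_4^-$, and so on). For trees: if $|\vertices(H)|\le3$ then $H\in\{\Star_1,\Star_2\}$ and there is nothing to prove, so the remaining case is a tree on $h\coloneqq|\vertices(H)|\ge4$ vertices (note $h\le D+1$). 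Here the global triangle inequality is too weak: it gives only $\Psi_{\SBM(p,Q)}(H)\le q^{(h-1)/h}$, which exceeds $q$ by the unbounded factor $q^{-1/h}$ when $q\to0$, and by \cref{challenge:iterativetechniquefails} one cannot recover the loss by iteratively deleting edges once $Q$ has negative entries.

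The fix is the leaf-isolation computation \eqref{intro:leafisolation}. The tree $H$ has two leaves $u\ne v$; since $h\ge4$, necessarily $\parent(u),\parent(v)\in\vertices(H)\setminus\{u,v\}$ (possibly $\parent(u)=\parent(v)$, which only helps). Let $T''$ be the induced subgraph of $H$ on $\vertices(H)\setminus\{u,v\}$; it has exactly $h-3$ edges, namely $\edges(H)$ minus the two leaf edges. Performing the sums in \eqref{eq:explicitfourier} over the leaf labels $x_u,x_v$ last,
\[
\Fourier_{\SBM(p,Q)}(H)=\sum_{\vec x}\Big(\prod_{i\notin\{u,v\}}p_{x_i}\prod_{(ij)\in\edges(T'')}Q_{x_i,x_j}\Big)\Big(\sum_{x_u}p_{x_u}Q_{x_{\parent(u)},x_u}\Big)\Big(\sum_{x_v}p_{x_v}Q_{x_{\parent(v)},x_v}\Big),
\]
where $\vec x$ ranges over the labels of $\vertices(H)\setminus\{u,v\}$. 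Now apply the triangle inequality, then $|a||b|\le a^2+b^2$ to the two leaf-sums, then $\prod_{(ij)\in\edges(T'')}|Q_{x_i,x_j}|\le q^{h-3}$, and finally sum out every label except $x_{\parent(u)}$ in the first term and except $x_{\parent(v)}$ in the second (each such sum contributes $\sum_{x_i}p_{x_i}=1$); by \cref{cor:starcounts} this gives
\[
|\Fourier_{\SBM(p,Q)}(H)|\le q^{h-3}\big(\sum_s p_s(\sum_y p_yQ_{s,y})^2+\sum_s p_s(\sum_y p_yQ_{s,y})^2\big)=2\,q^{h-3}\,\Fourier_{\SBM(p,Q)}(\Star_2),
\]
with no sign cancellation since $\Fourier_{\SBM(p,Q)}(\Star_2)=\sum_s p_s(\sum_y p_yQ_{s,y})^2\ge0$. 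Taking $h$-th roots, using $q\lesssim_c|\Fourier_{\SBM(p,Q)}(\cycle_4)|^{1/4}$, $h\le D+1$, and $\Fourier_{\SBM(p,Q)}(\Star_2)^{1/h}=\Psi_{\SBM(p,Q)}(\Star_2)^{3/h}$,
\[
\Psi_{\SBM(p,Q)}(H)\lesssim_{c,D}\Psi_{\SBM(p,Q)}(\cycle_4)^{\frac{h-3}{h}}\,\Psi_{\SBM(p,Q)}(\Star_2)^{\frac3h}\le\max\big(\Psi_{\SBM(p,Q)}(\cycle_4),\Psi_{\SBM(p,Q)}(\Star_2)\big),
\]
since $\frac{h-3}{h}+\frac3h=1$. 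Together with the non-tree case this proves the theorem, and the final testing statement follows from \cref{thm:beatingvarnullimpliesbeatingvarplanted}.

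I expect the tree case to be the main obstacle, and specifically keeping the two exponents consistent: leaf-isolation is designed precisely so that the two ``surplus'' vertices of a tree get traded for a genuine $\Star_2$-count (contributing the $3/h$ exponent) while the remaining $h-3$ entries of $Q$ match a $\cycle_4$-count on $h-3$ vertices (contributing $(h-3)/h$), so the exponents sum to $1$. The only bookkeeping point is verifying that, after the step $|a||b|\le a^2+b^2$, summing out the non-parent labels returns exactly $\Fourier_{\SBM(p,Q)}(\Star_2)$ — which holds automatically because that quantity is a sum of squares, and this is exactly what avoids the sign problems flagged in \cref{challenge:iterativetechniquefails}.
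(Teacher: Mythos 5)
Your proof is correct and follows the same overall strategy as the paper's: split on whether $H$ is a tree, handle non-trees by a crude triangle inequality against $q^{|\vertices(H)|}$ and the $4$-cycle lower bound, and handle trees by leaf isolation. There are two places where you deviate usefully from the paper. First, you prove \cref{lem:4cycleinnonvanishing} spectrally via $\Fourier_{\SBM(p,Q)}(\cycle_4)=\trace(R^4)\ge\|R\|_{\mathrm{op}}^4\ge(cq)^4$ with $R=\sqrt{P}Q\sqrt{P}$, while the paper expands the sum directly and lower-bounds by the diagonal contribution $\sum_{x,y}p_x^2p_y^2Q_{x,y}^4$; both give $c^4q^4$. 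Second, and more substantively, in the tree case the paper bounds the two leaf-squares \emph{uniformly} over the parent labels by $\frac{1}{c}\Fourier_{\SBM(p,Q)}(\Star_2)$ (using $p_i\ge c$ once more to absorb the missing $p_{x_{\parent}}$ factor), whereas you keep the leaf-squares inside the sum and explicitly sum out all labels except $x_{\parent(u)}$ (resp.\ $x_{\parent(v)}$), which each contribute a factor $1$ and reconstruct $\Fourier_{\SBM(p,Q)}(\Star_2)$ exactly. This gives the cleaner constant $2$ instead of $2/c$, and — more to the point — it shows that the hypothesis $p_i\ge c$ is only needed for the $\cycle_4$ lower bound, not anywhere in the leaf-isolation step. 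That observation is worth keeping; the rest is cosmetically different from, but materially the same as, the paper's argument.
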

\begin{proof}
Denote $h = |\vertices(H)|.$
We split the proof into two parts depending on whether $H$ is a tree.

\paragraph{1. $H$ is not a tree.} In that case, we can show that 4-cycles ``dominate'' $H.$ We need the following statement on signed 4-cycle counts.
\begin{lemma}
\label{lem:4cycleinnonvanishing}
Suppose that $\SBM(p,Q)$ is a stochastic block model on $k$ communities such that $p_i \ge c\; \forall  i\in [k]$ for some universal constant $c>0.$ Then, 
$$
\Fourier_{\SBM(p,Q)}(\cycle_4) \gtrsim_c\max_{i,j\in [k]}Q^4_{i,j}\text{ and, equivalently, }
\Psi_{\SBM(p,Q)}(\cycle_4) \gtrsim_c\max_{i,j\in [k]}|Q_{i,j}|.
$$
\end{lemma}

Before we present the proof of the lemma, we will show that it immediately implies the following statement. 

\begin{claim} Under the assumptions on $\SBM(p,Q)$ in \cref{thm:maximizingpartitioninnonvanishing}, for any connected graph $H$ which is not a tree, 
$$
\Psi_{\SBM(p,Q)}(H)\lesssim_{c,D}
\Psi_{\SBM(p,Q)}(\cycle_4). 
$$
\end{claim}
\begin{proof} Since $H$ is not a tree, $|\vertices(H)|\le |\edges(H)|.$ Recalling
\cref{prop:explicitFourier},

\begin{align*}
& \Big|\Fourier_{\SBM(p,Q)}(H)\Big|=
\Bigg|
\sum_{x_1, x_2, \ldots, x_h\in [k]}\Bigg(
\prod_{i = 1}^h p_{x_i}
\times
\prod_{(i,j)\in \edges(H)}
Q_{x_i,x_j}\Bigg)\Bigg|\\
& \le 
\sum_{x_1, x_2, \ldots, x_h\in [k]}
\prod_{i = 1}^h p_{x_i}
\times
\prod_{(i,j)\in \edges(H)}(\max_{u,v}
|Q_{u,v}|)\\
& =
\sum_{x_1, x_2, \ldots, x_h\in [k]}
\prod_{i = 1}^h p_{x_i}
(\max_{u,v}
|Q_{u,v}|)^{|\edges(H)|}\\
& \le
\sum_{x_1, x_2, \ldots, x_h\in [k]}
\prod_{i = 1}^h p_{x_i}
(\max_{u,v}
|Q_{u,v}|)^{|\vertices(H)|}\\
& = (\max_{u,v}
|Q_{u,v}|)^{|\vertices(H)|}.
\end{align*}
The claim follows immediately from \cref{lem:4cycleinnonvanishing}.
\end{proof}

\begin{proof}[Proof of \cref{lem:4cycleinnonvanishing}]
We start by analyzing \cref{prop:explicitFourier}:
\begin{align*}
& \Fourier_{\SBM(p,Q)}(\cycle_4)\\
& = 
\sum_{x_1, x_2, x_3, x_4}
p_{x_1}p_{x_2}p_{x_3}p_{x_4}
Q_{x_1, x_2}
Q_{x_2, x_3}
Q_{x_3, x_4}
Q_{x_4, x_1}\\
& = 
\sum_{x_1,x_3}p_{x_1}p_{x_3}
\Bigg(
\sum_{x}
p_x
Q_{x_1, x}
Q_{x, x_3}
\Bigg)^2\\
& \ge 
\sum_{y}p_{y}^2
\Bigg(
\sum_{x}
p_x
Q_{y, x}
Q_{y,x}
\Bigg)^2\\
& = 
\sum_{y}p_{y}^2
\Bigg(
\sum_{x}
p_x
Q_{y, x}^2
\Bigg)^2\\
& \ge
\sum_y p_y^2\sum_x p_x^2Q_{x,y}^4 = 
\sum_{x,y}
p_x^2p_y^2Q_{x,y}^4\ge \max_{x,y}c^4Q_{x,y}^4 \gtrsim_c \max_{x,y}Q_{x,y}^4,
\end{align*}
as desired.
\end{proof}

\paragraph{2. $H$ is a tree.} Suppose that $H$ is a tree. We can assume that $H$ has at least three edges as otherwise $H$ is a star and there is nothing to prove. In particular, this means that $H$ has at least two leaves. Let these be $h, h-1.$ Let their parents be $\parent(h-1)$ and $\parent(h).$ We rewrite \cref{prop:explicitFourier} in a way that allows us to compare to signed 2-stars and 4-cycles. Applying the leaf-isolation technique in \cref{intro:leafisolation}

\begin{align*}
& \Bigg|\Fourier_{\SBM(p,Q)}(H)\Bigg|\\
&\le
\sum_{x_{1},x_2, \ldots, x_{h-2}}\Bigg(
p_{x_{1}}p_{x_2}\cdots p_{x_{h-2}}
\prod_{(ij)\in \edges(H)\backslash \{(\parent(h-1),h-1), (\parent(h),h)\}}|Q_{x_i,x_j}|\times\\
& \quad\quad\quad\quad\times
\bigg(\big(
\sum_{x_{h-1}}
p_{x_{h-1}}
Q_{x_{\parent(h-1)}, x_{h-1}}
\big)^2 +
\big(
\sum_{x_{h}}
p_{x_{h}}
Q_{x_{\parent(h)}, x_{h}}
\big)^2\bigg)\Bigg)
\end{align*}
We used the simple inequality $|a|\times|b|\le a^2 + b^2.$
We now interpret the terms above.

\paragraph{Square Terms and 2-Stars.} From \cref{cor:starcounts} and the fact that $p_i\ge c>0$ for any $i,$
\begin{align*}
& \big(
\sum_{x_{h-1}}
p_{x_{h-1}}
Q_{x_{\parent(h-1)}, x_{h-1}}
\big)^2 +
\big(
\sum_{x_{h}}
p_{x_{h}}
Q_{x_{\parent(h)} x_{h}}
\big)^2\\
& \lesssim_c
p_{\parent(h-1)}
\big(
\sum_{x_{h-1}}
p_{x_{h-1}}
Q_{x_{\parent(h-1)}, x_{h-1}}
\big)^2 +
p_{\parent(h)}
\big(
\sum_{x_{h}}
p_{x_{h}}
Q_{x_{\parent(h)} x_{h}}
\big)^2\\
& \lesssim_c
\Fourier_{\SBM(p,Q)}(\Star_2).
\end{align*}

\paragraph{4-Cycles.}
Now, by \cref{lem:4cycleinnonvanishing},

\begin{align*}
& \sum_{x_{1},x_2, \ldots, x_{h-2}}
p_{x_{1}}p_{x_2}\cdots p_{x_{h-2}}
\prod_{(ij)\in \edges(H)\backslash \{(\parent(h-1),h-1), (\parent(h),h)\}}|Q_{x_i,x_j}|\\
& \le 
\sum_{x_{1},x_2, \ldots, x_{h-2}}
p_{x_{1}}p_{x_2}\ldots p_{x_{h-2}}
(\max_{u,v}|Q_{u,v}|)^{|\edges(H)|-2}\\
&  = 
(\max_{u,v}|Q_{u,v}|)^{|\vertices(H)|-3}\\
& \lesssim_{c,D}
|\Fourier_{\SBM(p,Q)}(\cycle_4)|^{\frac{|\vertices(H)|-3}{4}}.
\end{align*}
Altogether, we obtain that 
\begin{equation*}
    \begin{split}
        &\big|\Fourier_{\SBM(p,Q)}(H)\big|\lesssim_{c,D} 
        \big|\Fourier_{\SBM(p,Q)}(\Star_2)\big|\times 
        |\Fourier_{\SBM(p,Q)}(\cycle_4)|^{\frac{|\vertices(H)|-3}{4}}.
    \end{split}
\end{equation*}
We now proceed similarly to \cref{cor:vertexdisjointcomparison}. Namely, we rewrite 
\begin{equation*}
    \begin{split}
        &\big|\Fourier_{\SBM(p,Q)}(H)\big|^{\frac{1}{|\vertices(H)|}}\lesssim_{c,D} 
        \big|\Fourier_{\SBM(p,Q)}(\Star_2)\big|^{\frac{1}{|\vertices(H)|}}\times 
        |\Fourier_{\SBM(p,Q)}(\cycle_4)|^{\frac{|\vertices(H)|-3}{4}\times \frac{1}{|\vertices(H)|}} \Longleftrightarrow\\
        &
        \big|\Fourier_{\SBM(p,Q)}(H)\big|^{\frac{1}{|\vertices(H)|}}\lesssim_{c,D} 
        \Bigg(\big|\Fourier_{\SBM(p,Q)}(\Star_2)\big|^{\frac{1}{3}}\Bigg)^{\frac{3}{|\vertices(H)|}}\times 
        \Bigg(\big|\Fourier_{\SBM(p,Q)}(\cycle_4)\big|^{\frac{1}{4}}\Bigg)^{\frac{|\vertices(H)|-3}{|\vertices(H)|}}
    \end{split}
\end{equation*}
As $\frac{|\vertices(H)|-3}{|\vertices(H)|} + \frac{3}{|\vertices(H)|} = 1,$ the conclusion follows.
\end{proof}
Again, \cref{thm:beatingvarnullimpliesbeatingvarplanted} applied for any $D\ge 8$ gives the corresponding testing result.

\begin{theorem}[Testing in Non-Vanishing Community Probabilities]
Suppose that $c>0$ is an absolute constant and $\SBM(p,Q)$ is an SBM model on $k$ communities such that $p_i>c\quad \forall i \in [k].$
If there exists a constant degree test distinguishing $\ergraphhalf$ and 
$\SBM(n;p,Q)$ with high probability, one can also distinguish the two distributions with high probability using the signed count of one of the $\Star_1$ (edge), $\Star_2$ (wedge), or $\cycle_4$ (4-cycle) graphs.
\end{theorem}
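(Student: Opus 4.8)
The plan is to chain together three results already proved above: the meta-theorem \cref{thm:ldpimpliesbsubgraphcount}, which converts a low-degree distinguisher into a single subgraph $H$ with $\Psi_{\SBM(p,Q)}(H) = \omega(n^{-1/2})$; the partition-function maximization \cref{thm:maximizingpartitioninnonvanishing}, which bounds any such $\Psi_{\SBM(p,Q)}(H)$ by that of $\Star_1$, $\Star_2$, or $\cycle_4$; and the converse meta-theorem \cref{thm:beatingvarnullimpliesbeatingvarplanted}, which turns an approximately-maximal connected subgraph back into an honest test via its signed count. Since a degree-$D$ polynomial is also a degree-$D'$ polynomial for every $D' \ge D$, I would first enlarge $D$ so that $D \ge 8$.

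First, by \cref{def:lowdegreesuccess} any constant-degree distinguisher in particular satisfies $|\expect_{\SBM(n;p,Q)}[f] - \expect_{\ergraphhalf}[f]| = \omega(\var_{\ergraphhalf}[f]^{1/2})$, which is exactly the hypothesis of \cref{thm:ldpimpliesbsubgraphcount}; hence there is a graph $H \in \graphs_{\le D}$ with $\Psi_{\SBM(p,Q)}(H) = \omega(n^{-1/2})$. Writing $H$ as the disjoint union of its connected components and applying \cref{cor:vertexdisjointcomparison} repeatedly, some component $H'$ --- connected, with at most $D$ edges and no isolated vertices --- satisfies $\Psi_{\SBM(p,Q)}(H') \ge \Psi_{\SBM(p,Q)}(H) = \omega(n^{-1/2})$. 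Now \cref{thm:maximizingpartitioninnonvanishing} applies to $H'$ and yields $K \in \{\Star_1, \Star_2, \cycle_4\}$ with $\Psi_{\SBM(p,Q)}(K) \gtrsim_{c,D} \Psi_{\SBM(p,Q)}(H')$; since $c$ and $D$ are absolute constants this gives $\Psi_{\SBM(p,Q)}(K) = \omega(n^{-1/2})$, i.e.\ condition~1 of \cref{thm:beatingvarnullimpliesbeatingvarplanted}.

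It then remains to verify condition~2 of \cref{thm:beatingvarnullimpliesbeatingvarplanted}, that $K$ approximately maximizes $\Psi_{\SBM(p,Q)}$ over all graphs on at most $2D$ edges. For this I would apply \cref{thm:maximizingpartitioninnonvanishing} once more, with the constant $2D$ in place of $D$: every connected $K'$ on at most $2D$ edges obeys $\Psi_{\SBM(p,Q)}(K') \lesssim_{c,D} \max\bigl(\Psi_{\SBM(p,Q)}(\Star_1), \Psi_{\SBM(p,Q)}(\Star_2), \Psi_{\SBM(p,Q)}(\cycle_4)\bigr) = \Psi_{\SBM(p,Q)}(K)$, and a disconnected $K'$ on at most $2D$ edges is handled by applying \cref{cor:vertexdisjointcomparison} to its components (each still on at most $2D$ edges). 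Since $\Star_1, \Star_2, \cycle_4$ are connected and have at most $4 \le D$ vertices, $K$ meets both hypotheses of \cref{thm:beatingvarnullimpliesbeatingvarplanted}; that theorem then shows the signed count $\signedcount_K$ separates the means of $\SBM(n;p,Q)$ and $\ergraphhalf$ by $\omega$ of the larger of the two standard deviations, so the Chebyshev threshold on $\signedcount_K$ discussed around \eqref{eq:meanseparationforsignedcounts} is the desired test, with $K \in \{\Star_1, \Star_2, \cycle_4\}$.

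I do not expect a genuine obstacle: the mathematical content is entirely contained in \cref{thm:maximizingpartitioninnonvanishing} and \cref{lem:4cycleinnonvanishing}, which are already established. The only points requiring care are organizational --- checking that the $\gtrsim_{c,D}$ and $\lesssim_{c,D}$ losses from the maximization step and from \cref{thm:beatingvarnullimpliesbeatingvarplanted} never disturb the $\omega(n^{-1/2})$ conclusion (they do not, since $c$ and $D$ are fixed), and the two minor reductions, from an arbitrary degree-$D$ polynomial to a single connected $H$ and from connected comparison graphs to arbitrary graphs on at most $2D$ edges, both supplied by \cref{cor:vertexdisjointcomparison}.
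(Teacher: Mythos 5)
Your proposal is correct and takes essentially the same route as the paper, which compresses the argument into a single sentence (``apply \cref{thm:beatingvarnullimpliesbeatingvarplanted} for $D \ge 8$''). You have merely made explicit what that sentence suppresses: the reduction to a connected component via \cref{cor:vertexdisjointcomparison}, the two invocations of \cref{thm:maximizingpartitioninnonvanishing} (once at degree $D$ for the witness and once at degree $2D$ to certify approximate optimality), and the verification that the $\lesssim_{c,D}$ losses are absorbed by the $\omega(n^{-1/2})$ slack.
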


\subsection{General 2-SBMs}
\label{sec:2sbms}
\begin{theorem}[Maximizing Partition Functions in 2-SBMs]
\label{thm:maximizingpartition2sbm}
Let $D\in \mathbb{N}$ be some fixed even natural number. 
Suppose that $\SBM(p,Q)$ is an arbitrary SBM on $k=2$ communities. Then, for any connected graph $H$ on at most $D$ edges,
$$
\Psi_{\SBM(p,Q)}(H)
\lesssim_D 
\max\Bigg(
\Psi_{\SBM(p,Q)}(\cycle_4),
\max_{1\le t \le D}
\Psi_{\SBM(p,Q)}(\Star_t)
\Bigg).
$$
\end{theorem}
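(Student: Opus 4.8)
I would combine the leaf-isolation technique with the comparison-to-a-nonnegative-model idea, organising the argument according to whether the largest entry of $Q$ is ``seen'' by $\cycle_4$. Assume without loss of generality $p_1\le p_2$, so $p_2\ge 1/2$, and set $M=\max_{u,v}|Q_{u,v}|$. Two preparatory facts are needed. First, a clean form of \emph{leaf-isolation}: for every tree $T$ on $h\ge 3$ vertices,
\begin{equation*}
|\Fourier_{\SBM(p,Q)}(T)|\;\lesssim\; M^{\,h-3}\,\Fourier_{\SBM(p,Q)}(\Star_2).
\end{equation*}
This follows by choosing two leaves of $T$, summing over their labels first exactly as in \eqref{intro:leafisolation}, applying $|a||b|\le a^2+b^2$, and observing that the weight $p_{x_u}\bigl(\sum_{x_\ell}p_{x_\ell}Q_{x_u,x_\ell}\bigr)^2$ created at a parent vertex $u$ sums over $x_u$ to precisely $\Fourier_{\SBM(p,Q)}(\Star_2)$ (by \cref{cor:starcounts}), while each of the remaining $h-3$ core edges contributes a factor at most $M$. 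Second, lower bounds on the $4$-cycle: from \cref{prop:explicitFourier} applied to $\cycle_4$ together with $p_2\ge 1/2$, one gets $\Psi_{\SBM(p,Q)}(\cycle_4)\gtrsim\max\bigl(|Q_{2,2}|,\,p_1^{1/2}|Q_{1,2}|,\,p_1|Q_{1,1}|\bigr)$.

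\paragraph{Case A: $M\le C\,\Psi_{\SBM(p,Q)}(\cycle_4)$.}
If $H$ is not a tree then $|\edges(H)|\ge|\vertices(H)|$, so the crude bound $|\Fourier_{\SBM(p,Q)}(H)|\le M^{|\edges(H)|}\le M^{|\vertices(H)|}$ gives $\Psi_{\SBM(p,Q)}(H)\le M\lesssim\Psi_{\SBM(p,Q)}(\cycle_4)$. If $H$ is a tree on $\ge 3$ vertices, leaf-isolation and $M\lesssim\Psi_{\SBM(p,Q)}(\cycle_4)$ give $|\Fourier_{\SBM(p,Q)}(H)|\lesssim\Psi_{\SBM(p,Q)}(\cycle_4)^{|\vertices(H)|-3}\Psi_{\SBM(p,Q)}(\Star_2)^3$, and taking $|\vertices(H)|$-th roots followed by $x^{\theta}y^{1-\theta}\le\max(x,y)$ for $\theta\in[0,1]$ finishes the case (the tree on $2$ vertices is $\Star_1$, with nothing to prove).

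\paragraph{Case B: $M> C\,\Psi_{\SBM(p,Q)}(\cycle_4)$.}
By the $4$-cycle lower bounds this forces $p_1$ to be small and $M\in\{|Q_{1,1}|,|Q_{1,2}|\}$. Now pass to the nonnegative model $\SBM(p,|Q|)$: since $|\Fourier_{\SBM(p,Q)}(H)|\le\Fourier_{\SBM(p,|Q|)}(H)$, \cref{thm:maximizinginnonnegative} reduces the problem to bounding $\Psi_{\SBM(p,|Q|)}(\Star_t)$ for each $1\le t\le D$. Writing $a=p_1|Q_{1,1}|+p_2|Q_{1,2}|$, $b=p_1|Q_{1,2}|+p_2|Q_{2,2}|$, $a'=p_1Q_{1,1}+p_2Q_{1,2}$, $b'=p_1Q_{1,2}+p_2Q_{2,2}$, we have $\Fourier_{\SBM(p,|Q|)}(\Star_t)=p_1a^t+p_2b^t$ and $\Fourier_{\SBM(p,Q)}(\Star_t)=p_1(a')^t+p_2(b')^t$. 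If there is no cancellation --- $|a'|\asymp_D a$, $|b'|\asymp_D b$, and for odd $t$ the two terms $p_1(a')^t,\,p_2(b')^t$ do not nearly cancel --- then $|\Fourier_{\SBM(p,Q)}(\Star_t)|\gtrsim_D\Fourier_{\SBM(p,|Q|)}(\Star_t)$, and since $\Star_t$ is in our set we are done. Otherwise exactly one of the following holds. A \emph{within-community} cancellation, $|a'|=o_D(a)$ or $|b'|=o_D(b)$, which forces $p_1|Q_{1,1}|\asymp p_2|Q_{1,2}|$ (respectively $p_1|Q_{1,2}|\asymp p_2|Q_{2,2}|$), i.e.\ a quantitative relation such as $|Q_{1,2}|\asymp p_1|Q_{1,1}|$; combined with $p_1$ small this boosts the $4$-cycle lower bound enough to yield $\Fourier_{\SBM(p,|Q|)}(\Star_t)\lesssim_D\Psi_{\SBM(p,Q)}(\cycle_4)^{t+1}$. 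Or, with no within-community cancellation, a \emph{between-community} cancellation (possible only for odd $t$) in which $p_1(a')^t$ nearly cancels $p_2(b')^t$, forcing $p_1a^t\asymp p_2b^t\asymp b^t$ as $p_2\asymp 1$; then $\Fourier_{\SBM(p,|Q|)}(\Star_t)\asymp b^t$, while for $t\ge 3$ the count $\Fourier_{\SBM(p,Q)}(\Star_{t-1})$ has an even exponent (hence no cancellation) and is $\gtrsim_D b^{t-1}$, so $\Psi_{\SBM(p,Q)}(\Star_{t-1})\gtrsim_D b^{(t-1)/t}\ge b^{t/(t+1)}\asymp\Psi_{\SBM(p,|Q|)}(\Star_t)$ using $b\le 1$ and $\tfrac{t-1}{t}<\tfrac{t}{t+1}$; the edge case $t=1$ is handled directly via the $4$-cycle bound.

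\paragraph{Main obstacle.}
The preparatory facts and Case A are routine. The crux is Case B: establishing the cancellation dichotomy (that the failure of the comparison $\Fourier_{\SBM(p,|Q|)}(\Star_t)\lesssim_D|\Fourier_{\SBM(p,Q)}(\Star_t)|$ is always of exactly one of the two types), extracting from each the correct relation between $p$ and $Q$, and carrying out the exponent bookkeeping uniformly over all $1\le t\le D$ with constants independent of $n$ --- together with the low-order case $t=1$, which does not fit the general pattern and must be checked by hand. This is precisely the difficulty anticipated in \cref{challenge:iterativetechniquefails} and \cref{challenge:statisticalscaling}.
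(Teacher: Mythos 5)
Your Case A is a clean and correct argument: the leaf-isolation bound $|\Fourier_{\SBM(p,Q)}(T)|\lesssim M^{h-3}\,\Fourier_{\SBM(p,Q)}(\Star_2)$ holds for every $\SBM(p,Q)$ (no positivity assumption on $p$ is needed, since the parent weight $p_{x_u}$ is already present in the product $\prod_{i=1}^{h-2}p_{x_i}$), and combined with the crude bound for non-trees and the weighted AM--GM step, it disposes of the regime $M\lesssim\Psi_{\SBM(p,Q)}(\cycle_4)$. This subsumes the paper's Theorem~\ref{thm:maximizingpartitioninnonvanishing} and is, if anything, slightly cleaner.

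Case B, however, contains a fatal gap. Your plan is to bound the \emph{intermediate} quantity $\Psi_{\SBM(p,|Q|)}(\Star_{\tspecial})$ by the target $\max\bigl(\Psi_{\SBM(p,Q)}(\cycle_4),\max_s\Psi_{\SBM(p,Q)}(\Star_s)\bigr)$, but this is simply impossible: the reduction $\Psi_{\SBM(p,Q)}(H)\le\Psi_{\SBM(p,|Q|)}(\Star_{\tspecial})$ is genuinely lossy in the cancellation regime, and the slack is precisely what the proof must exploit. Concretely, take $p_1=x$ small, $Q_{1,1}=y\in(0,1)$, $Q_{1,2}=-xy/p_2$, $Q_{2,2}=x^2y/p_2^2$. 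Then $a'=p_1Q_{1,1}+p_2Q_{1,2}=0$ and $b'=p_1Q_{1,2}+p_2Q_{2,2}=0$ (a fully unbiased model, cf.\ Proposition~\ref{prop:fullyunbiased}), so $\Fourier_{\SBM(p,Q)}(\Star_s)=0$ for every $s$, while $\Psi_{\SBM(p,Q)}(\cycle_4)\asymp xy$. On the other hand $a\asymp xy$, $b\asymp x^2y$, so $\Fourier_{\SBM(p,|Q|)}(\Star_t)\asymp x^{t+1}y^t$ and $\Psi_{\SBM(p,|Q|)}(\Star_t)\asymp xy^{t/(t+1)}$, which \emph{exceeds} $xy=\max\bigl(\Psi_{\SBM(p,Q)}(\cycle_4),\max_s\Psi_{\SBM(p,Q)}(\Star_s)\bigr)$ whenever $y<1$. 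So your claimed bound $\Fourier_{\SBM(p,|Q|)}(\Star_t)\lesssim\Psi_{\SBM(p,Q)}(\cycle_4)^{t+1}$ is false, and even its many-to-one relaxation (allowing signed stars on the right) is false. Yet the theorem is true in this example: direct computation gives, e.g., $\Psi_{\SBM(p,Q)}(\complete_4)\asymp xy^{3/2}<xy$. The intermediate $\Star_{\tspecial}$ has thrown away information about the actual graph $H$.

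This is exactly why the paper, after using the nonnegative-model reduction only to \emph{detect} which cancellation type has occurred, returns to $\Fourier_{\SBM(p,Q)}(H)$ itself in the hard sub-cases. In the within-community case (Lemma~\ref{lem:when11dominates12}) and in the $\tspecial=1$ between-community case, it decomposes $\Fourier_{\SBM(p,Q)}(H)$ over label-sets $K\subseteq\vertices(H)$, uses connectivity to count at least $a+b-1$ cross-community edges (plus extra edges from non-tree structure or from isolated vertices of $H|_{\vertices(H)\backslash(K\cup\leaves(H))}$), and substitutes the relations $|Q_{1,2}|\lesssim p_1|Q_{1,1}|$ or $|Q_{2,2}|\lesssim p_1|Q_{1,2}|$ forced by the cancellation to manufacture enough factors of $p_1$ for a comparison with $\Psi_{\SBM(p,Q)}(\cycle_4)$. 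That direct re-analysis of $H$ is the core technical content of the theorem and it cannot be replaced by a bound on $\Psi_{\SBM(p,|Q|)}(\Star_{\tspecial})$. Your between-community argument for odd $t\ge 3$ (pass to $\Star_{t-1}$) is essentially the paper's Lemma~\ref{lem:whenbeta12lep} and is fine, but the assertion that ``the edge case $t=1$ is handled directly via the $4$-cycle bound'' is wrong for the same reason, and is in fact where the paper spends several pages.
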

Due to the length and complexity of the proof, we split it into several sections.
Without loss of generality suppose that $p_1\le p_2.$ As $p_1 + p_2 = 1,$ this means that $p_2\ge 1/2.$

We will also use throughout the following simple claim about signed 4-cycles in 2-SBMs.

\begin{claim}
\label{claim:4cycle2sbm}
Whenever $p_2\ge p_1$ in a 2-community stochastic block model $\SBM(p,Q),$ $$\Fourier_{\SBM(p,Q)}(\cycle_4) =\Theta(\max(p_1^4Q_{1,1}^4, p_1^2Q_{1,2}^4, Q_{2,2}^4)).$$
\end{claim}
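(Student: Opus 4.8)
The plan is to reduce the evaluation of $\Fourier_{\SBM(p,Q)}(\cycle_4)$ to the fourth Schatten norm of a small symmetric matrix, and then use that on $2\times 2$ matrices all norms are equivalent up to universal constants. Concretely, I would start from the identity already established inside the proof of \cref{lem:4cycleinnonvanishing},
\[
\Fourier_{\SBM(p,Q)}(\cycle_4) = \sum_{x_1,x_3\in[k]}p_{x_1}p_{x_3}\Big(\sum_{x\in[k]}p_x\,Q_{x_1,x}\,Q_{x,x_3}\Big)^2 ,
\]
and introduce the symmetric matrix $A$ with entries $A_{i,j}=\sqrt{p_ip_j}\,Q_{i,j}$ (symmetric since $Q$ is). A direct computation gives $\sum_{x}p_x\,Q_{x_1,x}\,Q_{x,x_3} = (A^2)_{x_1,x_3}/\sqrt{p_{x_1}p_{x_3}}$, so the display above collapses to $\sum_{x_1,x_3}(A^2)_{x_1,x_3}^2 = \|A^2\|_F^2 = \trace(A^4) = \sum_i\lambda_i(A)^4$, where the $\lambda_i(A)$ are the real eigenvalues of $A$. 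In particular $\Fourier_{\SBM(p,Q)}(\cycle_4)\ge 0$ and it is of order $\|A\|_{\mathrm{op}}^4$.

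The second step is linear algebra specialized to $k=2$: for a symmetric $2\times 2$ matrix, $\sum_i\lambda_i^4 \asymp (\max_i|\lambda_i|)^4 = \|A\|_{\mathrm{op}}^4$, and $\|A\|_{\mathrm{op}}\asymp\max_{i,j}|A_{i,j}|$, all constants universal (the lower bound on $\trace(A^4)$ comes from $\sum_i\lambda_i^4\ge\max_i\lambda_i^4$ together with $\|A\|_{\mathrm{op}}\ge\max_{i,j}|A_{i,j}|$; the upper bound from $\sum_i\lambda_i^4\le 2\|A\|_{\mathrm{op}}^4\le 2\|A\|_F^4\le 32(\max_{i,j}|A_{i,j}|)^4$). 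Hence
\[
\Fourier_{\SBM(p,Q)}(\cycle_4)\asymp \max\big(A_{1,1}^4, A_{1,2}^4, A_{2,2}^4\big) = \max\big(p_1^4Q_{1,1}^4,\ p_1^2p_2^2Q_{1,2}^4,\ p_2^4Q_{2,2}^4\big).
\]

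Finally, I would invoke the standing assumption $p_2\ge p_1$, which with $p_1+p_2=1$ forces $p_2\in[1/2,1]$, i.e. $p_2=\Theta(1)$. Substituting $p_2=\Theta(1)$ turns the three terms above into $p_1^4Q_{1,1}^4$, $p_1^2Q_{1,2}^4$, and $Q_{2,2}^4$ respectively (up to constants), which is precisely the claimed expression.

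I do not expect a real obstacle: the argument is short, and the only thing that needs mild care is keeping track of the universal constants in the norm-equivalence step and verifying both directions of the $\Theta$. If one wishes to avoid matrices entirely, the identical bounds follow by expanding $\Fourier_{\SBM(p,Q)}(\cycle_4)=p_1^2B_{1,1}^2+2p_1p_2B_{1,2}^2+p_2^2B_{2,2}^2$ with $B_{i,j}=p_1Q_{i,1}Q_{1,j}+p_2Q_{i,2}Q_{2,j}$, bounding each $B_{i,j}$ from below term-by-term for the lower bound and applying a couple of AM--GM inequalities for the upper bound, again using $p_2=\Theta(1)$.
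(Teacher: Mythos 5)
Your proof is correct but takes a genuinely different route from the paper's. The paper's proof of \cref{claim:4cycle2sbm} expands $\Fourier_{\SBM(p,Q)}(\cycle_4)$ over the sixteen label assignments to get the explicit polynomial
$p_1^4Q_{1,1}^4 + 4p_1^3p_2Q_{1,1}^2Q_{1,2}^2 + 4p_1^2p_2^2 Q_{1,1}Q_{1,2}^2Q_{2,2} + 2p_1^2p_2^2 Q_{1,2}^4 + 4p_1p_2^3 Q_{1,2}^2Q_{2,2}^2 + p_2^4Q_{2,2}^4$, then kills the sign-indefinite cross term $4p_1^2p_2^2Q_{1,1}Q_{1,2}^2Q_{2,2}$ by AM--GM and estimates the remaining non-negative terms one by one, using $p_2\in[1/2,1)$. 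You instead recognize $\Fourier_{\SBM(p,Q)}(\cycle_4)=\trace(A^4)$ with $A=\sqrt{P}Q\sqrt{P}$ --- the same matrix the paper introduces in the proof of \cref{thm:largercycles} --- and reduce the claim to the equivalence $\trace(A^4)\asymp \|A\|_{\mathrm{op}}^4\asymp(\max_{i,j}|A_{i,j}|)^4$ for $2\times 2$ symmetric matrices. Both steps of your norm equivalence are fine: $\|A\|_{\mathrm{op}}\ge\max_{i,j}|A_{i,j}|$ because $|A_{i,j}|=|e_i^\top A e_j|$, and $\|A\|_{\mathrm{op}}\le\|A\|_F\le 2\max_{i,j}|A_{i,j}|$, while $\trace(A^4)$ is sandwiched between $\|A\|_{\mathrm{op}}^4$ and $2\|A\|_{\mathrm{op}}^4$. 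The spectral route is arguably cleaner: it avoids the fiddly term-by-term AM--GM bookkeeping, makes the sign $\Fourier\ge 0$ transparent, and generalizes immediately to any fixed $k$ (giving $\Fourier_{\SBM(p,Q)}(\cycle_4)\asymp_k\max_{i,j}p_i^2p_j^2Q_{i,j}^4$), whereas the paper's direct expansion is elementary but specific to $k=2$. The final substitution $p_2=\Theta(1)$ from $p_2\ge p_1$ and $p_1+p_2=1$ is exactly what the paper also uses.
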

\begin{proof}
Expanding \cref{prop:explicitFourier},
\begin{align*}
& \Fourier_{\SBM(p,Q)}(\cycle_4) = 
p_1^4Q_{1,1}^4 + 
4p_1^3p_2Q_{1,1}^2Q_{1,2}^2 + 
4p_1^2p_2^2 Q_{1,1}Q_{1,2}^2Q_{2,2}\\
& \quad\quad\quad\quad\quad\quad\quad+ 
2p_1^2p_2^2 Q_{1,2}^4 + 
4p_1p_2^3 Q_{1,2}^2Q_{2,2}^2 + 
p_2^4Q_{2,2}^4.
\end{align*}
Observe that $|4p_1^2p_2^2 Q_{1,1}Q_{1,2}^2Q_{2,2}|\le 
2p_1^3p_2Q_{1,1}^2Q_{1,2}^2 +
2p_1p_2^3Q_{1,2}^2Q_{2,2}^2
$ by AM-GM. Hence the above expression is bounded between 
$p_1^4Q_{1,1}^4 + 
2p_1^3p_2Q_{1,1}^2Q_{1,2}^2 +  
2p_1^2p_2^2 Q_{1,2}^4 + 
2p_1p_2^3 Q_{1,2}^2Q_{2,2}^2 + 
p_2^4Q_{2,2}^4$ and 
$p_1^4Q_{1,1}^4 + 
6p_1^3p_2Q_{1,1}^2Q_{1,2}^2 +  
2p_1^2p_2^2 Q_{1,2}^4 + 
6p_1p_2^3 Q_{1,2}^2Q_{2,2}^2 + 
p_2^4Q_{2,2}^4.$ Similarly, 
observe that $0\le 2p_1^3p_2Q_{1,1}^2Q_{1,2}^2\le p_1^4Q_{1,1}^4 + p_1^2p_2^2Q_{1,2}^2.$ Using also that $p_2\in [1/2, 1)$ gives the desired conclusion.
\end{proof}

\subsubsection{Step 1: Comparison with a non-negative block-model.}
The key idea in the proof is a \emph{comparison between SBM models.} Namely, let $|Q|$ be the matrix formed by taking entry-wise absolute values of $Q.$ 
Consider $\SBM(p,|Q|).$  \cref{prop:explicitFourier} combined with triangle inequality implies that 
\begin{equation}
    \begin{split}
        & |\Fourier_{\SBM(p,Q)}(H)|^{\frac{1}{|\vertices(H)|}}=\Bigg| 
\sum_{x_1, x_2, \ldots, x_h\in [k]}\Bigg(
\prod_{i = 1}^h p_{x_i}
\times
\prod_{(i,j)\in \edges(H)}
Q_{x_i,x_j}\Bigg)\Bigg|^{\frac{1}{|\vertices(H)|}}\\
& \le \sum_{x_1, x_2, \ldots, x_h\in [k]}\Bigg(
\prod_{i = 1}^h p_{x_i}
\times
\prod_{(i,j)\in \edges(H)}
|Q_{x_i,x_j}|\Bigg)^{\frac{1}{|\vertices(H)|}} = 
|\Fourier_{\SBM(p,|Q|)}(H)|^{\frac{1}{|\vertices(H)|}}.
    \end{split}
\end{equation}
On the other hand, from \cref{thm:maximizinginnonnegative}, we know that 
$$
|\Fourier_{\SBM(p,|Q|)}(H)|^{\frac{1}{|\vertices(H)|}}\lesssim_D
|\Fourier_{\SBM(p,|Q|)}(\Star_t)|^{\frac{1}{|\vertices(\Star_t)|}}
$$
for some $t \in \{1,2,\ldots, D\}.$
Altogether, this implies that 
\begin{align}
\label{eq:compareHtoabsvaluemodel}
|\Fourier_{\SBM(p,Q)}(H)|^{\frac{1}{|\vertices(H)|}}\lesssim_D
|\Fourier_{\SBM(p,|Q|)}(\Star_t)|^{\frac{1}{|\vertices(\Star_t)|}}.
\end{align}
\eqref{eq:compareHtoabsvaluemodel} would imply the result if it were the case that taking absolute values of $Q$ does not significantly increase star counts, i.e.
\begin{align}
\label{eq:signedvsunsignedsbmforstars}
|\Fourier_{\SBM(p,|Q|)}(\Star_t)|^{\frac{1}{|\vertices(\Star_t)|}}\lesssim_t
|\Fourier_{\SBM(p,Q)}(\Star_t)|^{\frac{1}{|\vertices(\Star_t)|}}
\end{align}
Unfortunately, \eqref{eq:signedvsunsignedsbmforstars} is certainly incorrect as there are matrices $Q$ for which $\Fourier_{\SBM(p,Q)}(\Star_t)|^{\frac{1}{|\vertices(\Star_t)|}} = 0$ for any $t\ge 1,$ but $\Fourier_{\SBM(p,|Q|)}(\Star_t)|^{\frac{1}{|\vertices(\Star_t)|}}>0$
(see \cref{examplethm:notodddegree}). 

Yet, it turns out that whenever \eqref{eq:signedvsunsignedsbmforstars} does not hold, $\SBM(p,Q)$ needs to have a \emph{very specific structure which leads to cancellations in the Fourier coefficients of stars}. The rest of the proof is devoted to first describing such a structure, and then exploiting it to compare the Fourier coefficient of $H$ with that of stars and 4-cycles. For the rest of the proof, let $\tspecial$ be one value of $t\in \{1,2,\ldots, D\}$ such that \eqref{eq:compareHtoabsvaluemodel} is satisfied.

\subsubsection{Step 2: Identifying structure which leads to cancellations in the Fourier coefficients of stars.} By \cref{cor:starcounts},
$$
\Fourier_{\SBM(p,Q)}(\Star_\tspecial) = 
p_1(p_1 Q_{11} + p_2Q_{1,2})^\tspecial + 
p_2(p_1 Q_{12} + p_2Q_{2,2})^\tspecial.
$$
Now, suppose that \eqref{eq:signedvsunsignedsbmforstars} does not hold. Then, for some large absolute constant $C,$\footnote{For concreteness, $C = 128$ works.} it must be the case that 
\begin{equation}
\label{eq:signedstrtseqfail}
\begin{split}
    & (4C^2)^{\tspecial}\Big|
    p_1(p_1 Q_{11} + p_2Q_{1,2})^\tspecial + 
p_2(p_1 Q_{12} + p_2Q_{2,2})^\tspecial
    \Big|\\
    &\le 
    p_1(p_1 |Q_{11}| + p_2|Q_{1,2}|)^\tspecial + 
p_2(p_1 |Q_{12}| + p_2|Q_{2,2}|)^\tspecial.
\end{split}
\end{equation}
There might be two reasons for this inequality:
\begin{enumerate}
    \item \emph{Within-community cancellation:} 
    \begin{align}
            & p_1\Big(p_1|Q_{1,1}| + p_2|Q_{1,2}|\Big)^\tspecial \ge C^\tspecial \times
            p_1\Big|p_1Q_{1,1} + p_2Q_{1,2}\Big|^\tspecial \tag{31.a}
            \label{eq:inbracketcancelationa}
            \\
            & \quad\quad\quad\quad\quad\quad\quad\quad\quad\quad\quad\quad\quad \text{or}\label{eq:inbracketcancelation}\\
            & p_2\Big(p_1|Q_{1,2}| + p_2|Q_{2,2}|\Big)^\tspecial \ge C^\tspecial \times
            p_2\Big|p_1Q_{1,2} + p_2Q_{2,2}\Big|^\tspecial.
            \tag{31.b}
            \label{eq:inbracketcancelationb}
    \end{align}
    \item \emph{Between-community cancellation:} 
    
    \begin{equation}
    \label{eq:outofbracketcancelation}
        \begin{split}
            & p_1\Big(p_1|Q_{1,1}| + p_2|Q_{1,2}|\Big)^\tspecial \le C^\tspecial\times
            p_1\Big|p_1Q_{1,1} + p_2Q_{1,2}\Big|^\tspecial \\
            & \quad\quad\quad\quad\quad\quad\quad\quad\quad\quad\quad\quad\quad \text{and}\\
            & p_2\Big(p_1|Q_{1,2}| + p_2|Q_{2,2}|\Big)^\tspecial \le C^\tspecial\times 
            p_2\Big|p_1Q_{1,2} + p_2Q_{2,2}\Big|^\tspecial,\\
            & \quad\quad\quad\quad\quad\quad\quad\quad\quad\quad\quad\quad\quad \text{but}\\
            & 
            (4C^2)^\tspecial\times\Big|p_1\Big(p_1Q_{1,1} + p_2Q_{1,2}\Big)^\tspecial + 
            p_2\Big(p_1Q_{1,2} + p_2Q_{2,2}\Big)^\tspecial\Big|
            \\
            & \quad\quad\quad\quad\quad\le 
            p_1\Big|p_1Q_{1,1} + p_2Q_{1,2}\Big|^\tspecial + 
            p_2\Big|pQ_{1,2} + p_2Q_{2,2}\Big|^\tspecial
        \end{split}
    \end{equation}
    
\end{enumerate}
\subsubsection{Step 3: Within-community Cancellations.}
\label{sec:incommunitycancelation}
We begin by analyzing the case of \emph{within-community cancellations} which turns out to be the simpler case.

\begin{lemma}
\label{lem:11dominates12}
Suppose that \eqref{eq:signedstrtseqfail} and \eqref{eq:inbracketcancelation} hold for some $C\ge 4.$ Then, $|Q_{1,2}|\le Cp_1|Q_{1,1}|.$
\end{lemma}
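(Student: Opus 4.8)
\emph{Strategy.} By \cref{cor:starcounts} write $\Fourier_{\SBM(p,Q)}(\Star_\tspecial)=A+B$, where $A:=p_1(p_1Q_{1,1}+p_2Q_{1,2})^\tspecial$ and $B:=p_2(p_1Q_{1,2}+p_2Q_{2,2})^\tspecial$. Since $p_1>0$, the claim is vacuous when $Q_{1,2}=0$, so assume $Q_{1,2}\neq 0$; recall also the standing normalization $p_2\ge\tfrac12\ge p_1$. Dividing \eqref{eq:inbracketcancelationa} by $p_1$ and taking $\tspecial$-th roots turns it into $p_1|Q_{1,1}|+p_2|Q_{1,2}|\ge C\,|p_1Q_{1,1}+p_2Q_{1,2}|$, and \eqref{eq:inbracketcancelationb} similarly becomes $p_1|Q_{1,2}|+p_2|Q_{2,2}|\ge C\,|p_1Q_{1,2}+p_2Q_{2,2}|$. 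Since \eqref{eq:inbracketcancelation} is the disjunction of the two, I would split into cases. \textbf{Case 1: \eqref{eq:inbracketcancelationa} holds.} This case is elementary and does not use \eqref{eq:signedstrtseqfail}. If $p_2|Q_{1,2}|\le p_1|Q_{1,1}|$ then $|Q_{1,2}|\le p_1|Q_{1,1}|/p_2\le 2p_1|Q_{1,1}|$; otherwise the reverse triangle inequality gives $|p_1Q_{1,1}+p_2Q_{1,2}|\ge p_2|Q_{1,2}|-p_1|Q_{1,1}|$ together with $p_1|Q_{1,1}|+p_2|Q_{1,2}|<2p_2|Q_{1,2}|$, so the rewritten \eqref{eq:inbracketcancelationa} yields $2p_2|Q_{1,2}|>C(p_2|Q_{1,2}|-p_1|Q_{1,1}|)$, i.e. $p_2|Q_{1,2}|<\tfrac{C}{C-2}p_1|Q_{1,1}|\le 2p_1|Q_{1,1}|$ using $C\ge4$; hence $|Q_{1,2}|\le 2p_2|Q_{1,2}|\le 4p_1|Q_{1,1}|\le Cp_1|Q_{1,1}|$.

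\textbf{Case 2: \eqref{eq:inbracketcancelationb} holds.} This is the substantive case, and here \eqref{eq:signedstrtseqfail} is essential; I would argue by contradiction, assuming $|Q_{1,2}|>Cp_1|Q_{1,1}|$. Then $p_1|Q_{1,1}|\le\tfrac12 p_2|Q_{1,2}|$ (from $p_2\ge\tfrac12$, $C\ge4$), so the first bracket does not cancel: $\tfrac12 p_2|Q_{1,2}|\le|p_1Q_{1,1}+p_2Q_{1,2}|$ and $p_1|Q_{1,1}|+p_2|Q_{1,2}|\le\tfrac32 p_2|Q_{1,2}|$, whence $p_1(\tfrac12 p_2|Q_{1,2}|)^\tspecial\le|A|\le p_1(\tfrac32 p_2|Q_{1,2}|)^\tspecial$. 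On the other hand the rewritten \eqref{eq:inbracketcancelationb} forces $p_1Q_{1,2}$ and $p_2Q_{2,2}$ to have opposite signs with $p_2|Q_{2,2}|$ comparable to $p_1|Q_{1,2}|$ (ratio in $[\tfrac{C-1}{C+1},\tfrac{C+1}{C-1}]$), so $|p_1Q_{1,2}+p_2Q_{2,2}|\le\tfrac{2}{C-1}p_1|Q_{1,2}|$ and $p_1|Q_{1,2}|+p_2|Q_{2,2}|\le\tfrac{2C}{C-1}p_1|Q_{1,2}|$, giving $|B|\le p_2(\tfrac{2}{C-1}p_1|Q_{1,2}|)^\tspecial$. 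Combining, $|\Fourier_{\SBM(p,Q)}(\Star_\tspecial)|\ge|A|-|B|\gtrsim_\tspecial p_1|Q_{1,2}|^\tspecial$ (the term $|B|$ being smaller than $|A|$ by a factor $(O(1)/C)^\tspecial$ once $C$ is a large absolute constant), while the right-hand side of \eqref{eq:signedstrtseqfail} is at most $p_1(\tfrac32 p_2|Q_{1,2}|)^\tspecial+p_2(\tfrac{2C}{C-1}p_1|Q_{1,2}|)^\tspecial\lesssim_\tspecial p_1|Q_{1,2}|^\tspecial$. Feeding these two bounds into \eqref{eq:signedstrtseqfail} and cancelling the positive quantity $p_1|Q_{1,2}|^\tspecial$ forces $(4C^2)^\tspecial\lesssim_\tspecial 1$, which is impossible once $C$ is a sufficiently large absolute constant; since \cref{thm:maximizingpartition2sbm} invokes this lemma only with the fixed value $C=128$, this contradiction establishes $|Q_{1,2}|\le Cp_1|Q_{1,1}|$.

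\textbf{Main obstacle.} Case 1 is a two-line (reverse-)triangle-inequality computation and is routine. The work is all in Case 2, where one must track constants carefully enough that the $(4C^2)^\tspecial$ slack built into \eqref{eq:signedstrtseqfail}, as opposed to the bare $C^\tspecial$ in \eqref{eq:inbracketcancelation}, absorbs every $O(1)^\tspecial$ factor produced by the triangle and reverse-triangle estimates above. Conceptually this is just the statement that a very strong cancellation in $\Fourier_{\SBM(p,Q)}(\Star_\tspecial)$ together with ``only the community-$2$ bracket cancels'' is incompatible with $|Q_{1,2}|$ being large compared to $p_1|Q_{1,1}|$. I do not anticipate any new idea beyond this bookkeeping, and the eventual choice $C=128$ leaves ample room.
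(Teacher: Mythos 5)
Your proposal is correct and follows essentially the same path as the paper. Case 1 is identical modulo which inequality you rearrange. In Case 2 you organize the argument as a proof by contradiction (assume $|Q_{1,2}|>Cp_1|Q_{1,1}|$, show the community-1 bracket cannot cancel, derive that $|A|$ dominates $|B|$, and then clash with the $(4C^2)^\tspecial$ slack in \eqref{eq:signedstrtseqfail}), whereas the paper presents the same content as a dichotomy on whether $|p_1Q_{1,1}+p_2Q_{1,2}|\le\tfrac{1}{\sqrt C}(p_1|Q_{1,1}|+p_2|Q_{1,2}|)$ and handles the two branches separately; the underlying estimates — $p_2|Q_{2,2}|\asymp p_1|Q_{1,2}|$ forced by \eqref{eq:inbracketcancelationb}, $|B|\lesssim p_2(p_1|Q_{1,2}|/C)^\tspecial$, and the resulting lower bound $|\Fourier_{\SBM(p,Q)}(\Star_\tspecial)|\gtrsim p_1|Q_{1,2}|^\tspecial/(2C)^\tspecial$ — are the same, and both proofs in fact require $C$ on the order of $128$ (as the paper uses elsewhere) rather than the nominal $C\ge 4$ in the lemma statement.
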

\begin{proof}
\textbf{Case 1)} First, suppose that \eqref{eq:inbracketcancelationa} holds. Then, 
$$
(p_1|Q_{1,1}| + p_2|Q_{1,2}|)\ge C
\Big|
p_1Q_{1,1} + p_2Q_{1,2}\Big|\ge 
C\Big|
p_1|Q_{1,1}| - p_2|Q_{1,2}|
\Big|.
$$ 
When $C\ge 4,$ this immediately implies that $p_2|Q_{1,2}|\le 2p_1|Q_{1,1}|$ which is enough as $p_2\ge 1/2.$

\medskip

\noindent
\textbf{Case 2)} Now, suppose that that \eqref{eq:inbracketcancelationb} holds.
Then, 
$p_1|Q_{1,2}| + p_2|Q_{2,2}|\ge C \Big|p_1Q_{1,2} + p_2Q_{2,2}
\Big|\ge C\Big|
p_1|Q_{1,2}| - p_2|Q_{2,2}|
\Big|.$ As in Case 1),  $p_2|Q_{2,2}|\le 2p_1|Q_{1,2}| .$  Hence, $(p_1|Q_{1,2}| + p_2|Q_{2,2}|)\le 3p_1|Q_{1,2}|,$ so 
\begin{align}
\label{eq:upperboundsecondcommunity}
\Big|p_1Q_{1,2} + p_2Q_{2,2}
\Big|\le \frac{1}{C}(p_1|Q_{1,2}| + p_2|Q_{2,2}|)\le \frac{3}{C}p_1|Q_{1,2}|
\end{align}
There are two cases. Either $|p_1Q_{1,1} + p_2Q_{1,2}|\le  \frac{1}{\sqrt{C}}(p_1|Q_{1,1}| + p_2|Q_{1,2}|),$ which immediately implies  $p_2|Q_{1,2}|\le 2p_1|Q_{1,1}|$ as in {Case 1)} provided $C\ge 4^2.$ Or, 
\begin{equation}
\label{eq:lowerboundfirstcommunity}
\begin{split}
&|p_1Q_{1,1} + p_2Q_{1,2}|>  \frac{1}{\sqrt{C}}(p_1|Q_{1,1}| + p_2|Q_{1,2}|)\ge\frac{1}{\sqrt{C}}p_2|Q_{1,2}|\ge \frac{1}{2\sqrt{C}}|Q_{1,2}|.
\end{split}
\end{equation}
Combining \eqref{eq:upperboundsecondcommunity} and 
\eqref{eq:lowerboundfirstcommunity},
\begin{align*}
    & \Bigg|
    p_1(p_1Q_{1,1} + p_2Q_{1,2})^\tspecial + 
p_2(pQ_{1,2} + p_2Q_{2,2})^\tspecial
    \Bigg|\\
    & \ge
    \Bigg|
    p_1(p_1Q_{1,1} + p_2Q_{1,2})^\tspecial\Bigg| - \Bigg| 
p_2(pQ_{1,2} + p_2Q_{2,2})^\tspecial
    \Bigg|\\
    & \ge p_1 \times \Big(\frac{1}{2\sqrt{C}}|Q_{1,2}|\Big)^\tspecial - 
    p_2\Big(\frac{3}{C}p_1|Q_{1,2}|\Big)^\tspecial\\
    & \ge 
     p_1 \times \Big(\frac{1}{2\sqrt{C}}|Q_{1,2}|\Big)^\tspecial - 
    p_1\Big(\frac{3}{C}|Q_{1,2}|\Big)^\tspecial\\
    & \ge
    \frac{p_1|Q_{1,2}|^\tspecial}{(2C)^\tspecial}
\end{align*}
whenever $C\ge 128.$ 

Now, recall \eqref{eq:signedstrtseqfail}. Then, as $(p_1|Q_{1,2}| + p_2|Q_{2,2}|)\le 3p_1|Q_{1,2}|,$ it must be the case that
\begin{align*}
    & p_1(p_1|Q_{1,1}| + p_2|Q_{1,2}|)^\tspecial + p_2(3p_1|Q_{1,2}|)^\tspecial\\
    & \ge 
    p_1(p_1|Q_{1,1}| + p_2|Q_{1,2}|)^\tspecial +p_2
    (p_1|Q_{1,2}| + p_2|Q_{2,2}|)^\tspecial\\
    & \ge 
    (4C^2)^{\tspecial}\Bigg|
    p_1(p_1Q_{1,1} + p_2Q_{1,2})^\tspecial + 
p_2(p_1Q_{1,2} + p_2Q_{2,2})^\tspecial
    \Bigg|\\
    &\ge 
    (4C^2)^{\tspecial}
    \frac{p_1|Q_{1,2}|^\tspecial}{(2C)^\tspecial}.
\end{align*}
For large enough $C\ge 128,$ this immediately implies that
$$
p_1|Q_{1,1}| + p_2|Q_{1,2}| \ge 
\frac{3C}{2}\times |Q_{1,2}|.
$$
Again, this implies $|Q_{1,2}|\le Cp_1|Q_{1,1}|.$ 
\end{proof}

It turns out that \cref{lem:11dominates12} is enough to imply \cref{thm:maximizingpartition2sbm}. 

\begin{lemma}
\label{lem:when11dominates12}
Suppose that $|Q_{1,2}|\le C p_1|Q_{1,1}|$  for some absolute constant $C.$ Then, 
for any connected graph $H$ on at most $D$ edges, 
\begin{equation*}
    \begin{split}
        & 
\Psi_{\SBM(p,Q)}(H)
\lesssim_D 
\max\Bigg(
\Psi_{\SBM(p,Q)}(\cycle_4),
\max_{1\le t \le D}
\Psi_{\SBM(p,Q)}(\Star_t)
\Bigg).
        \end{split}
\end{equation*}
\end{lemma}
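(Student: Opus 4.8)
We have already reduced to $p_1\le p_2$, so $p_2\ge 1/2$; if $Q_{1,1}=0$ the hypothesis forces $Q_{1,2}=0$ and the model is a diagonal SBM supported on community $2$, a degenerate instance of what follows, so assume $Q_{1,1}\ne 0$. The tools will be the explicit formula \eqref{eq:explicitfourier}, the triangle inequality $|\Fourier_{\SBM(p,Q)}(H)|\le\Fourier_{\SBM(p,|Q|)}(H)$, and repeated replacement of a factor $|Q_{1,2}|$ by $Cp_1|Q_{1,1}|$. First I would record, from \cref{claim:4cycle2sbm} and the hypothesis (which makes $p_1^2Q_{1,2}^4$ negligible against $p_1^4Q_{1,1}^4$), the benchmark $\Psi_{\SBM(p,Q)}(\cycle_4)\asymp_C\max(p_1|Q_{1,1}|,|Q_{2,2}|)$. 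The argument then splits on whether $H$ is a tree.

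\emph{If $H$ is not a tree}, so $|\edges(H)|\ge h:=|\vertices(H)|$, I would bound $|\Fourier_{\SBM(p,Q)}(H)|$ by summing absolute values over all $2^h$ labelings $x\colon\vertices(H)\to\{1,2\}$. For a labeling with $a$ vertices in community $1$ and $e_{11},e_{12},e_{22}$ edges inside community $1$, across, and inside community $2$, the contribution is at most $C^{e_{12}}p_1^{\,a+e_{12}}|Q_{1,1}|^{e_{11}+e_{12}}|Q_{2,2}|^{e_{22}}$. Writing $u=p_1|Q_{1,1}|$: if $a\ge e_{11}$ this is $\le C^D u^{e_{11}+e_{12}}|Q_{2,2}|^{e_{22}}\le C^D\max(u,|Q_{2,2}|)^{|\edges(H)|}\le C^D\max(u,|Q_{2,2}|)^{h}$; if $a<e_{11}$ it is $\le C^D u^{a+e_{12}}|Q_{2,2}|^{e_{22}}$, and contracting community $1$ to a single vertex and using connectivity of $H$ gives $e_{12}+e_{22}\ge h-a$, so the exponent is again $\ge h$. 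Summing over labelings, $\Psi_{\SBM(p,Q)}(H)\lesssim_D\max(p_1|Q_{1,1}|,|Q_{2,2}|)\asymp_C\Psi_{\SBM(p,Q)}(\cycle_4)$.

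\emph{If $H$ is a tree}, I may assume it has at least three edges (otherwise $H\in\{\Star_1,\Star_2\}\subseteq\mathcal{A}^4_D$). I would iterate the leaf-isolation identity \eqref{intro:leafisolation}: peeling a leaf $\ell$ off its parent $q$ replaces the inner sum over $x_\ell$ by the bracket $B_{x_q}=\sum_y p_yQ_{x_q,y}$, and repeating reduces $H$ to an edge, pulling out one bracket factor per peeled leaf. From the hypothesis and $p_2\in[1/2,1)$ one gets $|B_1|\le(1+C)p_1|Q_{1,1}|$ and $|B_2|\le Cp_1|Q_{1,1}|+|Q_{2,2}|$, so each bracket factor is $\lesssim_C\max(p_1|Q_{1,1}|,|Q_{2,2}|)\asymp_C\Psi_{\SBM(p,Q)}(\cycle_4)$. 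This crude accounting gives only $\Psi_{\SBM(p,Q)}(H)\lesssim_{C,D}\Psi_{\SBM(p,Q)}(\cycle_4)^{(h-1)/h}$, one factor short of the target, since a tree (like a star) has edge-to-vertex deficit exactly one. To recover the missing factor I would compare the leftover graph to a \emph{signed even} star instead of to $\cycle_4$: since $D$ is even, $\Fourier_{\SBM(p,Q)}(\Star_{2t})=p_1B_1^{2t}+p_2B_2^{2t}$ is a sum of nonnegative terms, hence free of between-community cancellation and bounded below by $p_1|B_1|^{2t}$ and by $\tfrac12|B_2|^{2t}$. Splitting according to which bracket cancels — essentially, whether $|Q_{2,2}|\gtrsim p_1|Q_{1,1}|$, with the residual regime $p_1\gtrsim 1$ handed to \cref{thm:maximizingpartitioninnonvanishing} — one shows that the absolute-value star count $\Fourier_{\SBM(p,|Q|)}(\Star_s)$ at the bottom of the leaf-peeling is at most a constant times the signed count of a suitable even star on at most $s+1\le h$ vertices, with the correct power; this closes the off-by-one and yields $\Psi_{\SBM(p,Q)}(H)\lesssim_{C,D}\max\bigl(\Psi_{\SBM(p,Q)}(\cycle_4),\max_{1\le t\le D}\Psi_{\SBM(p,Q)}(\Star_t)\bigr)$.

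The hard part is exactly this last step. Because the statistically relevant quantity is $\Psi=|\Fourier|^{1/|\vertices(H)|}$ (\cref{challenge:statisticalscaling}), losing a single factor of $\Psi_{\SBM(p,Q)}(\cycle_4)$ is fatal, and trees — having the largest edge-to-vertex deficit — force precisely such a loss in any crude estimate or bare leaf-isolation. The resolution is to exploit that the cancellations which would shrink a signed star count also propagate, through summing out leaves, into $\Fourier_{\SBM(p,Q)}(H)$ itself, so that the lossy absolute-value comparison \eqref{eq:compareHtoabsvaluemodel} of Step~1 can be replaced by a comparison with a signed even star of the correct size — which is why the evenness of $D$ and the full set of stars in $\mathcal{A}^4_D$ are needed.
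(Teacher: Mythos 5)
Your non-tree case is correct and takes a somewhat different route from the paper: you sum directly over the $2^h$ labelings, bound the contribution of a labeling with $a$ vertices in community $1$, and close via the contraction argument giving $e_{12}+e_{22}\ge h-a$; the paper instead chooses the subset $K$ of label-$1$ vertices, counts connected components $a,b$ of $H|_K$ and $H|_{\vertices(H)\setminus K}$, and uses $|\edges_H(K,\vertices(H)\setminus K)|\ge a+b-1$. Both are valid one-shot bounds yielding $\Psi_{\SBM(p,Q)}(H)\lesssim_D\max(p_1|Q_{1,1}|,|Q_{2,2}|)\asymp\Psi_{\SBM(p,Q)}(\cycle_4)$.

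The tree case, however, has a genuine gap. The step you leave unverified --- that ``$\Fourier_{\SBM(p,|Q|)}(\Star_s)$ at the bottom of the leaf-peeling is at most a constant times the signed count of a suitable even star, with the correct power'' --- is false under the hypothesis of this lemma. Once you pass through the absolute-value model, you have discarded the \emph{within-community} cancellations in $B_1=p_1Q_{1,1}+p_2Q_{1,2}$ and $B_2=p_1Q_{1,2}+p_2Q_{2,2}$, and evenness of the star only protects against \emph{between-community} cancellation, not against $B_1$ itself being small. Concretely, take $Q_{1,1}=1$, $Q_{1,2}=-(p_1/p_2)$, $Q_{2,2}=(p_1/p_2)^2$ with $p_1$ small; this satisfies $|Q_{1,2}|\le 2p_1|Q_{1,1}|$ (so the lemma's hypothesis holds with $C=2$), yet $B_1=B_2=0$ so \emph{every} signed star count vanishes, while $\Fourier_{\SBM(p,|Q|)}(\Star_s)\ge p_1(p_1|Q_{1,1}|)^s>0$ and $\Fourier_{\SBM(p,|Q|)}(\Star_s)^{1/(s+1)}\gtrsim p_1|Q_{1,1}|^{s/(s+1)}\gg p_1|Q_{1,1}|\asymp\Psi_{\SBM(p,Q)}(\cycle_4)$. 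The target inequality is of course still true here (the tree Fourier coefficient is zero, by \cref{prop:fullyunbiased}), but your proposed chain of upper bounds cannot certify it: the lossy detour through $\SBM(p,|Q|)$ is irreversible once taken.

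The paper avoids this by never taking absolute values of the leaf brackets. It isolates exactly \emph{two} leaves $h-1,h$ at once, writes the Fourier coefficient as a sum over $K\subseteq\vertices(H)\setminus\{h-1,h\}$ with the explicit signed factor $(p_1Q_{1,1}+p_2Q_{1,2})^{P_1(K)}(p_1Q_{1,2}+p_2Q_{2,2})^{P_2(K)}$ where $P_1(K)+P_2(K)=2$, and then applies $|ab|\le a^2+b^2$ to convert this factor into $p_1^{-\indicator[|K|\ge 1]}\Fourier_{\SBM(p,Q)}(\Star_2)$. This produces a genuinely signed $\Star_2$ count that correctly inherits whatever cancellation occurs in $B_1,B_2$; the remaining $|\vertices(H)|-3$ vertices are then handled by the same $a,t,b,s$ connected-components decomposition as in the non-tree case, with $a-\indicator[a\ge 1]\ge 0$ absorbing the denominator. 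Your intuition that the fix should involve a signed even star is right, but the mechanism has to come \emph{before} any absolute-value replacement of $Q_{1,2}$ or the brackets, not after.
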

In the rest of the section, we prove \cref{lem:when11dominates12}.
Again, the proof depends on whether $H$ is a tree.

\begin{proof}[Proof of \cref{lem:when11dominates12} when $H$ is not a tree]
Suppose that $H$ is connected and not a tree. Then, $H$ has at least $|\vertices(H)|$ edges. Now, we rewrite \cref{prop:explicitFourier} as follows:
\begin{equation}
\label{eq:maximalbound}
    \begin{split}
        & \Fourier_{\SBM(p,Q)}(H)\\
        & = 
\sum_{x_1, x_2, \ldots, x_h\in \{1,2\}}\Bigg(
\prod_{i = 1}^h p_{x_i}
\times
\prod_{(i,j)\in \edges(H)}
Q_{x_i,x_j}\Bigg)\\
& = \sum_{K\subseteq \vertices(H)}
p_1^{|K|}\times 
p_2^{|\vertices(H)| - |K|}\times 
Q_{1,1}^{|\edges_H(K,K)|}
Q_{1,2}^{|\edges_H(K, \vertices(H)\backslash K)|}
Q_{2,2}^{|\edges_H(\vertices(H)\backslash K, \vertices(H)\backslash K)|},
    \end{split}
\end{equation}
where the equivalence between the second and the third line follows simply by choosing $K$ to be the subset of vertices labeled by $1.$ In particular, 
\begin{equation}
    \begin{split}
        & |\Fourier_{\SBM(p,Q)}(H)|\\
        & \le \sum_{K\subseteq \vertices(H)}\Bigg|
p_1^{|K|}\times 
p_2^{|\vertices(H)| - |K|}\times 
Q_{1,1}^{|\edges_H(K,K)|}
Q_{1,2}^{|\edges_H(K, \vertices(H)\backslash K)|}
Q_{2,2}^{|\edges_H(\vertices(H)\backslash K, \vertices(H)\backslash K)|}\Bigg|\\
& \lesssim_D
\max_{K\subseteq \vertices(H)}
\Bigg|
p_1^{|K|}\times 
Q_{1,1}^{|\edges_H(K,K)|}
Q_{1,2}^{|\edges_H(K, \vertices(H)\backslash K)|}
Q_{2,2}^{|\edges_H(\vertices(H)\backslash K, \vertices(H)\backslash K)|}\Bigg|\\
& = 
\max_{K\subseteq \vertices(H)}
p_1^{|K|}\times 
|Q_{1,1}|^{|\edges_H(K,K)|}
|Q_{1,2}|^{|\edges_H(K, \vertices(H)\backslash K)|}
|Q_{2,2}|^{|\edges_H(\vertices(H)\backslash K, \vertices(H)\backslash K)|}
    \end{split}
\end{equation}

From now on, we will focus on a specific choice of $K.$ Suppose that $K$ is such that $H|_{K}$ has $a$ connected components and $|K| = a+t.$ Hence, $|\edges_H(K,K)|\ge  t.$ Let $H|_{\vertices(H)\backslash K}$ have $b$ connected components and $b+s$ vertices. Hence, 
$|\edges_H(\vertices(H)\backslash K, \vertices(H)\backslash K)|\ge s.$ Finally, observe that $|\edges_H(K,\vertices(H)\backslash K)|\ge a+b-1$ since $H$ is connected.

\begin{figure}

\begin{center}
\begin{tikzpicture}
\draw[thick] (0,0) ellipse (2cm and 1cm);
\node at (0,0.7) {\text{$K$}}; 

\draw[thick, blue] (-1.3,0) circle (0.3cm); 
\node at (-1.3,0) {\text{$K_1$}}; 

\draw[thick, blue] (0,0) circle (0.3cm); 
\node at (0,0) {\text{$K_2$}}; 

\draw[thick, blue] (1.3,0) circle (0.3cm); 
\node at (1.3,0) {\text{$K_a$}}; 

\node at (0.65,0) {\text{$\dots$}}; 

\draw[thick] (-1.1,1) -- (-1.1,1.5); 
\draw[thick] (1.1,1) -- (1.1,1.5); 
\draw[thick] (-1.1,1.5) -- (1.1,1.5); 

\node at (0,1.8) {\text{$a+t$ vertices,{} \textcolor{red}{ at least $t$ edges}}}; 

\draw[thick] (0,-3) ellipse (2cm and 1cm);
\node at (0,-3.7) {\text{$\vertices(H) \setminus K$}}; 

\draw[thick, blue] (-1.3,-3) circle (0.3cm); 
\node at (-1.3,-3) {\text{$T_1$}}; 

\draw[thick, blue] (0,-3) circle (0.3cm); 
\node at (0,-3) {\text{$T_2$}}; 

\draw[thick, blue] (1.3,-3) circle (0.3cm); 
\node at (1.3,-3) {\text{$T_b$}}; 

\node at (0.65,-3) {\text{$\dots$}}; 

\draw[thick] (-1.1,-4) -- (-1.1,-4.5); 
\draw[thick] (1.1,-4) -- (1.1,-4.5); 
\draw[thick] (-1.1,-4.5) -- (1.1,-4.5); 

\node at (0,-5) {\text{$b+s$ vertices, \textcolor{red}{ at least $s$ edges}}}; ; 


\draw[thick, red] (-1.3,-0.3) -- (-1.3,-2.7); 

\draw[thick, red] (-1.3,-0.3) -- (1.3,-2.7); 

\draw[thick, red] (0,-0.3) -- (-1.3,-2.7); 

\draw[thick, red] (0,-0.3) -- (1.3,-2.7); 

\draw[thick, red] (0.65,-0.3) -- (0.65,-2.7); 

\draw[thick, red] (0,-2.7) -- (1.3,-0.3); 

\draw[thick, red, rotate around={180:(2.2,-1.5)}] (2.2,0) -- (2.2,-3); 
\draw[thick, red, rotate around={180:(2.2,-1.5)}] (2.2,-3) -- (2.5,-3); 
\draw[thick, red, rotate around={180:(2.2,-1.5)}] (2.2,0) -- (2.5,0); 

\node[align=center] at (3,-1.5) {\text{at least} \\ \text{$a+b-1$}\\\text{edges}}; 

\end{tikzpicture}
\caption{Decomposition of $H$ into vertices and edges in the case of in-community cancellations when $H$ is not a tree. $|\vertices(H)| = a+b+t+s,$ $|\edges_H(K,K)|\ge t, |\edges_H(K, \vertices(H)\backslash K)|\ge a+b -1,$ $ |\edges_H(\vertices(H)\backslash K, \vertices(H)\backslash K)|\ge s$ and $|\edges(H)|\ge |\vertices(H)| = a+ b + t + s.$ The blue circles represent the connected components in $H|_{K}$ and $H|_{\vertices(H)\backslash K},$ respectively.}
\end{center}
\end{figure}

Since $H$ is not a tree, $|\edges(H)|\ge |\vertices(H)| = a+b+t+s.$ Thus, there is at least one more edge besides the $a+b+t+s-1$ identified so far. Suppose that it is of the type $|Q_{i,j}|.$ As in the layout from the end of \cref{sec:introcmparewithnonnegative}, we aim to replace some of the $|Q_{1,2}|$ instances by $p_1|Q_{1,1}|$ so that we can compare to a 4-cycle. Namely:
\begin{equation*}
    \begin{split}
        & p_1^{|K|}
        |Q_{1,1}|^{|\edges_H(K,K)|}
        |Q_{1,2}|^{|\edges_H(K,\vertices(H)\backslash K)|}
        |Q_{2,2}|^{|\edges_H(\vertices(H)\backslash K, \vertices(H)\backslash K)|}\\ 
        & \le p_1^{a+t}\times |Q_{1,1}|^t\times |Q_{1,2}|^{a+b-1}\times 
        |Q_{2,2}|^s\times |Q_{i,j}|\\
        & \lesssim p_1^{a+ t + a + b - 1} \times |Q_{1,1}|^{a + b + t -1}\times |Q_{2,2}|^s \times |Q_{i,j}|,
    \end{split}
\end{equation*}
where we used \cref{lem:11dominates12} in the second inequality.

If $(i,j)\in \{(1,1), (1,2)\},$ then $a\ge 1$ (as label 1 exists) and $|Q_{i,j}| = O(|Q_{1,1}|).$ Thus, the above is expression is bounded by 
\begin{align*}
&p_1^{a + b + t } \times |Q_{1,1}|^{a + b + t}\times |Q_{2,2}|^s\\
&\le (p_1|Q_{1,1}|)^{a + b + t}\times |Q_{2,2}|^s\\
& \lesssim_D |\Fourier_{\SBM(p,Q)}(\cycle_4)|^{\frac{a+ b + t}{4}}\times 
|\Fourier_{\SBM(p,Q)}(\cycle_4)|^{\frac{s}{4}}\\
&= |\Fourier_{\SBM(p,Q)}(\cycle_4)|^{\frac{|\vertices(H)|}{4}},
\end{align*} which completes the proof.

If $(i,j) = (2,2),$ the above expression is bounded by
$$
p_1^{t + a + b - 1} \times |Q_{1,1}|^{a + b + t -1}\times |Q_{2,2}|^{s + 1}.
$$
We argue similarly that this is at most $\Fourier_{\SBM(p,Q)}(\cycle_4)|^{\frac{|\vertices(H)|}{4}}$ up to multiplicative constants depending on $D.$
\end{proof}

If we were to apply the same argument when $H$ were a tree, we would prove that
$$
|\Fourier_{\SBM(p,Q)}(H)| 
\lesssim_D
|\Fourier_{\SBM(p,Q)}(\cycle_4)|^{\frac{|\vertices(H)|-1}{4}},
$$ 
which is not strong enough. Thus, we need to use the technique of isolating leaves as in \cref{thm:maximizingpartitioninnonvanishing}

\begin{proof}[Proof of \cref{lem:when11dominates12} when $H$ is a tree]
Now, suppose that $H$ is a tree. Again, there is nothing to prove if it has less than 3 vertices. If it has at least three vertices, then it has at least two leaves. Let these be $h-1$ and $h.$

We now bound $|\Fourier_{\SBM(p,Q)}(H)|$ by using the same technique as in \cref{thm:maximizingpartitioninnonvanishing}. Namely, 
\begin{equation}
\label{eq:boundwiyhoutleavesinnerbrackets}
    \begin{split}
        & \Fourier_{\SBM(p,Q)}(H)\\
        &=  
        \sum_{K\subseteq \vertices(H)\backslash\{h-1, h\}}
        p^{|K|}        
        Q_{1,1}^{|\edges_H(K,K)|}
        Q_{1,2}^{|\edges_H(K,\vertices(H)\backslash K)|}
        Q_{2,2}^{|E(\vertices(H)\backslash K, \vertices(H)\backslash K)|}\\
        & \quad\quad\quad\quad\quad\quad\times (p_1Q_{1,1} + p_2Q_{1,2})^{P_1(K)}
        (p_1Q_{1,2} + p_2Q_{2,2})^{P_2(K)},
    \end{split}
\end{equation}
where $P_1(K)$ is the number of parents of $h, h-1$ with label $1$ according to $K$ and $P_2(K)$ with label 2 (in case of a common parent, we count it twice). Note that $P_1(K) + P_2(K) = 2.$ By AM-GM,
\begin{align*}
&|(p_1Q_{1,1} + p_2Q_{2,2})^{P_1(K)}
        (p_1Q_{1,2} + p_2Q_{2,2})^{P_2(K)}|\\
&\lesssim 
p_1^{-\indicator[|K|\ge 1]}\Bigg(
p_1
(p_1Q_{1,1} + p_2Q_{1,2})^2 + 
p_2
(p_1Q_{1,2} + p_2Q_{2,2})^2\Bigg)\\
&=
p_1^{-\indicator[|K|\ge 1]}\Fourier_{\SBM(p,Q)}(\Star_2).
\end{align*}
The $\indicator[|K|\ge 1]$ factor comes from the fact that if $|K| = 0,$ then $P_1(K) = 0.$   
Combining with \eqref{eq:boundwiyhoutleavesinnerbrackets},
\begin{equation}
\label{eq:treecaseincommunitystarcomp}
\begin{split}
    &\Big| \Fourier_{\SBM(p,Q)}(H)\Big|\\
    &\lesssim_D \max_{K\subseteq \vertices(H)\backslash\{h,h-1\}}
        p^{|K|}        
        |Q_{1,1}|^{|\edges_H(K,K)|}
        |Q_{1,2}|^{|\edges_H(K,H\backslash K)|}
        |Q_{2,2}|^{|\edges_H(\vertices(H)\backslash K, \vertices(H)\backslash K)|}\\
        & \quad\quad\quad\quad\quad\quad\quad\times p^{-\indicator[|K|\ge 1]}|
        \Fourier_{\SBM(p,Q)}(\Star_2)|
\end{split}
\end{equation}
Again, we denote by $a$ the connected components of $K,$ by $a + t$ its vertices, by $b$ the connected components of $\vertices(H)\backslash K$ and by $b + s$ the number of vertices. 

\begin{figure}
\begin{center}
\begin{tikzpicture}

\draw[thick] (0,0) ellipse (2cm and 1cm);
\node at (0,0.7) {\text{$K$}}; 

\filldraw[thick, black] (-3,-.5) circle (.05cm);
\node at (-3.2,-.8) {\text{vertex $h-1$}};
\draw[thick,red] (-3,-.5) -- (-2,0);

\draw[thick, blue] (-1.3,0) circle (0.3cm); 
\node at (-1.3,0) {\text{$K_1$}}; 

\draw[thick, blue] (0,0) circle (0.3cm); 
\node at (0,0) {\text{$K_2$}}; 

\draw[thick, blue] (1.3,0) circle (0.3cm); 
\node at (1.3,0) {\text{$K_a$}}; 

\node at (0.65,0) {\text{$\dots$}}; 

\draw[thick] (-1.1,1) -- (-1.1,1.5); 
\draw[thick] (1.1,1) -- (1.1,1.5); 
\draw[thick] (-1.1,1.5) -- (1.1,1.5); 

\node at (0,1.8) {\text{$a+t$ vertices,{} \textcolor{red}{ at least $t$ edges}}}; 

\draw[thick] (0,-3) ellipse (2cm and 1cm);
\node at (0,-3.7) {\text{$\vertices(H) \setminus K$}}; 

\filldraw[thick, black] (-3,-2.5) circle (.05cm);
\node at (-3.2,-2.2) {\text{vertex $h$}};
\draw[thick,red] (-3,-2.5) -- (-2,-3);

\draw[thick, blue] (-1.3,-3) circle (0.3cm); 
\node at (-1.3,-3) {\text{$T_1$}}; 

\draw[thick, blue] (0,-3) circle (0.3cm); 
\node at (0,-3) {\text{$T_2$}}; 

\draw[thick, blue] (1.3,-3) circle (0.3cm); 
\node at (1.3,-3) {\text{$T_b$}}; 

\node at (0.65,-3) {\text{$\dots$}}; 

\draw[thick] (-1.1,-4) -- (-1.1,-4.5); 
\draw[thick] (1.1,-4) -- (1.1,-4.5); 
\draw[thick] (-1.1,-4.5) -- (1.1,-4.5); 

\node at (0,-5) {\text{$b+s$ vertices, \textcolor{red}{ at least $s$ edges}}}; ; 


\draw[thick, red] (-1.3,-0.3) -- (-1.3,-2.7); 

\draw[thick, red] (-1.3,-0.3) -- (1.3,-2.7); 

\draw[thick, red] (0,-0.3) -- (-1.3,-2.7); 

\draw[thick, red] (0,-0.3) -- (1.3,-2.7); 

\draw[thick, red] (0.65,-0.3) -- (0.65,-2.7); 

\draw[thick, red] (0,-2.7) -- (1.3,-0.3); 

\draw[thick, red, rotate around={180:(2.2,-1.5)}] (2.2,0) -- (2.2,-3); 
\draw[thick, red, rotate around={180:(2.2,-1.5)}] (2.2,-3) -- (2.5,-3); 
\draw[thick, red, rotate around={180:(2.2,-1.5)}] (2.2,0) -- (2.5,0); 

\node[align=center] at (3,-1.5) {\text{at least} \\ \text{$a+b-1$}\\\text{edges}}; 

\end{tikzpicture}
\end{center}
\caption{Decomposition of $H$ into vertices and edges in the case of in-community cancellations when $H$ is a tree. $|\vertices(H)| = a+b+t+s+2,$ $|\edges_H(K,K)|\ge t, |\edges_H(K, \vertices(H)\backslash K)|= a+b -1,$ $ |\edges_H(\vertices(H)\backslash K, \vertices(H)\backslash K)|= s$ and
 $|\edges(H)|= a+ b + t + s+1.$ The blue circles represent the connected components in $H|_{K}$ and $H|_{\vertices(H)\backslash K},$ respectively.
 We have drawn the two leaves to have parents with different labels for the purposes of illustration, but this might not be the case.
 }

\end{figure}

We rewrite the right hand-side of \eqref{eq:treecaseincommunitystarcomp} as
\begin{align*}
& p_1^{a + t}
|Q_{1,1}|^t 
|Q_{1,2}|^{a + b - 1}
|Q_{2,2}|^s
p_1^{-\indicator[a\ge 1]}|\Fourier_{\SBM(p,Q)}(\Star_2)|\\
& \le 
p_1^{a + t -1 + a + b - \indicator[a\ge 1]}
|Q_{1,1}|^{a+ b + t -1}
|Q_{2,2}|^s
|\Fourier_{\SBM(p,Q)}(\Star_2)|,
\end{align*}
where again the inequality follows from \cref{lem:11dominates12}.
Using that $a-\indicator[a\ge 1]\ge 0,$ we continue as follows 
\begin{align*}
&(p_1|Q_{1,1}|)^{a+ b + t -1}
   |Q_{2,2}|^s
|\Fourier_{\SBM(p,Q)}(\Star_2)|\\
& \lesssim_D
|\Fourier_{\SBM(p,Q)}(\cycle_4)|^{\frac{a +b + t -1}{4}}
|\Fourier_{\SBM(p,Q)}(\cycle_4)|^\frac{s}{4}
|\Fourier_{\SBM(p,Q)}(\Star_2)|\\
& = 
|\Fourier_{\SBM(p,Q)}(\cycle_4)|^\frac{|V(H)| - 3}{4}
|\Fourier_{\SBM(p,Q)}(\Star_2)|.
\end{align*}
Exactly as in the proof of \cref{thm:maximizingpartitioninnonvanishing}, this implies that
$$
\Psi_{\SBM(p,Q)}(H)\lesssim_D
\max\big(
\Psi_{\SBM(p,Q)}(\cycle_4),
\Psi_{\SBM(p,Q)}(\Star_2)
\big).\qedhere
$$
\end{proof}
\subsubsection{Step 4: Between-Community Cancellations.}
\label{sec:outofbracketcancelation}
Now, suppose that \eqref{eq:signedstrtseqfail} holds in the case of between-community cancellations, \eqref{eq:outofbracketcancelation}. Additionally, we assume that $|Q_{1,2}|\ge p_1|Q_{1,1}|$ as otherwise we can invoke \cref{lem:when11dominates12} and complete the proof. In particular, 
$p_1|Q_{1,1}| + p_2|Q_{1,2}| \le 2|Q_{1,2}|.$ By \eqref{eq:outofbracketcancelation}, also $|p_1Q_{1,1} + p_2Q_{1,2}|\ge \frac{p_2}{C}|Q_{1,2}|\ge \frac{1}{2C}|Q_{1,2}|.$
We record:
\begin{align}
\label{eq:boundonpbeta11plusbeta22}
\frac{1}{2C}|Q_{1,2}|\le |p_1Q_{1,1} + p_2Q_{1,2}|\le 2|Q_{1,2}|\end{align}
Furthermore, \eqref{eq:outofbracketcancelation} implies that $p_1(p_1Q_{1,1} + p_2Q_{1,2})^\tspecial, 
            p_2(p_1Q_{1,2} + p_2Q_{2,2})^\tspecial$ have different signs. In particular, $\tspecial$ is odd. Hence, $\tspecial\le D-1$ as $D$ is even. 
            
Next, we will show that unless $p_1\ge |Q_{1,2}|,$ $\Star_{\tspecial+1}$ dominates $H.$

\begin{lemma}
\label{lem:whenbeta12gep}
Suppose that \eqref{eq:signedstrtseqfail} holds and $p_1 \le |Q_{1,2}|.$ Then, 
$$
\Psi_{\SBM(p,Q)}(H)\lesssim_D
\Psi_{\SBM(p,Q)}(\Star_{\tspecial+1})
$$    
\end{lemma}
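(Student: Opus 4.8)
The plan is to exploit the comparison with a non-negative model already produced in Step~1 together with the structure forced by the between-community cancellation, after which the whole statement collapses to the single scalar inequality $p_1\le |Q_{1,2}|$. Throughout write $q:=|Q_{1,2}|$, $A:=p_1Q_{1,1}+p_2Q_{1,2}$, $B:=p_1Q_{1,2}+p_2Q_{2,2}$, and recall the standing assumptions of Step~4: \eqref{eq:outofbracketcancelation} holds, $q\ge p_1|Q_{1,1}|$, $p_1\le p_2$ (so $p_2\ge 1/2$), $\tspecial$ is odd with $\tspecial\le D-1$, and by \eqref{eq:boundonpbeta11plusbeta22} we have $|A|\asymp_D q$. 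The first step is to extract structure from \eqref{eq:outofbracketcancelation}: dividing its first two inequalities by $p_1$ and $p_2$ gives $p_1|Q_{1,2}|+p_2|Q_{2,2}|\le C|B|$, whence $|B|\gtrsim_D |Q_{2,2}|$; and its last inequality forces $p_1|A|^{\tspecial}\asymp_D p_2|B|^{\tspecial}$ (if one of these two quantities were at least twice the other there would be no cancellation, contradicting $(4C^2)^{\tspecial}|p_1A^{\tspecial}+p_2B^{\tspecial}|\le p_1|A|^{\tspecial}+p_2|B|^{\tspecial}$ for $C$ large). Combining with $p_2\asymp 1$ and $|A|\asymp_D q$ yields the key relation
\[
|Q_{2,2}|^{\tspecial}\ \lesssim_D\ |B|^{\tspecial}\ \asymp_D\ p_1|A|^{\tspecial}\ \asymp_D\ p_1q^{\tspecial},\qquad\text{i.e.}\qquad |Q_{2,2}|\ \lesssim_D\ p_1^{1/\tspecial}\,q.
\]

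Next I would lower bound the target quantity. Since $\tspecial$ is odd, $\tspecial+1$ is even and at most $D$, so $\Star_{\tspecial+1}\in\mathcal{A}^4_D$; by \cref{cor:starcounts}, $\Fourier_{\SBM(p,Q)}(\Star_{\tspecial+1})=p_1A^{\tspecial+1}+p_2B^{\tspecial+1}\ge p_1|A|^{\tspecial+1}\gtrsim_D p_1q^{\tspecial+1}$, using $|A|\asymp_D q$ and that both summands are nonnegative. Hence $\Psi_{\SBM(p,Q)}(\Star_{\tspecial+1})\gtrsim_D (p_1q^{\tspecial+1})^{1/(\tspecial+2)}$.

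For the upper bound on $|\Fourier_{\SBM(p,Q)}(H)|$ I would \emph{not} return to the vertex-partition expansion, but instead start from \eqref{eq:compareHtoabsvaluemodel}, namely $|\Fourier_{\SBM(p,Q)}(H)|^{1/|\vertices(H)|}\lesssim_D |\Fourier_{\SBM(p,|Q|)}(\Star_{\tspecial})|^{1/(\tspecial+1)}$, and simply bound the nonnegative star count of the absolute-value model using the structural relations. By \cref{cor:starcounts}, $\Fourier_{\SBM(p,|Q|)}(\Star_{\tspecial})=p_1(p_1|Q_{1,1}|+p_2|Q_{1,2}|)^{\tspecial}+p_2(p_1|Q_{1,2}|+p_2|Q_{2,2}|)^{\tspecial}$. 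In the first bracket $p_1|Q_{1,1}|\le q$ (this is exactly $q\ge p_1|Q_{1,1}|$) and $p_2|Q_{1,2}|\le q$, so the first summand is $\lesssim_D p_1q^{\tspecial}$; in the second bracket $p_1|Q_{1,2}|=p_1q\le p_1^{1/\tspecial}q$ and $p_2|Q_{2,2}|\lesssim_D p_1^{1/\tspecial}q$, so the second summand is $\lesssim_D (p_1^{1/\tspecial}q)^{\tspecial}=p_1q^{\tspecial}$. Thus $\Fourier_{\SBM(p,|Q|)}(\Star_{\tspecial})\lesssim_D p_1q^{\tspecial}$, and therefore $|\Fourier_{\SBM(p,Q)}(H)|\lesssim_D (p_1q^{\tspecial})^{|\vertices(H)|/(\tspecial+1)}$.

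Finally I would close the argument with the elementary inequality that for $0<p_1\le q\le 1$ and any real $h\ge 1$,
\[
(p_1q^{\tspecial})^{h/(\tspecial+1)}\ \le\ (p_1q^{\tspecial+1})^{h/(\tspecial+2)},
\]
which, after raising to the power $(\tspecial+1)(\tspecial+2)/h$ and cancelling the common factor $q^{\tspecial(\tspecial+2)}$, reduces precisely to $p_1\le q$ --- the extra hypothesis of the lemma. Chaining this with the lower bound on $\Psi_{\SBM(p,Q)}(\Star_{\tspecial+1})$ gives $|\Fourier_{\SBM(p,Q)}(H)|\lesssim_D \Fourier_{\SBM(p,Q)}(\Star_{\tspecial+1})^{|\vertices(H)|/(\tspecial+2)}$, that is $\Psi_{\SBM(p,Q)}(H)\lesssim_D \Psi_{\SBM(p,Q)}(\Star_{\tspecial+1})$. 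I expect the one genuinely delicate point to be the first step --- teasing the clean relation $|Q_{2,2}|\lesssim_D p_1^{1/\tspecial}q$ (and $|A|\asymp_D q$) out of the opaque cancellation conditions \eqref{eq:outofbracketcancelation}; once these are in hand, the rest is bookkeeping plus the single exponent inequality $p_1\le q$. (The degenerate case $q=0$ is excluded since $p_1\le|Q_{1,2}|$ and $p_1>0$, so no division is problematic.)
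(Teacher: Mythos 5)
Your proof is correct and follows essentially the same route as the paper's. Both arguments hinge on the same three ingredients: the Step-1 reduction $|\Fourier_{\SBM(p,Q)}(H)|^{1/|\vertices(H)|}\lesssim_D |\Fourier_{\SBM(p,|Q|)}(\Star_\tspecial)|^{1/(\tspecial+1)}$, the balance $p_1|A|^\tspecial\asymp_D p_2|B|^\tspecial$ forced by the between-community cancellation (which in the paper's notation reads $p_1|\mu|^\tspecial\in(1\pm 1/2)p_2|\lambda|^\tspecial$), and the observation that since $\tspecial+1$ is even, $\Fourier_{\SBM(p,Q)}(\Star_{\tspecial+1})\ge p_1|A|^{\tspecial+1}$; the hypothesis $p_1\le|Q_{1,2}|$ enters identically (the paper uses $|\mu|\gtrsim|Q_{1,2}|\ge p_1$ to pass from exponent $1/(\tspecial+1)$ to $1/(\tspecial+2)$, exactly your exponent inequality with $q\ge p_1$). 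Your version is slightly more explicit in that it extracts the intermediate scalar bounds $\Fourier_{\SBM(p,|Q|)}(\Star_\tspecial)\lesssim_D p_1 q^\tspecial$ and $\Fourier_{\SBM(p,Q)}(\Star_{\tspecial+1})\gtrsim_D p_1 q^{\tspecial+1}$ (along with the auxiliary relation $|Q_{2,2}|\lesssim_D p_1^{1/\tspecial}q$) before comparing, whereas the paper chains the inequalities directly in terms of $\mu,\lambda$ without naming these intermediate quantities; the substance is the same.
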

\begin{proof}
    For brevity, denote $\mu = p_1Q_{1,1} + p_2Q_{1,2},\lambda = 
p_1Q_{1,2} + p_2Q_{2,2}.$ When \eqref{eq:outofbracketcancelation} holds, $C|\mu|\ge p_1|Q_{1,1}| + p_2|Q_{1,2}| $ and 
$C|\lambda|\ge p_1|Q_{1,2}| + p_2|Q_{2,2}|.$ However, 
$(4C^2)^\tspecial\times |p_1\mu^\tspecial + p_2\lambda^\tspecial|\le \Big(p_1|\mu|^\tspecial + p_2|\lambda|^\tspecial\Big).$ In particular, this means
$(4C^2)^\tspecial\times \Big|p_1|\mu|^\tspecial - p_2|\lambda|^\tspecial\Big|\le  \Big(p_1|\mu|^\tspecial + p_2|\lambda|^\tspecial\Big),$ so 
$p_1|\mu|^\tspecial\in (1\pm 1/2)p_2|\lambda|^\tspecial$ when $C\ge 4.$  By \eqref{eq:boundonpbeta11plusbeta22}, also 
$|\mu|\ge \frac{1}{2C}|Q_{1,2}|\ge \frac{p_1}{2C}.$

Now, since $\tspecial+1$ is even and $\mu \ge \frac{p_1}{2C},$ $(p_1\mu^{\tspecial+1})^{1/(\tspecial+2)}\gtrsim_D (p_1|\mu|^\tspecial)^{1/(\tspecial+1)}.$ Altogether, 
\begin{align*}
        & |\Fourier_{\SBM(p,Q)}(\Star_{\tspecial+1})|^{\frac{1}{|\vertices(\Star_{\tspecial+1})|}} = |p_1\mu^{\tspecial+1} + p_2\lambda^{\tspecial+1}|^{1/(\tspecial+2)}\ge |p_1\mu^{\tspecial+1}|^{1/(\tspecial+2)}
        \gtrsim_D |p_1\mu^\tspecial|^{1/(\tspecial+1)} \\&\gtrsim_D \big( p_1|\mu|^\tspecial + p_2|\lambda|^\tspecial\big)^{1/(\tspecial+1)}  =
        |\Fourier_{\SBM(p,|Q|)}(\Star_{\tspecial})|^{1/(\tspecial+1)} \gtrsim_D
        |\Fourier_{\SBM(p,Q)}(H)|^{\frac{1}{|\vertices(H)|}}.\qedhere
\end{align*}
\end{proof}
Thus, from now on, we 
\begin{align}
\label{eq:beta12issmall}
\text{ Assume that }|Q_{1,2}|\le p_1.
\end{align}

\begin{lemma}
\label{lem:whenbeta12lep}
Suppose that \eqref{eq:signedstrtseqfail} holds, $|Q_{1,2}|\le p_1,$ and $\tspecial>1.$ Then, 
$$
\Psi_{\SBM(p,Q)}(H)\lesssim_D
\Psi_{\SBM(p,Q)}(\Star_{\tspecial-1})
$$    
\end{lemma}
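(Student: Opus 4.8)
The plan is to avoid splitting on whether $H$ is a tree (unlike \cref{lem:when11dominates12}) and instead run everything through the comparison \eqref{eq:compareHtoabsvaluemodel}, which already bounds $\Psi_{\SBM(p,Q)}(H)$ by a power of the non-negative star count $\Fourier_{\SBM(p,|Q|)}(\Star_\tspecial)$ \emph{uniformly} over $H$. Writing $\mu=p_1Q_{1,1}+p_2Q_{1,2}$ and $\lambda=p_1Q_{1,2}+p_2Q_{2,2}$, by \cref{cor:starcounts} we have $\Fourier_{\SBM(p,|Q|)}(\Star_\tspecial)=p_1(p_1|Q_{1,1}|+p_2|Q_{1,2}|)^\tspecial+p_2(p_1|Q_{1,2}|+p_2|Q_{2,2}|)^\tspecial$ and $\Fourier_{\SBM(p,Q)}(\Star_{\tspecial-1})=p_1\mu^{\tspecial-1}+p_2\lambda^{\tspecial-1}$, so the lemma reduces to comparing these two quantities raised to the powers $1/(\tspecial+1)$ and $1/\tspecial$ respectively. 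Since we are inside Step 4, \eqref{eq:outofbracketcancelation} is in force, which in particular makes $\tspecial$ odd and hence $\tspecial\ge 3$.

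First I would collect the structural facts available in this regime. From \eqref{eq:outofbracketcancelation} one reads off $p_1|Q_{1,1}|+p_2|Q_{1,2}|\le C|\mu|$ and $p_1|Q_{1,2}|+p_2|Q_{2,2}|\le C|\lambda|$; unwinding the last inequality of \eqref{eq:outofbracketcancelation} — using that $\tspecial$ is odd, so $p_1\mu^\tspecial$ and $p_2\lambda^\tspecial$ have opposite signs — yields $p_1|\mu|^\tspecial\asymp p_2|\lambda|^\tspecial$ with absolute constants (the same computation already performed in the proof of \cref{lem:whenbeta12gep}, giving $p_1|\mu|^\tspecial\in(1\pm\tfrac12)p_2|\lambda|^\tspecial$). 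Combining \eqref{eq:boundonpbeta11plusbeta22}, which gives $|\mu|\le 2|Q_{1,2}|$, with the hypothesis $|Q_{1,2}|\le p_1$ yields the crucial inequality $|\mu|\le 2p_1$. If $\mu=0$, then $p_2|\lambda|^\tspecial\asymp0$ and \eqref{eq:compareHtoabsvaluemodel} forces $\Fourier_{\SBM(p,Q)}(H)=0$, so the lemma is trivial; hence I would assume $\mu\neq 0$ from now on.

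Next I would bound the non-negative star count: using the two displayed estimates and $p_2|\lambda|^\tspecial\lesssim p_1|\mu|^\tspecial$,
\begin{equation*}
\Fourier_{\SBM(p,|Q|)}(\Star_\tspecial)\;\le\; C^\tspecial\big(p_1|\mu|^\tspecial+p_2|\lambda|^\tspecial\big)\;\lesssim_D\; p_1|\mu|^\tspecial ,
\end{equation*}
so, since $|\vertices(\Star_\tspecial)|=\tspecial+1$, \eqref{eq:compareHtoabsvaluemodel} gives $\Psi_{\SBM(p,Q)}(H)\lesssim_D (p_1|\mu|^\tspecial)^{1/(\tspecial+1)}$. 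On the other side, because $\tspecial-1$ is even, $\Fourier_{\SBM(p,Q)}(\Star_{\tspecial-1})=p_1|\mu|^{\tspecial-1}+p_2|\lambda|^{\tspecial-1}\ge p_1|\mu|^{\tspecial-1}$, so $\Psi_{\SBM(p,Q)}(\Star_{\tspecial-1})\ge (p_1|\mu|^{\tspecial-1})^{1/\tspecial}$. Thus the lemma follows once I verify $(p_1|\mu|^\tspecial)^{1/(\tspecial+1)}\lesssim_D (p_1|\mu|^{\tspecial-1})^{1/\tspecial}$, which after raising to the power $\tspecial(\tspecial+1)$ becomes $p_1^\tspecial|\mu|^{\tspecial^2}\lesssim_D p_1^{\tspecial+1}|\mu|^{\tspecial^2-1}$, i.e.\ $|\mu|\lesssim_D p_1$ — precisely the inequality $|\mu|\le 2p_1$ from the first step.

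I do not expect a serious obstacle: the entire content is recognizing that the hypothesis $|Q_{1,2}|\le p_1$, through $|\mu|\asymp|Q_{1,2}|$, is exactly what lets one trade one power of $|\mu|$ for one power of $p_1$ and thereby drop the star size from $\tspecial$ to $\tspecial-1$ while paying only a constant. The only care needed is bookkeeping: confirming the slack constants in \eqref{eq:outofbracketcancelation} (the $(4C^2)^\tspecial$ factor, the $1\pm\tfrac12$ from $p_1|\mu|^\tspecial\asymp p_2|\lambda|^\tspecial$) genuinely reduce to absolute constants, and that $\tspecial$-dependent constants such as $C^\tspecial$ and $2^{1/(\tspecial(\tspecial+1))}$ are harmlessly absorbed into $\lesssim_D$ because $\tspecial\le D$.
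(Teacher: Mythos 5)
Your proof is correct and follows essentially the same route as the paper's, which is given only as a sketch deferring to the proof of \cref{lem:whenbeta12gep}: bound $\Fourier_{\SBM(p,|Q|)}(\Star_\tspecial)\lesssim_D p_1|\mu|^\tspecial$ via \eqref{eq:outofbracketcancelation}, observe $\tspecial-1$ is even so $\Fourier_{\SBM(p,Q)}(\Star_{\tspecial-1})\ge p_1|\mu|^{\tspecial-1}$, and use $|\mu|\lesssim p_1$ (from $|Q_{1,2}|\le p_1$ and \eqref{eq:boundonpbeta11plusbeta22}) to trade one factor of $|\mu|$ for one of $p_1$ and drop the star size by one. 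You simply spell out the details (and the degenerate case $\mu=0$) that the paper's one-line sketch omits.
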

\begin{proof}[Proof sketch]
The proof is exactly the same as that of \cref{lem:whenbeta12lep}. The only difference is that this time $|\mu|\lesssim_D p_1,$ which implies 
$(p_1\mu^{\tspecial-1})^{1/\tspecial}\gtrsim_D (p_1|\mu|^\tspecial)^{1/(\tspecial+1)}.$ 
\end{proof}

Hence, what is left is the case $\tspecial = 1.$ For convenience, we restate and recall the conditions in the remaining case:
\begin{enumerate}
    \item $\tspecial = 1.$
    \item \eqref{eq:outofbracketcancelation}: $p_1|Q_{1,1}| + p_2|Q_{1,2}|\le C\times|pQ_{1,1} + p_2Q_{1,2}|,$ and 
    $p_1|Q_{1,2}| + p_2|Q_{2,2}|\le C \times|p_1Q_{1,2} + p_2Q_{2,2}|,$ and 
    \begin{align*}
    & (4C^2)\times |p_1(p_1Q_{1,1} + p_2Q_{1,2}) + p_2(p_1Q_{1,2} + p_2Q_{2,2})| \le
    p_1|p_1Q_{1,1} + p_2Q_{1,2}| + p_2|pQ_{1,2} + p_2Q_{2,2}|.
    \end{align*}
    \item $|Q_{1,2}|\ge p_1|Q_{1,1}|$ as otherwise \cref{lem:when11dominates12} gives \cref{thm:maximizingpartition2sbm}.
    \item $|Q_{1,2}|\le p_1$ as otherwise \cref{lem:whenbeta12gep} implies the result.
\end{enumerate}
Observe that under the above assumptions, $|p_1(p_1Q_{1,1} + p_2Q_{1,2})|\le 
p_1 |p_1Q_{1,1}| + p_1|p_2Q_{1,2})|\le 
2p_1|Q_{1,2}|.$ Furthermore,
\begin{align*}
    & 
    p_1|p_1Q_{1,1} + p_2Q_{1,2}| + p_2|pQ_{1,2} + p_2Q_{2,2}|\\
    & \ge C\times |p_1(p_1Q_{1,1} + p_2Q_{1,2}) + p_2(pQ_{1,2} + p_2Q_{2,2})|\\
    & \ge C\times\Big|
    p_1|p_1Q_{1,1} + p_2Q_{1,2}| - 
    p_2|p_1Q_{1,2} + p_2Q_{2,2}|
    \Big|.
\end{align*}
For large enough $C,$ this implies
\begin{align}
\label{eq:insecondcommunity12dominates}
p_2|p_1Q_{1,2} + p_2Q_{2,2}|\le 2p_1|p_1Q_{1,1} + p_2Q_{1,2}| \le 4p_1|Q_{1,2}|.
\end{align}
As $p_2|p_1Q_{1,2} + p_2Q_{2,2}|\ge \frac{1}{C}p_1|Q_{1,2}| + \frac{1}{C}p_2|Q_{2,2}|,$ this further implies
\begin{enumerate}
    \item [4.] $|Q_{2,2}|\le 8cp_1|Q_{1,2}|$ for large enough $C.$
\end{enumerate}
Finally, this immediately implies that $|p_1Q_{1,2} + p_2Q_{2,2}| = O(p_1|Q_{1,2}|).$ As we are in the case of between-community cancellations, the reverse inequality is also true and $|p_1Q_{1,2} + p_2Q_{2,2}| \asymp p_1|Q_{1,2}|.$
\begin{enumerate}
    \item [5.] For any even $t\in [2,D],$
    \begin{equation}
    \label{eq:starexpectationsint1outbrackets}
    \begin{split}
     &\Fourier_{\SBM(p,Q)}(\Star_t)\\
     & = 
	p_1(p_1Q_{1,1} + p_2Q_{1,2})^t  + 
	p_2(p_1Q_{1,2} + p_2Q_{2,2})^t\\
	& \asymp_D 
	p_1(p_1|Q_{1,1}| + p_2|Q_{1,2}|)^t + 
	p_2(p_1|Q_{1,2}| + p_2|Q_{2,2}|)^t\\
	& \asymp_D 
	p_1 |Q_{1,2}|^t + p_1^{t}|Q_{1,2}|^t\asymp_D 
	p_1 |Q_{1,2}|^t.
    \end{split}
    \end{equation}
\end{enumerate}
Now, we will carry out a similar analysis as in \cref{sec:incommunitycancelation} but we will need an even more careful analysis of the leaves. Namely, suppose that $\leaves(H)\subseteq \vertices(H)$ is the set of leaves and for $K\subseteq V(H)\backslash \leaves(H),$ the
set of leaves with parents of label $1$ is $\leaves_1(K)$ and with parents of label $2$ is $\leaves_2(K).$ Hence, 
\begin{equation}
\label{eq:trianglineqcrosscommunity}
\begin{split}
    &|\Fourier_{\SBM(p,Q)}(H)|\\
    & = \Bigg|\sum_{K\subseteq \vertices(H)\backslash \leaves(H)}
    p_1^{|K|}p_2^{|\vertices(H)| - |\leaves(H)| - |K|}\times\\
    &\quad\quad\quad\quad\quad\times
    Q_{1,1}^{|\edges_H(K,K)|}
    Q_{1,2}^{|\edges_H(K,\vertices(H)\backslash (K\cup \leaves(H)))|}
    Q_{2,2}^{|\edges_H(\vertices(H)\backslash (K\cup \leaves(H)),  \vertices(H)\backslash (K\cup \leaves(H)))|}\\
    &\quad\quad\quad\quad\quad\times
    (p_1Q_{1,1} + p_2Q_{1,2})^{|\leaves_1(K)|}
    (p_1Q_{1,2} + p_2Q_{2,2})^{|\leaves_2(K)|}
    \Bigg|\\
    & \lesssim_D \max_{K\subseteq \vertices(H)\backslash \leaves(H)} p_1^{|K|}\\
     & \quad\quad\quad\quad\quad \times
    |Q_{1,1}|^{|\edges_H(K,K)|}
    |Q_{1,2}|^{|\edges_H(K,\vertices(H)\backslash (K\cup \leaves(H)))|}|Q_{2,2}|^{|\edges_H(\vertices(H)\backslash (K\cup \leaves(H)),  \vertices(H)\backslash (K\cup \leaves(H)))|}\\
    & \quad\quad\quad\quad\quad \times
    |p_1Q_{1,1} + p_2Q_{1,2}|^{|\leaves_1(K)|}
    |p_1Q_{1,2} + p_2Q_{2,2}|^{|\leaves_2(K)|}.
    \end{split}
\end{equation}
Now on, let $K$ be the respective maximizer in \eqref{eq:trianglineqcrosscommunity}.
Again, let $K$ have $a$ connected components and $a+t$ vertices and $H|_{\vertices(H)\backslash (K\cup \leaves(H))}$ have $b$ connected components and $b+s$ vertices. In particular, 
$$
|\vertices(H)| = a + b + t + s + |\leaves_1(K)| + |\leaves_2(K)|.
$$

\begin{figure}
\begin{center}
\begin{tikzpicture}

\draw[thick] (0,0) ellipse (2cm and 1cm);
\node at (0,0.7) {\text{$K$}}; 

\filldraw[thick, black] (-3,-1) circle (.05cm);
\draw[thick] (-3,-1) -- (-1.8,-.4);

\filldraw[thick, black] (-3,0) circle (.05cm);
\draw[thick] (-3,0) -- (-2,0);

\filldraw[thick, black] (-3,1) circle (.05cm);
\draw[thick] (-3,1) -- (-1.8,.4);

\draw[thick, green] (-3.35,1.4) -- (-2.65,1.4); 

\draw[thick, green] (-3.35,1.4) -- (-3.35,-1.4); 

\draw[thick, green] (-3.35,-1.4) -- (-2.65,-1.4);

\node at (-4.2,0) {\text{$\leaves_1(K)$}};

\filldraw[thick, black] (-3,-4) circle (.05cm);
\draw[thick] (-3,-4) -- (-1.8,-3.4);

\filldraw[thick, black] (-3,-3) circle (.05cm);
\draw[thick] (-3,-3) -- (-2,-3);

\filldraw[thick, black] (-3,-2) circle (.05cm);
\draw[thick] (-3,-2) -- (-1.8,-2.6);

\draw[thick, green] (-3.35,-1.6) -- (-2.65,-1.6); 

\draw[thick, green] (-3.35,-1.6) -- (-3.35,-4.4); 

\draw[thick, green] (-3.35,-4.4) -- (-2.65,-4.4);

\node at (-4.2,-3) {\text{$\leaves_2(K)$}};

\draw[thick, blue] (-1.3,0) circle (0.3cm); 
\node at (-1.3,0) {\text{$K_1$}}; 

\draw[thick, blue] (0,0) circle (0.3cm); 
\node at (0,0) {\text{$K_2$}}; 

\draw[thick, blue] (1.3,0) circle (0.3cm); 
\node at (1.3,0) {\text{$K_a$}}; 

\node at (0.65,0) {\text{$\dots$}}; 

\draw[thick] (-1.1,1) -- (-1.1,1.5); 
\draw[thick] (1.1,1) -- (1.1,1.5); 
\draw[thick] (-1.1,1.5) -- (1.1,1.5); 

\node at (0,1.8) {\text{$a+t$ vertices,{} \textcolor{red}{ at least $t$ edges}}}; 

\draw[thick] (0,-3) ellipse (2cm and 1cm);
\node at (0,-3.7) {\text{$\vertices(H) \setminus K$}}; 

\draw[thick, blue] (-1.3,-3) circle (0.3cm); 
\node at (-1.3,-3) {\text{$T_1$}}; 

\draw[thick, blue] (0,-3) circle (0.3cm); 
\node at (0,-3) {\text{$T_2$}}; 

\draw[thick, blue] (1.3,-3) circle (0.3cm); 
\node at (1.3,-3) {\text{$T_b$}}; 

\node at (0.65,-3) {\text{$\dots$}}; 

\draw[thick] (-1.1,-4) -- (-1.1,-4.5); 
\draw[thick] (1.1,-4) -- (1.1,-4.5); 
\draw[thick] (-1.1,-4.5) -- (1.1,-4.5); 

\node at (0,-5) {\text{$b+s$ vertices, \textcolor{red}{ at least $s$ edges}}}; ; 


\draw[thick, red] (-1.3,-0.3) -- (-1.3,-2.7); 

\draw[thick, red] (-1.3,-0.3) -- (1.3,-2.7); 

\draw[thick, red] (0,-0.3) -- (-1.3,-2.7); 

\draw[thick, red] (0,-0.3) -- (1.3,-2.7); 

\draw[thick, red] (0.65,-0.3) -- (0.65,-2.7); 

\draw[thick, red] (0,-2.7) -- (1.3,-0.3); 

\draw[thick, red, rotate around={180:(2.2,-1.5)}] (2.2,0) -- (2.2,-3); 
\draw[thick, red, rotate around={180:(2.2,-1.5)}] (2.2,-3) -- (2.5,-3); 
\draw[thick, red, rotate around={180:(2.2,-1.5)}] (2.2,0) -- (2.5,0); 

\node[align=center] at (3,-1.5) {\text{at least} \\ \text{$a+b-1$}\\\text{edges}}; 

\end{tikzpicture}
\caption{Decomposition of $H$ into vertices and edges in the case of between-community cancellations. $|\vertices(H)| = a+b+t+s + |\leaves_1(K)| + |\leaves_2(K)|,$ $|\edges_H(K,K)|\ge t, |\edges_H(K, \vertices(H)\backslash K)|\ge a+b -1,$ 
$ |\edges_H(\vertices(H)\backslash K,\vertices(H)\backslash K)|\ge s.$ The blue circles represent the connected components in $H|_{K}$ and $H|_{\vertices(H)\backslash K}.$}
\end{center}
\end{figure}
For a fixed $K,$ the last expression in 
\eqref{eq:trianglineqcrosscommunity}
is at most 
$$
p_1^{a+t}|Q_{1,1}|^t |Q_{1,2}|^{a + b - 1}
|Q_{2,2}|^s 
|p_1Q_{1,1} + Q_{1,2}|^{|\leaves_1(K)|}
    |p_1Q_{1,2} + p_2Q_{2,2}|^{|\leaves_2(K)|}.
$$
Using that $|Q_{2,2}|\le p_1|Q_{1,2}|,$ the above expression is bounded by 
\begin{align}
\label{eq:comparisoninoutofbracketscase}
    p_1^{a+t +s + |\leaves_2(K)|}|Q_{1,1}|^t |Q_{1,2}|^{a + b - 1 + s + |\leaves_2(K)|}
|p_1Q_{1,1} + Q_{1,2}|^{|\leaves_1(K)|}.
\end{align}
We now analyze two cases based on which of the two quantities $|Q_{1,1}|\sqrt{p_1}$ and  $|Q_{1,2}|$ is larger. Again, the intuition is as in \cref{sec:introcmparewithnonnegative} -- this will allow us to replace instances of $|Q_{1,2}|$ with $|Q_{1,1}|\sqrt{p_1}$ and compare witha four-cycle.
\paragraph{ Case 1) $|Q_{1,1}|\sqrt{p_1}\le |Q_{1,2}|.$}
Using $|Q_{1,1}|\sqrt{p_1}\le |Q_{1,2}|$, we bound \eqref{eq:comparisoninoutofbracketscase}
by
\begin{align}
\label{eq:whensqrtismsaller}
p_1^{a+t/2 +s + |\leaves_2(K)|} |Q_{1,2}|^{a + b - 1 + s + |\leaves_2(K)| + t}
|p_1Q_{1,1} + Q_{1,2}|^{|\leaves_1(K)|}.
\end{align}
\paragraph{Case 1.1) $|\leaves_1(K)|\le 2.$}
Using that $p_1|Q_{1,1}| \le|Q_{1,2}|,$ the expression in \eqref{eq:whensqrtismsaller} is bounded by 
\begin{align*}
 & p_1^{a+t/2 +s + |\leaves_2(K)|}|\times  |Q_{1,2}|^{a + b - 1 + s + |\leaves_2(K)| +t + |\leaves_1(K)|}\\
 & \lesssim 
  p_1^{a+t/2 +s + |\leaves_2(K)|-1}|\times  |Q_{1,2}|^{a + b - 3 + s + |\leaves_2(K)| +t+ |\leaves_1(K)|}\times 
  \Fourier_{\SBM(p,Q)}(\Star_2)
\end{align*}
where we used \eqref{eq:starexpectationsint1outbrackets}.
\paragraph{Case 1.1.1)}
 Now, suppose that 
 $$
 a+t/2 +s + |\leaves_2(K)|-1\ge  \frac{1}{2}(a + b - 3 + s +t+ |\leaves_2(K)| + |\leaves_1(K)|) = \frac{1}{2}(|\vertices(H)|-3).
 $$
 Then, using \cref{claim:4cycle2sbm}, the above expression is bounded by 
\begin{align*}
 & (\sqrt{p_1}|Q_{1,2}|)^{a + b - 3 + s + |\leaves_2(K)| +t+ |\leaves_1(K)|}\times 
  \Fourier_{\SBM(p,Q)}(\Star_2)\\
  & \lesssim 
  |\Fourier_{\SBM(p,Q)}(\cycle_4)|^{\frac{|\vertices(H)|-3}{4}}\times 
  |\Fourier_{\SBM(p,Q)}(\Star_2)|.
\end{align*}
As in \cref{thm:maximizinginnonnegative}, this expression is at most $$\max(|\Fourier_{\SBM(p,Q)}(\cycle_4)|^{1/4}, 
  |\Fourier_{\SBM(p,Q)}(\Star_2)|^{1/3})^{|V(H)|}.$$

\paragraph{Case 1.1.2)}
The remaining case is 
\begin{align*}
&a+t/2 +s + |\leaves_2(K)|-1< \frac{1}{2}(a + b - 3 + s + |\leaves_2(K)| + t+|\leaves_1(K)|)\quad\quad\quad\Longleftrightarrow\\
&
a + s + |\leaves_2(K)| + 1< b + |\leaves_1(K)|\quad\quad\quad\Longleftrightarrow\\
& a + s + |\leaves_2(K)| + 2\le b + |\leaves_1(K)|.
\end{align*}
Since $|\leaves_1(K)|\le 2$ in the current case, it follows that $b = a + s + |\leaves_2(K)| + r$ for some $r\ge 0.$  Under these assumptions, we prove the following fact.
\begin{claim}
 $H|_{\vertices(H)\backslash (K\cup \leaves_1(K))}$ has at least $R \ge b - s-|\leaves_2(K)| = a +r$ isolated vertices. 
 \end{claim}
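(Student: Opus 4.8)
The plan is to prove the claim by a single double-counting argument on the induced subgraph $\tilde H := H|_{\vertices(H)\backslash(K\cup\leaves_1(K))}$, comparing its number of connected components with its number of vertices. First I would pin down $\vertices(\tilde H)$ exactly. We may assume $|\vertices(H)|\ge 3$ (otherwise $H$ is a star and the claim is vacuous), and then no leaf of $H$ is adjacent to another leaf, so the parent of every leaf is a non-leaf and hence lies either in $K$ or in $\vertices(H)\backslash(K\cup\leaves(H))$. Consequently $\leaves(H)=\leaves_1(K)\sqcup\leaves_2(K)$, and therefore $\vertices(\tilde H)=\big(\vertices(H)\backslash(K\cup\leaves(H))\big)\cup\leaves_2(K)$, a set of size $(b+s)+|\leaves_2(K)|$ by the counts set up just before the claim.

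The second step is to count the connected components of $\tilde H$. Each vertex of $\leaves_2(K)$ is a leaf of $H$ whose unique neighbour is a vertex of $\vertices(H)\backslash(K\cup\leaves(H))$, so none of its $H$-edges is deleted in passing to $\tilde H$, and it appears in $\tilde H$ as a pendant vertex attached to one of the $b$ connected components of $H|_{\vertices(H)\backslash(K\cup\leaves(H))}$. Since attaching pendant vertices neither merges components nor creates new ones, $\tilde H$ has exactly $b$ connected components.

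The third step is the count itself. Let $R$ denote the number of isolated vertices of $\tilde H$, i.e.\ the number of singleton components; each of the remaining $b-R$ components contains at least two vertices, so
$$
(b+s)+|\leaves_2(K)|=|\vertices(\tilde H)|\ \ge\ R+2(b-R)=2b-R,
$$
whence $R\ge b-s-|\leaves_2(K)|$. Substituting $b=a+s+|\leaves_2(K)|+r$ — which holds in Case 1.1.2 because $a+s+|\leaves_2(K)|+2\le b+|\leaves_1(K)|\le b+2$ — yields $R\ge a+r$, as desired.

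I do not expect a genuine obstacle here, since the statement is combinatorial and elementary. The only point requiring care is the component bookkeeping in the second step: one must check that every vertex of $\leaves_2(K)$ attaches to $\tilde H$ solely through a vertex of $\vertices(H)\backslash(K\cup\leaves(H))$ — never through $K$ or $\leaves_1(K)$ — so that $\tilde H$ genuinely inherits its component count from the ``middle'' graph $H|_{\vertices(H)\backslash(K\cup\leaves(H))}$, and that the $b$ components counted there are all nonempty (true by definition). The degenerate cases $|\vertices(H)|\le 2$ are disposed of at the outset.
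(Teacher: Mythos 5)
Your proof is correct and is essentially the paper's argument: both rest on the elementary bound that a graph with $c$ components and $v$ vertices has at least $2c-v$ singletons. You apply it once to $\tilde{H}$ after noting that attaching the $\leaves_2(K)$ pendants leaves the component count at $b$, while the paper applies it to $H|_{\vertices(H)\backslash(K\cup\leaves(H))}$ and then observes that re-attaching the pendants can remove at most $|\leaves_2(K)|$ vertices from the isolated set.
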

 \begin{proof}
 Indeed, note that $H|_{\vertices(H)\backslash (K\cup \leaves(H))}$ has $b$ connected components and $b+ s$ vertices. Hence, at least $b-s$ of the vertices in $H|_{\vertices(H)\backslash (K\cup \leaves(H))}$ are isolated. These same vertices are not isolated in $H|_{\vertices(H)\backslash (K\cup \leaves_1(K))}$ if and only if they have a neighbor in $\leaves_2(K).$ As each vertex in $\leaves_2(K)$ is a leaf, there are at most $|\leaves_2(K)|$ such of them. Hence, at least $b - s - |\leaves_2(K)| = a  + r$ of the vertices in $\vertices(H)\backslash (K\cup \leaves(H))$
are isolated in 
$H|_{\vertices(H)\backslash (K\cup |\leaves_1(K)|)}.$ \end{proof}

Denote these isolated vertices by $\mathcal{S},$ so $|\mathcal{S}|\ge a +r.$
Now consider 
$
H|_{V(H)\backslash \leaves(H)}.
$ This is a connected graph since $H$ is connected and $\leaves(H)$ is the set of leaves. Hence, each connected component in $H|_{V(H)\backslash (K\cup \leaves(H))}$ has at least one edge to a vertex in $K.$ If, furthermore, this connected component is one of the vertices in $\mathcal{S},$ it must have at least $2$ edges to $K.$ At least one edge due to connectivity and at least one more as the vertices of $\mathcal{S}$ are not leaves. Indeed, note that $\mathcal{S}\subseteq \vertices(H)\backslash (K\cup \leaves_1(K)$) and they don't have any edges to other vertices in $\vertices(H)\backslash (K\cup \leaves_1(K))$ due to being isolated, and in $\leaves_1(K)$ due to the fact that the parents of leaves in $\leaves_1(K)$ are all in $K.$ Altogether, this means that the number of edges between $K$ and $\vertices(H)\backslash (\leaves(H)\cup K)$ is at least 
$$
2|\mathcal{S}| + (b - |\mathcal{S}|)\ge 
b + |\mathcal{S}|\ge 
b + a + r .
$$ Note that all of these edges are of type $(1,2).$
Thus, the bound in \eqref{eq:trianglineqcrosscommunity} becomes
\begin{equation*}
\begin{split}
    &|\Fourier_{\SBM(p,Q)}(H)|\\
    & \lesssim_D 
    p_1^{a + t}|Q_{1,1}|^t\times |Q_{1,2}|^{b + a + r }\times 
    |Q_{2,2}|^{s}\times 
    |p_1Q_{1,1} + p_2Q_{1,2}|^{|\leaves_1(K)|}
    |p_1Q_{1,2} + p_2Q_{2,2}|^{|\leaves_2(K)|}\\
    & \quad\quad\quad\quad\text{(Using that $\sqrt{p_1}|Q_{1,1}|\le |Q_{1,2}|, p_1|Q_{1,2}|\gtrsim |Q_{2,2}|$ and \cref{eq:boundonpbeta11plusbeta22,eq:insecondcommunity12dominates})}\\
    & \lesssim_D 
    p_1^{a + t}|Q_{1,2}|^tp^{-t/2}
    |Q_{1,2}|^{b + a + r }\times 
    |Q_{1,2}|^{s}p^s
    |Q_{1,2}|^{|\leaves_1(K)|}
    |Q_{1,2}|^{|\leaves_2(K)|}
    p_1^{|\leaves_2(K)|}\\
    & \lesssim_D 
    p_1^{a + t/2 + s + |\leaves_2(K)|}
    |Q_{1,2}|^{t + b + a + r +s+|\leaves_1(K)| + |\leaves_2(K)|}\\
    & \lesssim 
    p_1^{a + t/2 + s + |\leaves_2(K)| -1}\times 
    |Q_{1,2}|^{t + b + a + s+|\leaves_1(K)| + |\leaves_2(K)| - 3}\times p_1|Q_{1,2}|^{2}\times 
    |Q_{1,2}|^{1 + r}\\
    & \quad\quad\quad\quad\text{(Using that $|Q_{1,2}|\le p_1.$)}\\
    & \lesssim 
    p_1^{a + t/2 + s + |\leaves_2(K)| + r}
    \times 
    |Q_{1,2}|^{t + b + a + s+|\leaves_1(K)| + |\leaves_2(K)| - 3}\times p_1|Q_{1,2}|^{2},
\end{split}
\end{equation*}
 Now, observe that 
$$
a + t/2 + s + |\leaves_2(K)| + r\ge \frac{1}{2}(t + b + a + s+|\leaves_1(K)| + |\leaves_2(K)| - 3)
$$
since this is equivalent to 
$$
a + s + |\leaves_2(K)| + 2r + 3\ge b + |\leaves_1(K)|,
$$
but $a + s + |\leaves_2(K)| + r = b,r + 3\ge 3>|\leaves_1(K)|.$ Altogether, 
the above expression is bounded by the familiar
$$
(\sqrt{p_1}|Q_{1,2}|)^{t + b + a + s+|\leaves_1(K)| + |\leaves_2(K)| - 3}\times p_1|Q_{1,2}|^{2}\lesssim 
|\Fourier_{\SBM(p,Q)}(\cycle_4)|^{\frac{|\vertices(H)|-3}{4}}
\times 
|\Fourier_{\SBM(p,Q)}(\Star_2)|,
$$
where the inequality follows from \cref{claim:4cycle2sbm,eq:starexpectationsint1outbrackets}.

\paragraph{Case 1.2) $|\leaves_1(K)|\ge 3.$} Let $\xi\in \{0,1\}$ be such that $|\leaves_1(K)| -\xi$ is even. We bound
\eqref{eq:trianglineqcrosscommunity} by 
\begin{align*}
& p_1^{a+t +s + |\leaves_2(K)|}|Q_{1,1}|^t |Q_{1,2}|^{a + b - 1 + s}
|p_1Q_{1,1} + p_2Q_{1,2}|^{|\leaves_1(K)|}\times
|p_1Q_{1,2} + p_2Q_{2,2}|^{|\leaves_2(K)|}
\\
& \quad\quad\quad\quad\text{(Using that $\sqrt{p_1}|Q_{1,1}|\le |Q_{1,2}|$ and \eqref{eq:boundonpbeta11plusbeta22} and \eqref{eq:insecondcommunity12dominates})}\\
& \lesssim_D 
p_1^{a+t +s + |\leaves_2(K)|}|Q_{1,2}|^tp^{-t/2} |Q_{1,2}|^{a + b - 1 + s + |\leaves_2(K)|}
|p_1Q_{1,1} + p_2Q_{1,2}|^{|\leaves_1(K)|-\xi}|Q_{1,2}|^\xi\\
& \quad\quad\quad\quad\text{(Using \eqref{eq:starexpectationsint1outbrackets})}\\
& \lesssim_D 
p_1^{a+t/2 +s + |\leaves_2(K)|}
|Q_{1,2}|^{a + b - 1 + s + |\leaves_2(K)| + t + \xi}
|\Fourier_{\SBM(p,Q)}(\Star_{|\leaves_1(K)|-\xi})|.
\end{align*}
Again, we consider two cases. 

\paragraph{Case 1.2.1)} If 
\begin{align*}
& a+t/2 +s + |\leaves_2(K)|\ge 
\frac{1}{2}(a + b - 1 + s + |\leaves_2(K)| + t + \xi)\quad\quad\quad\Longleftrightarrow\\
& a + s + |\leaves_2(K)|\ge b + \xi - 1,
\end{align*}
the last expression is bounded by 
\begin{align*}
&(\sqrt{p_1}|Q_{1,2}|)^{a + b - 1 + s + |\leaves_2(K)| + t + \xi}
|\Fourier_{\SBM(p,Q)}(\Star_{|\leaves_1(K)|-\xi})|\\
& \quad\quad\quad\quad\text{(Using \cref{claim:4cycle2sbm})}\\
&\lesssim |\Fourier_{\SBM(p,Q)}(\cycle_4)|^{\frac{|V(H)| - |\leaves_1(K)| -1 + \xi}{4}}
|\Fourier_{\SBM(p,Q)}(\Star_{|\leaves_1(K)|-\xi})| \\
&\le  \max(|\Fourier_{\SBM(p,Q)}(\cycle_4)|^{1/4}, 
  |\Fourier_{\SBM(p,Q)}(\Star_{|\leaves_1(K)|-\xi})|^{1/(|\leaves_1(K)|-\xi + 1)})^{|\vertices(H)|}.
\end{align*}
The last inequality follows in the same way as for 2-stars and 4-cycles as in \cref{thm:maximizinginnonnegative}.

\paragraph{Case 1.2.2)} Otherwise, 
\begin{align*}
& a+t/2 +s + |\leaves_2(K)|< 
\frac{1}{2}(a + b - 1 + s + |\leaves_2(K)| + t + \xi)\quad\quad\quad\Longleftrightarrow\\
& a + s + |\leaves_2(K)|< b + \xi - 1,
\end{align*}
so $b + \xi \ge a + s + |\leaves_2(K)| + 2.$ Again, let $b+ \xi = a  + s+ |\leaves_2(K)| + r$ for some $r\ge 2.$ We use the same argument as in Case  1) to argue that there are at least 
$b + a + r - \xi$ edges between $K$ and $\vertices(H)\backslash (K\cup \leaves(H)).$ We bound \eqref{eq:trianglineqcrosscommunity} by
\begin{equation*}
\begin{split}
    &|\Fourier_{\SBM(p,Q)}(H)|\\
    & \lesssim_D 
    p_1^{a + t}|Q_{1,1}|^t\times |Q_{1,2}|^{b + a + r - \xi }\times 
    |Q_{2,2}|^{s}\times 
    |p_1Q_{1,1} + p_2Q_{1,2}|^{|\leaves_1(K)|}
    |p_1Q_{1,2} + p_2Q_{2,2}|^{|\leaves_2(K)|}\\
    & \quad\quad\quad\quad\text{(Using $|Q_{1,1}|\sqrt{p_1}\le |Q_{1,2}|, |Q_{2,2}|\le p_1|Q_{1,2}|$ and \eqref{eq:starexpectationsint1outbrackets})}\\
    & \lesssim 
    p_1^{a + t}|Q_{1,2}|^tp_1^{-t/2}
    |Q_{1,2}|^{b + a + r -\xi}\times 
    |Q_{1,2}|^{s}p^s\\
    & \quad\quad\quad\quad\quad\quad\times
    p^{-1}|\Fourier_{\SBM(p,Q)}(\Star_{|\leaves_1(K)|-\xi})|\times |Q_{1,2}|^\xi
    |Q_{1,2}|^{|\leaves_2(K)|}
    p_1^{|\leaves_2(K)|}\\
    & \lesssim 
    p_1^{a + t/2 + s + |\leaves_2(K)|-1}
    |Q_{1,2}|^{t + b + a + r +s+|\leaves_2(K)|}\times 
    |\Fourier_{\SBM(p,Q)}(\Star_{|\leaves_1(K)|-\xi})|
    \\
     & \quad\quad\quad\quad\text{(Using $|Q_{1,2}|\le p_1$)}\\
    & \lesssim 
    p_1^{a + t/2 + s + |\leaves_2(K)| + r - \xi}
    |Q_{1,2}|^{t + b + a  +s+|\leaves_2(K)| + \xi - 1}\times 
    |\Fourier_{\SBM(p,Q)}(\Star_{|\leaves_1(K)|-\xi})|.
\end{split}
\end{equation*}
Now, observe that 
\begin{align*}
    & a + t/2 + s + |\leaves_2(K)| + r - \xi\ge 
    \frac{1}{2}(t + b + a  +s+|\leaves_2(K)| + \xi - 1)\quad\quad\quad\Longleftrightarrow\\
    & a + s + |\leaves_2(K)| + 2r +1\ge b + 3\xi,
\end{align*}
but $b+ \xi = a + s + |\leaves_2(K)| + r$ and $r + 1\ge 3\ge 2\xi.$ Altogether, this means that the above expression is at most 
\begin{align*}
& (\sqrt{p_1}|Q_{1,2}|)^{t + b + a  +s+|\leaves_2(K)| + \xi - 1}\times 
    |\Fourier_{\SBM(p,Q)}(\Star_{|\leaves_1(K)|-\xi})|\\
&\le
|\Fourier_{\SBM(p,Q)}(\cycle_4)|^{\frac{|V(H)| - |\leaves_1(K)| -1 + \xi}{4}}\times
|\Fourier_{\SBM(p,Q)}(\Star_{|\leaves_1(K)|-\xi})|.
\end{align*}
Again, this is enough.

\paragraph{Case 2) $|Q_{1,1}|\sqrt{p_1}> |Q_{1,2}|$}
We consider the same two cases based on $|\leaves_1(K)|.$

\paragraph{Case 2.1) $|\leaves_1(K)|\le 2.$}
We bound the expression in  \eqref{eq:trianglineqcrosscommunity} by 
\begin{align*}
    & p_1^{a+t +s + |\leaves_2(K)|}|Q_{1,1}|^t |Q_{1,2}|^{a + b - 1 + s + |\leaves_2(K)|}
|p_1Q_{1,1} + p_2Q_{1,2}|^{|\leaves_1(K)|}\\
& \quad\quad\quad\quad\text{(Using \eqref{eq:boundonpbeta11plusbeta22} and \eqref{eq:insecondcommunity12dominates})}\\
& \lesssim_D p_1^{a+t +s + |\leaves_2(K)|}|Q_{1,1}|^t \times 
|Q_{1,2}|^{a + b - 1 + s + |\leaves_2(K)|}\times 
|Q_{1,2}|^{|\leaves_1(K)|}\\
& \quad\quad\quad\quad\text{(Using that ${p_1}\ge|Q_{1,2}|$)}\\
& \lesssim_D 
p_1^{a+t +s + |\leaves_2(K)| -1}\times 
|Q_{1,1}|^t \times 
|Q_{1,2}|^{a + b + s + |\leaves_2(K)| + |\leaves_1(K)| - 3}\times 
p_1|Q_{1,2}|^2\\
& \quad\quad\quad\quad\text{(Using that $\sqrt{p_1}|Q_{1,1}|> |Q_{1,2}|$)}\\
& \lesssim_D
p_1^{a+t +s + |\leaves_2(K)| -1}\times 
|Q_{1,1}|^t \times 
|Q_{1,1}|^{a + b + s + |\leaves_2(K)|+ |\leaves_1(K)| - 3}p^{(a + b + s + |\leaves_2(K)| - 3)/2}\times 
p_1|Q_{1,2}|^2\\
& \quad\quad\quad\quad\text{(Using \eqref{eq:starexpectationsint1outbrackets})}\\
& \lesssim_D
p_1^{a+t +s + |\leaves_2(K)| -1 + \frac{1}{2}(a + b + s + |\leaves_2(K)|+ |\leaves_1(K)| - 3)}|Q_{1,1}|^{t + a + b + s + |\leaves_2(K)|+ |\leaves_1(K)| - 3}\times \Fourier_{\SBM(p,Q)}(\Star_{2}).
\end{align*}
\paragraph{Case 2.1.1)}
If 
\begin{align*}
    & a+t +s + |\leaves_2(K)| -1 + \frac{1}{2}(a + b + s + |\leaves_2(K)|+ |\leaves_1(K)| - 3)\ge\\
    & \quad\quad\quad\quad \ge
    t + a + b + s + |\leaves_2(K)|+ |\leaves_1(K)| - 3\Longleftrightarrow\\
    & a + s + \leaves_2 + 1\ge b + \leaves_1,
\end{align*}
then going back we have 
\begin{align*}
    & p^{a+t +s + |\leaves_2(K)| -1 + \frac{1}{2}(a + b + s + |\leaves_2(K)|+ |\leaves_1(K)| - 3)}|Q_{1,1}|^{t + a + b + s + |\leaves_2(K)|+ |\leaves_1(K)| - 3}\times \Fourier_{\SBM(p,Q)}(\Star_{2})\\
    & \le 
    (p_1|Q_{1,1}|)^{|V(H)|-3}
    \Fourier_{\SBM(p,Q)}(\Star_{2})\\
    & \le 
    \Fourier_{\SBM(p,Q)}(\cycle_4)^{\frac{|V(H)|-3}{4}}\times
    \Fourier_{\SBM(p,Q)}(\Star_{2})\\
    & \le 
    \max(|\Fourier_{\SBM(p,Q)}(\cycle_4)|^{1/4}, 
  |\Fourier_{\SBM(p,Q)}(\Star_{2})|^{1/3})^{|V(H)|}.
\end{align*}
\paragraph{Case 2.1.2)}
Otherwise, 
$a + s + |\leaves_2(K)| + 1< b + |\leaves_1(K)|\le b + 2,$ so 
$b  = a + s + \leaves_2(K) + r,$ where $r\ge 0.$ Again, using the argument of counting isolated vertices, we lower-bound the number of edges between $K$ and $\vertices(H)\backslash(K\cup \leaves(H))$ by $a+b+r,$ so

\begin{equation}
\label{eq:crosscase212}
\begin{split}
    &|\Fourier_{\SBM(p,Q)}(H)|\\
    & \quad\quad\quad\quad\text{(By \eqref{eq:trianglineqcrosscommunity})}\\
    & \lesssim_D
    p_1^{a + t}|Q_{1,1}|^t\times |Q_{1,2}|^{b + a + r}\times 
    |Q_{2,2}|^{s}\times 
    |p_1Q_{1,1} + p_2Q_{1,2}|^{|\leaves_1(K)|}
    |p_1Q_{1,2} + p_2Q_{2,2}|^{|\leaves_2(K)|}\\
    & \quad\quad\quad\quad\text{(Using that $\sqrt{p_1}|Q_{1,1}|> |Q_{1,2}|$)}\\
    & \lesssim 
    p_1^{a + t}|Q_{1,1}|^t\times |Q_{1,2}|^{b + a + r}\times 
    |Q_{1,2}|^{s}p^s\times 
    |Q_{1,2}|^{|\leaves_1(K)|}
    |Q_{1,2}|^{|\leaves_2(K)|}p_1^{|\leaves_2(K)|}\\
    & \lesssim 
    p_1^{a + t + s  + |\leaves_2(K)|}\times |Q_{1,1}|^t\times 
    |Q_{1,2}|^{b +a + r + s + |\leaves_1(K)| + |\leaves_2(K)|}\\
    & \quad\quad\quad\quad\text{(Using that $p_1>|Q_{1,2}|$)}\\
    & \lesssim p_1^{a + t + s  + |\leaves_2(K)| + r}
    |Q_{1,1}|^t\times 
    |Q_{1,2}|^{b +a + s + |\leaves_1(K)| + |\leaves_2(K)|}\\
    & \quad\quad\quad\quad\text{(Using that $p_1>|Q_{1,2}|$)}\\
    & \lesssim p_1^{a + t + s  + |\leaves_2(K)| + r}
    |Q_{1,1}|^t\times 
    |Q_{1,2}|^{b +a + s + |\leaves_1(K)| + |\leaves_2(K)| - 3}p_1|Q_{1,2}|^2\\
    & \quad\quad\quad\quad\text{(Using \eqref{eq:starexpectationsint1outbrackets})}\\
    & \lesssim p_1^{a + t + s  + |\leaves_2(K)| + r + \frac{1}{2}(b +a + s + |\leaves_1(K)| + |\leaves_2(K)|-3)}
    |Q_{1,1}|^{t + b +a + s + |\leaves_1(K)| + |\leaves_2(K)|-3}p_1|Q_{1,2}|^2.\\
\end{split}
\end{equation}
Now, observe that 
\begin{align*}
    & a + t + s  + |\leaves_2(K)| + r + \frac{1}{2}(b +a + s + |\leaves_1(K)| + |\leaves_2(K)|-3)\ge\\
    &\quad\quad\quad\quad\ge t + b +a + s + |\leaves_1(K)| + |\leaves_2(K)|-3\quad\quad\quad\Longleftrightarrow\\
    & 3 + 2 r + a + s + |\leaves_2(K)|\ge b + \leaves_1.
\end{align*}
This inequality holds as $r + a + s + |\leaves_2(K)|\ge b, 3 + r \ge 3\ge |\leaves_2(K)|.$ Hence, the last expression in \eqref{eq:crosscase212} is bounded by 
$$
(p_1|Q_{1,1}|)^{t + b +a + s + |\leaves_1(K)| + |\leaves_2(K)|-3}p_1|Q_{1,2}|^2\lesssim 
|\Fourier_{\SBM(p,Q)}(\cycle_4)|^{\frac{|V(H)|-3}{4}}\times 
|\Fourier_{\SBM(p,Q)}(\Star_2)|.
$$

\paragraph{Case 2.2) $|\leaves_1(K)|> 2.$}
Let $\xi \in \{0,1\}$ be such that $|\leaves_1(K)|-\xi$ is even. Then, we bound the expression in  \eqref{eq:trianglineqcrosscommunity} by 
\begin{align*}
&p_1^{a+t +s }|Q_{1,1}|^t |Q_{1,2}|^{a + b - 1 + s}|p_1Q_{1,2} + p_2Q_{2,2}|^{|\leaves_2(K)|}
|p_1Q_{1,1} + p_2Q_{1,2}|^{|\leaves_1(K)|}\\
& \quad\quad\quad\quad\text{(Using \cref{eq:boundonpbeta11plusbeta22,eq:insecondcommunity12dominates})}\\
& \lesssim_D p_1^{a+t +s + |\leaves_2(K)|}|Q_{1,1}|^t |Q_{1,2}|^{a + b - 1 + s + |\leaves_2(K)|}
|p_1Q_{1,1} + p_2Q_{1,2}|^{|\leaves_1(K)|}\\
& \lesssim_D
p_1^{a+t +s + |\leaves_2(K)|-1}|Q_{1,1}|^t \times |Q_{1,2}|^{a + b - 1 + s + |\leaves_2(K)|}\times 
p_1 |p_1Q_{1,1} + p_2Q_{1,2}|^{|\leaves_1(K)|-\xi}|\times |Q_{1,2}|^\xi\\
    & \quad\quad\quad\quad\text{(Using \eqref{eq:starexpectationsint1outbrackets})}\\
& \lesssim
p_1^{a+t +s + |\leaves_2(K)|-1}|Q_{1,1}|^t \times |Q_{1,2}|^{a + b - 1 + s + |\leaves_2(K)|+\xi}\times 
|\Fourier_{\SBM(p,Q)}(\Star_{|\leaves_1(K)| - \xi})|\\
& \quad\quad\quad\quad\text{(Using $\sqrt{p_1}|Q_{1,1}|>|Q_{1,2}|$)}\\
& \lesssim p_1^{a+t +s + |\leaves_2(K)|-1}|Q_{1,1}|^t \times |Q_{1,1}|^{a + b - 1 + s + |\leaves_2(K)|+\xi}p_1^{\frac{1}{2}(a + b - 1 + s + |\leaves_2(K)|+\xi)}\times 
|\Fourier_{\SBM(p,Q)}(\Star_{|\leaves_1(K)| - \xi})|\\
& \lesssim 
p_1^{a+t +s + |\leaves_2(K)|-1 + \frac{1}{2}(a + b - 1 + s + (K)+\xi)}|Q_{1,1}|^{t +a + b - 1 + s + |\leaves_2(K)|+\xi }\times 
|\Fourier_{\SBM(p,Q)}(\Star_{|\leaves_1(K)| - \xi})|.
\end{align*}
\paragraph{Case 2.2.1)}
If 
\begin{align*}
    & a+t +s + |\leaves_2(K)|-1 + \frac{1}{2}(a + b - 1 + s + |\leaves_2(K)|+\xi)\\
    &\quad\quad\quad\quad\quad\ge 
    t +a + b - 1 + s + |\leaves_2(K)|+\xi \quad\quad\quad\Longleftrightarrow\\
    & a - 1 + s + |\leaves_2(K)|\ge b + \xi, 
\end{align*}
the above expression is at most 
\begin{align*}
    & (p_1 |Q_{1,1}|)^{t +a + b - 1 + s + |\leaves_2(K)|+\xi}\times |\Fourier_{\SBM(p,Q)}(\Star_{|\leaves_1(K)| - \xi})|\\
    & \quad\quad\quad\quad\text{(Using \cref{claim:4cycle2sbm})}\\
    & \lesssim 
    |\Fourier_{\SBM(p,Q)}(\cycle_4)|^{\frac{|V(H)| - |\leaves_1(K)| - 1 + \xi}{4}}\times |\Fourier_{\SBM(p,Q)}(\Star_{|\leaves_1(K)| - \xi})|\le \\
    & \le \max(|\Fourier_{\SBM(p,Q)}(\cycle_4)|^{1/4}, 
  |\Fourier_{\SBM(p,Q)}(\Star_{|\leaves_1(K)|-\xi})|^{1/(|\leaves_1(K)|-\xi + 1)})^{|V(H)|}.
\end{align*}
\paragraph{Case 2.2.2)}
Otherwise, $a - 1 + s + |\leaves_2(K)|< b + \xi,$ so $b + \xi = a + s + |\leaves_2(K)| + r,$ where $r\ge 0.$ Using the isolated-vertices argument, there are at least $b + a + r - \xi$ edges between $K$ and 
$\vertices(H)\backslash(K\cup \leaves(H)).$ We bound \eqref{eq:trianglineqcrosscommunity}
as

\begin{equation}
\label{eq:longlastequation}
\begin{split}
    &|\Fourier_{\SBM(p,Q)}(H)|\\
    & \lesssim_D 
    p_1^{a + t}|Q_{1,1}|^t\times |Q_{1,2}|^{b + a + r - \xi }\times 
    |Q_{2,2}|^{s}\times 
    |p_1Q_{1,1} + p_2Q_{1,2}|^{|\leaves_1(K)|}
    |p_1Q_{1,2} + p_2Q_{2,2}|^{|\leaves_2(K)|}\\
    & \quad\quad\quad\quad\text{(Using ${p_1}|Q_{1,2}|\gtrsim|Q_{2,2}|$) and \eqref{eq:boundonpbeta11plusbeta22} and \eqref{eq:insecondcommunity12dominates}}\\
    & \lesssim 
    p_1^{a + t}|Q_{1,1}|^t
    |Q_{1,2}|^{b + a + r -\xi}\times 
    |Q_{1,2}|^{s}p_1^s
    p_1^{-1}
    \times\\
    &\quad\quad\quad\quad\times
    p_1
    |p_1Q_{1,1} + p_2Q_{1,2}|^{|\leaves_1(K)|-\xi}\times  
    |p_1Q_{1,1} + p_2Q_{1,2}|^\xi
    |Q_{1,2}|^{|\leaves_2(K)|}
    p_1^{|\leaves_2(K)|}\\
    & \quad\quad\quad\quad\text{(Using \eqref{eq:starexpectationsint1outbrackets})}\\
    & \lesssim 
    p_1^{a + t}|Q_{1,1}|^t
    |Q_{1,2}|^{b + a + r -\xi}\times 
    |Q_{1,2}|^{s}p_1^s
    p_1^{-1}
    |\Fourier_{\SBM(p,Q)}(\Star_{|\leaves_1(K)|-\xi})|\times
    |Q_{1,2}|^{|\leaves_2(K)|+\xi}
    p_1^{|\leaves_2(K)|}\\
    & \lesssim 
    p_1^{a + t + s + |\leaves_2(K)|-1}\times 
    |Q_{1,1}|^{t}\times 
    |Q_{1,2}|^{b + a + r+ s +|\leaves_2(K)|}\times 
    |\Fourier_{\SBM(p,Q)}(\Star_{|\leaves_1(K)|-\xi})|\\
    & \quad\quad\quad\quad\text{(Using $p_1\ge |Q_{1,2}|$)}\\
    & \lesssim 
    p_1^{a + t + s + |\leaves_2(K)|-1}\times 
    |Q_{1,1}|^{t}\times 
    |Q_{1,2}|^{b + a + r+ s +|\leaves_2(K)|-3}\times p_1Q_{1,2}^2\times 
    |\Fourier_{\SBM(p,Q)}(\Star_{|\leaves_1(K)|-\xi})|\\
     & \quad\quad\quad\quad\text{(Using \eqref{eq:starexpectationsint1outbrackets})}\\
    & \lesssim 
    p_1^{a + t + s + |\leaves_2(K)|-1}\times 
    |Q_{1,1}|^{t}\times 
    |Q_{1,2}|^{b + a + r+ s +|\leaves_2(K)|-3}\times |\Fourier_{\SBM(p,Q)}(\Star_2)|\times 
    |\Fourier_{\SBM(p,Q)}(\Star_{|\leaves_1(K)|-\xi})|\\
    & \quad\quad\quad\quad\text{(Using $p_1\ge |Q_{1,2}|$)}\\
    & \lesssim 
    p_1^{a + t + s + |\leaves_2(K)|-1 + 1 + r-\xi}\times 
    |Q_{1,1}|^{t}\times 
    |Q_{1,2}|^{b + a + s +|\leaves_2(K)|-4 + \xi}\times\\
    & \quad\quad\quad\quad\quad\quad\quad\quad|\Fourier_{\SBM(p,Q)}(\Star_2)|\times 
    |\Fourier_{\SBM(p,Q)}(\Star_{|\leaves_1(K)|-\xi})|\\
    & \quad\quad\quad\quad\text{(Using $\sqrt{p_1}|Q_{1,1}|\ge |Q_{1,2}|$)}\\
    & \lesssim 
    p_1^{a + t + s + |\leaves_2(K)|+ r-\xi}\times 
    |Q_{1,1}|^{t + b + a + s +|\leaves_2(K)|-4 + \xi}\times 
    p_1^{\frac{1}{2}(b + a + s +|\leaves_2(K)|-4 + \xi)}\times\\
    & \quad\quad\quad\quad\quad\quad\quad\quad|\Fourier_{\SBM(p,Q)}(\Star_2)|\times 
    |\Fourier_{\SBM(p,Q)}(\Star_{|\leaves_1(K)|-\xi})|\\
    & \lesssim 
    p_1^{a + t + s + |\leaves_2(K)|+ r-\xi + \frac{1}{2}(b + a + s +|\leaves_2(K)|-4 + \xi)}\times 
    |Q_{1,1}|^{t + b + a + s +|\leaves_2(K)|-4 + \xi} 
    \times\\
    & \quad\quad\quad\quad\quad\quad\quad\quad|\Fourier_{\SBM(p,Q)}(\Star_2)|\times 
    |\Fourier_{\SBM(p,Q)}(\Star_{|\leaves_1(K)|-\xi})|.\\
\end{split}
\end{equation}
Now, observe that 
\begin{align*}
    & a + t + s + |\leaves_2(K)|+ r-\xi + \frac{1}{2}(b + a + s +|\leaves_2(K)|-4 + \xi)\ge\\
    &\quad\quad\quad\quad\ge
    t + b + a + s +|\leaves_2(K)|-4 + \xi\quad\quad\quad\Longleftrightarrow\\
    & 2r + a + s + |\leaves_2(K)| + 4 \ge 3\xi + b,
\end{align*}
which is true since $b + \xi = r + a + s + |\leaves_2(K)|, 2\xi \le 4.$ Furthermore, 
$$
t + b + a + s +|\leaves_2(K)|-4 + \xi = 
|\vertices(H)\backslash\leaves(K)| + 
|\leaves_2(K)|-4 + \xi\ge 
1 + 3 - 4\ge 0,
$$
since $|\leaves_2(K)|\ge 3$ in case 2.2) and
$|\vertices(H)\backslash\leaves(K)|\ge 1$ as $H$ is connected and has at least 3 leaves and any such graph has a non-leaf vertex, and $\xi\ge 0$ by definition.

Altogether, \eqref{eq:longlastequation} is bounded by  
\begin{align*}
    & (p_1|Q_{1,1}|)^{t + b + a + s +|\leaves_2(K)|-4 + \xi}
    \times 
    |\Fourier_{\SBM(p,Q)}(\Star_2)|\times 
    |\Fourier_{\SBM(p,Q)}(\Star_{|\leaves_1(K)|-\xi})|\\
    & \quad\quad\quad\quad\text{(Using \cref{claim:4cycle2sbm})}\\
    & \lesssim_D |\Fourier_{\SBM(p,Q)}(\cycle_4)|^{\frac{|V(H)| + \xi - |\leaves_1(K)|-4}{4}}\times 
    |\Fourier_{\SBM(p,Q)}(\Star_2)|\times 
    |\Fourier_{\SBM(p,Q)}(\Star_{|\leaves_1(K)|-\xi})|\\
    & = 
    \Bigg(|\Fourier_{\SBM(p,Q)}(\cycle_4)|^{1/4}\Bigg)^{|V(H)| + \xi - |\leaves_1(K)|-4}\times 
    \Bigg(|\Fourier_{\SBM(p,Q)}(\Star_2)|^{1/3}\Bigg)^3\\
    & \quad\quad\quad\times 
    \bigg(|\Fourier_{\SBM(p,Q)}(\Star_{|\leaves_1(K)|-\xi})|^{1/(|\leaves_1(K)|-\xi + 1)}\Bigg)^{|\leaves_1(K)|-\xi + 1}\\
    & \le \max\Big(
    \Psi_{\SBM(p,Q)}(\cycle_4),
    \Psi_{\SBM(p,Q)}(\Star_2),
    \Psi_{\SBM(p,Q)}(\Star_{|\leaves_1(K)|-\xi})\Big)^{|\vertices(H)|}.
\end{align*}
With this, the proof of \cref{thm:maximizingpartition2sbm} is complete.\qed

\subsubsection{Testing in 2-SBMs}
\begin{theorem}[Testing in 2-SBMs] Suppose that $\SBM(n;p,Q)$ is a stochastic block-model on two communities such that $\min(p_1,p_2) = \omega(1/n).$ Suppose that there exists a polynomial test of degree at most $D,$ for some even absolute constant $D\ge 4,$  distinguishing $\SBM(n;p,Q)$ and 
$\ergraphhalf$ with high probability. Then, one can also distinguish the two graph distribution with high probability via either the signed 4-cycle count or the signed count of a star on at most $D$ edges. 
\end{theorem}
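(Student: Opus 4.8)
The plan is to chain the three ``meta'' results already in hand. Since by hypothesis some polynomial of degree at most $D$ distinguishes $\SBM(n;p,Q)$ from $\ergraphhalf$, it in particular beats the null variance, so \cref{thm:ldpimpliesbsubgraphcount} produces a graph $H_0$ with no isolated vertices and at most $D$ edges such that $\Psi_{\SBM(p,Q)}(H_0)=\omega(n^{-1/2})$. I would then pass, via \cref{cor:vertexdisjointcomparison}, to the connected component of $H_0$ of largest $\Psi_{\SBM(p,Q)}$ --- still connected, on at most $D$ edges, with $\Psi_{\SBM(p,Q)}=\omega(n^{-1/2})$ --- and invoke \cref{thm:maximizingpartition2sbm} (legitimate since $D$ is even) to conclude that the $\Psi_{\SBM(p,Q)}$-maximizer $K$ over the finite family $\{\cycle_4\}\cup\{\Star_t:1\le t\le D\}$ obeys $\Psi_{\SBM(p,Q)}(K)\gtrsim_D\Psi_{\SBM(p,Q)}(H_0)=\omega(n^{-1/2})$. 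It then remains to show that $\signedcount_K$ by itself distinguishes the two distributions, i.e.\ that $|\E_{\SBM(n;p,Q)}[\signedcount_K]-\E_{\ergraphhalf}[\signedcount_K]|$ dominates both $\var_{\SBM(n;p,Q)}[\signedcount_K]^{1/2}$ and $\var_{\ergraphhalf}[\signedcount_K]^{1/2}$.

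The separation of means is $\Theta(n^{|\vertices(K)|}|\Fourier_{\SBM(p,Q)}(K)|)$, and $\Psi_{\SBM(p,Q)}(K)=\omega(n^{-1/2})$ makes this $\omega(n^{|\vertices(K)|/2})=\omega(\var_{\ergraphhalf}[\signedcount_K]^{1/2})$, exactly as in the first half of the proof of \cref{thm:beatingvarnullimpliesbeatingvarplanted}. For the planted variance I would expand $\var_{\SBM(p,Q)}[\signedcount_K]$ as in that proof: up to a term bounded by $o(1)\cdot(\text{mean separation})^2$, it equals $\sum n^{|\vertices(K_1)\cup\vertices(K_2)|}\,|\Fourier_{\SBM(p,Q)}(K_1\otimes K_2)|$ over overlapping isomorphic copies $K_1,K_2$ of $K$, and every $K_1\otimes K_2$ has at most $2|\edges(K)|\le 2D$ edges. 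If $K$ happens to be an approximate $\Psi_{\SBM(p,Q)}$-maximizer over all graphs on at most $2D$ edges --- equivalently, by \cref{thm:maximizingpartition2sbm} at degree $2D$ and \cref{cor:vertexdisjointcomparison}, if $\Psi_{\SBM(p,Q)}(K)\gtrsim_D\max_{1\le m\le 2D}\Psi_{\SBM(p,Q)}(\Star_m)$ --- then this estimate is \emph{verbatim} the bound carried out in the proof of \cref{thm:beatingvarnullimpliesbeatingvarplanted}, and we are done.

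The hard part will be the opposite case: some star $\Star_m$ with $D<m\le 2D$ can have $\Psi_{\SBM(p,Q)}(\Star_m)$ \emph{polynomially} larger than $\Psi_{\SBM(p,Q)}(K)$ (this genuinely occurs when one community is tiny and $|v_1|>|v_2|$ for $v=Qp$), so \cref{thm:beatingvarnullimpliesbeatingvarplanted} cannot be quoted off the shelf and the overlap sum must be bounded by hand --- and this is where I expect the standing hypothesis $\min(p_1,p_2)=\omega(1/n)$ to be essential. Using \cref{cor:starcounts}, $\Fourier_{\SBM(p,Q)}(\Star_m)=p_1v_1^m+p_2v_2^m$; for even $m$ this is nonnegative and $m\mapsto\log\Fourier_{\SBM(p,Q)}(\Star_m)$ is convex (Cauchy--Schwarz), giving the elementary power-mean bound $\Fourier_{\SBM(p,Q)}(\Star_m)\le \min(p_1,p_2)^{-(m/2-1)}\Fourier_{\SBM(p,Q)}(\Star_2)^{m/2}$, hence $\Psi_{\SBM(p,Q)}(\Star_m)\le \min(p_1,p_2)^{-c_D}\,\Psi_{\SBM(p,Q)}(K)$ for all $m\le 2D$ and a constant $c_D$; and analogously, via \cref{claim:4cycle2sbm}, $\Psi_{\SBM(p,Q)}(\cycle_4)\le \min(p_1,p_2)^{-c_D}\Psi_{\SBM(p,Q)}(K)$ whenever $\cycle_4$ arises as an overlap graph. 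Since $\min(p_1,p_2)^{-1}=o(n)$, the extra factor $\min(p_1,p_2)^{-c_D}$ is polynomially bounded, and substituting these bounds into the vertex-counting bookkeeping of \cref{thm:beatingvarnullimpliesbeatingvarplanted} (using $\Fourier_{\SBM(p,Q)}(K)^{1/|\vertices(K)|}=\omega(n^{-1/2})$) should show each overlap term $n^{|\vertices(K_1)\cup\vertices(K_2)|}\,|\Fourier_{\SBM(p,Q)}(K_1\otimes K_2)|$ is $o((\text{mean separation})^2)$; odd stars with internal cancellation are first reduced, as in the proof of \cref{thm:maximizingpartition2sbm}, to a neighbouring even star or to $\cycle_4$. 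The obstacle, absent in the non-vanishing-community regime where the candidate family $\{\cycle_4,\Star_1,\Star_2\}$ is bounded and \cref{thm:beatingvarnullimpliesbeatingvarplanted} applies directly, is exactly this: here the family $\{\Star_t:t\le D\}$ grows with $D$, so approximate maximality alone does not close the argument and one must rerun the planted-variance estimate, squeezing the factor $\min(p_1,p_2)^{-1}=o(n)$ out of the star formula to absorb the gap.
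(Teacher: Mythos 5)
Your architecture matches the paper's: use \cref{thm:ldpimpliesbsubgraphcount} to produce some $H_0$ with $\Psi_{\SBM(p,Q)}(H_0)=\omega(n^{-1/2})$, pass to a connected component, invoke \cref{thm:maximizingpartition2sbm} to land on a maximizer $K$ in $\{\cycle_4\}\cup\{\Star_t:t\le D\}$, dispose of the null variance exactly as in the first half of \cref{thm:beatingvarnullimpliesbeatingvarplanted}, and then hand-bound the planted variance of $\signedcount_K$ in the case where a large star dominates, exploiting $\min(p_1,p_2)=\omega(1/n)$. You also correctly identify that \cref{thm:beatingvarnullimpliesbeatingvarplanted} cannot be quoted off the shelf because $\Psi(\Star_\ell)$ for $D<\ell\le 2D$ can exceed $\Psi(K)$ polynomially, and that the odd-$\ell$ and 4-cycle overlap cases are side issues. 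This is all consistent with the paper's plan (which likewise splits into the ``easy'' regime covered by \cref{thm:beatingvarnullimpliesbeatingvarplanted} and a ``hard'' star-dominated regime treated by a bespoke variance calculation).

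There is, however, a genuine quantitative gap in the key planted-variance step: the power-mean bound you propose is too lossy to close the bookkeeping. Your inequality
$\Fourier_{\SBM(p,Q)}(\Star_m)\le \min(p_1,p_2)^{-(m/2-1)}\Fourier_{\SBM(p,Q)}(\Star_2)^{m/2}$
anchors the comparison at $\Star_2$, so the exponent on $\min(p_1,p_2)^{-1}$ is $m/2-1$. Consider the dominant overlap pattern $K_1\sim K_2\sim\Star_D$ sharing only the central vertex, so $K_1\otimes K_2=\Star_{2D}$ and $|\vertices(K_1)\cup\vertices(K_2)|=2D+1$; the term to beat is $n\,\Fourier(\Star_D)^2=\omega(\Fourier(\Star_{2D}))$. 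Even combining your bound with the sharpest available comparison $\Fourier(\Star_2)\le\Fourier(\Star_D)^{2/D}$ (Jensen's inequality, valid since $D$ is even), one gets only $\Fourier(\Star_{2D})\le\min(p_1,p_2)^{-(D-1)}\Fourier(\Star_D)^2$, reducing the goal to $n=\omega(\min(p_1,p_2)^{-(D-1)})$. The hypothesis $\min(p_1,p_2)=\omega(1/n)$ gives only $\min(p_1,p_2)^{-(D-1)}=o(n^{D-1})$, which for $D\ge 4$ is far larger than $n$. So the argument as written fails. The fix is to anchor the power-mean at $\Star_D$ rather than $\Star_2$: for even $\ell$ with $D\le\ell\le 2D$,
$\Fourier_{\SBM(p,Q)}(\Star_\ell)\le\min(p_1,p_2)^{-(\ell/D-1)}\Fourier_{\SBM(p,Q)}(\Star_D)^{\ell/D}$,
obtained by writing $p_iv_i^\ell=(p_iv_i^D)^{\ell/D}p_i^{1-\ell/D}$ and using superadditivity of $x\mapsto x^{\ell/D}$. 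This shrinks the exponent from $\ell/2-1$ to $\ell/D-1$, and the bookkeeping $n^{1+D-\ell/2}\Fourier(\Star_D)^{2-\ell/D}=\omega(\min(p_1,p_2)^{-(\ell/D-1)})$ then closes with margin exactly $\omega(n^{\ell/(2D)})$ versus $o(n^{\ell/D-1})$, i.e.\ $\ell\le 2D$. The case $\ell<D$ follows from the approximate maximality of $\Star_D$ as in \cref{thm:beatingvarnullimpliesbeatingvarplanted}. The paper achieves the same end by a more algebraic route: it first derives $p_1=o(|\lambda_1|)$ and $|\lambda_1|\ge|\lambda_2|$, hence the monotonicity facts \eqref{eq:1dominates}--\eqref{eq:powerDdominates}, and then does an explicit three-case analysis of how two copies of $\Star_D$ can overlap (shared center; disjoint edges; one shared edge). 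Either route is fine; yours just needs the reference star in the power-mean inequality changed from $\Star_2$ to $\Star_D$.
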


\begin{remark} The condition $\min(p_1,p_2) = \omega(1/n)$ is very minimal.
Suppose that $p_1\le p_2.$
If $\min(p_1,p_2) = o(1/n),$ so $p_1 = o(1/n),$ then with high probability, there is no vertex of label 1, so $\SBM(n;p,Q)$ is (in total variation) close to 
$\mathbb{G}(n, (Q_{2,2}+1)/2).$ Trivially, the signed edge count is an optimal distinguisher between $\mathbb{G}(n,q)$ and $\ergraphhalf$ for any $q.$ If $p_1 = \Theta(1/n),$ then we can prove a slightly weaker result by blowing up the ``vertex-sample complexity'' as in \cref{def:vertexsamplecomplexity}. Namely, the result we will prove is that if there exists a polynomial test of degree at most $D$ for some even $D\ge 4$ distinguishing $\SBM(n;p,Q)$ and 
$\ergraphhalf,$ then one can distinguish 
$\SBM(N(n);p,Q)$ and $\mathbb{G}(N(n),1/2)$ via either the signed 4-cycle count or the signed count of a star on at most $D$ edges, where $N(n)$ is any function that satisfies $N(n) = \omega(n).$ The idea is that if $N(n) = \omega(n)$ and 
$p_1 = \Theta(1/n),$ then $p_1 = \omega(1/N(n)).$
\end{remark}
\begin{proof}
By \cref{thm:maximizingpartition2sbm}, we know that for any graph on at most $D$ edges without isolated vertices, $$
\Psi_{\SBM(p,Q)}(H)
\lesssim_D 
\max\Bigg(
\Psi_{\SBM(p,Q)}(\cycle_4),
\max_{1\le t \le D}
\Psi_{\SBM(p,Q)}(\Star_t)
\Bigg).
$$
If an approximate maximum on the right hand-side above is achieved by a 4-cycle or a star on at most $D/2$ edges, the conclusion follows by \cref{thm:beatingvarnullimpliesbeatingvarplanted}. Suppose instead that the maximum is achieved by some $\Star_t$ such that $t \in \{D/2 + 1,\ldots, D\}.$ 

\paragraph{Step 1: Identifying relationships between $p,Q.$} As in the proof of \cref{thm:maximizingpartition2sbm}, we first identify several relationships between $p$ and $Q.$

First, from \cref{thm:ldpimpliesbsubgraphcount}, 
\begin{align}
\label{eq:2sbmtestingtstarisenough}
\Psi_{\SBM(p,Q)}(\Star_t) = \omega(n^{-1/2}).
\end{align}
Second, it implies that $\Star_2$ is not an approximate maximizer, so 

\begin{align}
\label{eq:tstarbeats2star2sbm}
\Psi_{\SBM(p,Q)}(\Star_3) = o(
\Psi_{\SBM(p,Q)}(\Star_t)
).
\end{align}
Without loss of generality, let $p_1\le p_2.$ Let also $\lambda_i\coloneqq p_1Q_{1,i} + p_2Q_{2,i},$ so 
$\Fourier_{\SBM(p,Q)}(\Star_r) = p_1\lambda_1^r + p_2\lambda_2^r.$ Now, \eqref{eq:tstarbeats2star2sbm} implies that 
\begin{align*}
    & (p_1\lambda_1^2 + p_2\lambda_2^2)^{1/3} = 
    o\Big(
    (p_1\lambda_1^t + p_2\lambda_2^t)^{1/(t+1)}
    \Big) \Longrightarrow\\
    & (p_1\lambda_1^2 + p_2\lambda_2^2)^{(t+1)} = 
    o\Big(
    (p_1|\lambda_1|^t + p_2|\lambda_2|^t)^{3}
    \Big) \Longrightarrow\\
    & p_1^{t+1}|\lambda_1|^{2(t+1)} + 
    p_2^{t+1}|\lambda_2|^{2(t+1)}  = 
    o\Big(
p_1^{3}|\lambda_1|^{3t} + 
    p_2^{3}|\lambda_2|^{3t}
    \Big).
\end{align*}
Now, one of the following two systems of inequalities needs to be satisfied
\begin{enumerate}
    \item $p_2^{t+1}|\lambda_2|^{2(t+1)} = o( p_2^{3}|\lambda_2|^{3t})$ and 
    $ p_2^{3}|\lambda_2|^{3t} \ge  p_1^{3}|\lambda_1|^{3t}.$ Then, as $t = O(1),$
    the first inequality implies that $p_2^{t-2} = o(|\lambda_2|^{t-2}).$ Thus, 
    $$
    p_2 = o(|\lambda_2|) = o(|p_1Q_{1,2} + p_2Q_{2,2}|) = 
    o(p_1|Q_{1,2}| + p_2|Q_{2,2}|) = o(2p_2),
    $$
    contradiction. We used that $p_1\le p_2$ and $|Q_{i,j}|\le 1.$
    \item $p_1^{t+1}|\lambda_1|^{2(t+1)} = o( p_1^{3}|\lambda_1|^{3t})$ and 
    $ p_1^{3}|\lambda_1|^{3t} \ge  p_2^{3}|\lambda_2|^{3t}.$ As in the previous case, 
    the first inequality implies that $p_1 = o(|\lambda_1|).$ The second implies that $|\lambda_1| \ge  |\lambda_2|$ as $p_1\le p_2.$ 
\end{enumerate}
Altogether, we have learned that:
\begin{align}
\label{eq:2sbmtestingrelationships}
p_1\le p_2,  p_1 = \omega(1/n), p_1 = o(|\lambda_1|), \text{ and }
|\lambda_1|\ge |\lambda_2|.
\end{align}
This further implies the following two inequalities:
\begin{align}
\label{eq:1dominates}
    & p_1|\lambda_1|^s\ge p_2|\lambda_2|^s \text{ for any }s\ge t.
\end{align}
This is true since $\frac{ p_1|\lambda_1|^s}{p_2|\lambda_2|^s} = 
\frac{ p_1|\lambda_1|^t}{p_2|\lambda_2|^t}\times 
\Big(\frac{|\lambda_1|}{|\lambda_2|}\Big)^{s-t},
$ which is a product of terms at least equal to 1.

\begin{align}
    \label{eq:powerDdominates}
    |(p_1\lambda_1^s + p_2\lambda_2^s)|^{\frac{1}{s+1}} \gtrsim_D 
    |(p_1\lambda_1^t + p_2\lambda_2^t)|^{\frac{1}{t+1}} \text{ for any even $s\in [t, D].$}
\end{align}
 Indeed, from \eqref{eq:1dominates}, it is enough to show that 
$$
(p_1|\lambda_1|^s)^{\frac{1}{s+1}}\ge 
(p_1|\lambda_1|^t)^{\frac{1}{t+1}} \Longleftrightarrow
|\lambda_1|^{s-t}\ge 
p_1^{s-t}
$$
which follows from $s\ge t, p_1 = o(|\lambda_1|).$ 

Furthermore, \eqref{eq:2sbmtestingtstarisenough} implies that 
\begin{align}
\label{eq:lowerboundlambda1}
    |\lambda_1| = \omega\Big((p_1|\lambda_1|^t)^{/t+1}\Big) = 
    \omega\Big(|\Fourier_{\SBM(p,Q)}(\Star_t)|^{1/(t+1)}) = \omega(n^{-1/2}).
\end{align}
Finally, from \eqref{eq:powerDdominates}, we assume that $t = D.$

\paragraph{Step 2: Analysis of Variance of SBM.} We need to show that 
$$\Big|\expect_{\bfG\sim\SBM(p,Q)}\big[\signedcount_{\Star_D}(\bfG)\big]\Big| = 
\omega\Big(
\Var_{\bfG\sim\SBM(p,Q)}\big[\signedcount_{\Star_D}(\bfG)]^{1/2}
\Big).
$$
As in the proof of \cref{thm:beatingvarnullimpliesbeatingvarplanted}, 
take any graph $H$ which is isomorphic to $S_1\otimes  S_2$ where $S_1, S_2$ are both isomorphic to $\Star_D$ and share at least one vertex. Suppose that there are $M_H$ copies of the signed count of $H$ after expanding $\Var_{\bfG\sim\SBM(p,Q)}\big[\signedcount_{\Star_D}(\bfG)].$ Then, we have to show that 
$$
n^{2(D+1)}( p_1\lambda_1^D + p_2\lambda_2^D)^2= 
\omega\Big(
M_H \times \Fourier_{\SBM(p,Q)}(H)
\Big).
$$
We consider three possible cases for $H = S_1\otimes S_2:$
\paragraph{Case 1) $S_1$ and $S_2$ share their central vertex.} Then, $H$ is a star on $\ell\le 2D$ leaves. This means that $S_1$ and $S_2$ also share 
    $(2D-\ell)/2$ leaves. Altogether, $|\vertices(S_1)\cup \vertices(S_2)|= 1 + \ell +
    (2D-\ell)/2 = 1 + D + \ell/2.
    $ Hence, $M_H = \Theta(n^{1+D+\ell/2}).$ All we need to prove is that 
    \begin{equation}
    \label{eq:ineqinfirstcase}
    \begin{split}
        & n^{2(D+1)}( p_1\lambda_1^D + p_2\lambda_2^D)^2 = \omega(
        n^{1+D+\ell/2} 
        ( p_1\lambda_1^\ell + p_2\lambda_2^\ell)
        ) \Longleftrightarrow\\
        & n^{1 + D -\ell/2}\max(p^2_1\lambda_1^{2D}, p^2_2\lambda_2^{2D}) = 
        \omega( p_1\lambda_1^\ell + p_2\lambda_2^\ell).
    \end{split}
    \end{equation}
    Again, there are two cases. If $\ell \ge D,$ then by \eqref{eq:1dominates} the inequality is equivalent to 
    $$
    n^{1 + D -\ell/2}p^2_1\lambda_1^{2D} = \omega(
       p_1\lambda_1^{\ell}) \Longleftrightarrow
    n^{1 + D-\ell/2} 
    p_1\lambda_1^{2D - \ell}
    = \omega(1).
    $$
    Note that $p_1 = \omega(1/n)$ and  $|\lambda_1|=\omega(n^{-1/2})$ by \eqref{eq:lowerboundlambda1}, which is enough.

    If $\ell<D,$ then $|p_1\lambda_1^\ell + p_2\lambda_2^\ell| = |\Fourier_{\SBM(p,Q)}(\Star_\ell)| \lesssim_D |\Fourier_{\SBM(p,Q)}(\Star_D)|^{\frac{\ell + 1}{D+1}},$ so the inequality reduces to 
    $$
    n^{1 + D -\ell/2} |\Fourier_{\SBM(p,Q)}(\Star_D)|^{2 - \frac{\ell+1}{D+1}} = \omega(1),
    $$
    which follows immediately from \eqref{eq:2sbmtestingtstarisenough}.
\paragraph{Case 2) $S_1$ and $S_2$ do not have a common central vertex and do not have common edges.} Hence, $S_1\otimes S_2$ has at most $2(D+1)-1$ vertices (at $S_1, S_2$ need to share at least one vertex) and $2D$ edges. Hence, $M_H = O(n^{2D+1}).$ By \cref{thm:maximizingpartition2sbm} applied for $2D,$ $|\Fourier_{\SBM(p,Q)}(S_1\otimes S_2)|\lesssim_D 
    |\Fourier_{\SBM(p,Q)}(\Star_{2D})|.
    $ Hence, all we need to show is that 
    $$
    n^{2(D+1)}( p_1\lambda_1^D + p_2\lambda_2^D)^2 = \omega(
        n^{1+2D} 
        ( p_1\lambda_1^{2D} + p_2\lambda_2^{2D}).
    $$
    This is a special case of \eqref{eq:ineqinfirstcase} proved in Case 1) when $\ell = 2D.$
\paragraph{Case 3) $S_1$ and $S_2$ do not have a common central vertex but do have common edges.} Note that $S_1, S_2$ are stars and do not share their central vertex, they can have at most one common edge. Hence, $S_1\otimes S_2$ has at most $2D-2$ edges, so $|\Fourier_{\SBM(p,Q)}(S_1\otimes S_2)|\lesssim_D 
    |\Fourier_{\SBM(p,Q)}(\Star_{2D-2})|
    $ by \cref{thm:maximizingpartition2sbm} applied for $2D.$ As $S_1, S_2$ share an edge, they have at least two common vertices, so $|\vertices(S_1)\cup \vertices(S_2)|\le 2(D+1)-2\le 2D.$ Thus, $M_H = O(n^{2D}).$ Altogether, we need to prove that 
    $$
    n^{2(D+1)}( p_1\lambda_1^D + p_2\lambda_2^D)^2 = \omega(
        n^{2D} 
        ( p_1\lambda_1^{2D-2} + p_2\lambda_2^{2D-2}).
    $$
    This is a special case of \eqref{eq:ineqinfirstcase} proved in Case 1) when $\ell = 2D-2.$
\end{proof}

\section{Comparison Inequalities}
We note that all arguments in the current section apply more generally to any graphon instead of stochastic block model, provided no measurability issues occur.
\subsection{Cycle Comparisons: A Spectral Approach}
We prove the following theorem, which explains why signed triangles and 4-cycles are used for detecting stochastic block models, but larger cycles are not.
\begin{theorem}
\label{thm:largercycles}
For any $\SBM(p,Q)$ distribution and $t\ge 5,$
$$
\Psi_{\SBM(p,Q)}(\cycle_t)\le 
\Psi_{\SBM(p,Q)}(\cycle_4).
$$
\end{theorem}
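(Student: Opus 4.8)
The plan is to observe that the Fourier coefficient of a cycle is the trace of a power of a symmetric matrix, and then to finish with an elementary spectral inequality; this is the analogue of the link between cycle counts and the spectrum (properties P2 and P3) in classical quasirandomness~\cite{chung87}.

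First I would introduce the symmetric matrix $A\in\mathbb{R}^{k\times k}$ with entries $A_{a,b}=\sqrt{p_a p_b}\,Q_{a,b}$. Since the cyclic shift $i\mapsto i+1$ permutes $\{1,\dots,t\}$, each $p_{x_i}$ contributes exactly two factors $\sqrt{p_{x_i}}$, so Proposition~\ref{prop:explicitFourier} rewrites as
\[
\Fourier_{\SBM(p,Q)}(\cycle_t)=\sum_{x_1,\dots,x_t\in[k]}\prod_{i=1}^{t}\sqrt{p_{x_i}p_{x_{i+1}}}\,Q_{x_i,x_{i+1}}=\trace(A^t),
\]
with indices taken mod $t$. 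Because $A$ is real symmetric it has real eigenvalues $\mu_1,\dots,\mu_k$, and hence $\Fourier_{\SBM(p,Q)}(\cycle_t)=\sum_{j=1}^{k}\mu_j^{\,t}$ for every $t$.

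Next I would set $\rho=\max_j|\mu_j|$ and exploit that the exponent $4$ is even: this gives $\Fourier_{\SBM(p,Q)}(\cycle_4)=\sum_j\mu_j^4\ge\rho^4\ge 0$, so $\rho\le\Fourier_{\SBM(p,Q)}(\cycle_4)^{1/4}$. If $\Fourier_{\SBM(p,Q)}(\cycle_4)=0$ then $\rho=0$, all $\mu_j$ vanish, and $\Fourier_{\SBM(p,Q)}(\cycle_t)=0$ as well, so the inequality is trivial; assume henceforth $\Fourier_{\SBM(p,Q)}(\cycle_4)>0$. For any $t\ge 5$ (indeed $t\ge 4$) I would then bound, using once more that $\sum_j|\mu_j|^4=\sum_j\mu_j^4$,
\[
|\Fourier_{\SBM(p,Q)}(\cycle_t)|\le\sum_j|\mu_j|^{\,t}\le\rho^{\,t-4}\sum_j|\mu_j|^4=\rho^{\,t-4}\,\Fourier_{\SBM(p,Q)}(\cycle_4)\le\Fourier_{\SBM(p,Q)}(\cycle_4)^{\,t/4}.
\]
Taking $t$-th roots and recalling $|\vertices(\cycle_t)|=t$ and $|\vertices(\cycle_4)|=4$ yields $\Psi_{\SBM(p,Q)}(\cycle_t)\le\Psi_{\SBM(p,Q)}(\cycle_4)$, as claimed.

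There is no genuine obstacle here; the only points requiring care are the parity of the exponent $4$—so that $\sum_j\mu_j^4$ is automatically nonnegative and equal to $\sum_j|\mu_j|^4$, which is exactly what lets the spectral radius dominate the $\cycle_4$ coefficient—and the degenerate case $\Fourier_{\SBM(p,Q)}(\cycle_4)=0$. For the extension to arbitrary graphons one would replace $A$ by the Hilbert--Schmidt integral operator with kernel $2W-1$, which is compact and self-adjoint and therefore has real, square-summable eigenvalues $\{\mu_j\}$ with $\sup_j|\mu_j|$ attained; the identity $\Fourier_W(\cycle_t)=\sum_j\mu_j^{\,t}$ (valid for $t\ge 2$) and all the bounds above then carry over verbatim, the sum over $j$ now being over a countable index set.
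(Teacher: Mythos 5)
Your argument is correct and is essentially the paper's proof: you both identify $\Fourier_{\SBM(p,Q)}(\cycle_t)=\trace\big((\sqrt{P}Q\sqrt{P})^t\big)=\sum_j\mu_j^t$ and then conclude from the monotonicity of $\ell_p$-norms that $\big(\sum_j|\mu_j|^t\big)^{1/t}\le\big(\sum_j|\mu_j|^4\big)^{1/4}$. The paper invokes this norm monotonicity directly, whereas you unfold it via the spectral radius $\rho$ and the bound $\rho^{t-4}\Fourier(\cycle_4)\le\Fourier(\cycle_4)^{t/4}$, which is the same inequality made explicit; there is no substantive difference.
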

\begin{proof} Consider the expression for a signed $t$-cycle count. It is given by 
\begin{equation*}
    \begin{split}
         \Fourier_{\SBM(p,Q)}(\cycle_t)
        &= 
        \sum_{x_1, x_2, \ldots, x_t}
        p_{x_1}p_{x_2}\cdots p_{x_t} Q_{x_1x_2}Q_{x_2x_3}\cdots Q_{x_tx_1}\\
        & = 
        \sum_{x_1, x_2, \ldots, x_t} 
        (\sqrt{p_{x_1}}Q_{x_1,x_2}\sqrt{p_{x_2}})
        (\sqrt{p_{x_2}}Q_{x_2,x_3}\sqrt{p_{x_3}})
        \cdots
        (\sqrt{p_{x_t}}Q_{x_t,x_1}\sqrt{p_{x_1}})\\
        & = \trace((\sqrt{P}Q\sqrt{P})^t),
    \end{split}
\end{equation*}
where $\sqrt{P}$ is the diagonal matrix with entries $(\sqrt{p_1}, \sqrt{p_2}, \ldots, \sqrt{p_k}).$ Let $\lambda_1, \lambda_2, \ldots, \lambda_k$ be the k eigenvalues of $\sqrt{P}Q\sqrt{P}.$ Then, 
$$
\Fourier_{\SBM(p,Q)}(\cycle_t) = \sum_{i = 1}^k \lambda_i^t.
$$
Now, for any $t\ge 5,$
$$
|\Fourier_{\SBM(p,Q)}(\cycle_t)|^{1/t} = 
\Big|\sum_{i = 1}^k \lambda_i^t\Big|^{1/t}\le 
(\sum_{i = 1}^k |\lambda_i|^t)^{1/t}\le
(\sum_{i = 1}^k |\lambda_i|^4)^{1/4} = 
|\Fourier_{\SBM(p,Q)}(\cycle_4)|^{1/4}.
$$
The first inequality is triangle inequality and the second monotonicity of $t\longrightarrow\|(\lambda_1, \ldots, \lambda_k)\|_t.$
 \end{proof}
\begin{remark}[Triangles vs 4-Cycles] The above proof also sheds light on when triangles are more informative than 4-cycles and vice versa. Namely, observe that if the positive (or negative) eigenvalues of $\sqrt{P}Q\sqrt{P}$ dominate, then 
$$
\Big|\sum_{i= 1}^k \lambda_i^3\Big|^{1/3}\approx
\Big|\sum_{i= 1}^k |\lambda_i|^3\Big|^{1/3}\ge 
\Big|\sum_{i= 1}^k \lambda_i^4\Big|^{1/4},
$$
so 
$|\Fourier_{\SBM(p,Q)}(\cycle_3)|^{1/3}\ge
c
|\Fourier_{\SBM(p,Q)}(\cycle_4)|^{1/4}.$
Whenever this is not the case (as in the quiet planted coloring distribution in \cite{kothari2023planted}, the $L_\infty$ random geometric graphs in \cite{bangachev2024detection}, certain non-PSD Gaussian random geometric graphs in \cite{bangachev2023random}), the mass on positive and negative eigenvalues should be relatively balanced. Heuristically, this is caused by certain bipartiteness/hyperbolic behavior of $\SBM(p,Q).$ 

A more detailed analysis of the performance of signed triangles and 4-cycles for testing \emph{against sparser \ER{}} as well is carried out in 
\cite{Jin2019OptimalAO}.
\end{remark}

\subsection{Exploiting Symmetries: A Sum-of-Squares Approach}
We prove the following theorem, which explains why signed $\complete_4^-$ counts are not used for detecting latent space structure, where $\complete_4^-$ is the graph on 4 vertices and 5 edges (say with vertex set$\{1,2,3,4\}$ and all edges but $(2,4)$). 
One can certainly generalize this approach to other graphs with enough symmetry. The argument appears implicitly in \cite[Lemma 4.10]{Liu2021APV}.
\begin{theorem}
\label{thm:4cyclewithextra}
For any $\SBM(p,Q)$ distribution,
$$
|\Fourier_{\SBM(p,Q)}(K_4^-)|\le 
|\Fourier_{\SBM(p,Q)}(\cycle_4)|.
$$
\end{theorem}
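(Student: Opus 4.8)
The plan is to exploit the symmetry between vertices $2$ and $4$ of $\complete_4^-$: on the vertex set $\{1,2,3,4\}$ with all edges present except $(2,4)$, both $2$ and $4$ are adjacent to exactly $1$ and $3$. Performing the summation of \cref{prop:explicitFourier} over $x_2$ and $x_4$ first will manufacture a perfect square, and the only leftover discrepancy with $\cycle_4$ will be a single edge whose weight has absolute value at most $1$. To set up notation I would write $P=\mathrm{diag}(p_1,\dots,p_k)$ and $A\coloneqq QPQ$, so that $A_{s,t}=\sum_{x\in[k]}p_x Q_{s,x}Q_{x,t}$, a symmetric matrix since $Q$ is symmetric.

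First I would record the $4$-cycle coefficient. Labelling $\cycle_4$ as $1\!-\!2\!-\!3\!-\!4\!-\!1$ and summing over $x_2,x_4$ (each adjacent only to $1$ and $3$) before $x_1,x_3$, \cref{prop:explicitFourier} gives
\begin{align*}
\Fourier_{\SBM(p,Q)}(\cycle_4)
&= \sum_{x_1,x_3\in[k]} p_{x_1}p_{x_3}\Big(\sum_{x_2}p_{x_2}Q_{x_1,x_2}Q_{x_2,x_3}\Big)\Big(\sum_{x_4}p_{x_4}Q_{x_3,x_4}Q_{x_4,x_1}\Big)\\
&= \sum_{s,t\in[k]} p_s p_t\, A_{s,t}^2 \ \ge\ 0,
\end{align*}
using symmetry of $Q$ to identify both inner sums with $A_{x_1,x_3}$; this nonnegativity is exactly the identity already invoked in the proof of \cref{lem:4cycleinnonvanishing}.

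Next I would carry out the same elimination for $\complete_4^-$, whose edge set $\{(1,2),(1,3),(1,4),(2,3),(3,4)\}$ is two triangles glued along $(1,3)$. Again $2$ and $4$ are adjacent only to $1,3$, so summing them out yields
\begin{align*}
\Fourier_{\SBM(p,Q)}(\complete_4^-)
&= \sum_{x_1,x_3\in[k]} p_{x_1}p_{x_3}\,Q_{x_1,x_3}\Big(\sum_{x_2}p_{x_2}Q_{x_1,x_2}Q_{x_2,x_3}\Big)\Big(\sum_{x_4}p_{x_4}Q_{x_1,x_4}Q_{x_4,x_3}\Big)\\
&= \sum_{s,t\in[k]} p_s p_t\, Q_{s,t}\, A_{s,t}^2,
\end{align*}
the extra factor $Q_{s,t}$ coming from the shared edge $(1,3)$. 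Since $p_s p_t A_{s,t}^2\ge 0$ and $|Q_{s,t}|\le 1$, the triangle inequality then closes the argument:
\begin{align*}
\big|\Fourier_{\SBM(p,Q)}(\complete_4^-)\big|
&\le \sum_{s,t} p_s p_t |Q_{s,t}| A_{s,t}^2 \le \sum_{s,t} p_s p_t A_{s,t}^2\\
&= \Fourier_{\SBM(p,Q)}(\cycle_4) = \big|\Fourier_{\SBM(p,Q)}(\cycle_4)\big|.
\end{align*}

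I do not expect a genuine obstacle here: the whole content is the observation that the two vertices of $\complete_4^-$ lying off the shared edge form an independent set with common neighborhood $\{1,3\}$, so eliminating them produces the square $A_{s,t}^2$, after which a single edge of weight $\le 1$ is all that separates $\complete_4^-$ from $\cycle_4$. The only care needed is the routine bookkeeping of which inner sum corresponds to which eliminated vertex and the use of symmetry of $Q$ to make both inner sums literally equal to $A_{x_1,x_3}$. This also makes precise the remark that the method generalizes: any graph containing an independent set of vertices that all share one fixed two-element neighborhood admits the same ``collapse to a square, then drop a bounded edge'' comparison.
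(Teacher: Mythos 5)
Your proof is correct and takes essentially the same route as the paper's: both identify that summing out the two degree-$2$ vertices off the shared edge produces the square $A_{s,t}^2$ (the paper phrases this as $\E[Q_{\bfx_1\bfx_2}Q_{\bfx_2\bfx_3}\mid\bfx_1,\bfx_3]^2$), and both then use $|Q_{s,t}|\le 1$ on the leftover $(1,3)$-edge to compare to $\cycle_4$. The only cosmetic difference is that the paper packages the last step as the pair of inequalities $\Fourier(\cycle_4)\mp\Fourier(\complete_4^-)=\E[\ldots^2(1\mp Q_{\bfx_1,\bfx_3})]\ge 0$, whereas you apply the triangle inequality term by term.
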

\begin{proof} Consider the $\cycle_4$ with edges $(12), (23), (34), (14)$ and the $K_4^-$ on these edges with the extra edge $(13).$ 
We will first rewrite the expression for $\Fourier_{\SBM(p,Q)}(\cycle_4).$ Note that 
\begin{equation}
    \begin{split}
        \Fourier_{\SBM(p,Q)}(\cycle_4) & =
        \expect_{\bfx_1, \bfx_2, \bfx_3,\bfx_4}[
        Q_{\bfx_1,\bfx_2}
        Q_{\bfx_2,\bfx_3}
        Q_{\bfx_3,\bfx_4}
        Q_{\bfx_4,\bfx_1}
        ]\\
        & = \expect_{\bfx_1,\bfx_3}\Big[
        \expect_{\bfx_2,\bfx_4}[Q_{\bfx_1,\bfx_2}
        Q_{\bfx_2,\bfx_3}
        Q_{\bfx_3,\bfx_4}
        Q_{\bfx_4,\bfx_1}|\bfx_1,\bfx_3]
        \Big].
    \end{split}
\end{equation}
Note that $\bfx_2, \bfx_4$ are independent even conditioned on $\bfx_1, \bfx_3.$ Hence, 
\begin{align*}
& \expect[Q_{\bfx_1,\bfx_2}
        Q_{\bfx_2,\bfx_3}
        Q_{\bfx_3,\bfx_4}
        Q_{\bfx_4,\bfx_1}|\bfx_1,\bfx_3]\\
&= 
   \expect[Q_{\bfx_1,\bfx_2}
        Q_{\bfx_2,\bfx_3}|\bfx_1,\bfx_3]\times 
        \expect[Q_{\bfx_1,\bfx_4}
        Q_{\bfx_4,\bfx_3}|\bfx_1,\bfx_3]\\
&= 
    \expect[Q_{\bfx_1,\bfx_2}
        Q_{\bfx_2,\bfx_3}|\bfx_1,\bfx_3]^2,  
\end{align*}
where we used the fact that the conditional expectation are identically distributed. Hence, we obtain that 
$$
\Fourier_{\SBM(p,Q)}(\cycle_4)  =
\expect\Bigg[
\expect[Q_{\bfx_1,\bfx_2}
        Q_{\bfx_2,\bfx_3}|\bfx_1, \bfx_3]^2
\Bigg].
$$
In the exact same way, we conclude that 
$$
\Fourier_{\SBM(p,Q)}(K^-_4)  =
\expect\Bigg[
\expect[Q_{\bfx_1,\bfx_2}
        Q_{\bfx_2,\bfx_3}|\bfx_1, \bfx_3]^2Q_{\bfx_1,\bfx_3}
\Bigg].
$$
Altogether, 
\begin{align*}
    & \Fourier_{\SBM(p,Q)}(\cycle_4) - \Fourier_{\SBM(p,Q)}(K^-_4) = 
    \expect\Bigg[
\expect[Q_{\bfx_1,\bfx_2}
        Q_{\bfx_2,\bfx_3}|\bfx_1, \bfx_3]^2(1-Q_{\bfx_1,\bfx_3})
\Bigg]\ge 0,\\
    & \Fourier_{\SBM(p,Q)}(\cycle_4) + \Fourier_{\SBM(p,Q)}(K^-_4) = 
    \expect\Bigg[
\expect[Q_{\bfx_1,\bfx_2}
        Q_{\bfx_2,\bfx_3}|\bfx_1, \bfx_3]^2(1+Q_{\bfx_1,\bfx_3})
\Bigg]\ge 0,
\end{align*}
where we used that $-1\le Q_{\bfx_1,\bfx_3}\le 1.$ 
Together, the two inequalities give the desired result.
\end{proof}

\subsection{Ghost Vertices: A Second Moment Approach}
The key idea in the proof of \cref{thm:4cyclewithextra} was the symmetry around $(1,3).$
One can ``artificially'' create such symmetry via a second moment argument, but this unfortunately yields comparison inequalities too weak for the purposes of \eqref{eq:statisticalscaling}. Here, we present one possible result. The proof follows implicitly \cite[Fact 12]{chung87} and \cite[(4) in Section 3]{Liu2021APV}.
\begin{theorem}
\label{thm:chisquaredapproach}
Consider any graph $H$ and suppose that it has a vertex of degree $d.$ Then, for any $\SBM(p,Q)$ distribution,
$$
|\Fourier_{\SBM(p,Q)}(H)|\le
|\Fourier_{\SBM(p,Q)}(\complete_{2,d})|^{1/2}.
$$
\end{theorem}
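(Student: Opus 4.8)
The plan is to exploit the same symmetry trick as in \cref{thm:4cyclewithextra} but now applied at the degree-$d$ vertex, combined with a Cauchy--Schwarz step to create the ``ghost'' copy. Let $v$ be a vertex of $H$ with $\deg_H(v) = d$, and let $u_1,\dots,u_d$ be its neighbors. Write $H = H_v \cup \Star_d$, where $\Star_d$ consists of the $d$ edges $(v,u_i)$ and $H_v$ is the graph on $\vertices(H)$ with the edge set $\edges(H)\setminus\{(v,u_1),\dots,(v,u_d)\}$ (so $v$ is isolated in $H_v$). Using \cref{prop:explicitFourier} and summing over the label $x_v$ last, conditionally on the labels $(x_w)_{w\neq v}$, we get
\begin{equation}
\label{eq:chisquaredstart}
\Fourier_{\SBM(p,Q)}(H)
= \expect_{(\bfx_w)_{w\neq v}}\Bigg[\Bigg(\prod_{(i,j)\in \edges(H_v)} Q_{\bfx_i,\bfx_j}\Bigg)\cdot g(\bfx_{u_1},\dots,\bfx_{u_d})\Bigg],
\end{equation}
where $g(y_1,\dots,y_d) \coloneqq \sum_{x\in[k]} p_x \prod_{i=1}^d Q_{x,y_i} = \expect_{\bfx\sim p}\big[\prod_{i=1}^d Q_{\bfx,y_i}\big]$ is exactly the ``half'' of a $\complete_{2,d}$ appearing when one conditions on the two degree-$d$ vertices.

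The main step is then a Cauchy--Schwarz over the labels other than $v$. First I would bound
\begin{align}
\label{eq:chisquaredCS}
\big|\Fourier_{\SBM(p,Q)}(H)\big|
&\le \expect_{(\bfx_w)_{w\neq v}}\Bigg[\Bigg|\prod_{(i,j)\in \edges(H_v)} Q_{\bfx_i,\bfx_j}\Bigg|\cdot \big|g(\bfx_{u_1},\dots,\bfx_{u_d})\big|\Bigg]\notag\\
&\le \expect_{(\bfx_w)_{w\neq v}}\big[g(\bfx_{u_1},\dots,\bfx_{u_d})^2\big]^{1/2},
\end{align}
where in the last line I used $|Q_{i,j}|\le 1$, so the product over $\edges(H_v)$ is at most $1$ in absolute value, and then Cauchy--Schwarz (or simply $\expect[|AB|]\le \expect[B^2]^{1/2}$ when $|A|\le 1$). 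Now $\expect_{(\bfx_w)_{w\neq v}}[g(\bfx_{u_1},\dots,\bfx_{u_d})^2]$ only depends on the labels $\bfx_{u_1},\dots,\bfx_{u_d}$, which are i.i.d.\ from $p$; expanding the square introduces a second independent variable $\bfx'\sim p$ (the ghost vertex), giving
\begin{equation}
\label{eq:chisquaredghost}
\expect\big[g(\bfx_{u_1},\dots,\bfx_{u_d})^2\big]
= \expect_{\bfx,\bfx'\sim p,\;(\bfx_{u_i})_{i=1}^d\sim p}\Bigg[\prod_{i=1}^d Q_{\bfx,\bfx_{u_i}}Q_{\bfx',\bfx_{u_i}}\Bigg]
= \Fourier_{\SBM(p,Q)}(\complete_{2,d}),
\end{equation}
since the right-hand expectation is precisely the Fourier coefficient of $\complete_{2,d}$ with the two part-of-size-$2$ vertices labeled $\bfx,\bfx'$ and the $d$ middle vertices labeled $\bfx_{u_1},\dots,\bfx_{u_d}$ (note this automatically shows $\Fourier_{\SBM(p,Q)}(\complete_{2,d})\ge 0$). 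Combining \eqref{eq:chisquaredCS} and \eqref{eq:chisquaredghost} yields $|\Fourier_{\SBM(p,Q)}(H)|\le |\Fourier_{\SBM(p,Q)}(\complete_{2,d})|^{1/2}$.

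The only subtlety — and the step I would be most careful about — is bookkeeping the vertex sets: one must make sure that the $d$ neighbors $u_1,\dots,u_d$ of $v$ are genuinely $d$ distinct vertices (true since $H$ is a simple graph) and that discarding the $\edges(H_v)$-product via $|Q_{i,j}|\le 1$ is legitimate even when $H_v$ has edges among the $u_i$ or involves other vertices; it is, because that product is a product of entries of $Q\in[-1,1]^{k\times k}$ regardless of structure. There is no scaling issue to resolve here: as the excerpt emphasizes, this inequality has the ``wrong'' exponent $1/2$ for hypothesis-testing purposes (one would want the exponent $|\vertices(H)|/|\vertices(\complete_{2,d})|$), but the statement as written only claims the $1/2$ bound, which the argument above delivers cleanly. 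For the graphon generalization mentioned at the top of the section, one replaces the sums over $[k]$ by integrals against the graphon's base measure and $Q_{x,y}$ by the (signed, i.e.\ $2W-1$) graphon kernel; the only thing to check is measurability of $g$, which follows from Fubini.
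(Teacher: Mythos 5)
Your proof is correct and follows essentially the same route as the paper's: condition on all labels except that of the degree-$d$ vertex, factor out the conditional expectation over that vertex, discard the remaining $Q$-product via $|Q_{i,j}|\le 1$, apply Jensen to pass from $\expect[|\cdot|]$ to $\expect[(\cdot)^2]^{1/2}$, and recognize the expanded square as $\Fourier_{\SBM(p,Q)}(\complete_{2,d})$ with two ``ghost'' copies of the center. Your side remark that this also shows $\Fourier_{\SBM(p,Q)}(\complete_{2,d})\ge 0$ is a correct and worthwhile observation.
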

\begin{proof} Let the vertex set of $H$ be $\{1,2,\ldots, h\}$ such that $h$ has degree $d.$ Then,
\begin{equation*}
\begin{split}
    & |\Fourier_{\SBM(p,Q)}(H)|
     = \Bigg|\expect\Bigg[\prod_{(i,j)\in \edges(H)}Q_{\bfx_i\bfx_j}\Bigg]\Bigg|\\
     & 
    = \Bigg|\expect\Bigg[ 
    \prod_{i,j<h\;:\; (i,j)\in \edges(H)}
    Q_{\bfx_i,\bfx_j}
    \times
    \expect\Big[
    \prod_{k\; : \; (k,h)\in \edges(H)}Q_{\bfx_k\bfx_h}\Big|x_1, x_2, \ldots, x_{h-1}\Big]
    \Bigg]\Bigg|\\
    & \le
    \expect\Bigg[\Bigg| 
    \prod_{i,j<h\;:\; (i,j)\in \edges(H)}
    Q_{\bfx_i,\bfx_j}
    \times
    \expect\Big[
    \prod_{k\; : \; (k,h)\in \edges(H)}Q_{\bfx_k\bfx_h}\Big|x_1, x_2, \ldots, x_{h-1}\Big]
    \Bigg|\Bigg]\\
    & \le
    \expect\Bigg[\Bigg| 
    \expect\Big[
    \prod_{k\; : \; (k,h)\in \edges(H)}Q_{\bfx_k\bfx_h}\Big|x_1, x_2, \ldots, x_{h-1}\Big]\Bigg|\Bigg]\\
    & \le
    \expect\Bigg[
    \expect\Big[
    \prod_{k\; : \; (k,h)\in \edges(H)}Q_{\bfx_k\bfx_h}\Big|x_1, x_2, \ldots, x_{h-1}\Big]^2\Bigg]^{1/2}\\
    & =
    \expect_{\bfx_1,\bfx_2,\ldots,\bfx_{h-1}, \bfx_{h'}, \bfx_{h''}}\Bigg[
    \prod_{k\; : \; (k,h)\in \edges(H)}Q_{\bfx_k\bfx_{h'}}Q_{\bfx_k\bfx_{h''}}
    \Bigg]^{1/2} = |\Fourier_{\SBM(p,Q)}(\complete_{2,d})|^{1/2}.\qedhere
\end{split}
\end{equation*}
\end{proof}

\section{Future Directions}
\label{sec:discussion}
Beyond the natural direction of proving \cref{conj:optimality} for all stochastic block models (and, more generally, graphons), we outline several different directions of interest. They modify the conditions in the current work in different ways.

\paragraph{1. Beyond dense graphs: testing against a sparse \ER.} One natural direction is to develop a quasirandomness theory for testing against $\ergraph$ when $q$ depends on $n,$ say $q = n^{-\beta}$ for some absolute constant $\beta\in (0,1].$ The
$q$-biased Fourier coefficients are
$$
\Fourier^q_{\SBM(p,Q)}(H)\coloneqq \expect\Bigg[
\prod_{(i,j)\in \edges(H)}
\frac{(\bfG_{ij} - q)}{\sqrt{q(1-q)}}\Bigg]\asymp
n^{\frac{\beta \times |\edges(H)|}{2}}\times 
\expect\Bigg[
\prod_{(i,j)\in \edges(H)}
{(\bfG_{ij} - q)}\Bigg],
$$ where the asymptotic equivalence holds for constant sized graphs $H.$ 
The equivalent condition to \eqref{eq:statisticalscaling} is 
$|\Fourier^q_{\SBM(p,Q)}(H)|^{\frac{1}{|\vertices(H)|}} = \omega(n^{-1/2}).$ What makes this problem different is the $n^{\frac{\beta \times |\edges(H)|}{2|\vertices(H)|}}$ term in $ |\Fourier^q_{\SBM(p,Q)}(H)|^{\frac{1}{|\vertices(H)|}}.$
This term poses several challenges, including the argument on the variance of the planted model in \cref{thm:beatingvarnullimpliesbeatingvarplanted}. One viable approach seems the consideration of \emph{balanced} graphs which \cite{dhawan2023detection} use exactly in the setting of planting a dense subgraph in $\ergraph.$ In a balanced graph $H,$ the key quantity
$\frac{|\edges(H)|}{|\vertices(H)|}$ is larger than the same quantity for any subgraph $H'$ of  $H.$

\paragraph{2. Beyond SBMs: testing against vertex-transitive distributions.} The question of developing quasirandomness theory for testing against $\ergraphhalf$ is, of course, not restricted to stochastic block models as in our work and planted subgraph models as in \cite{yu2024counting}. In fact, one can even ask the question for fixed graphs as in the original quasrirandomness work of \cite{chung87}.

One way to phrase \cref{problem:maximizingfourriercoefficients} for a fixed graph is as follows. Take any fixed graph $G$ and let $\Pi_G$ be the distribution formed by applying a uniformly random vertex permutation to $G.$ Find the possible approximate maximizers of 
$
H\longrightarrow \Psi_{\Pi_G}(H)
$
where $H$ ranges over constant-sized graphs without isolated vertices.

\paragraph{3. Beyond constant degree tests: towards computational hardness.} Our results as well as the results of \cite{yu2024counting} apply only to indistinguishability against constant-degree polynomial tests. However, it would be useful to have results for polynomials of higher-degree. Especially desirable would be hardness against degree $D = \omega(\log n)$ polynomial tests as this is frequently viewed as strong (even though by no means perfect) evidence for computational hardness.

Our current framework does not allow for any super-constant $D$ since one can check that the implicit constants in $\gtrsim_D$ are on the order of $2^{\Theta(D\log D)}$ (when enumerating over all graphs on at most $D$ edges in \cref{thm:ldpimpliesbsubgraphcount} and taking max over $K$ in \cref{thm:maximizinginnonnegative,thm:maximizingpartition2sbm}). Explicitly, this means that when $D = \omega(1),$ the different $\gtrsim_D$ factors no longer indicate inequalities up to absolute constants. Yet, if we allow for an $n^{o(1)}$ blow-up in the sample complexity, the same techniques with a more careful bookkeeping of the $\gtrsim_D$ dependence still apply. To illustrate, useful is the following sample-complexity formulation of \eqref{eq:hypothesistest}.

\begin{definition}[``Vertex''-Sample-Complexity Perspective of Testing against \ER]
\label{def:vertexsamplecomplexity}
Consider a sequence of stochastic block-models $\bigg\{\SBM(p^k, Q^k)\bigg\}_{k \in \mathbb{N}}$ such that each coordinate of $p^k$ is positive and $Q^k$ is not the zero matrix. Find the minimal number of vertices $n(k)$ such that one can test between 
$\SBM(n(k);p^k, Q^k)$ and $\mathbb{G}(n(k), 1/2)$ 
via a degree-$D$ polynomial test with success probability $1 - o_k(1).$
\end{definition}

With a small blow-up in the sample-complexity, we can show the following theorem. The proofs are identical, observing that the hidden factors are on the order of $2^{O(D\log D)}$ everywhere. Hence, if $D = o(\log n/\log\log n),$ the hidden factors are of order $n^{o(1)}.$
\begin{proposition}
\label{prop:lognoverloglognversion}
Suppose that there is a degree $D = o(\log n/\log\log n)$ polynomial test that succeeds in distinguishing $\SBM(n(k); p^k, Q^k)$ and  
$\mathbb{G}(n(k), 1/2)$ with success probability $1-o_k(1).$ Suppose furthermore that the family $\SBM(n(k);p^k, Q^k)$ belongs to one of the four cases $i\in \{1,2,3,4\}$ described in \cref{thm:mainintro}. Then, for some appropriate $\epsilon_k = o_k(1),$ there exists some signed subgraph count of $H\in \mathcal{A}^i_D$ that distinguishes 
$\SBM(n(k)^{1+\epsilon_k}; p^k, Q^k)$ and  
$\mathbb{G}(n(k)^{1+\epsilon_k}, 1/2)$ with success probability $1-o_k(1).$
\end{proposition}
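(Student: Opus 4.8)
The plan is to re-run, case by case, the proof of each of the four parts of \cref{thm:mainintro} --- that is, the chain \cref{thm:ldpimpliesbsubgraphcount} $\to$ the relevant partition-function maximization theorem (\cref{thm:maximizepartitiondiagonal}, \cref{thm:maximizinginnonnegative}, \cref{thm:maximizingpartitioninnonvanishing}, \cref{thm:maximizingpartition2sbm}), together with its companion testing theorem $\to$ \cref{thm:beatingvarnullimpliesbeatingvarplanted} --- while keeping explicit track of every implicit constant. First I would catalogue the sources of $D$-dependence. There are essentially two. (i) The number of isomorphism classes of graphs with no isolated vertices and at most $2D$ edges is $N_{2D} = 2^{O(D\log D)}$; this single multiplicative factor bounds the loss in \cref{thm:ldpimpliesbsubgraphcount} (the number of Fourier characters of a degree-$D$ polynomial) as well as the number of overlap patterns $(s_\emptyset,s_1,s_2,s_{1,2})$, leaf-assignments, and auxiliary graphs $H_1\otimes H_2$ enumerated inside \cref{thm:beatingvarnullimpliesbeatingvarplanted} and inside Step~2 of the ``Testing in 2-SBMs'' proof. (ii) The auxiliary constants internal to the maximization arguments --- enumeration of the $2^{|\vertices(H)|}\le 2^{2D}$ subsets $K\subseteq\vertices(H)$, iterated AM--GM, and the leaf-isolation step --- which, inspecting those proofs, are \emph{absolute} constants at the level of the normalized functional $\Psi$: the factors $C^{1/4}, C^{1/3},\dots$ coming from \cref{lem:4cycleinnonvanishing} and \cref{claim:4cycle2sbm} enter with a power $\le 1$ after the final $1/|\vertices(H)|$-normalization, and a loss $2^{|\vertices(H)|}$ absorbed inside a $\max$ becomes $2$ after the same normalization. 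At the level of raw Fourier coefficients these maximization comparisons therefore read $|\Fourier_{\SBM(p,Q)}(H)|\le C^{|\vertices(H)|}\,|\Fourier_{\SBM(p,Q)}(K)|^{|\vertices(H)|/|\vertices(K)|}$ with $C = O(1)$, i.e.\ with slack only $2^{O(D)}$.

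The quantitative point is that for $D = o(\log n/\log\log n)$ one has $N_{2D} = 2^{O(D\log D)} = 2^{o(\log n)} = n^{o_k(1)}$, and a fortiori $2^{O(D)} = n^{o_k(1)}$. Hence every ``$x\gtrsim_D y$'' used in the four proofs becomes ``$x\ge y/n^{o_k(1)}$'', and ``$K$ approximately maximizes $\Psi$ up to $O_D(1)$'' becomes ``up to a factor $n^{o_k(1)}$''. Tracing the bookkept chain: a successful degree-$D$ test on $n=n(k)$ vertices produces, via \cref{thm:ldpimpliesbsubgraphcount}, a graph $H$ on at most $2D$ edges with $\Psi_{\SBM(p^k,Q^k)}(H)\ge n^{-1/2}/2^{O(D\log D)}$; the bookkept maximization theorem of the relevant case then yields $K\in\mathcal{A}^i_D$ with $\Psi_{\SBM(p^k,Q^k)}(K)\ge n^{-1/2}/n^{o_k(1)}$ and with $K$ an $n^{o_k(1)}$-approximate maximizer of $\Psi_{\SBM(p^k,Q^k)}$ over all graphs on at most $2D$ edges. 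In cases~1 and~3 this last point is automatic, since $\mathcal{A}^i_D$ is a fixed set independent of $D$ and \cref{thm:maximizepartitiondiagonal}/\cref{thm:maximizingpartitioninnonvanishing} applied with parameter $2D$ already give the required $\le 2D$-edge maximality; in cases~2 and~4 the ``large-star'' subcase is handled exactly as in the existing testing theorems (Step~2 of the 2-SBM proof, or the bookkept version of \cref{thm:countingstars} of \cite{yu2024counting} for the non-negative case), whose constants are again $2^{O(D\log D)}$.

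It then remains to absorb the accumulated $n^{o_k(1)}$ slack by a slight blow-up of the vertex set. Writing the total slack as $n^{\delta_k}$ with $\delta_k = O(D\log D)/\log n = o_k(1)$, choose any sequence $\epsilon_k\to 0$ with $\epsilon_k \gg \delta_k + O(D)/\log n = O(D\log D)/\log n$ --- such $\epsilon_k$ exists precisely because $D\log D = o(\log n)$ in the stated range of $D$ --- and set $N=N(k) = n(k)^{1+\epsilon_k}$. On $N$ vertices, $\Psi_{\SBM(p^k,Q^k)}(K)\ge n^{-1/2-\delta_k} = N^{-1/2}\,n^{\,\epsilon_k/2-\delta_k} = \omega(N^{-1/2})$, giving condition~1 of \cref{thm:beatingvarnullimpliesbeatingvarplanted} with a genuine margin $\Psi_{\SBM}(K)/N^{-1/2}\ge N^{\Omega(\epsilon_k)}$. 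For condition~2 and the planted-variance computation: the only place the maximality constant is raised to a power is the step $|\Fourier(H_1\otimes H_2)| \le C^{|\vertices(H_1\otimes H_2)|}|\Fourier(K)|^{|\vertices(H_1\otimes H_2)|/|\vertices(K)|}$, and since $C=O(1)$ and $|\vertices(H_1\otimes H_2)| = O(D)$ this contributes only $2^{O(D)} = n^{o_k(1)}$, while the exponent of $\Psi(K)$ never multiplies anything larger than $O(D)$ and the running self-correction $(\Psi(K)/N^{-1/2})^{2(s_\emptyset+s_{1,2})}\ge N^{\Omega(\epsilon_k)}$ (valid since $s_\emptyset+s_{1,2}\ge 1$) beats it; finally there are only $N_{2D}=n^{o_k(1)}$ overlap patterns. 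Thus both hypotheses of \cref{thm:beatingvarnullimpliesbeatingvarplanted} hold on $N$ vertices with genuine $\omega(\cdot)$-separations, so the signed $K$-count distinguishes $\SBM(N;p^k,Q^k)$ and $\mathbb{G}(N,1/2)$ with probability $1-o_k(1)$.

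I expect the genuine obstacle to be the bookkeeping in (ii) above --- verifying that the maximization comparisons really are absolute-constant at the $\Psi$-level (equivalently, raw-level slack $2^{O(D)}$ rather than $2^{O(D\log D)}$ or $2^{O(\mathrm{poly}(D))}$) across all four cases, i.e.\ that the $C^{1/4}$- and $C^{1/3}$-type factors threaded through \cref{thm:maximizingpartitioninnonvanishing} and \cref{thm:maximizingpartition2sbm} (and through the many case splits of the latter) all survive the $1/|\vertices(H)|$-normalization with exponent $\le 1$. This is precisely what lets $D$ reach $o(\log n/\log\log n)$: if instead a maximization constant secretly contributed $2^{\Theta(D\log D)}$ at the $\Psi$-level, raising it to the $O(D)$ power in the variance step would force the narrower range $D = o(\sqrt{\log n/\log\log n})$. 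Everything else --- \cref{thm:ldpimpliesbsubgraphcount}, the edge-removal argument, the leaf-isolation argument, and the Step~1 relationship-extraction in the 2-SBM testing proof --- is a routine substitution of $n^{o_k(1)}$ for absolute constants and of $N = n^{1+\epsilon_k}$ for $n$.
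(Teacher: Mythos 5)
Your proposal is correct and follows the same route as the paper, which offers only a one-sentence justification for this proposition (``the proofs are identical, observing that the hidden factors are on the order of $2^{O(D\log D)}$ everywhere''). What you add, and what is actually necessary to make the claimed range $D = o(\log n/\log\log n)$ work, is the finer distinction between the two kinds of $D$-dependence: the \emph{additive} $2^{O(D\log D)}$ coming from enumerating isomorphism classes (in \cref{thm:ldpimpliesbsubgraphcount} and in the overlap-pattern sum in \cref{thm:beatingvarnullimpliesbeatingvarplanted}), which is absorbed once by $n^{\epsilon_k}$, versus the \emph{multiplicative} slack in the maximization comparisons, which gets raised to the $O(D)$ power inside the planted-variance step. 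Your observation that the latter slack is an absolute constant at the $\Psi$-level --- the $2^{|\vertices(H)|}$ subset enumeration and the $C^{O(|\vertices(H)|)}$ accumulations both collapse to $O(1)$ after the $1/|\vertices(H)|$-root --- is exactly what keeps the damage at $2^{O(D)}$ rather than $2^{O(D^2\log D)}$ and hence within the stated degree range. Your caveat that this is the ``genuine obstacle'' is well placed: it is the one claim that a reader must check against each case split of \cref{thm:maximizingpartition2sbm}, and it is implicit rather than explicit in the paper. The choice of $\epsilon_k$ absorbing $O(D\log D)/\log n$, and the re-verification of both hypotheses of \cref{thm:beatingvarnullimpliesbeatingvarplanted} on $N = n^{1+\epsilon_k}$ vertices, are routine and correct as you present them.
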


The reason why we write all the proofs for constant degree instead of $o(\log n/\log\log n)$ is that while we are not aware of any advantage of the $o(\log n/\log\log n)$ tests, there is significant notational simplicity when hiding factors depending on the degree. Of course, if one manages to push the result to degree $\Theta(\log n),$ this would have the significant advantage of capturing spectral methods. However, this seems to be not merely a matter of carefully keeping track of the constants in our proofs. If any form of \cref{conj:optimality} holds true for degree $D = \omega(\log n)$ polynomial tests, it seems to require essentially new techniques.

\begin{remark}[Completeness of \cref{def:vertexsamplecomplexity}]
A-priori, it is not even clear that $n(k)$ in \cref{def:vertexsamplecomplexity} exists. Yet, a simple argument mimicking \cref{claim:4cycle2sbm} and \cref{lem:4cycleinnonvanishing} shows that 
$$
\Fourier_{\SBM(p,Q)}(\cycle_4) = 
\Omega\Big(\max_{i,j}
p_i^2p_j^2 |Q_{i,j}|^4
\Big)
$$
for any $\SBM(p,Q).$
As the variance under planted scales as $O_k(n^7) = o_k(n^{2|\vertices(\cycle_4)|}),$ one can use this to show that for large enough $n(k),$ one can always test via the signed 4-cycle count.
\end{remark}

\paragraph{4. Beyond complete graphs: ``edge''-sample-complexity of testing.} A different sample complexity perspective of graph hypothesis testing was introduced in \cite{mardia24ldpquery} and further analyzed in \cite{bangachev24fourier}. The goal is to capture the query-complexity of low-degree polynomial tests. We take the following formulation as described in \cite{bangachev24fourier}. 

For a mask $\mathcal{M}\in \{\textsf{view, hide}\}^{N\times N}$ and (adjacency) matrix  $A\in \{0,1\}^{N\times N},$ denote by $A\odot\mathcal{M}$ the $N\times N$ array in which 
$(A\odot \mathcal{M})_{ji}= A_{ji}$ whenever $\mathcal{M}_{ji} = \textsf{view}$ and $(A\odot \mathcal{M})_{ji}= \,\,?$ whenever $\mathcal{M}_{ji} = \textsf{hide}.$ Testing between graph distributions with masks corresponds to a non-adaptive edge query model. Instead of viewing a full graph, one can choose to observe a smaller more structured set
$\mathcal{M}$
of edges in order to obtain a more data-efficient algorithm. 
The number of $\textsf{view}$ entries $|\mathcal{M}|$ of $\mathcal{M}$ is a natural proxy for ``sample complexity'' in the case of low-degree polynomials as the input variables of low-degree polynomials are edges rather than vertices.
Instead of asking for $n(k)$ in \cref{def:vertexsamplecomplexity}, one can ask for the size of the optimal mask $\mathcal{M}(k).$

\begin{definition}[``Edge''-Sample-Complexity Perspective of Testing against \ER]
\label{def:edgesamplecomplexity}
Consider a sequence of stochastic block-models $\bigg\{\SBM(p^k, Q^k)\bigg\}_{k \in \mathbb{N}}$ such that each coordinate of $p^k$ is positive and $Q^k$ is not the zero matrix. Find the minimal number of edges $M_k$ such that there exists some $N(k)\in \mathbb{N}$ and a 
mask $\mathcal{M}_k$ of size $|\mathcal{M}_k| = M_k$ on $N(k)$ vertices with the following property. 
One can test between 
$\mathcal{M}_k\odot\SBM(N(k);p^k, Q^k)$ and $\mathcal{M}_k\odot\mathbb{G}(N(k), 1/2)$ 
via a degree-$D$ polynomial test with success probability $1 - o_k(1).$
\end{definition}

Again, one can ask for a quasirandomness criterion in the case of edge-query-complexity. A theorem due to Alon \cite{Alon1981OnTN} shows that the maximal number of graphs isomorphic to $H$ in a graph on $M$ edges is $\Theta_H(M^{\frac{|\vertices(H)| + \delta(H)}{2}}),$ where $$\delta(H)\coloneqq \max_{S\subseteq \vertices(H)}|S| - |\{j \in \vertices(H)\; : \; \exists i \in S \text{ s.t. }(j,i)\in \edges(H)\}|.$$ 
Reasoning as in \cref{sec:introformalizingsubgraphsbmpartition}, instead of finding the approximate maximizers of\linebreak
$H \longrightarrow |\Fourier_{\SBM(p,Q)}(H)|^\frac{1}{|\vertices(H)|},$ one should look for the approximate maximizers of 
$$H \longrightarrow |\Fourier_{\SBM(p,Q)}(H)|^\frac{1}{|\vertices(H)| + \delta(H)}.$$ Needless to say, different techniques than ours are required.

\paragraph{5. The Sign of Fourier Coefficients of Stochastic Block Models.} The argument of \cref{lem:4cycleinnonvanishing} shows that for any $\SBM(p,Q)$ such that $Q$ is not the zero matrix, 
$\Fourier_{\SBM(p,Q)}(\cycle_4)>0.$ \emph{What other graphs $H$ besides $\cycle_4$ have this property?} The argument in 
\cref{lem:4cycleinnonvanishing} applies verbatim for any other graph $\complete_{2,d}$ when $d$ is even. The examples in \cref{appendix:library} show that $H$ does not necessarily satisfy this property if $H$ has an odd degree vertex (\cref{examplethm:notodddegree}), $H$ has an odd number of edges (\cref{examplethm:fourcyclenostarnotriangle}), $H$ is not 2-connected (\cref{examplethm:trianglenostarnotfourcycle}), or $H$ is not bipartite (\cref{examplethm:largestarsnotrianglenofourcycle}). 

What if we relax the inequality: \emph{what graphs $H$ have the property that $\Fourier_{\SBM(p,Q)}(H)\ge 0$ for any $\SBM(p,Q)$?} The argument in \cref{thm:largercycles} shows that all cycles of even length, in addition to the graphs $\complete_{2,d}$ satisfy this property.

\section*{Acknowledgments}
We thank Hannah Munkhbat for participating in early stages of this work. 

\bibliography{ref}
\bibliographystyle{alpha}

\appendix

\section{Library of Examples}
\label{appendix:library}
We will repeatedly refer to the following condition.
\begin{proposition}
\label{prop:fullyunbiased}
We call an $\SBM(p,Q)$ distribution on $k$ communities \emph{fully unbiased} if for any $x\in [k],$ it holds that $\sum_y p_y Q_{x,y} = 0.$
For any fully unbiased $\SBM(p,Q)$ and graph $H$ with a leaf (in particular, any tree), $\Fourier_{\SBM(p,Q)}(H) = 0.$ 
\end{proposition}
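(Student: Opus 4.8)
The plan is to directly manipulate the explicit formula for the Fourier coefficient given in \cref{prop:explicitFourier} and exploit that a leaf touches exactly one edge. Write $\vertices(H) = \{1, 2, \ldots, h\}$ and suppose without loss of generality that vertex $h$ is a leaf, with unique neighbor $v = \parent(h)$. By \cref{prop:explicitFourier},
\begin{equation*}
\Fourier_{\SBM(p,Q)}(H) = \sum_{x_1, x_2, \ldots, x_h \in [k]} \Bigg( \prod_{i=1}^{h} p_{x_i} \prod_{(i,j) \in \edges(H)} Q_{x_i, x_j} \Bigg).
\end{equation*}

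First I would separate out the contribution of the leaf. The only edge of $H$ incident to $h$ is $(v, h)$, so every other factor $p_{x_i}$ (for $i < h$) and every other factor $Q_{x_i, x_j}$ (for edges not equal to $(v,h)$) does not involve the index $x_h$. Performing the sum over $x_h$ innermost therefore factors the expression as
\begin{equation*}
\Fourier_{\SBM(p,Q)}(H) = \sum_{x_1, \ldots, x_{h-1} \in [k]} \Bigg( \prod_{i=1}^{h-1} p_{x_i} \prod_{\substack{(i,j) \in \edges(H) \\ h \notin \{i,j\}}} Q_{x_i, x_j} \Bigg) \cdot \Bigg( \sum_{x_h \in [k]} p_{x_h}\, Q_{x_v, x_h} \Bigg).
\end{equation*}

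Next I would invoke the fully unbiased hypothesis: for the specific value $x = x_v \in [k]$, the defining condition gives $\sum_{x_h \in [k]} p_{x_h} Q_{x_v, x_h} = 0$. Since this inner sum vanishes for every choice of $x_v$, the entire outer sum is zero, which is exactly the claim. For the parenthetical remark about trees, I would just note that any tree on at least two vertices has a leaf (indeed at least two), and any tree on a single vertex has no edges, in which case $\Fourier_{\SBM(p,Q)}(H) = \sum_{x_1} p_{x_1} = 1$ — but such an $H$ has isolated vertices and is excluded from consideration, or one simply restricts the claim to trees with at least one edge; I would phrase the statement so this edge case causes no friction.

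There is essentially no substantive obstacle here; the only point requiring a line of care is the bookkeeping that a leaf is incident to precisely one edge, so that the summand genuinely factors through $x_v$ alone and the fully unbiased identity applies verbatim with $x = x_v$. Everything else is a one-step computation built on \cref{prop:explicitFourier}.
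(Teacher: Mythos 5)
Your proof is correct and takes essentially the same approach as the paper: isolate a leaf, sum over its label innermost, and observe that the inner sum $\sum_{x_h} p_{x_h} Q_{x_v, x_h}$ vanishes by the fully unbiased condition. Your write-up is if anything slightly more careful than the paper's, which drops the $p_{x_h}$ factor from the displayed inner sum, and your remark about the single-vertex edge case is a harmless but unnecessary aside since the Fourier coefficient is only ever considered for graphs without isolated vertices.
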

\begin{proof} Suppose that $H$ has $h$ vertices and vertex $h$ is a leaf with parent $h-1.$ By \cref{prop:explicitFourier}, 
\begin{align*}
&
\Fourier_{\SBM(p,Q)}(H)
 = 
\sum_{x_1, x_2, \ldots, x_h\in [k]}\Bigg(
\prod_{i = 1}^h p_{x_i}
\times
\prod_{(i,j)\in \edges(H)}
Q_{x_i,x_j}\Bigg)\\
& = 
\sum_{x_1, x_2, \ldots, x_{h-1}\in [k]}\Bigg(
\prod_{i = 1}^{h-1} p_{x_i}
\times
\prod_{(i,j)\in \edges(H)\backslash \{(h-1,h)\}}
Q_{x_i,x_j}\times\sum_{x_h\in [k]}Q_{x_{h-1},x_h}\Bigg)  = 0,
\end{align*}
where we use the fully unbiased condition for $x = x_{h-1}.$
\end{proof}

Throughout, we only verify condition \eqref{eq:statisticalscaling}, but do not compute the variance of the planted model.

\begin{theorem}[Failure of All Signed Trees]
\label{examplethm:notodddegree}
There exists an SBM distribution $\SBM(n;p,Q)$ on $k=2$ communities with $p = (1/2,1/2)$ which has the following properties.
For any graph $H$ with an odd degree vertex,
$\Fourier_{\SBM(p,Q)}(H) = 0.$ 
For any graph $H$ without odd degree vertices,
$\Fourier_{\SBM(p,Q)}(H) = 1.$ 
Furthermore, as $n$ grows,  $\SBM(n;p,Q)$ can be distinguished from $\ergraphhalf$ with high probability via the signed 4-cycle count, for example. Finally, 
any signed subgraph count distinguishes $\SBM(n;p, |Q|)$ and $\ergraphhalf$ with high probability as $n$ grows.
\end{theorem}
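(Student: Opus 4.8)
The plan is to use the ``parity'' stochastic block model. Identify the two communities with $\pm 1$, so that $p=(1/2,1/2)$ corresponds to drawing the labels as $\bfx_1,\dots,\bfx_n\iidsim\unif\{-1,+1\}$, and take the interaction matrix $Q$ with $Q_{x,y}=xy$, i.e.\ $Q=\left(\begin{smallmatrix}1&-1\\-1&1\end{smallmatrix}\right)$ in the ordering $(1,2)\leftrightarrow(+1,-1)$. This matrix has entries in $[-1,1]$ and is not the zero matrix, so $\SBM(n;p,Q)$ is a legitimate SBM, and (as a sanity check) it is ``fully unbiased'' in the sense of \cref{prop:fullyunbiased}.

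First I would compute the Fourier coefficients using \cref{prop:explicitFourier}: for a graph $H$ on vertex set $\{1,\dots,h\}$,
\[
\Fourier_{\SBM(p,Q)}(H)=\expect\Bigl[\prod_{(i,j)\in\edges(H)}\bfx_i\bfx_j\Bigr]=\expect\Bigl[\prod_{i\in\vertices(H)}\bfx_i^{\deg_H(i)}\Bigr]=\prod_{i\in\vertices(H)}\expect\bigl[\bfx_i^{\deg_H(i)}\bigr],
\]
where the middle step regroups the edge factors $\bfx_i\bfx_j$ by incident vertex and the last step uses independence of the labels. Since $\bfx_i$ is uniform on $\{\pm1\}$ we have $\expect[\bfx_i^m]=1$ for even $m$ and $=0$ for odd $m$; hence $\Fourier_{\SBM(p,Q)}(H)=1$ when every vertex of $H$ has even degree and $\Fourier_{\SBM(p,Q)}(H)=0$ as soon as $H$ has a vertex of odd degree. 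This proves the first two assertions.

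To distinguish $\SBM(n;p,Q)$ from $\ergraphhalf$ via the signed $4$-cycle count I would invoke \cref{thm:beatingvarnullimpliesbeatingvarplanted} with $H=\cycle_4$ and any even constant $D\ge 4$. Every vertex of $\cycle_4$ has degree $2$, so $\Psi_{\SBM(p,Q)}(\cycle_4)=|\Fourier_{\SBM(p,Q)}(\cycle_4)|^{1/4}=1=\omega(n^{-1/2})$, giving condition~1. Condition~2 (approximate maximality) is immediate since $|Q_{x,y}|\le 1$ forces $|\Fourier_{\SBM(p,Q)}(K)|\le 1$, hence $\Psi_{\SBM(p,Q)}(K)\le 1=\Psi_{\SBM(p,Q)}(\cycle_4)$ for all $K$ on at most $2D$ edges. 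The theorem then yields that the signed $\cycle_4$ count separates the means from both standard deviations, i.e.\ distinguishes with high probability.

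Finally, for the claim about $|Q|$, note that $|Q|$ is the all-ones matrix, so in $\SBM(n;p,|Q|)$ every edge appears with probability $(1+1)/2=1$; thus $\SBM(n;p,|Q|)$ is deterministically the complete graph $\complete_n$, equivalently $\Fourier_{\SBM(p,|Q|)}(H)=1$ for every $H$. For any fixed graph $H$ without isolated vertices, $\signedcount_H(\complete_n)=\sum_{H_1\sim H}\prod_{(ij)\in\edges(H_1)}(2\cdot1-1)=\Theta(n^{|\vertices(H)|})$, while under $\ergraphhalf$ this count has mean $0$ and variance $\Theta(n^{|\vertices(H)|})$ and under $\SBM(n;p,|Q|)$ it is deterministic; so the mean gap $\Theta(n^{|\vertices(H)|})$ dominates both standard deviations and the signed $H$ count distinguishes. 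I do not expect a genuine obstacle in any of this: the only steps needing (minor) care are the vertex-regrouping in the Fourier computation and checking the approximate-maximality hypothesis of \cref{thm:beatingvarnullimpliesbeatingvarplanted}, both trivial because all Fourier coefficients lie in $[-1,1]$.
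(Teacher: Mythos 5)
Your construction is exactly the one the paper gives (the parity SBM with $Q=\left(\begin{smallmatrix}1&-1\\-1&1\end{smallmatrix}\right)$), and your verification — regrouping the edge factors by vertex to get $\prod_i\expect[\bfx_i^{\deg_H(i)}]$, invoking \cref{thm:beatingvarnullimpliesbeatingvarplanted} for the $\cycle_4$ test, and noting that $\SBM(n;p,|Q|)$ is deterministically $\complete_n$ — is correct and simply fills in the checks the paper leaves implicit. (Minor: \cref{thm:beatingvarnullimpliesbeatingvarplanted} does not require $D$ to be even; any constant $D\ge 4$ suffices.)
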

\begin{proof}[Construction] Take $k = 2, p = (1/2, 1/2)$
and $Q = \begin{pmatrix}1 & -1\\-1 & 1\end{pmatrix}.$
\end{proof}

\begin{theorem}[Balanced 2-SBMs in which 1-Stars Dominate]
\label{examplethm:1starsnotrianglenofourcycle}
There exists a stochastic block model $\SBM(n; p,Q)$ on 2 balanced communities with the following property. It can be distinguished from $\ergraphhalf$ with high probability via the signed count of 1-stars, but not via the signed counts of any other connected
graph of constant size.
\end{theorem}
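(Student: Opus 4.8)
\textbf{Plan of proof for \cref{examplethm:1starsnotrianglenofourcycle}.}

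The plan is to construct a balanced 2-SBM, $p=(1/2,1/2)$, whose matrix $Q$ is chosen so that $\Fourier_{\SBM(p,Q)}(\Star_1)=|\Fourier_{\SBM(p,Q)}(\mathsf{edge})|$ is ``large'' while all higher signed star counts, the triangle, and the 4-cycle (hence every connected graph of constant size, by \cref{thm:maximizingpartition2sbm}) vanish, and then to tune the scale so that \eqref{eq:statisticalscaling} holds for the edge but fails for everything in $\mathcal{A}^4_D$. Recall from \cref{cor:starcounts} that for a 2-SBM with $\lambda_i:=p_1Q_{1,i}+p_2Q_{2,i}=\tfrac12(Q_{1,i}+Q_{2,i})$ we have $\Fourier_{\SBM(p,Q)}(\Star_t)=\tfrac12(\lambda_1^t+\lambda_2^t)$. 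So the first step is to make the edge count nonzero but the wedge, 3-star, etc.\ count zero: take $\lambda_1=-\lambda_2=:\lambda\neq 0$. Then $\Fourier(\Star_t)=0$ for all even $t$ and $\Fourier(\Star_t)=\lambda\cdot\tfrac12(1+(-1)^t)/1$\,---\,wait, more carefully $\Fourier(\Star_t)=\tfrac12(\lambda^t+(-\lambda)^t)$, which is $\lambda^t$ for even $t$ and $0$ for odd $t\ge 1$... that is the wrong parity. So instead I would take $\lambda_1=\lambda\neq 0$, $\lambda_2=0$: then $\Fourier(\Star_t)=\tfrac12\lambda^t$ for every $t\ge1$, all nonzero, which is also not what we want. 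The correct choice: we want only $\Star_1$ nonzero. Since $\Fourier(\Star_t)=\tfrac12(\lambda_1^t+\lambda_2^t)$, asking this to vanish for all $t\ge 2$ and be nonzero for $t=1$ forces $\lambda_1=-\lambda_2\neq0$ and then $t=1$ gives $0$ too; so exact vanishing for all $t$ is impossible. Hence the statement must be read asymptotically: we want $\Psi_{\SBM(p,Q)}(\Star_1)=\omega(n^{-1/2})$ but $\Psi_{\SBM(p,Q)}(K)=o(n^{-1/2})$ for every other connected $K$. By \cref{thm:maximizingpartition2sbm} it suffices to control $\Star_t$ ($2\le t\le D$) and $\cycle_4$.

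So the construction I would use is: set $Q_{1,1}=a$, $Q_{2,2}=-a$, $Q_{1,2}=Q_{2,1}=b$ for parameters $a,b$ to be chosen as functions of $n$. Then $\lambda_1=\tfrac12(a+b)$, $\lambda_2=\tfrac12(b-a)$, so $\Fourier(\Star_1)=\tfrac12(\lambda_1+\lambda_2)=\tfrac12 b$, and $\Fourier(\Star_t)=\tfrac12(\lambda_1^t+\lambda_2^t)$. Now take $b=n^{-1/2+\epsilon}$ for a small fixed $\epsilon>0$ and take $a$ much smaller than $b$, say $a=n^{-100}$; also we need $|a|,|b|\le1$, fine. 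Then $\Psi(\Star_1)=|\tfrac12 b|\asymp n^{-1/2+\epsilon}=\omega(n^{-1/2})$, so the signed edge count distinguishes (the variance under the planted model is easily seen to be of the right order, or one invokes \cref{thm:beatingvarnullimpliesbeatingvarplanted} since the edge is trivially the approximate maximizer once we check the bounds below). For $t\ge 2$: $|\lambda_i|\le \tfrac12(|a|+|b|)\asymp n^{-1/2+\epsilon}$, so $|\Fourier(\Star_t)|\lesssim n^{t(-1/2+\epsilon)}$, whence $\Psi(\Star_t)=|\Fourier(\Star_t)|^{1/(t+1)}\lesssim n^{\frac{t}{t+1}(-1/2+\epsilon)}$; for $t\ge 2$ and $\epsilon$ small this exponent is $<-1/2$ (need $\tfrac{t}{t+1}(\tfrac12-\epsilon)>\tfrac12$, i.e.\ $\tfrac12-\epsilon>\tfrac12\cdot\tfrac{t+1}{t}=\tfrac12+\tfrac1{2t}$, which is \emph{false}). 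Hmm --- so this crude bound does not suffice and the real point is cancellation: with $\lambda_1\approx \tfrac12 b\approx\lambda_2$ (since $a\ll b$) we get $\Fourier(\Star_t)=\tfrac12(\lambda_1^t+\lambda_2^t)\approx \tfrac12\cdot 2\cdot(\tfrac12 b)^t\ne$ small. That is wrong too. The fix is to make $\lambda_1=-\lambda_2$ up to tiny error: choose $b=n^{-100}$ (tiny) and $a=n^{-1/2+\epsilon}$. Then $\Fourier(\Star_1)=\tfrac12 b$ is tiny --- also wrong.

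Let me reconsider: we genuinely need $\Fourier(\Star_1)$ large and $\Fourier(\Star_t)$ small for $t\ge 2$, and $\Fourier(\Star_1)=\tfrac12(\lambda_1+\lambda_2)$ while $\Fourier(\Star_2)=\tfrac12(\lambda_1^2+\lambda_2^2)$. If $|\lambda_1+\lambda_2|$ is of order $\delta$ and we want $\lambda_1^2+\lambda_2^2$ of much smaller order, we need $\lambda_1,\lambda_2$ both small but their \emph{sum} not small by comparison --- impossible since $|\lambda_1+\lambda_2|\le |\lambda_1|+|\lambda_2|$ and $\lambda_1^2+\lambda_2^2\ge\tfrac12(\lambda_1+\lambda_2)^2$. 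So $\Psi(\Star_2)=|\Fourier(\Star_2)|^{1/3}\ge (\tfrac14(\lambda_1+\lambda_2)^2)^{1/3}\asymp\Psi(\Star_1)^{2/3}$, and similarly $\cycle_4$ via \cref{claim:4cycle2sbm} with $p_1=p_2=\tfrac12$ gives $\Fourier(\cycle_4)=\Theta(\max(Q_{1,1}^4,Q_{1,2}^4,Q_{2,2}^4))$. The point is that $\Psi(\Star_1)=\omega(n^{-1/2})$ forces $\Psi(\Star_2)\gtrsim \Psi(\Star_1)^{2/3}$, which is \emph{still} $\omega(n^{-1/3})=\omega(n^{-1/2})$; so $\Star_2$ always also distinguishes. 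Therefore the honest reading is: we want a model where $\Star_1$ distinguishes and the \emph{only} connected graphs that distinguish are stars --- no, the statement says ``not via any other connected graph of constant size,'' so wedges must fail. The resolution must be that we choose the scale so that $\Psi(\Star_1)$ is \emph{barely} $\omega(n^{-1/2})$, e.g.\ $\Psi(\Star_1)=n^{-1/2}\log n$, and arrange $Q_{1,2}$ to be the only nonzero entry so that $\lambda_1=\lambda_2=\tfrac12 Q_{1,2}$, giving $\Fourier(\Star_t)=\tfrac12(\tfrac12 Q_{1,2})^t$; then $\Psi(\Star_t)=(\tfrac12)^{2/(t+1)}\cdot\tfrac12|Q_{1,2}|^{t/(t+1)}$. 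With $|Q_{1,2}|=2n^{-1/2}\log n$ we get $\Psi(\Star_1)\asymp n^{-1/2}\log n=\omega(n^{-1/2})$ but $\Psi(\Star_2)\asymp |Q_{1,2}|^{2/3}\asymp (n^{-1/2}\log n)^{2/3}=n^{-1/3}(\log n)^{2/3}=\omega(n^{-1/2})$ --- still fails.

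I now believe the clean construction is the \emph{planted-clique-like} asymmetric one: $p=(1/2,1/2)$ is forced balanced by the statement, so asymmetry must come from $Q$. Take $Q_{1,1}=1$, $Q_{1,2}=Q_{2,2}=0$: then $\lambda_1=\tfrac12$, $\lambda_2=0$, so $\Fourier(\Star_t)=\tfrac12\cdot(\tfrac12)^t=2^{-t-1}=\Theta(1)$ for every $t$ --- all stars distinguish, and by \cref{prop:explicitFourier} $\Fourier(H)=2^{-|\vertices(H)|}$ for every connected $H$ (the all-1's labeling), so \emph{every} connected graph distinguishes; wrong direction. Given the difficulty of guessing the exact construction, \textbf{the main obstacle} --- and what I would focus the writeup on --- is finding the precise $(p,Q)$ (necessarily with a scale depending on $n$) such that, after the reductions of \cref{thm:maximizingpartition2sbm} and the 4-cycle formula of \cref{claim:4cycle2sbm}, the exponents $\tfrac1{|\vertices(K)|}\log_n|\Fourier(K)|$ land strictly below $-1/2$ for every connected $K\ne\Star_1$ while $\Star_1$'s exponent exceeds $-1/2$; the arithmetic of the $\Psi(\Star_t)\gtrsim\Psi(\Star_1)^{t/(t+1)}$ inequality shows this cannot be done with $p_1=p_2=1/2$ and a single scale, so the construction must exploit the \emph{vector}-sample-complexity framing of \cref{def:vertexsamplecomplexity} (or an unbalanced-looking $Q$ that is fully unbiased in the sense of \cref{prop:fullyunbiased}, making all tree counts exactly $0$ while a suitable non-tree count is controlled) --- locating that construction and verifying the exponent bounds for $\Star_t$ ($2\le t\le D$) and $\cycle_4$ via \cref{claim:4cycle2sbm,cor:starcounts} is the crux; once it is in hand, the remaining steps (edge count distinguishes by \cref{thm:beatingvarnullimpliesbeatingvarplanted}, everything else fails by \cref{thm:maximizingpartition2sbm} plus Chebyshev) are routine.
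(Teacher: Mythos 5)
The proposal does not reach a working construction, and the reason is traceable to a concrete algebraic error.

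You correctly derive that with $p=(1/2,1/2)$ and $\lambda_i=\tfrac12(Q_{1,i}+Q_{2,i})$ one has $\Fourier(\Star_1)=\tfrac12(\lambda_1+\lambda_2)$ and $\Fourier(\Star_2)=\tfrac12(\lambda_1^2+\lambda_2^2)\ge\tfrac14(\lambda_1+\lambda_2)^2$. But you then claim $\Psi(\Star_2)\gtrsim\Psi(\Star_1)^{2/3}$, which is off by a factor of two in the exponent. Since $\Psi(\Star_1)=|\Fourier(\Star_1)|^{1/2}$, we have $|\lambda_1+\lambda_2|\asymp\Psi(\Star_1)^2$, so $\Fourier(\Star_2)\gtrsim\Psi(\Star_1)^4$ and hence $\Psi(\Star_2)=\Fourier(\Star_2)^{1/3}\gtrsim\Psi(\Star_1)^{4/3}$, \emph{not} $\Psi(\Star_1)^{2/3}$. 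The exponent $4/3>1$ is exactly what leaves room: if $\Psi(\Star_1)=n^{-\gamma}$ with $\gamma\in(3/8,1/2)$, then $\Psi(\Star_1)=\omega(n^{-1/2})$ while the lower bound $\Psi(\Star_2)\gtrsim n^{-4\gamma/3}$ is $o(n^{-1/2})$. Your incorrect exponent $2/3<1$ made it look as if $\Star_2$ must always distinguish whenever $\Star_1$ does, which is what derailed you into searching for exotic constructions and ultimately concluding the task is impossible at a single scale.

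The paper's construction is the simplest diagonal one, which you never tried: $p=(1/2,1/2)$, $Q=n^{-\beta}I$ with $\beta\in(3/4,1)$. Then $\lambda_1=\lambda_2=\tfrac12 n^{-\beta}$, and for every connected $H$ with $h$ vertices, $\Fourier(H)=2\,(1/2)^{h}\,n^{-\beta|\edges(H)|}$, so $\Psi(H)\asymp n^{-\beta|\edges(H)|/|\vertices(H)|}$. For $H=\Star_1$ the exponent is $-\beta/2\in(-1/2,-3/8)$, giving $\omega(n^{-1/2})$. Every connected $H$ with at least two edges has $|\edges(H)|/|\vertices(H)|\ge 2/3$ (the wedge is the extremal case; trees on $v\ge 3$ vertices have ratio $(v-1)/v\ge 2/3$, and non-trees have ratio $\ge1$), so $\Psi(H)\lesssim n^{-2\beta/3}=o(n^{-1/2})$ since $\beta>3/4$. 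No appeal to \cref{thm:maximizingpartition2sbm} is needed; the direct exponent bound handles all connected graphs uniformly. Your high-level framing (reduce to stars and the 4-cycle, then tune a single scale) was on the right track, but the arithmetic slip closed off the correct parameter window.
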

\begin{proof}[Construction]
Consider the following SBM model on $k = 2$ communities. 
$p = (1/2,1/2)$ and 
$Q = \begin{pmatrix}
     n^{-\beta} & 0 \\
     0 & n^{-\beta}
\end{pmatrix}$ where 
$\beta \in (3/4,1).$
For any connected graph $H,$
$\Fourier_{\SBM(n;p,Q)}(H) = O(n^{-\beta |\edges(H)|}).$ Hence, if $H$ has at least 2 edges and is connected, 
$\Psi_{\SBM(n;p,Q)}(H) = O(n^{-\beta \frac{|\edges(H)|}{|\vertices(H)|}}) = O(n^{-2\beta/3}) = o(n^{-1/2})$ as $\beta \ge 3/4.$ Then, $\Fourier_{\SBM(n;p,Q)}(\Star_1) = \Theta(n^{-\beta})$ and $\Psi_{\SBM(n;p,Q)}(\Star_1) = \Theta(n^{-\beta/3}) = \omega(n^{-1/2})$ as $\beta\le 1.$ 
\end{proof}

\begin{theorem}[Balanced 2-SBMs in which 2-Stars Dominate]
\label{examplethm:2starsnotrianglenofourcycle}
There exists a stochastic block model $\SBM(n; p,Q)$ on 2 balanced communities with the following property. It can be distinguished from $\ergraphhalf$ with high probability via the signed count of 2-stars, but not via the signed counts of any other connected
graph of constant size.
\end{theorem}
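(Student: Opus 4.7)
The plan is to mimic the construction of \cref{examplethm:1starsnotrianglenofourcycle} but with the diagonal entries of $Q$ having opposite signs, so that the signed edge count is forced to vanish while the signed wedge count survives. Concretely, take $p = (1/2,1/2)$ and $Q = \begin{pmatrix} a & 0 \\ 0 & -a \end{pmatrix}$ with $a = n^{-2/3}$ (any $a = n^{-\gamma}$ for $\gamma$ in the open interval $(5/8, 3/4)$ will work identically).

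First, I would exploit the diagonal structure of $Q$ via \cref{prop:explicitFourier}: the only non-vanishing contributions to $\Fourier_{\SBM(p,Q)}(H)$ come from assignments in which every pair of endpoints of an edge of $H$ carries the same label, so by connectedness of $H$ all $h = |\vertices(H)|$ labels must agree, giving
\[
\Fourier_{\SBM(p,Q)}(H) \;=\; \sum_{x\in\{1,2\}} p_x^{h} Q_{x,x}^{m} \;=\; \frac{a^m + (-a)^m}{2^h},
\]
where $m = |\edges(H)|$. This coefficient is $0$ whenever $m$ is odd and equals $a^m/2^{h-1}$ whenever $m$ is even.

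Second, I would verify that $\Star_2$ distinguishes: with $(h,m)=(3,2)$ one gets $\Psi_{\SBM(p,Q)}(\Star_2) = (a^2/4)^{1/3} = \Theta(n^{-4/9}) = \omega(n^{-1/2})$. Since $a \to 0$ the planted distribution is $o(1)$-close to $\ergraphhalf$ in total variation, so the variance of $\signedcount_{\Star_2}$ under $\SBM(n;p,Q)$ is of the same order as under $\ergraphhalf$, and distinguishability follows in the same style as \cref{thm:beatingvarnullimpliesbeatingvarplanted}.

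Third, I would check that no other connected constant-sized $H$ distinguishes. Three sub-cases exhaust the possibilities: (i) if $m$ is odd then $\Fourier_{\SBM(p,Q)}(H) = 0$, ruling out $\Star_1$, the triangle, every odd cycle, $\complete_4^-$, and every tree on an even number of vertices; (ii) if $H$ contains a cycle then $m \ge h$, so $\Psi_{\SBM(p,Q)}(H) \le c\cdot a^{m/h} \le c\cdot a = \Theta(n^{-2/3}) = o(n^{-1/2})$; (iii) if $H$ is a tree with $h-1$ even then $h$ is odd and (since $h = 3$ yields exactly $\Star_2$) we may assume $h \ge 5$, so $\Psi_{\SBM(p,Q)}(H) \le c\cdot a^{(h-1)/h} \le c\cdot a^{4/5} = \Theta(n^{-8/15}) = o(n^{-1/2})$, using $(h-1)/h \ge 4/5$ for $h \ge 5$. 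There is no real obstacle beyond the bookkeeping of the exponent $\gamma$: the lower bound $\gamma > 5/8$ is exactly what kills the smallest competing non-vanishing tree (on $5$ vertices), while the upper bound $\gamma < 3/4$ is what allows $\Star_2$ to succeed; the interval $(5/8,3/4)$ is nonempty, so the construction works.
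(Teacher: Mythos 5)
Your construction and approach are the same as the paper's: same $p=(1/2,1/2)$, same diagonal $Q$ with opposite-sign entries of magnitude $n^{-\gamma}$, and the same reduction via \cref{prop:explicitFourier} to $\Fourier_{\SBM(p,Q)}(H) = 2^{-h}\big(a^m + (-a)^m\big)$ for connected $H$ with $h$ vertices and $m$ edges. Your case split, however, is a genuine sharpening of the paper's analysis. The paper bounds all connected competitors with $m\ge 3$ uniformly via $m/h\ge 3/4$, whereas you observe that the uniform bound is tight only at $m=3$, which has vanishing Fourier coefficient anyway, and instead split the nonvanishing (even-$m$) competitors into cyclic graphs with $m/h\ge 1$ and even-$m$ trees (forcing odd $h\ge 5$) with $m/h\ge 4/5$. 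This correctly widens the admissible exponent range from the paper's $(2/3,3/4)$ to $(5/8,3/4)$, and in particular validates your choice $\gamma=2/3$, which the paper's uniform estimate would exclude as the left endpoint of an open interval.

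One step in your write-up is logically backwards, though it does not affect the conclusion. You assert that since $a\to 0$ the planted model is $o(1)$-close to $\ergraphhalf$ in total variation, and use this to argue the planted-model variance of $\signedcount_{\Star_2}$ is of the same order as the null variance. But the theorem being proven says exactly that the two models \emph{can} be distinguished with high probability, which forces the total variation distance to be $1-o(1)$, not $o(1)$, so one cannot infer anything about the planted variance this way. The correct route is \cref{thm:beatingvarnullimpliesbeatingvarplanted}: one verifies (i) $\Psi_{\SBM(p,Q)}(\Star_2)=\omega(n^{-1/2})$, which you did, and (ii) that $\Star_2$ is an approximate maximizer of $K\mapsto\Psi_{\SBM(p,Q)}(K)$ over graphs on at most $2D$ edges, which follows directly from your Fourier computation together with \cref{cor:vertexdisjointcomparison}. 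Note also that the paper's appendix preamble explicitly says it only verifies the scaling condition \eqref{eq:statisticalscaling} and does not compute the planted variance for these library examples, so the paper's own proof of this statement does not address the planted variance either.
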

\begin{proof}[Construction]
Consider the following SBM model on $k = 2$ communities. 
$p = (1/2,1/2)$ and 
$Q = \begin{pmatrix}
     n^{-\beta} & 0 \\
     0 & -n^{-\beta}
\end{pmatrix}$ where 
$\beta \in (2/3,3/4).$
For any connected graph $H,$
$\Fourier_{\SBM(n;p,Q)}(H) = O(n^{-\beta |\edges(H)|}).$ Hence, if $H$ has at least 3 edges and is connected, 
$\Psi_{\SBM(n;p,Q)}(H) = O(n^{-\beta \frac{|\edges(H)|}{|\vertices(H)|}}) = O(n^{-3\beta/4}) = o(n^{-1/2})$ as $\beta \ge 2/3.$ Then, $\Fourier_{\SBM(n;p,Q)}(\Star_2) = \Theta(n^{-2\beta})$ and $\Psi_{\SBM(n;p,Q)}(\Star_2) = \Theta(n^{-2\beta/3}) = \omega(n^{-1/2})$ as $\beta\le 3/4.$ Finally, 
$\Fourier_{\SBM(n;p,Q)}(\Star_1) = 0.$
\end{proof}

\begin{theorem}[2-SBMs in which Large Stars Dominate]
\label{examplethm:largestarsnotrianglenofourcycle}
Let $D\ge 3$ be a natural number. 
There exists a stochastic block model $\SBM(n; p,Q)$ on 2 communities with the following property. 
It can be distinguished from $\ergraphhalf$ with high probability via the signed count of $D$-stars, but not via the signed count of any other 
connected graph on at most $D$ edges.
\end{theorem}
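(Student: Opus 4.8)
The plan is to use a two-community model whose first community is polynomially small, so that every ``small'' statistic sees only a very weak signal. Fix $\epsilon=\epsilon(D)>0$ small and set $\gamma=\tfrac12-\epsilon$ and $\alpha=\tfrac12+(D-\tfrac12)\epsilon$; take
\[
p=\bigl(n^{-\alpha},\,1-n^{-\alpha}\bigr),\qquad Q=\begin{pmatrix}0 & n^{-\gamma}\\ n^{-\gamma} & 0\end{pmatrix}.
\]
For $\epsilon$ small enough one checks $0<\gamma<\alpha<1$, $\gamma<\tfrac12$, and the three numerical inequalities $\alpha+\gamma(D-1)>\tfrac D2$, $\alpha+\gamma D<\tfrac{D+1}2$, and $2\alpha+\gamma D>\tfrac{D+1}2$, which are all the facts the argument will use. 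Since $Q_{1,1}=Q_{2,2}=0$, a labeling contributes to the formula of \cref{prop:explicitFourier} only when it is a proper $2$-colouring of $H$; hence $\Fourier_{\SBM(p,Q)}(H)=0$ for every connected non-bipartite $H$, while for connected bipartite $H$ with colour classes $A,B$ (say $|A|\le|B|$) the two proper colourings give
\[
\Fourier_{\SBM(p,Q)}(H)=\bigl(p_1^{|A|}p_2^{|B|}+p_1^{|B|}p_2^{|A|}\bigr)\,n^{-\gamma|\edges(H)|}=\Theta\!\bigl(n^{-\alpha|A|-\gamma|\edges(H)|}\bigr)>0,
\]
so $\Psi_{\SBM(p,Q)}(H)=\Theta\!\bigl(n^{-(\alpha|A|+\gamma|\edges(H)|)/|\vertices(H)|}\bigr)$. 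In particular $\Psi_{\SBM(p,Q)}(\Star_t)=\Theta\!\bigl(n^{-(\alpha+\gamma t)/(t+1)}\bigr)$, and since $\alpha>\gamma$ the exponent $\gamma+\tfrac{\alpha-\gamma}{t+1}$ strictly decreases in $t$, so $t\mapsto\Psi_{\SBM(p,Q)}(\Star_t)$ is strictly increasing.

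Next I would show that no connected $H\not\cong\Star_D$ on $\le D$ edges distinguishes. A non-bipartite $H$ has vanishing Fourier coefficient and fails trivially. For $H=\Star_t$ with $t\le D-1$, monotonicity plus the first inequality give $\Psi_{\SBM(p,Q)}(H)\le\Psi_{\SBM(p,Q)}(\Star_{D-1})=\Theta\!\bigl(n^{-(\alpha+\gamma(D-1))/D}\bigr)=o(n^{-1/2})$. For connected bipartite $H$ that is not a star, one has $|A|\ge 2$ (a connected bipartite graph with a singleton colour class is a star), and with $|\edges(H)|\ge|\vertices(H)|-1$ and $|\vertices(H)|\le D+1$ the exponent is $\ge\gamma+\tfrac{2\alpha-\gamma}{D+1}>\tfrac12$ by the third inequality, so $\Psi_{\SBM(p,Q)}(H)=o(n^{-1/2})$ again. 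In every case the mean gap of $\signedcount_H$ is of strictly smaller order than its null standard deviation $\Theta(n^{|\vertices(H)|/2})$, so $\signedcount_H$ does not distinguish.

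By the second inequality $\Psi_{\SBM(p,Q)}(\Star_D)=\Theta\!\bigl(n^{-(\alpha+\gamma D)/(D+1)}\bigr)=\omega(n^{-1/2})$, so the mean gap $\Theta(n^{D+1}|\Fourier_{\SBM(p,Q)}(\Star_D)|)$ of $\signedcount_{\Star_D}$ beats its null standard deviation $\Theta(n^{(D+1)/2})$. The hard part is the variance under $\SBM(p,Q)$: one cannot simply invoke \cref{thm:beatingvarnullimpliesbeatingvarplanted}, because $\Psi_{\SBM(p,Q)}(\Star_t)$ is increasing, so $\Star_D$ is \emph{not} an approximate maximizer over graphs with $\le 2D$ edges (e.g.\ $\Star_{2D}$ beats it) and that theorem's hypothesis is violated. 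Instead I would expand $\Var_{\SBM(p,Q)}[\signedcount_{\Star_D}]$ exactly as in the proof of \cref{thm:beatingvarnullimpliesbeatingvarplanted} into a sum over overlapping pairs $S_1,S_2\sim\Star_D$ of terms $\Theta\!\bigl(n^{|\vertices(S_1)\cup\vertices(S_2)|}\,|\Fourier_{\SBM(p,Q)}(S_1\otimes S_2)|\bigr)$ (plus a manifestly lower-order piece), enumerate the finitely many overlap patterns of two stars — in each of which $S_1\otimes S_2$ is a disjoint union of stars, or is non-bipartite and contributes $0$ — and check the needed inequality $|\vertices(S_1)\cup\vertices(S_2)|+\log_n|\Fourier_{\SBM(p,Q)}(S_1\otimes S_2)|<2(D+1)-2(\alpha+\gamma D)$ case by case. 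The extremal patterns are $S_1,S_2$ sharing only their common centre ($S_1\otimes S_2=\Star_{2D}$, $2D+1$ vertices), which reduces to $\alpha<1$, and $S_1,S_2$ sharing a single edge joining their centres ($S_1\otimes S_2=\Star_{D-1}\sqcup\Star_{D-1}$, $2D$ vertices), which reduces to $\gamma<1$, with $S_1=S_2$ reducing to $\alpha+\gamma D<\tfrac{D+1}2$; every other pattern is no worse. I expect this last bookkeeping — verifying that the combinatorial multiplicity of low-overlap star pairs is small enough to offset the fact that $\Star_D$ is not the globally optimal star test — to be the only genuinely delicate point.
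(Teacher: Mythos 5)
Your construction is the same in kind as the paper's: an anti-diagonal two-community model with a polynomially small first community, $p=(n^{-\alpha},1-n^{-\alpha})$ and $Q$ with zero diagonal and off-diagonal entry $n^{-\gamma}$. The paper fixes $\alpha=3/4$ and takes $\beta=\frac{2D-1}{4D}-\frac{1}{8D(D-1)}$; you parametrize by a small $\epsilon$ and set $\alpha=\tfrac12+(D-\tfrac12)\epsilon$, $\gamma=\tfrac12-\epsilon$. Both choices satisfy the same three algebraic constraints and the verification of $\Psi_{\SBM(p,Q)}(H)=o(n^{-1/2})$ for every connected $H\ne\Star_D$ on at most $D$ edges, and $\Psi_{\SBM(p,Q)}(\Star_D)=\omega(n^{-1/2})$, proceeds identically (non-bipartite $\Rightarrow$ Fourier coefficient zero; bipartite trees with one singleton class are stars and are ordered by monotonicity; bipartite non-stars have a colour class of size $\ge 2$ and are killed by the ``$2\alpha$'' inequality). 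So for the core of the argument you and the paper coincide.

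Where you genuinely go beyond the paper: the paper explicitly declares at the start of \cref{appendix:library} that for all the library examples it only checks \eqref{eq:statisticalscaling} and does \emph{not} control the planted variance, so the paper's proof of this theorem is, by its own admission, incomplete. You correctly observe that one cannot close this gap by invoking \cref{thm:beatingvarnullimpliesbeatingvarplanted}, because in this model $t\mapsto\Psi_{\SBM(p,Q)}(\Star_t)$ is strictly increasing (the exponent $\gamma+\tfrac{\alpha-\gamma}{t+1}$ decreases in $t$), so $\Star_D$ is not even approximately a maximizer over graphs with $\le 2D$ edges and the hypothesis of that theorem fails by a polynomial factor. Your replacement — expanding $\Var_{\SBM(p,Q)}[\signedcount_{\Star_D}]$ over overlap patterns of two $D$-stars, as the paper itself does inside its ``Testing in 2-SBMs'' theorem in \cref{sec:2sbms}, and checking the resulting exponent inequality pattern by pattern — is the right move, and your three extremal checks ($S_1=S_2$ giving $\alpha+\gamma D<\tfrac{D+1}2$; common centre only, giving $\alpha<1$; shared centre-edge only, giving $\gamma<1$) are all correct.

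One concrete slip worth flagging: the intermediate structural claim that $S_1\otimes S_2$ is always either a disjoint union of stars or non-bipartite is false. If $c_1\neq c_2$ are the two centres and they share some leaf $\ell$, then $\ell$ has degree $2$ in $S_1\otimes S_2$ (joined to both centres), and depending on whether $(c_1,c_2)$ is shared you get connected bipartite graphs such as $K_{2,k}$ with pendant edges, or a ``double broom'' tree — neither a star nor a disjoint union of stars. (If in addition $(c_1,c_2)$ appears in exactly one of $S_1,S_2$ and $\ell$ is shared, you do get a triangle and hence a vanishing coefficient, but this is only a sub-case.) Since for any $S_1\otimes S_2$ with $c_1\neq c_2$ both centres have degree $\geq D-1\geq 2$, the smaller colour class of each component has size at least $2$, and plugging $m\ge 2$ into $|\Fourier_{\SBM(p,Q)}(S_1\otimes S_2)|=\Theta(n^{-\alpha m-\gamma|\edges(S_1\otimes S_2)|})$ the exponent inequality still closes in all these cases — so the conclusion you want does hold — but the justification needs to proceed through that observation rather than through the claimed (and incorrect) structural dichotomy.
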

\begin{proof}[Construction]
Consider the following SBM model on $k = 2$ communities. 
$p = (n^{-\alpha},1-n^{-\alpha})$ and 
$Q = \begin{pmatrix}
     0 & n^{-\beta} \\
     n^{-\beta} & 0
\end{pmatrix}$ where 
$\alpha= 3/4, \beta = \frac{2D-1}{4D} -\frac{1}{8D(D-1)}.$
Let $H$ be any connected graph. If $H$ is not bipartite, clearly $\Fourier_{\SBM(p,Q)}(H) = 0.$ Now, suppose that $H$ is bipartite and has parts of sizes $u$ and $v$ where $1\le u\le v$ and $u+v = |\vertices(H)| \le D+1.$ A simple calculation shows that 
$$
\Fourier_{\SBM(p,Q)}(H) = 
\Theta(
p_1^u n^{- \beta |\edges(H)|})
= 
\Theta(n^{-\alpha u - \beta |\edges(H)|})
\Longrightarrow
\Psi_{\SBM(p,Q)}(H) = 
\Theta(n^{-\alpha\frac{u}{|\vertices(H)|} - \beta\frac{|\edges(H)|}{|\vertices(H)|} }).
$$
Now, suppose first that $H$ is not a tree. A non-tree bipartite graph on at most $D$ edges has at most $D$ vertices and each part in the bipartition is of size at least 2. Thus,
$$
\Psi_{\SBM(p,Q)}(H) = 
O(n^{-\frac{3}{2D} - \frac{2D-1}{4D} + \frac{1}{8D(D-1)}})
O(n^{-\frac{1}{2} - \frac{10D - 11}{8D(D-1)}})
=o(n^{-1/2}). 
$$
Next, if $H$ is a tree and $u\ge 2,$
$$
\Psi_{\SBM(p,Q)}(H) =  
O(n^{-\frac{3}{2}\frac{1}{|\vertices(H)|} - \beta \times 
\frac{|\vertices(H)|-1}{|\vertices(H)|}
}).
$$
As $\beta<3/2,$ the convex combination $\frac{3}{2}\frac{1}{|\vertices(H)|} + \beta \times 
\frac{|\vertices(H)|-1}{|\vertices(H)|}$ decreases with $|\vertices(H)|,$ hence is maximized at $|\vertices(H)| = D+1,$ where
$$
n^{-\frac{3}{2}\frac{1}{|\vertices(H)|} - \beta \times 
\frac{|\vertices(H)|-1}{|\vertices(H)|}
} = 
n^{-\frac{3}{2}\times \frac{1}{D+1} - (\frac{2D-1}{4D} -\frac{1}{8D(D-1)}) \times 
\frac{D}{D+1}
} =
n^{-\frac{1}{2}- \frac{6D - 7}{8(D-1)(D+1)}}
=o(n^{-1/2}).
$$
Finally, if $H$ is a tree and $u = 1,$ we obtain 
$$
\Psi_{\SBM(p,Q)}(H) =  
\Theta(n^{-\frac{3}{4}\frac{1}{|\vertices(H)|} - \beta \times 
\frac{|\vertices(H)|-1}{|\vertices(H)|}
}).
$$
When $|\vertices(H)| = D+1,$ the case of $\Star_D,$
$$
n^{-\frac{3}{4}\frac{1}{|\vertices(H)|} - \beta \times 
\frac{|\vertices(H)|-1}{|\vertices(H)|}
} = 
n^{-\frac{3}{4}\frac{1}{D+1} - 
(\frac{2D-1}{4D} -\frac{1}{8D(D-1)})\frac{D}{D+1}
} = 
n^{-\frac{1}{2} + \frac{1}{8(D-1)(D+1)}} = \omega(n^{-1/2}).
$$
When $|\vertices(H)| <D+1,$ the case of smaller stars,
$$
n^{-\frac{3}{4}\frac{1}{|\vertices(H)|} - \beta \times 
\frac{|\vertices(H)|-1}{|\vertices(H)|}
}\le 
n^{-\frac{3}{4}\frac{1}{D} - 
(\frac{2D-1}{4D} -\frac{1}{8D(D-1)})\frac{D-1}{D}
} = 
n^{-\frac{1}{2} - \frac{1}{8D^2}} = o(n^{-1/2}).
$$
Thus, 
$\Star_D$ is the only connected graph on at most $D$ edges such that
$\Psi_{\SBM(n;p,Q)} = \omega(n^{-1/2}).$
\end{proof}

\begin{theorem}[2-SBMs in which 4-Cycles Dominate]
\label{examplethm:fourcyclenostarnotriangle}
There exists an SBM distribution $\SBM(p,Q)$ with the following properties. All Fourier coefficients corresponding to graphs with at least one leaf (including all stars) are zero and the Fourier coefficients corresponding to graphs with an odd number of edges (including triangles) are zero. Furthermore,
the 4-cycle is the unique connected graph of constant size 
which can distinguish $\SBM(n;p,Q)$ and $\ergraphhalf$ with high probability.
\end{theorem}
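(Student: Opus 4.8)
The plan is to exhibit a stochastic block model with a slowly growing number of communities; a constant number provably cannot work, since for a fully unbiased SBM the matrix $\sqrt{P}Q\sqrt{P}$ has spectrum symmetric about $0$, forcing $\Psi_{\SBM(p,Q)}(\cycle_{2t})\asymp\Psi_{\SBM(p,Q)}(\cycle_4)$ whenever the number of communities is $O(1)$. So fix a small constant $\epsilon>0$, set $m=m(n)=\lceil n^{5\epsilon/4}\rceil$, $k=2m$, take $p$ uniform on $[k]$, and let
$$
Q=\theta\,\Big(I_m-\tfrac1m\mathbf{1}_m\mathbf{1}_m^{\top}\Big)\otimes\begin{pmatrix}1&0\\0&-1\end{pmatrix},\qquad \theta=\theta(n)=n^{-(1/2-\epsilon)} .
$$
Write $\alpha=I_m-\tfrac1m\mathbf{1}_m\mathbf{1}_m^{\top}$ and $Z=\mathrm{diag}(1,-1)$, so $Q=\theta\,\alpha\otimes Z$ (symmetric, and entrywise in $[-1,1]$ since $|\theta|<1$). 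First I would record two symmetries. Because $\alpha\mathbf{1}_m=0$ and $p$ is uniform, $\sum_{y}p_yQ_{x,y}=0$ for all $x$, i.e. the model is \emph{fully unbiased}; by \cref{prop:fullyunbiased} every graph with a leaf (in particular every star) has $\Fourier_{\SBM(p,Q)}(H)=0$. Second, the permutation $\pi$ of $[k]$ swapping the two $Z$-coordinates of each community preserves $p$ and satisfies $Q_{\pi(x),\pi(y)}=-Q_{x,y}$, so re-indexing the sum in \cref{prop:explicitFourier} by $\pi$ gives $\Fourier_{\SBM(p,Q)}(H)=(-1)^{|\edges(H)|}\Fourier_{\SBM(p,Q)}(H)$, hence $\Fourier_{\SBM(p,Q)}(H)=0$ for every $H$ with an odd number of edges (in particular every triangle).

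Next I would compute the surviving coefficients. The matrix $\sqrt{P}Q\sqrt{P}=\tfrac{1}{2m}Q=\tfrac{\theta}{2m}\,\alpha\otimes Z$ has eigenvalues $0$ (multiplicity $2$) and $\pm\tfrac{\theta}{2m}$ (each of multiplicity $m-1$), so by the spectral identity underlying \cref{thm:largercycles},
$\Fourier_{\SBM(p,Q)}(\cycle_4)=\mathrm{tr}\big((\sqrt{P}Q\sqrt{P})^{4}\big)=2(m-1)(\theta/2m)^{4}\asymp\theta^{4}m^{-3}$,
whence $\Psi_{\SBM(p,Q)}(\cycle_4)\asymp\theta\,m^{-3/4}=n^{-1/2+\epsilon/16}=\omega(n^{-1/2})$, so the signed $4$-cycle count beats the null fluctuations. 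For a general connected $H$ with $v=|\vertices(H)|$, $e=|\edges(H)|$, the factor $Z$ and connectedness force all $Z$-coordinates of the labels to agree in \cref{prop:explicitFourier}, giving
$$
\Fourier_{\SBM(p,Q)}(H)=(2m)^{-v}\,\theta^{e}\,\big(1+(-1)^{e}\big)\sum_{a\in[m]^{v}}\ \prod_{(i,j)\in\edges(H)}\alpha_{a_i,a_j}.
$$
Expanding $\alpha_{a_i,a_j}=\delta_{a_i,a_j}-\tfrac1m$ over subsets $F\subseteq\edges(H)$ rewrites the inner sum as $\sum_{F}(-1)^{e-|F|}\,m^{(v-e)+\mathrm{cyc}(F)}$ with $\mathrm{cyc}(F)$ the cyclomatic number of $(\vertices(H),F)$; the exponent is at most $1$, with equality at $F=\edges(H)$, so $\big|\Fourier_{\SBM(p,Q)}(H)\big|\lesssim_{H}\theta^{e}m^{1-v}$ for $e$ even (and the sum is $0$ for $H$ with a leaf, as it equals $m^{v}\Fourier_{\SBM(\mathrm{unif}_m,\alpha)}(H)$ and $\SBM(\mathrm{unif}_m,\alpha)$ is fully unbiased). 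Therefore any connected leafless $H$ with $e$ even has $\Psi_{\SBM(p,Q)}(H)\lesssim_{H}\theta^{e/v}m^{-(v-1)/v}=n^{-(1/2-\epsilon)e/v-(5\epsilon/4)(v-1)/v}$; using $e\ge v$ for leafless connected graphs and checking the finitely many small shapes ($(v,e)=(4,6)$ is $\complete_4$, $(5,6)$, $(6,6)$ is $\cycle_6$, and all $v\ge 6$ at once), the exponent exceeds $\tfrac12$ for every such $H\ne\cycle_4$, so $\Psi_{\SBM(p,Q)}(H)=o(n^{-1/2})$. Together with the two vanishing statements, $\cycle_4$ is the unique connected graph of constant size whose signed count distinguishes $\SBM(n;p,Q)$ from $\ergraphhalf$.

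The main obstacle is the bookkeeping in the last step: cleanly establishing that the power of $m$ in $\sum_{a}\prod_{(i,j)}\alpha_{a_i,a_j}$ is at most $1$ for every connected $H$ (with strict decrease once a leaf appears, via the auxiliary fully unbiased model), and then checking the elementary inequality $(1/2-\epsilon)\tfrac{e}{v}+\tfrac{5\epsilon}{4}\cdot\tfrac{v-1}{v}>\tfrac12$ for all admissible $(v,e)\neq(4,4)$ while it stays $<\tfrac12$ at $(4,4)$; choosing the exponent of $m$ in the window $(\tfrac{6\epsilon}{5},\tfrac{4\epsilon}{3})$ is exactly what makes both happen simultaneously. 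As elsewhere in this appendix, I only verify condition \eqref{eq:statisticalscaling}; turning "$\Psi_{\SBM(p,Q)}(\cycle_4)=\omega(n^{-1/2})$" into high-probability distinguishing additionally requires the routine estimate $\var_{\SBM(p,Q)}[\signedcount_{\cycle_4}]=O(n^{7})$, which is not pursued here.
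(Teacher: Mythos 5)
Your proof is correct and gives a genuinely different construction from the paper's, though it exploits the same two structural mechanisms. The paper builds a quiet-planting-style SBM on $k=q^2$ communities labeled by $[q]\times[q]$ with uniform prior and $Q\in\{-1,0,1\}^{k\times k}$ (determined by whether the first coordinates match and the second coordinates mismatch), taking $q$ in the window $(\omega(n^{5/8}),\,o(n^{2/3}))$; you instead take $k=2m$ communities with $m=n^{5\epsilon/4}$ growing much more slowly, uniform prior, and a damped tensor-product matrix $Q=\theta\,(\alpha\otimes Z)$ with $\theta=n^{-(1/2-\epsilon)}$. Both designs kill leaf coefficients by making $Q$ have zero row sums under $p$ (\cref{prop:fullyunbiased}) and kill odd-edge coefficients via an order-$2$ measure-preserving permutation that negates $Q$ (the paper swaps the two $[q]$-coordinates, you flip the $Z$-coordinate). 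Your tensor-product form pays off in the spectral step: the eigenvalues of $\sqrt{P}Q\sqrt{P}$ factor immediately, whereas the paper computes them combinatorially. You also make the bound $|\Fourier_{\SBM(p,Q)}(H)|\lesssim_H\theta^e m^{1-v}$ fully explicit via the $\delta_{a_i,a_j}-\tfrac1m$ expansion and a cyclomatic-number count, a step the paper only asserts. One small inaccuracy in your motivating aside: full unbiasedness by itself does \emph{not} force the spectrum of $\sqrt{P}Q\sqrt{P}$ to be symmetric about $0$ (it only places $\sqrt{p}$ in the kernel; e.g.\ $k=2$, $p=(1/2,1/2)$, $Q$ the all-$\pm1$ checkerboard matrix is fully unbiased with a one-sided spectrum). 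What does force symmetry is the vanishing of all odd power sums $\mathrm{tr}\big((\sqrt{P}Q\sqrt{P})^{2t+1}\big)$, i.e.\ the odd-edge-vanishing property you want, so the intended conclusion that constant $k$ cannot yield the desired example still stands; but the wording should be fixed. This does not affect the body of the proof, which is sound.
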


\begin{proof}[Construction] The construction is inspired by the ``quiet planting'' distribution in \cite{kothari2023planted}. Take some $q\in \mathbb{N}$ such that $q = \omega(n^{5/8}), q = o(n^{2/3})$ and consider an SBM on $k = q^2$ communities labeled by $[q]\times[q],$ where $p_{(a,b)} = 1/q^2\quad\forall (a,b)\in [q]\times [q]$ and
\begin{align*}
    Q_{(a_1,b_1),(a_2,b_2)} = 
    \begin{cases}
     1, & a_1 = b_1 \text{ and }a_2 \neq b_2,\\
    -1, & a_1 \neq b_1 \text{ and }a_2 = b_2,\\
    \ 0, & \text{otherwise}.
    \end{cases}
\end{align*}
The Fourier coefficient corresponding to any graph with a leaf (including stars) is 0 as the SBM is fully unbiased as in \cref{prop:fullyunbiased}.

The Fourier coefficient corresponding to any graph with an odd number of edges (including triangles) is also 0. This follows since the measure-preserving map on labels $(a,b)\longrightarrow (b,a)$ acts by $Q_{b,a} = - Q_{a,b}.$ This is enough via \eqref{eq:explicitfourier}.

Finally, the Fourier coefficient of any connected graph $H$ is $O(q^{1-|\vertices(H)|})$ and the Fourier coefficient of the signed 4-cycle is $\Theta(q^{-3}).$ In particular, this means that 
if $H$ is connected, $\Psi_{\SBM(p,Q)}(H) = O(q^{-\frac{|\vertices(H)|-1}{|\vertices(H)|}}).$ As $q = \omega(n^{5/8}),$ this means that 
$\Psi_{\SBM(p,Q)}(H) = o(n^{-1/2})$ if $|\vertices(H)|\ge 5.$ On the other hand, 
$\Psi_{\SBM(p,Q)}(\cycle_4) = \Theta(q^{-3/4}) = \omega(n^{-1/2})$ as $q = o(n^{2/3}).$ Finally, 
any other connected graph $H$ on at most 4 vertices has either an odd number of edges or an odd degree vertex, so 
$\Psi_{\SBM(p,Q)}(H) = 0.$
\end{proof}

\begin{theorem}[2-SBMs in which Triangles Dominate]
\label{examplethm:trianglenostarnotfourcycle}
There exists a stochastic block model $\SBM(n; p,Q)$ which can be distinguished from $\ergraphhalf$ with high probability via the signed triangle count, but not via the signed count of any other connected graph on a constant number of vertices.
\end{theorem}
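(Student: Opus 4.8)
\textbf{Proof proposal for Theorem~\ref{examplethm:trianglenostarnotfourcycle}.}
The plan is to exhibit an explicit two-community SBM whose only nonvanishing small-graph Fourier coefficients come from graphs built out of triangles, and calibrate the parameters so that $\cycle_3$ is the unique connected graph on a constant number of vertices with $\Psi_{\SBM(p,Q)}(H)=\omega(n^{-1/2})$. The natural construction is a \emph{planted-clique-like} model: take $k=2$, $p=(n^{-\alpha},1-n^{-\alpha})$ for $\alpha$ to be chosen, and $Q=\begin{pmatrix}n^{-\gamma} & 0\\ 0 & 0\end{pmatrix}$ for a suitable $\gamma$. Then by \cref{prop:explicitFourier} only subgraphs all of whose edges lie inside community $1$ contribute, so $\Fourier_{\SBM(p,Q)}(H)$ is zero unless $H$ has no isolated vertices with a neighbor in community $2$, i.e.\ effectively $H$ must be entirely supported on the label-$1$ block; in that case $\Fourier_{\SBM(p,Q)}(H)=p_1^{|\vertices(H)|}(n^{-\gamma})^{|\edges(H)|}=\Theta(n^{-\alpha|\vertices(H)|-\gamma|\edges(H)|})$, hence $\Psi_{\SBM(p,Q)}(H)=\Theta\big(n^{-\alpha - \gamma|\edges(H)|/|\vertices(H)|}\big)$.

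The key calculation is then to choose $\alpha,\gamma$ so that this quantity is $\omega(n^{-1/2})$ exactly when $H=\cycle_3$ among connected graphs on a bounded number of vertices. Among connected graphs, $|\edges(H)|/|\vertices(H)|$ is minimized by trees (value $(v-1)/v<1$, so largest $\Psi$ comes from big stars) and is $\ge 1$ for graphs with a cycle, with equality iff $H$ is unicyclic; the triangle is the unicyclic graph with the \emph{smallest} vertex count, giving ratio $1$ on $3$ vertices, while any other unicyclic $H$ on $v\ge4$ vertices has ratio $1$ but larger $v$, hence smaller $\Psi$ once $\alpha>0$. So I want: (i) $-\alpha-\gamma<-1/2$ fails only for $v=3$, meaning on the triangle $n^{-\alpha-\gamma}=\omega(n^{-1/2})$, i.e.\ $\alpha+\gamma<1/2$; (ii) for every tree $T$ (in particular $\Star_t$ for all constant $t$), $\Psi_{\SBM(p,Q)}(T)=\Theta(n^{-\alpha-\gamma(v-1)/v})=o(n^{-1/2})$, which for the worst case (large $v$, ratio $\to1$) requires $\alpha+\gamma\ge 1/2$ in the limit — so I must instead \emph{block} trees by making their coefficient vanish, e.g.\ by using a \emph{fully unbiased} variant so that \cref{prop:fullyunbiased} kills all graphs with a leaf; (iii) for every unicyclic $H$ on $v\ge4$ vertices, $n^{-\alpha-\gamma}\cdot n^{-\alpha(v-3)}$-type decay makes $\Psi=o(n^{-1/2})$, which follows from (i) plus $\alpha>0$; (iv) for every $H$ with at least two independent cycles, $|\edges(H)|\ge |\vertices(H)|+1$, so $\Psi=\Theta(n^{-\alpha-\gamma(|\vertices(H)|+1)/|\vertices(H)|})=o(n^{-1/2})$ provided $\alpha+\gamma\cdot\frac{v+1}{v}>1/2$, again forced by tuning.

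To reconcile the tension in (ii), the cleanest fix is to place the triangle structure inside a fully unbiased gadget so that \emph{all} tree (and more generally leaf-containing) counts are identically zero, while triangles survive. Concretely I would use $k=3$ communities $\{0,1,2\}$ with $p_0$ small, $p_1=p_2$, and a matrix $Q$ supported so that the only surviving structure is a triangle among three distinct communities with $Q_{0,1},Q_{1,2},Q_{2,0}$ arranged to be \emph{fully unbiased} (each row sums to zero against $p$) yet with $Q_{0,1}Q_{1,2}Q_{2,0}\ne 0$; this mimics the quiet-planting trick of \cite{kothari2023planted} used in \cref{examplethm:fourcyclenostarnotriangle} but tuned for the $3$-cycle rather than the $4$-cycle. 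Then \cref{prop:fullyunbiased} kills every $H$ with a leaf (all stars, all trees), a symmetry/parity argument kills graphs whose edge-count/structure forces an unfavorable community pattern (including $\cycle_4$ and $\complete_4^-$), and a direct estimate $\Fourier_{\SBM(p,Q)}(H)=O(p_0^{\#\text{community-}0\text{ vertices}}\cdot(\text{edge scale})^{|\edges(H)|})$ shows every connected $H$ on $\ge4$ vertices has $\Psi=o(n^{-1/2})$ for an appropriate choice of the small parameters, while $\Psi_{\SBM(p,Q)}(\cycle_3)=\omega(n^{-1/2})$. The main obstacle — and the part needing genuine care rather than routine arithmetic — is choosing the community sizes and edge scales so that the $3$-cycle coefficient genuinely dominates \emph{every} other connected constant-size graph simultaneously: one must check unicyclic graphs on $4,5,\dots$ vertices, graphs with multiple cycles, and graphs mixing in-community and cross-community edges, and verify that no combination of these sneaks above the $n^{-1/2}$ threshold; this is the analogue of the parameter-juggling in \cref{examplethm:largestarsnotrianglenofourcycle,examplethm:fourcyclenostarnotriangle} and will require picking the exponents in an open interval whose endpoints come from the $v=4$ and $v=\infty$ extremes.
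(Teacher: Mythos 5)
There is a genuine gap: a stochastic block model on a \emph{constant} number of communities cannot make the signed triangle count the unique winner, so your plan of ``$k=3$ communities $\{0,1,2\}$'' is doomed, not just technically delicate. To see this, recall from the proof of \cref{thm:largercycles} that if $\lambda_1,\dots,\lambda_k$ are the eigenvalues of $\sqrt{P}Q\sqrt{P}$ then $\Fourier_{\SBM(p,Q)}(\cycle_t) = \sum_i \lambda_i^t$. Writing $M = \max_i |\lambda_i|$, one has the two-sided bounds
\begin{equation*}
\Psi_{\SBM(p,Q)}(\cycle_3) = \Big|\sum_i \lambda_i^3\Big|^{1/3} \le k^{1/3}\,M,
\qquad
\Psi_{\SBM(p,Q)}(\cycle_4) = \Big(\sum_i \lambda_i^4\Big)^{1/4} \ge M.
\end{equation*}
If $\Psi_{\SBM(p,Q)}(\cycle_3) = \omega(n^{-1/2})$ then $M = \omega(k^{-1/3}n^{-1/2})$; if additionally $\Psi_{\SBM(p,Q)}(\cycle_4) = o(n^{-1/2})$ then $M = o(n^{-1/2})$. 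For constant $k$ these two conditions are mutually exclusive. Hence no SBM with $k=O(1)$ communities (two, three, or any bounded number) can have a triangle count that distinguishes while the 4-cycle count fails. The number of communities $k$ must grow with $n$; that is not parameter juggling but a structural necessity, and it is the one ingredient your proposal omits.

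The paper's construction indeed exploits this: it takes $k = n^\alpha$ communities for $\alpha \in (2/3,3/4)$, with the Potts-type (planted coloring) matrix $p_i = 1/k$, $Q_{ii}=1$, $Q_{ij}=-1/(k-1)$. This is fully unbiased (your intuition there is right and matches the paper), so \cref{prop:fullyunbiased} kills trees; moreover one checks that every graph that is not $2$-connected has vanishing Fourier coefficient, and every $2$-connected $H$ satisfies $\Fourier_{\SBM(p,Q)}(H)\asymp k^{1-|\vertices(H)|}$, hence $\Psi_{\SBM(p,Q)}(H)\asymp k^{-(|\vertices(H)|-1)/|\vertices(H)|}$, which is strictly decreasing in $|\vertices(H)|$. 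Calibrating $k\in(\omega(n^{2/3}), o(n^{3/4}))$ then puts exactly the $3$-vertex case ($\cycle_3$) above threshold and everything on $\ge 4$ vertices below. Two secondary remarks on your sketch. First, the ``quiet planting'' gadget of \cref{examplethm:fourcyclenostarnotriangle} relies on the antisymmetry $Q_{(b,a),\cdot} = -Q_{(a,b),\cdot}$, which forces \emph{all} graphs with an odd number of edges (including $\cycle_3$) to have zero Fourier coefficient; it is the wrong template for a triangle-dominating example and cannot be ``tuned for the $3$-cycle'' without abandoning that sign structure. Second, your three-community fully-unbiased attempt with $Q_{ii}=0$, $p_1=p_2$, and a nonzero triangle product $Q_{0,1}Q_{1,2}Q_{2,0}$ is overdetermined: the unbiasedness equations for rows $1$ and $2$ force $Q_{1,2}$ to equal both $-\tfrac{p_0}{p_2}Q_{0,1}$ and $+\tfrac{p_0}{p_1}Q_{0,1}$, which with $p_1=p_2$ kills $Q_{0,1}$ and hence the triangle term.
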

\begin{proof}[Construction] One such construction appears in \cite{kothari2023planted} and with a slight modification in \cite{bangachev24fourier}. We give full detail for completeness. 
Take $k = n^{\alpha}$ for some $\alpha \in (2/3, 3/4).$ 
Consider an SBM on $k$ communities with $p_i = 1/k = n^{-\alpha}\quad \forall i\in [k]$ and 
\begin{equation*}
    Q_{i,j} = \begin{cases}
          1, & i = j,\\
         -\frac{1}{k-1}, & i \neq j.
    \end{cases}
\end{equation*}
Let $H$ be any connected graph.
One can again observe that the SBM is fully unbiased, which implies
that $\Fourier_{\SBM(p,Q)}(H) = 0$ if $H$ is a tree
via \cref{prop:fullyunbiased}. Similarly, one can show that if $H$ has any edge the removal of which makes the graph disconnected (i.e., $H$ is not 2-connected), $\Fourier_{\SBM(p,Q)}(H) = 0.$ 

Finally, if $H$ is 2-connected, that is $H$ is connected and continues to be so after the removal of any edge, one can show that $\Fourier_{\SBM(p,Q)}(H)\asymp k^{1-|\vertices(H)|}.$ Hence, 
$\Psi_{\SBM(p,Q)}(H)\asymp k^{-\frac{|\vertices(H)|-1}{|\vertices(H)|}}.$ This quantity is decreasing in $|\vertices(H)|.$ 
As $k  = o(n^{3/4})$ and $k = \omega(n^{2/3}),$ the signed triangle count is the unique signed count of a connected constant-sized graph sufficient for testing.
\end{proof}

\begin{theorem}[One-to-One Comparisons of Fourier Coefficients Fail]
\label{examplethm:onetoneinsufficient}
There exist two stochastic block models $\SBM(n;p^1,Q^1)$ and 
$\SBM(n;p^2, Q^2)$ on 2 communities and a connected graph $H$ on $6$ vertices and 8 edges with the following two properties:
\begin{enumerate}
    \item In $\SBM(n;p^1,Q^1),$ 
    $
    \Psi_{\SBM(n;p^1,Q^1)}(\Star_t) = 
    o(\Psi_{\SBM(n;p^1,Q^1)}(H))
    $ for any $t\in \mathbb{N}.$
    \item In $\SBM(n;p^2,Q^2),$
    $
    \Psi_{\SBM(n;p^2,Q^2)}(\cycle_4) = 
    o(\Psi_{\SBM(n;p^2,Q^2)}(H)).
    $ 
\end{enumerate}
But, by \cref{thm:maximizingpartition2sbm}, for any 2-SBM $\SBM(p,Q),$ it holds that
$$
\Psi_{\SBM(n;p,Q)}(H) \lesssim 
\max\Bigg(
\Psi_{\SBM(n;p,Q)}(\cycle_4), 
\max_{1\le t \le 8}
\Psi_{\SBM(n;p,Q)}(\Star_t) 
\Bigg).$$

\end{theorem}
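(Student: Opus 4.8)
The plan is to take $H=\complete_{2,4}$, the complete bipartite graph with parts $\{u,v\}$ and $\{w_1,w_2,w_3,w_4\}$; it is connected, has $6$ vertices, $8$ edges, and degree sequence $(4,4,2,2,2,2)$. For the first model I would reuse the ``sign SBM'' of \cref{examplethm:notodddegree}: $p^1=(1/2,1/2)$ and $Q^1=\begin{pmatrix}1&-1\\-1&1\end{pmatrix}$. For the second I would take a sparse purely-bipartite SBM, $p^2=(n^{-\alpha},1-n^{-\alpha})$ and $Q^2=\begin{pmatrix}0&n^{-\beta}\\n^{-\beta}&0\end{pmatrix}$, with the constants fixed at the end; $\alpha=\tfrac12,\ \beta=\tfrac18$ will work.

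\emph{Property 1.} Writing $Q^1_{ij}=\epsilon_i\epsilon_j$ with $\epsilon_1=1,\epsilon_2=-1$, the edge product in \cref{prop:explicitFourier} equals $\prod_{(a,b)\in\edges(H)}\epsilon_{x_a}\epsilon_{x_b}=\prod_{z\in\vertices(H)}\epsilon_{x_z}^{\Deg_H(z)}=1$ for every coloring, since all degrees of $\complete_{2,4}$ are even. Hence $\Fourier_{\SBM(p^1,Q^1)}(\complete_{2,4})=\sum_{x_1,\dots,x_6}\prod_z p^1_{x_z}=1$, so $\Psi_{\SBM(p^1,Q^1)}(H)=1$. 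On the other hand $\SBM(p^1,Q^1)$ is fully unbiased and every $\Star_t$ has a leaf, so \cref{prop:fullyunbiased} gives $\Fourier_{\SBM(p^1,Q^1)}(\Star_t)=0$, i.e.\ $\Psi_{\SBM(p^1,Q^1)}(\Star_t)=0=o(1)$ for every $t$. This establishes Property 1.

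\emph{Property 2.} In $\SBM(p^2,Q^2)$ we have $Q^2_{11}=Q^2_{22}=0$, so in the expansion of \cref{prop:explicitFourier} a coloring of a connected graph $H'$ contributes only if every edge is bichromatic; a non-bipartite $H'$ thus has Fourier coefficient $0$, while a connected bipartite $H'$ admits exactly two such colorings (swap the two sides), contributing $(p_1^u p_2^v+p_1^v p_2^u)(n^{-\beta})^{|\edges(H')|}$ where $u\le v$ are the part sizes. Since $p_1\le p_2$ and $p_2\asymp 1$, this gives $\Fourier_{\SBM(p^2,Q^2)}(H')\asymp n^{-\alpha u-\beta|\edges(H')|}$ and $\Psi_{\SBM(p^2,Q^2)}(H')\asymp n^{-(\alpha u+\beta|\edges(H')|)/|\vertices(H')|}$. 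Specializing to $H=\complete_{2,4}$ ($u=2$, $|\edges|=8$, $|\vertices|=6$) and to $\cycle_4$ ($u=2$, $|\edges|=4$, $|\vertices|=4$) yields $\Psi(H)\asymp n^{-(\alpha+4\beta)/3}$ and $\Psi(\cycle_4)\asymp n^{-(\alpha+2\beta)/2}$, and $\Psi(\cycle_4)=o(\Psi(H))$ is equivalent to $\tfrac{\alpha+2\beta}{2}>\tfrac{\alpha+4\beta}{3}$, i.e.\ $\alpha>2\beta$. (One may additionally want $\Psi(H)=\omega(n^{-1/2})$, i.e.\ $\alpha+4\beta<\tfrac32$, although this is not demanded by the statement.) Both $\alpha>2\beta$ and $\alpha+4\beta<\tfrac32$ hold for $\alpha=\tfrac12,\beta=\tfrac18$, which proves Property 2. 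The final displayed inequality in the statement is just \cref{thm:maximizingpartition2sbm} applied with $D=8$, which is even and at least $|\edges(\complete_{2,4})|$.

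\emph{Where the work is.} There is no hard computation here; the only real decision is the choice of $H$, and it is essentially forced. To make \emph{all} stars (indeed all trees) have vanishing scaled Fourier coefficient while $\Psi(H)$ stays bounded away from $0$, one needs a model that annihilates all trees --- the sign SBM --- under which $H$ survives only if all its degrees are even. To make $\cycle_4$ strictly subdominant, one needs $H$ to be ``superlinear'' ($|\edges(H)|>|\vertices(H)|$, so not a tree) and bipartite, so that it scales more favorably than $\cycle_4$ in a sparse bipartite SBM. Among connected $6$-vertex $8$-edge graphs an all-even degree sequence forces $(4,4,2,2,2,2)$, and $\complete_{2,4}$ realizes it bipartitely; recognizing this pair of requirements is the substance of the example, and the rest is the small linear-feasibility check on $(\alpha,\beta)$ above.
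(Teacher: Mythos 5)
Your construction and reasoning are correct and essentially coincide with the paper's own example: same $H=\complete_{2,4}$, same sign-SBM $\SBM(p^1,Q^1)$ from \cref{examplethm:notodddegree}, same sparse bipartite SBM structure for $\SBM(p^2,Q^2)$. The only cosmetic difference is that you introduce an extra tuning parameter $\beta$ in $Q^2=\begin{pmatrix}0&n^{-\beta}\\ n^{-\beta}&0\end{pmatrix}$ and then pick $(\alpha,\beta)=(\tfrac12,\tfrac18)$; the paper simply takes $\beta=0$ (i.e.\ $Q^2_{12}=1$) and any $\alpha\in(0,1)$, which your formula $\Psi(H')\asymp n^{-(\alpha u+\beta|\edges(H')|)/|\vertices(H')|}$ recovers as the special case $\beta=0$, yielding $\Psi(\cycle_4)\asymp n^{-\alpha/2}=o(n^{-\alpha/3})\asymp\Psi(\complete_{2,4})$ with no further constraints to check.
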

\begin{proof}
Take $H = \complete_{2,4}.$ Let $\SBM(n;p^1,Q^1)$ be just the SBM from \cref{examplethm:notodddegree}. Then, for any star $\Star_t,$
$
\displaystyle\Fourier_{\SBM(p^1,Q^1)}(\Star_t) = 0.
$
However, as $\complete_{2,4}$ has only even degree vertices,
$
\Fourier_{\SBM(p^1,Q^1)}(\complete_{2,4}) = 
1.
$

Now, let $\SBM(n;p^2,Q^2)$ be the SBM with $p^2 = (n^{-\alpha}, 1- n^{-\alpha})$ where 
$\alpha \in (0,1)$
and 
$Q^2 = \begin{pmatrix}
    0 & 1 \\
    1 & 0 \\
\end{pmatrix}.$
Then, 
$$
\Fourier_{\SBM(p^2,Q^2)}(\cycle_4)\asymp 
\Fourier_{\SBM(p^2,Q^2)}(\complete_4)\asymp (p^2_1)^2 = n^{-2\alpha}.
$$
Thus, 
$$
|\Fourier_{\SBM(n;p^2,Q^2)}(\cycle_4)|^{\frac{1}{|\vertices(\cycle_4)|}} \asymp n^{-\alpha/2} 
 = 
    o(|\Fourier_{\SBM(n;p^2,Q^2)}(\complete_{2,4})|^{\frac{1}{|\vertices(H))|}})=  o(n^{-\alpha/3}). \qedhere
$$
\end{proof}

\section{Some Omitted Details}
\subsection{Proof of Inequality \texorpdfstring{\eqref{eq:decreasinginv}}{Norms Inequality}}
\label{sec:smpleinequality}
We rewrite as follows. 

\begin{lemma}
Suppose that $(p_1,p_2,\ldots, p_k)$ is a pmf on $[k].$ Let $q_1, q_2, \ldots, q_k$ be real numbers in $[0,1].$ Then,
$$
v\longrightarrow \big(\sum_{i = 1}^k p_i^{v}q_i^{v-1}\big)^{1/v}
$$
is non-increasing on $[1,+\infty].$
\end{lemma}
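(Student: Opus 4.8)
The statement is scale-homogeneous in the index $v$, so it suffices to fix $1\le v'\le v$ and show
$f(v)^{1/v}\le f(v')^{1/v'}$, where $f(v)\coloneqq \sum_{i=1}^k p_i^{v}q_i^{\,v-1}$ (throughout using the empty–product convention $0^0=1$ for the factor $q_i^{\,v-1}$ when $v=1$, consistent with counting edges of a tree). Equivalently, since both sides are nonnegative, it is enough to prove $f(v)\le f(v')^{\,v/v'}$. First I would introduce $t\coloneqq v/v'\ge 1$ and the auxiliary nonnegative numbers $A_i\coloneqq p_i^{v'}q_i^{\,v'-1}$, so that $f(v')=\sum_i A_i$.

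The crux is the purely algebraic identity $p_i^{v}q_i^{\,v-1}=A_i^{t}\,q_i^{\,t-1}$, which follows by writing $v=v't$ and expanding exponents:
$A_i^t q_i^{t-1}=p_i^{v't}q_i^{(v'-1)t}q_i^{t-1}=p_i^{v}q_i^{\,v't-t+t-1}=p_i^{v}q_i^{\,v-1}$.
Because $q_i\in[0,1]$ and $t-1\ge 0$, we have $q_i^{\,t-1}\le 1$, hence
$f(v)=\sum_i A_i^t q_i^{\,t-1}\le \sum_i A_i^{t}$.
It remains to invoke the elementary superadditivity bound $\sum_i A_i^{t}\le\bigl(\sum_i A_i\bigr)^{t}$ valid for $t\ge 1$ and $A_i\ge 0$ (proof: with $S=\sum_i A_i$, each $A_j^t=A_j A_j^{t-1}\le A_j S^{t-1}$, and summing over $j$ gives $\sum_j A_j^t\le S^t$; the case $S=0$ is trivial). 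Combining, $f(v)\le \bigl(\sum_i A_i\bigr)^{t}=f(v')^{\,v/v'}$, which is exactly what we wanted, and taking $v$-th roots finishes the argument.

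I expect no real obstacle here; the entire proof is a one-line reparametrization plus two elementary inequalities. The only points deserving a sentence of care are (i) the convention at $v=1$ / the case $q_i=0$ (for $v>1$ the $i$-th term vanishes and the displayed identity still holds with $0^{t-1}=0$ when $t>1$; for $t=1$ everything is trivial), and (ii) noting that the constraint $q_i\le 1$ — not merely $q_i\ge 0$ — is what makes $q_i^{\,t-1}\le 1$ and hence drives the monotonicity; if one only had $q_i\ge 0$ the statement would be false. One could alternatively present a differential version: writing $g(v)=\tfrac1v\log f(v)$ and $\mu_v(i)\propto p_i^v q_i^{v-1}$, a Gibbs/free-energy identity gives $v f'(v)/f(v)-\log f(v)=\mathbb{E}_{\mu_v}[\log q_i]-H(\mu_v)\le 0$ since $\log q_i\le 0$ and $H(\mu_v)\ge 0$, so $g'(v)\le 0$; but the elementary argument above is shorter and avoids differentiability/boundary issues.
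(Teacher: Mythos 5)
Your proof is correct, and it takes a genuinely different route from the paper's. The paper proves monotonicity of $f(v)=\frac1v\log\big(\sum_i p_i^v q_i^{v-1}\big)$ by computing $f'(v)$ and reducing $f'(v)\le0$ to the term-by-term inequality $\log(p_i^v q_i^v)\le\log(p_i^v q_i^{v-1})\le\log\big(\sum_j p_j^v q_j^{v-1}\big)$, again driven by $q_i\le 1$. You instead give the "integrated" version: for $1\le v'\le v$ you set $t=v/v'$, $A_i=p_i^{v'}q_i^{v'-1}$, observe the exact reparametrization $p_i^v q_i^{v-1}=A_i^t q_i^{t-1}$, drop the factor $q_i^{t-1}\le1$, and finish with the elementary $\ell^1\hookrightarrow\ell^t$ inequality $\sum_i A_i^t\le\big(\sum_i A_i\big)^t$. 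Both proofs hinge on the same two ingredients — $q_i\le1$ and a form of concavity/power-mean comparison — but your version is calculus-free, handles the boundary cases $q_i=0$ without having to interpret $0\cdot\log 0$ or worry about differentiability, and is arguably easier to verify line by line. The paper's derivative computation is marginally more compact on the page but carries those (small) smoothness caveats implicitly. You even note the differential alternative at the end, so the comparison is already essentially in your write-up; no gap to report.
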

\begin{proof} Define the function 
    $\displaystyle
    f(v)\coloneqq \log \big(\sum_{i = 1}^k p_i^{v}q_i^{v-1}\big)^{1/v} = 
    \frac{1}{v}\log \big(\sum_{i = 1}^k p_i^{v}q_i^{v-1}\big).
    $
    It is enough to show that $f(v)$ is non-increasing. Equivalently, we need to show that $f'(v)\le 0.$ 
    \begin{align*}
        & f'(v) \le 0 \Longleftrightarrow\\
        & -\frac{1}{v^2}\log \big(\sum_{i = 1}^k p_i^{v}q_i^{v-1}\big) + 
        \frac{1}{v}\frac{\sum_{i = 1}^k \log(p_iq_i)p_i^{v}q_i^{v-1}}{\sum_{i = 1}^k p_i^{v}q_i^{v-1}}\le 0  \Longleftrightarrow\\
        & v\sum_{i = 1}^k \log(p_iq_i)p_i^{v}q_i^{v-1}\le 
        \log \big(\sum_{i = 1}^k p_i^{v}q_i^{v-1}\big)\times 
        \sum_{i = 1}^k p_i^{v}q_i^{v-1} \Longleftrightarrow\\
        & \sum_{i = 1}^k \log(p^v_iq^v_i)p_i^{v}q_i^{v-1}\le  
        \sum_{i = 1}^k \Bigg(\log \big(\sum_{i = 1}^k p_i^{v}q_i^{v-1}\big)\times p_i^{v}q_i^{v-1} \Bigg).\\
    \end{align*}
The last inequality follows from the fact that $q_i\in [0,1]$ since for any $i,$
$$
\log(p^v_iq^v_i)\le 
\log(p^v_iq^{v-1}_i)\le 
\log \big(\sum_{i = 1}^k p_i^{v}q_i^{v-1}\big).\qedhere
$$
\end{proof}

\end{document}